\newtheorem{theorem}{Theorem}[section]
\newtheorem{lemma}[theorem]{Lemma}
\newtheorem{corollary}[theorem]{Corollary}
\newtheorem{proposition}[theorem]{Proposition}
\theoremstyle{definition}
\newtheorem{definition}[theorem]{Definition}
\newtheorem{remark}[theorem]{Remark}
\numberwithin{equation}{section}
\DeclareMathOperator{\supp}{supp}
\DeclareMathOperator{\End}{End}
\DeclareMathOperator{\Ind}{Ind}
\DeclareMathOperator{\id}{id}
\newcommand\N{\mathbb N}
\newcommand\C{\mathbb C}
\newcommand\R{\mathbb R}
\newcommand\Y{\mathbb Y}
\newcommand\Z{\mathbb Z}
\newcommand\al{\alpha}
\newcommand\be{\beta}
\newcommand\Ga{\Gamma}
\newcommand\ga{\gamma}
\newcommand\de{\delta}
\newcommand\De{\Delta}
\newcommand\la{\lambda}
\newcommand\lan{{\la[n]}}
\newcommand\Om{\Omega}
\newcommand\om{\omega}
\newcommand\eps{\varepsilon}
\newcommand\epsb{\bar\eps}
\renewcommand\th{\theta}
\newcommand\ka{\varkappa}
\newcommand\T{\mathcal T}
\newcommand\HH{{\mathcal H}}
\newcommand\ZZ{\mathcal Z}
\newcommand\A{{\mathcal A}}
\newcommand\B{{\mathcal B}}
\newcommand\cond{\operatorname{cond}}
\newcommand\Sym{\operatorname{Sym}}
\newcommand\gr{\operatorname{gr}}
\newcommand\tr{\operatorname{tr}}
\newcommand\Hom{\operatorname{Hom}}
\newcommand\SSym{{\Sym^*}}
\newcommand\F{F}
\newcommand\da{\downarrow}
\newcommand\bla{{\boldsymbol{\lambda}}}
\newcommand\bmu{{\boldsymbol{\mu}}}
\newcommand\bnu{{\boldsymbol{\nu}}}
\newcommand\beps{\bar\eps}
\newcommand\wt{\widetilde}
\newcommand\wh{\widehat}
\newcommand\p{{\bar p}}
\newcommand\q{{\bar q}}
\newcommand\Tb{\overline T}
\newcommand\zz{c}
\newcommand\rr{r}
\begin{document}

\title[]{Limits of group algebras for growing symmetric groups and wreath products}

\author{Irina Devyatkova$^{1,2}$ \and Grigori Olshanski$^{1,2}$}

\thanks{ 
\leftline{e-mail: irina.e.devyatkova@gmail.com, olsh2007@gmail.com}
\leftline{${}^1$ Igor Krichever Center for Advanced Studies, Skoltech, Moscow, Russia.} 
\leftline{${}^2$ Faculty of Mathematics, HSE University, Moscow, Russia.}
\leftline{Both authors supported by the Russian Science Foundation under project 23-11-00150}
}

\begin{abstract}
Let $S(\infty)$ denote the infinite symmetric group formed by the finitary permutations of the set of natural numbers; this is a countable group. We introduce its virtual group algebra, a completion of the conventional group algebra $\C[S(\infty)]$. The virtual group algebra is obtained by taking large-$n$ limits of the finite-dimensional group algebras $\C[S(n)]$ in the so-called tame representations of $S(\infty)$. (Note that our virtual group algebra is very different from the $C^*$-envelope.) We describe the structure of the virtual group algebra, which reveals a connection with Drinfeld-Lusztig degenerate affine Hecke algebras. Then we extend the results to wreath products $G\wr S(\infty)$ with arbitrary finite groups $G$.  
\end{abstract}

\date{}

\maketitle

\tableofcontents

\section{Introduction}

\subsection{Statement of problem}\label{sect1.1}

Given a finite or countable set $X$, let $S(X)$ denote the group of all permutations of $X$. We assume that $S(X)$ act on $X$ on the left. It is convenient to represent permutations of $X$ as \emph{strictly monomial} 0-1 matrices of the format $X\times X$. Namely, given $g\in S(X)$ and a couple of indices $(i,j)\in X\times X$,  the $(i,j)$-entry of the matrix of $g$ is
$$
g_{ij}=\begin{cases} 1, & \text{if  $gj=i$,}\\ 0, & \text{otherwise.} \end{cases}
$$

Throughout the paper we will exclusively deal with the sets $\N:=\{1,2,\dots\}$ and $\N_n:=\{1,\dots,n\}$, where $n=1,2,\dots$\,. We abbreviate $S(n):=S(\N_n)$ and identify $S(n)$ with the subgroup of $S(\N)$ fixing the points $n+1,n+2,\dots$\,. 

Next, we set
$$
S(\infty):=\varinjlim S(n)=\bigcup_{n=1}^\infty S(n) \subset S(\N).
$$
The elements $g\in S(\infty)$ are the \emph{finitary} permutations of $\N$ (that is, permutations moving only finitely many points). In other words, $g_{ij}\ne\de_{ij}$ for finitely many entries only. 
 
The group $S(\N)$ is endowed with a natural topology making it a  topological group. This  topology is totally disconnected; it is determined by the property that the pointwise stabilizers of finite subsets form a fundamental system of neighborhoods of the identity. Note that $S(\N)$ is a basic example of an \emph{oligomorphic group}, see \cite{Cam90}, \cite{Tsa12}. 

The unitary representations of the topological group $S(\N)$ were first studied by Lieberman \cite{Lie72}. He showed that that there are countably many irreducible representations (up to equivalence) and any unitary representation is a discrete direct sum of irreducible ones (for other approaches, see \cite{Ols85}, \cite{Tsa12}). In this respect, the group $S(\N)$ behaves like a compact group, although it is neither compact nor even locally compact. Moreover, there are many similarities between the representations of $S(\N)$ and those of the finite symmetric groups $S(n)$.  

The group $S(\infty)$ is dense in $S(\N)$, so any unitary representation of $S(\N)$ is uniquely determined by its restriction to $S(\infty)$. The representations of $S(\infty)$ obtained in this way are called \emph{tame}; they can also be characterized intrinsically,  without recourse to the group $S(\N)$. Thus, instead of representations of $S(\N)$ we can equally well deal with tame representations of $S(\infty)$. 

Because $S(\infty)$ is a countable group, its group algebra $\C[S(\infty)]$ is well defined, and any unitary representation $S(\infty)$, not necessarily a tame one, admits an extension to this algebra. However,  if we focus on tame representations of $S(\infty)$ and try to exploit their similarity with representations of the groups $S(n)$, we discover that the algebra $\C[S(\infty)]$ does not fit well into our picture: it seems to be too small. For instance, the group algebra $\C[S(n)]$ has a large center which separates irreducible representations, whereas the center of $\C[S(\infty)]$ is trivial.  This fact prompts us to look for a suitable completion of $\C[S(\infty)]$, which would possess a richer structure. The aim of the paper is to define such a completion and describe its structure. 

In fact, we do this in greater generality --- for wreath products $G \wr S(\infty)$, where $G$ is an arbitrary finite group. However, for the sake of simplicity, in this introductory section, we mainly focus on the case of the group $S(\infty)$.  

\subsection{The virtual group algebra of $S(\infty)$}\label{sect1.2}

Let  $\al=(\al_1,\al_2,\dots)$ be an infinite sequence of elements of $\C[S(\infty)]$ such that $\al_n\in\C[S(n)]$. We say that $\al$ is \emph{convergent} if:

\begin{itemize}
\item the elements $\al_n$ are uniformly bounded with respect to a certain (double) filtration in the spaces $\C[S(n)]$ (see Definition \ref{def6.A});
\item for any irreducible tame representation $T$, the operators $T(\al_n)$  have a strong limit, which is denoted by  $\Tb(\al)$.
\end{itemize}
Next, we say that two convergent sequences, $\al$ and $\be$, are \emph{equivalent} if $\Tb(\al)=\Tb(\be)$ for any irreducible tame $T$.

Consider now the  set of equivalence classes of convergent sequences and denote it by $\B$; it is an associative algebra, where  all operations are defined term-wise. Thus, $\B$ is a kind of \emph{large-$n$ limit} of the group algebras $\C[S(n)]$. 

There is a natural embedding $\C[S(\infty)]\to\B$: its image is formed by the \emph{stable} sequences $\al=(\al_n)$ (we say that $\al$ is stable if $\al_n$ does not depend on $n$ for all $n$ large enough). We call $\B$ the \emph{virtual group algebra of $S(\infty)$}. 

By the very definition, the algebra $\B$ acts in the space of each irreducible tame representation $T$ via the correspondence $\al\mapsto \Tb(\al)$, and this action extends the natural action of the group algebra $\C[S(\infty)]$. 

Note that  $\B$ is very different from $C^*(S(\infty))$, the $C^*$-envelope of the group algebra $\C[S(\infty)]$.

\subsection{The centralizer construction}\label{sect1.3}
In Theorem \ref{thm1.A} below, we establish a connection with the \emph{centralizer construction} of \cite{MO01}. That construction employs the \emph{symmetric inverse semigroups} $\Ga(n)\supset S(n)$ (aka \emph{rook monoids}) and produces an algebra $\A\supset \C[S(\infty)]$, which has the following structure: 

\begin{itemize}

\item the center of $\A$, denoted by $\A_0$, is isomorphic to a close relative of the algebra of symmetric functions $\Sym$, \emph{the algebra of shifted symmetric functions}, denoted by  $\Sym^*$ (see section \ref{sect3.1} for the definition);  

\item the whole algebra $\A$ splits into the tensor product of $\A_0$ and another subalgebra, denoted by  $\widetilde{\mathcal H}$:
\begin{equation}\label{eq1.A}
\A\simeq\A_0\otimes\wt\HH;
\end{equation}

\item the algebra $\wt\HH$ is the union of an ascending chain of subalgebras $\wt\HH_m$, $m=1,2,\dots$, where $\wt\HH_m$ is a relative of the \emph{degenerate affine Hecke algebra} $\HH_m$ associated with the symmetric group $S(m)$.
\end{itemize}

A precise description of $\wt\HH_m$ is given in section \ref{sect5.3}  below.  Here are a few comments to the above rough description. 

The degenerate affine Hecke algebras (aka graded affine Hecke algebras) were introduced by Drinfeld \cite{Dri86} and Lusztig \cite{Lus89}. (About connections between the works \cite{Dri86} and \cite{Lus89}, see \cite[section 4]{RS03}.)
The algebra $\HH_m$ contains $\C[S(m)]$ and the algebra of polynomials in $m$ variables $\C[u_1,\dots,u_m]$. As a vector space, $\HH_m$ is isomorphic to the tensor product $\C[S(m)]\otimes \C[u_1,\dots,u_m]$ but there are nontrivial commutation relations between permutations and polynomials, of the form 
$$
s u_i s^{-1} = u_{s(i)} + X(s,i), \qquad s\in S(m), \quad i=1,\dots,m,
$$
where $X(s,i)$ are certain elements of  $\C[S(m)]$. For instance, if $s$ is the elementary transposition $(i,i+1)$, then $X(s,i)=(i,i+1)$. Without the terms $X(s,i)$, we would obtain a simpler object, the crossed product algebra $S(m)\ltimes\C[u_1,\dots,u_m]$; the algebra $\HH_m$ is a nontrivial deformation of this crossed product. 
As for the algebra $\wt\HH_m$, it is a certain modification of $\HH_m$, in which the role of the group $S(m)$ is played by the semigroup $\Ga(m)$.  

\subsection{Main result (case of $S(\infty)$)}

\begin{theorem}\label{thm1.A}
The algebra $\B$ {\rm(}the virtual group algebra of $S(\infty)${\rm)} is isomorphic to the algebra $\A$ defined by the centralizer construction.
\end{theorem}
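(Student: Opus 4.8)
The plan is to construct a map between $\B$ and $\A$ by comparing how both algebras act on the same family of irreducible tame representations, and to exploit the fact that both are determined by those actions. Concretely, I would first recall from the centralizer construction of \cite{MO01} that $\A$ sits inside $\varprojlim$-type data built from the semigroups $\Ga(n)$, and that it acts on every irreducible tame representation $T$ of $S(\infty)$ (equivalently, of $S(\N)$) through a compatible family of operators; call this action $\pi_T$. On the other side, $\B$ is by construction a quotient of the space of convergent sequences $\al=(\al_n)$, with $\al_n\in\C[S(n)]$, where two sequences are identified exactly when $\Tb(\al)=\Tb(\be)$ for all irreducible tame $T$. Thus $\B$ embeds into $\prod_T \End(V_T)$ (product over irreducible tame $T$, $V_T$ the representation space), and the image is precisely the set of families $(\Tb(\al))_T$ arising from convergent sequences. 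The whole theorem then amounts to showing this image coincides with the image of $\A$ under $\bigoplus_T\pi_T$, together with the injectivity of $\bigoplus_T \pi_T$ on $\A$ (which should already be in \cite{MO01}, or follow from the explicit structure \eqref{eq1.A} and the known irreducible tame representations).

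The core of the argument is therefore a two-way inclusion of images. For the inclusion ``$\A\subseteq\B$'': given an element $a\in\A$, I would produce an explicit convergent sequence $\al=(\al_n)$ with $\al_n\in\C[S(n)]$ whose strong limits recover $\pi_T(a)$ for every $T$. This is where the centralizer construction pays off: elements of $\A$ are, essentially by definition, represented by coherent sequences of elements in the centralizers $\End_{S(n-m)}(\cdots)$, and these centralizer elements can be pushed back into $\C[S(n)]$ (using that the relevant representations of $S(n)$ are built from the regular representation, so endomorphisms are realized by right multiplications). One must then check the two bullet conditions in the definition of convergence: uniform boundedness with respect to the double filtration of Definition \ref{def6.A}, and existence of strong limits $\Tb(\al)$ in each irreducible tame $T$. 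The strong-limit condition should follow from the branching/stabilization properties that underlie the centralizer construction; the boundedness is a matter of tracking filtration degrees through the pushforward, which the generators of $\A$ --- the shifted-symmetric-function elements generating $\A_0$ and the finitely many generators of each $\wt\HH_m$ --- should visibly satisfy. For the reverse inclusion ``$\B\subseteq\A$'', I would argue that any convergent sequence's limit family $(\Tb(\al))_T$ already lies in the image of $\A$: here the key input is a structural/dimension-count or a spanning statement saying that the operators realizable as such limits are exhausted by $\pi_T(\A)$, i.e.\ that $\A$ is ``large enough'' --- for instance, by showing that each graded/filtered piece of $\B$ (with respect to the induced filtration) is spanned by images of elements of $\A$.

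Alternatively, and perhaps more cleanly, one can avoid arguing both inclusions separately by working filtration by filtration: equip $\A$ with its natural filtration coming from \eqref{eq1.A} (degree in $\A_0\cong\SSym$ plus the level $m$ in $\wt\HH_m$), equip $\B$ with the filtration induced from the double filtration on the $\C[S(n)]$, construct a filtered algebra homomorphism $\A\to\B$ using the pushforward of centralizer elements described above, and then check it is an isomorphism on associated graded pieces. The associated graded of $\A$ is understood from the centralizer construction (it is, up to the $\SSym$ factor, a graded version of the affine-Hecke-type algebras), and the associated graded of $\B$ can be computed from the known asymptotics of how $\C[S(n)]$ acts in tame representations as $n\to\infty$ (this is where the degenerate affine Hecke relations $su_is^{-1}=u_{s(i)}+X(s,i)$ emerge naturally as limits of conjugation relations in $\C[S(n)]$, the $u_i$ being limits of Jucys--Murphy-type elements). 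Matching the two associated graded algebras then forces the filtered map to be an isomorphism.

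The main obstacle I expect is the ``$\B\subseteq\A$'' direction, i.e.\ proving that \emph{every} convergent sequence has its limit inside $\A$ and not in some larger algebra: a priori $\B$ is defined by a closure-type (strong-limit) condition, so controlling it requires a genuine finiteness or spanning theorem rather than a mere construction. Establishing the right filtration on $\B$ and computing $\gr\B$ --- in particular proving that the strong limits cannot produce ``new'' operators beyond the algebraic ones coming from $\A$ --- is the technical heart; it will rely on precise estimates for the action of filtered elements of $\C[S(n)]$ in a single irreducible tame representation, uniformly in the representation, and on the fact that the double filtration of Definition \ref{def6.A} is fine enough to detect all of $\A$ but no coarser. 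A secondary difficulty is purely bookkeeping: making the pushforward $\End_{S(n-m)}(\cdots)\to\C[S(n)]$ genuinely multiplicative and filtration-compatible, so that the constructed map is an algebra homomorphism and not just a linear one.
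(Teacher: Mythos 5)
Your overall framework---embedding both $\A$ and $\B$ into $\prod_{\la}\End(H(T^\la))$ and comparing images---is exactly the paper's, and you correctly isolate the two inclusions. But the direction you yourself flag as the technical heart, showing that every convergent sequence lands in (the image of) $\A$, is left as a genuine gap: neither of your two suggested routes (a spanning/dimension count, or a computation of $\gr\B$) is carried out or obviously feasible, since $\B$ is defined by a strong-closure condition and you have no a priori handle on its size. The paper's mechanism here is different and quite concrete: for a convergent sequence $\al=(\al_n)$ one applies the truncation maps $\th_r:\C[S(n)]\to\C[\Ga(r)]$ (striking all rows and columns beyond the $r$-th) and shows that $\th_r(\al_n)$ converges \emph{in the finite-dimensional algebra} $\C[\Ga(r)]$ as $n\to\infty$. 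This is forced because $\C[\Ga(r)]$ is semisimple with irreducibles exactly the compressions $\T^\la_r$, and $\T^\la_r(\th_r(\al_n))=P_r T^\la(\al_n)P_r$ converges by hypothesis (Propositions \ref{prop4.A} and \ref{prop4.C}). The limits $b_r$ are automatically coherent under $\th_{r+1,r}$, of bounded degree, and lie in the correct centralizers, hence define an element of $\A_m$ directly from the definition of the centralizer construction. No associated-graded or spanning theorem is needed; the whole point is that the target of the truncation is the rook-monoid algebra, not $\C[S(r)]$.

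Your treatment of the other inclusion also has a soft spot. The generators of $\A_m$ include the idempotents $\eps_1,\dots,\eps_m$, which live in $\Ga(m)$ but in no group algebra $\C[S(n)]$, so there is no formal ``pushforward'' of centralizer elements back into $\C[S(n)]$ (your remark about endomorphisms being realized by right multiplications on the regular representation does not apply here). Realizing $\eps_i$ as an element of $\B$ requires an analytic approximation: the paper takes the Ces\`aro-type averages $\al_n=\frac1{n-m}\sum_{j=m+1}^{n}(i,j)$ and verifies strong convergence to $\T^\la(\eps_i)$ in every $T^\la$. Together with Theorem \ref{thm3.A} (approximation of the center $\A_0$ by explicit central elements realizing shifted symmetric functions) and the shift endomorphism $\xi$, this handles all generators listed in Corollary \ref{cor5.A}. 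So the skeleton of your proposal is right, but the two decisive devices---truncation into $\C[\Ga(r)]$ for one inclusion, and explicit averaging of transpositions for the other---are missing.
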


For a more detailed formulation, see Theorem \ref{thm6.A}. 

As a corollary we obtain an explicit presentation of $\B$ in terms of countably many generators and defining relations. It follows, in particular, that $\B$ has countable dimension, which is not obvious from the initial definition. 

The definition of the algebra $\A$ is constructive, which makes it possible to describe its structure, while the definition of the algebra $\B$ is non-constructive (it can be compared with the completion of a metric space).  On the other hand, from the viewpoint of representation theory, the algebra $\B$ looks as a reasonable version of group algebra. The meaning of Theorem \ref{thm1.A} is that the two definitions, very different in spirit, lead to the same object.  

\subsection{Generalization to wreath products}

All results described above hold in greater generality --- namely,  for the wreath product groups  $G\wr S(\infty)$, where $G$ is an arbitrary finite group. We prove this by making use of Jinkui Wan's work \cite{Wan10}, where the centralizer construction of \cite{MO01} is extended to the wreath products $G\wr S(\infty)$. Our final result, a counterpart of Theorem \ref{thm1.A}, is stated in Theorem \ref{thm11.A}.
 
\subsection{Analogy with infinite-dimensional classical groups}

Similar results were earlier obtained for the three series $O(n)$, $U(n)$, $Sp(n)$ of compact classical groups. Consider the corresponding  inductive limit groups
\begin{equation}\label{eq1.C}
O(\infty):=\varinjlim O(n), \quad U(\infty):=\varinjlim U(n), \quad Sp(\infty):=\varinjlim Sp(n).
\end{equation}
Let $\mathcal G$ denote any of these three infinite-dimensional groups. For $\mathcal G$, there is again a distinguished class of unitary representations called tame, and these are precisely those representations that can be extended to a certain topological completion $\overline{\mathcal G}\supset\mathcal G$.   

For definiteness, let us focus on the orthogonal case: $\mathcal G=O(\infty)$. Then the counterparts of the group $S(\N)$, the group algebra $\C[S(n)]$, and the group algebra $\C[S(\infty)]$ are, respectively, the following objects:
\begin{itemize}
\item 
the group $\overline{\mathcal G}=O(\ell^2(\N))$ of all orthogonal operators on the coordinate real Hilbert space $\ell^2(\N)$, and the relevant topology in $O(\ell^2(\N))$ is the strong (=weak) operator topology;
\item
the universal enveloping algebra $\mathcal U(\mathfrak o(n,\C))$, where $\mathfrak o(n,\C)$ is the Lie algebra of $n\times n$ complex skew-symmetric matrices;
\item
the associative algebra
$$
\mathcal U(\mathfrak o(\infty,\C))=\varinjlim\, \mathcal U(\mathfrak o(n,\C)).
$$
\end{itemize}

As shown in \cite{Ols91a}, there exists an associative algebra $A\supset \mathcal U(\mathfrak o(\infty,\C))$ which acts on all irreducible tame representations of $O(\infty)$ and has a large center $A_0$ separating these representations. (Note that the center of $\mathcal U(\mathfrak o(\infty,\C))$ is trivial, as in the case of $\C[S(\infty)]$.) The algebra $A$ can be defined in two different but equivalent ways: (1) as a large-$n$ limit of the algebras $\mathcal U(\mathfrak o(n,\C))$, where the limit should be understood in a suitable way, and (2) by making use of the centralizer construction. 

The first definition seems to be the most natural, but it is not constructive, while the the second one allows one to describe the structure of the limit algebra $A$. Namely, it turns out that  
\begin{equation}\label{eq1.B}
A\simeq A_0\otimes Y,
\end{equation}
where the center $A_0$ is again isomorphic to the algebra $\Sym^*$,  and
$$
Y:=\varinjlim Y_m,
$$
where $Y_m$ denotes  the \emph{Yangian} of the Lie algebra $\mathfrak{gl}(m,\C)$ (about the Yangians $Y_m$, see \cite{MNO96}, \cite{Mol07}).

An amazing fact is that the first approach deals with the orthogonal Lie algebras, while the second approach (the centralizer construction) requires the use of the general linear algebras. An explanation of this effect is given in \cite[section 1.4]{Ols91a}. 

Note that in the symmetric group context, the role of the general lineal Lie algebras is played by the symmetric inverse semigroups $\Ga(n)$.

The method of \cite{MO01} is a version of the second approach (the centralizer construction), while in the present paper we develop the first approach (the large-$n$ limit transition) and then establish a connection with the centralizer construction of \cite{MO01}.  

The evident similarity between \eqref{eq1.A} and \eqref{eq1.B} is a manifestation of a surprising parallelism between the representation theory of the group $S(\infty)$ and the representation theory of the infinite-dimensional classical groups \eqref{eq1.C}. In this connection see also \cite{BO13}.  

These two results, \eqref{eq1.A} and \eqref{eq1.B}, show that two kinds of `affine' objects of type A, the degenerate affine Hecke algebras and the Yangians, can be obtained from `non-affine' objects (symmetric groups $S(n)$ and universal enveloping algebras $\mathcal U(\mathfrak o(n,\C))$) by a large-$n$ limit transition. 

Note also that the degenerate affine Hecke algebras  and the Yangians  meet in Drinfeld's duality described in \cite{Dri86}.

The main results, both for the classical groups and for the symmetric groups, were initially announced in the short note \cite{Ols88}.

\subsection{Organization of the paper} 

In the first part of the paper (sections \ref{sect2}--\ref{sect6}) we deal with the group $S(\infty)$, and in the second part (sections \ref{sect7}--\ref{sect11}) we extend the results to the wreath products $G(\infty):=G\wr S(\infty)$. 

The general plan of the proof in the second part is similar to that of the first part. However, the case of wreath products $G(\infty)$ is technically more sophisticated; this is why we decided to examine the group $S(\infty)$ first. 

The core of the second part is the computation in Proposition \ref{prop8.A}. The results of Jinkui Wan's paper \cite{Wan10} are used in section \ref{sect10}.

\section{Tame representations of $S(\infty)$}\label{sect2}

We introduce tame representations and describe the irreducible tame representations of $S(\infty)$ in two different ways.

\subsection{Definition of tame representations}\label{sect2.1}

Recall that we regard the elements of the symmetric group $S(n)$ as $n\times n$ permutation matrices and also consider $S(n)$ as a subgroup of $S(\N)$. This allows us to form the inductive limit group $S(\infty)=\varinjlim S(n)$. 

Given two integers $m$ and $n$ such that $0\le m< n$,  we denote by $S_m(n)$ the subgroup in $S(n)$ formed by the $n\times n$ permutation matrices with the first $m$ diagonal entries equal to $1$. Note that $S_m(n)$ is isomorphic to $S(n-m)$. Next, we set
$$
S_m(\infty) = \bigcup_{n> m} S_m(n), \quad m=0, 1,2,\dots\,.
$$
The subgroups $S_m(\infty)$ form a decreasing chain in $S(\infty)$, 
$$
S(\infty)=S_0(\infty)\supset S_1(\infty)\supset S_2(\infty) \supset \ldots,
$$
with 
$$
\bigcap_m S_m(\infty) = \{e\}.
$$

Given a unitary representation $T$ of $S(\infty)$, we denote by $H(T)$ the Hilbert space of $T$, and for $m\ge 0$ we denote by $H_m(T)$ the  subspace of $S_m(\infty)$-invariant vectors in $H(T)$. These spaces form an ascending chain and we set
$$
H_{\infty}(T) := \bigcup_{m=0}^\infty H_m(T).
$$
Note that $H_m(T)$ is an $S(m)$-invariant subspace, so $H_{\infty}(T)$ is invariant with respect to the action of the whole group $S(\infty)$.

\begin{definition}\label{def2.A}
(i) A unitary representation $T$ of the group $S(\infty)$ is said to be \emph{tame} if  $H_{\infty}(T)$ is dense in $H(T)$.

(ii) If $T$ is tame, then the minimal number $m$ such that $H_m(T)\ne\{0\}$ is called the \emph{conductor} of $T$ and is denoted by $\cond(T)$.
\end{definition}

\subsection{The irreducible representations $T^\la$}\label{sect2.2}
We denote by $\mathbb{Y}_k$ the set of Young diagrams with $k$ boxes and write 
$$
\mathbb{Y} = \bigsqcup_{k\ge0} \mathbb{Y}_k
$$
for the set of all Young diagrams (the set $\mathbb{Y}_0$ consists of a single empty Young diagram $\varnothing$). Given $\la\in\Y$, we denote by $|\la|$ the number of boxes in $\la$. 
For each $\la\in\Y$ with $|\la|=\ell>0$ we denote by $\pi^\la$ the irreducible representation of the symmetric group $S(\ell)$ indexed by $\la$. 

We are going to assign to each $\la\in\Y$ a unitary representation $T^\la$ of the group $S(\infty)$. If $\la=\varnothing$, then $T^\varnothing$ is the trivial one-dimensional representation. Assume now that $\la\ne\varnothing$ and set $\ell:=|\la|$. Then we define $T^\la$ as an induced representation:
\begin{equation}\label{eq2.E}
T^\la:=\Ind^{S(\infty)}_{S(\ell)S_\ell(\infty)} (\pi^\la\otimes 1).
\end{equation}

Let us comment on the notation. Here the group $S(\ell)S_\ell(\infty)\subset S(\infty)$ is the product of two commuting subgroups, $S(\ell)$ and $S_\ell(\infty)$; this group is formed by the (finitary) permutations preserving the subset $\{1,\dots,\ell\}\subset\N$ as a whole. By $\pi^\la\otimes 1$ we denote the (outer) product of the irreducible representation $\pi^\la$ of $S(\ell)$ and the trivial representation of $S_\ell(\infty)$. The representation $T^\la$ is induced from the subgroup $S(\ell)S_\ell(\infty)$ to the group $S(\infty)$.

Here is a more detailed description of  the induced representation \eqref{eq2.E}. Let $\Om(\ell,\infty)$ denote the set of matrices of the format $\N_\ell\times\N$, with the entries in $\{0,1\}$, and such that each row contains precisely one $1$ and no column contains two or more $1$'s. The group $S(\ell)$ acts on $\Om(\ell,\infty)$ on the left by permutations of rows, while the group $S(\infty)$ acts on the right by permutations of columns. 

Next, let $H(\pi^\la)$ denote the  space of $\pi^\la$ and consider the Hilbert space 
\begin{equation}\label{eq2.M}
\ell^2(\Om(\ell,\infty); H(\pi^\la))=\ell^2(\Om(\ell,\infty))\otimes H(\pi^\la);
\end{equation}
its elements are $H(\pi^\la)$-valued vector functions on $\Om(\ell,\infty)$ with the scalar product
\begin{equation}\label{eq2.K}
(\phi_1,\phi_2):=\sum_{\om\in\Om(\ell,\infty)} (\phi_1(\om),\phi_2(\om)),
\end{equation}
where $(-,-)$ on the right-hand side is the $S(\ell)$-invariant scalar product in $H(\pi^\la)$. The Hilbert space $H(T^\la)$ of $T^\la$ is defined as the subspace of the space \eqref{eq2.M} formed by the functions $\phi(\om)$ satisfying the additional condition
\begin{equation}\label{eq2.L}
\phi(s\om)=\pi^\la(s)\phi(\om), \quad s\in S(\ell), \; \om\in\Om(\ell,\infty).
\end{equation}
The action of $S(\infty)$ on this subspace is given by
\begin{equation}\label{eq2.A}
(T^\la(g) \phi)(\om)=\phi(\om g), \quad g\in S(\infty), \; \om\in\Om(\ell,\infty).
\end{equation}
Using the obvious identification of $\Om(\ell,\infty)$ with the homogeneous space $S_\ell(\infty)\setminus S(\infty)$ one sees that \eqref{eq2.A} is equivalent to the initial definition \eqref{eq2.E}.

The subspaces $H_n(T^\la)$ of $S_n(\infty)$-invariants are described as follows (see \cite{Ols85} , \cite{MO01}):
\begin{itemize}
\item if $n<\ell$, then $H_n(T^\la)=\{0\}$;

\item if $n\ge \ell$, then $H_n(T^\la)$ is formed by the functions $\phi(\om)$ concentrated on the subset of matrices $\om\in \Om(\ell,\infty)$ whose columns with indices greater than $n$ are filled entirely of zeroes. 
\end{itemize}
From this description it follows that $T^\la$ is tame and
\begin{equation}\label{eq2.J}
\cond(T^\la)=|\la|, \quad \la\in\Y.
\end{equation}

The next theorem shows that the tame representations behave as representations of compact groups. 

\begin{theorem}\label{thm2.A}

{\rm(i)} The representations  $T^\la$, $\la\in\Y$, are irreducible, pairwise non-equivalent, and they exhaust all irreducible tame representations of $S(\infty)$.  

{\rm(ii)} Any tame representation of $S(\infty)$ can be decomposed into a discrete direct sum of irreducible representations. 
\end{theorem}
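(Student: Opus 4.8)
The plan is to prove Theorem~\ref{thm2.A} by a combination of the Mackey-type analysis of the induced representations $T^\la$ and an abstract argument showing that every tame representation is generated, in a suitable sense, by its subspaces of invariant vectors $H_m(T)$. The two parts are logically intertwined, so I would organize the argument around the functor $T\mapsto H_\infty(T)$ together with the action of the ``germ algebra'' $\bigcup_m \C[S(m)]\ltimes(\text{something})$ on it; but at the level of this excerpt it suffices to work with the pieces $H_m(T)$ as modules over $S(m)$ and over the Hecke-type operators coming from the double cosets $S_m(\infty)\backslash S(\infty)/S_m(\infty)$.

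\emph{Irreducibility and pairwise inequivalence of the $T^\la$.} First I would compute, for $\la,\mu\in\Y$ with $\ell=|\la|$, $\ell'=|\mu|$, the intertwining space $\Hom_{S(\infty)}(T^\la,T^\mu)$. By Frobenius reciprocity for the induced representation \eqref{eq2.E} this equals $\Hom_{S(\ell)S_\ell(\infty)}(\pi^\la\otimes 1,\ T^\mu|_{S(\ell)S_\ell(\infty)})$, which by Mackey's theory decomposes over the double cosets $S(\ell')S_{\ell'}(\infty)\backslash S(\infty)/S(\ell)S_\ell(\infty)$. The key combinatorial point --- essentially already recorded in the description of $\Om(\ell,\infty)$ above --- is that these double cosets are parametrized by partial matchings between $\{1,\dots,\ell'\}$ and $\{1,\dots,\ell\}$, i.e.\ by a common subset of size $k\le\min(\ell,\ell')$ together with a bijection of that subset; crucially, because $S_\ell(\infty)$ still contains an infinite symmetric group, the only double coset whose contribution survives after taking $S_\ell(\infty)$- and $S_{\ell'}(\infty)$-invariants on both sides is the ``full-overlap'' one, which is nonempty only when $\ell=\ell'$ and which then contributes exactly $\Hom_{S(\ell)}(\pi^\la,\pi^\mu)$. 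Hence $\dim\Hom_{S(\infty)}(T^\la,T^\mu)=\de_{\la\mu}$, giving both irreducibility and pairwise non-equivalence. (Concretely: a partial overlap of size $k<\ell$ forces one to induce the trivial representation of an infinite symmetric group up and then take invariants, which produces either $0$ or a non-$L^2$ vector, so it does not contribute a bounded intertwiner; this is the standard mechanism behind the ``semisimplicity'' of oligomorphic-type representation categories.) I would cite \cite{Ols85}, \cite{MO01} for the detailed bookkeeping rather than redo it.

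\emph{Exhaustion and complete reducibility.} For part (ii) and the exhaustion claim in (i) I would argue as follows. Let $T$ be tame, so $H_\infty(T)=\bigcup_m H_m(T)$ is dense. Fix the smallest $m=\cond(T)$ with $H_m(T)\ne\{0\}$. The space $H_m(T)$ is a unitary $S(m)$-module, hence decomposes as $\bigoplus_{|\la|=m} H(\pi^\la)\otimes M_\la$ for multiplicity spaces $M_\la$. For each $\la$ with $M_\la\ne\{0\}$, pick a unit vector $v$ in the $\pi^\la$-isotypic component that is $S_m(\infty)$-invariant and transforms under $S(m)$ according to $\pi^\la$; the cyclic $S(\infty)$-span of $v$ is, by the universal property of induction \eqref{eq2.E} together with the fact that $v$ is $S_m(\infty)$-fixed, a quotient of $T^\la$, hence (by irreducibility of $T^\la$ just proved) isomorphic to $T^\la$ and closed. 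Splitting off these copies of $T^\la$ and inducting on $\cond$ of the orthogonal complement (the complement is again tame, and its conductor is strictly larger because we have exhausted the $S_m(\infty)$-invariants at level $m$), a Zorn's-lemma / transfinite-exhaustion argument yields $T\cong\bigoplus T^{\la_i}$; since the $T^\la$ are all the irreducibles produced this way, they exhaust the irreducible tame representations, proving (i) and (ii) simultaneously. To keep the induction honest one must check that after removing a sub- and therefore also super-representation isomorphic to a sum of $T^\la$'s the remaining tame representation still has dense union of invariants --- this follows because the $H_k$ are increasing and the removed summands contributed their full $H_k$ only in the range $k\ge m$, at level exactly $m$ they contributed everything, so the complement has empty $H_m$.

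\emph{Main obstacle.} The genuinely delicate step is the Mackey double-coset analysis: one has to be careful that the ``infinite tails'' $S_\ell(\infty)$, $S_{\ell'}(\infty)$ really do kill every partial-overlap double coset, and that the full-overlap contribution is computed with the correct finite-dimensional multiplicity and not spuriously enlarged by the infinite part of the group. This is exactly where tameness (density of $H_\infty$) and the oligomorphic structure of $S(\N)$ are doing the work --- without it one would be in the wild world of all unitary representations of $S(\infty)$, where neither irreducibility of the $T^\la$ nor complete reducibility holds. I expect the cleanest route is to phrase the whole computation in terms of the Gelfand-pair-like property of $(S(\infty),S_m(\infty))$ and the explicit orbit structure on $\Om(\ell,\infty)$ given above, and to quote \cite{Lie72}, \cite{Ols85}, \cite{Tsa12} for the topological-group formulation that guarantees discreteness of the decomposition.
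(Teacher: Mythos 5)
The paper itself does not prove Theorem~\ref{thm2.A}; it cites \cite{Ols85} and \cite[section 2]{MO01}, whose method is the semigroup method that the paper recapitulates only later, in section~\ref{sect4} (extension of a tame $T$ to $\Ga(\N)$, and the finite-dimensional semisimple algebras $\C[\Ga(n)]$). So your route is necessarily different, and for part (i) it is essentially sound --- indeed you can simplify it: you only need the \emph{injective} restriction map $\Hom_{S(\infty)}(T^\la,T^\mu)\hookrightarrow\Hom_{S(\ell)}(\pi^\la,H_\ell(T^\mu))$ obtained by evaluating an intertwiner on the canonical copy of $\pi^\la$ (the functions supported at the base point of $\Om(\ell,\infty)$, which are $S_\ell(\infty)$-invariant), together with the explicit description of $H_\ell(T^\mu)$ given in section~\ref{sect2.2}. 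This yields the upper bound $\dim\Hom(T^\la,T^\mu)\le\de_{\la\mu}$ without any double-coset bookkeeping.

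The genuine gap is in the exhaustion step, precisely at the phrase ``by the universal property of induction \eqref{eq2.E} \dots a quotient of $T^\la$''. For $\ell^2$-induction from a subgroup of \emph{infinite} index there is no such universal property: the restriction map $\Hom_G(\Ind_K^G\sigma,T)\to\Hom_K(\sigma,T|_K)$ is injective but in general far from surjective (already for $G=\Z$, $K=\{0\}$, $T$ the trivial representation, one has $\Hom_K(\sigma,T)=\C$ but $\Hom_G(\ell^2(\Z),T)=0$). So a $K$-equivariant embedding $j:H(\pi^\la)\to H_m(T)$ does \emph{not} automatically produce a bounded intertwiner $A:T^\la\to T$; the candidate $A\phi=\sum_{\om}T(g_\om)j(\phi(\om))$ is bounded only if the matrix coefficients $(T(g)j(u),j(u'))$ vanish on every double coset in $S_m(\infty)\backslash S(\infty)/S_m(\infty)$ not meeting $S(m)$, i.e.\ on those indexed by partial bijections of $\N_m$ of rank $<m$. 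This vanishing is exactly what the minimality of the conductor buys: $H_{m-1}(T)=\{0\}$ forces $\T(\ga)\big|_{H_m(T)}=0$ for every $\ga\in\Ga(m)$ of rank $<m$ (Proposition~\ref{prop4.C}(ii)--(iii)), whence $\|A\phi\|^2$ collapses to a single-coset sum and $A$ is an isometry up to normalization. Without inserting this argument (or the equivalent passage through the $\C[\Ga(m)]$-module structure on $H_m(T)$), the cyclic span of $v$ need not be a copy of $T^\la$ and the induction on the conductor does not start. The rest of your exhaustion scheme (tameness of orthogonal complements via commutation of the $P_k$ with the projection onto an invariant subspace, and termination because $\bigcap$ of the residual pieces has $H_\infty=\{0\}$) is correct.
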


\begin{proof}
See \cite{Ols85} and \cite[section 2]{MO01}.
\end{proof}

The above construction works, without changes, when the discrete group $S(\infty)$ is replaced by the topological group $S(\N)$. As mentioned in the introduction, tame representations of $S(\infty)$ and continuous unitary representations of $S(\N)$ are the same (for the proof, see \cite{Ols85}). In the context of the topological group $S(\N)$, the unitary representations were first studied by Lieberman \cite{Lie72}; the method of \cite{Ols85} is different; yet another approach is developed by Tsankov \cite{Tsa12}.

\subsection{The spectrum of $T^\la\big|_{S(n)}$}\label{sect2.3}

We identify Young diagrams $\la\in\Y$ with the corresponding partitions $(\la_1,\la_2,\dots)$. 
Fix an arbitrary $\la\in\Y$. For $n\ge|\la|$, let $\lan\in\Y_n$ denote the diagram obtained from $\la$ by adding a new row of length $n-|\la|$. If $n\ge |\la|+\la_1$, then 
\begin{equation}\label{eq2.F}
\lan=(n - |\la|, \lambda_1, \lambda_2, \ldots),
\end{equation}
otherwise the new coordinate $n-|\la|$ appears somewhere in between the coordinates of $\la$.  

Introduce a notation: for two partitions $\la$ and $\nu$, we write $\nu\succ\la$ or $\la\prec\nu$ if
$$
\nu_1\ge \la_1\ge \nu_2\ge\la_2\ge\dots\,.
$$

Equivalently, $\nu\succ\la$ means that $|\nu|\ge|\la|$ and the diagram $\nu$ can be obtained from the diagram $\la$ by appending a horizontal strip of length $|\nu|-|\la|$. 

A simple but important remark is  that $\nu\succ\la$ implies $\la\succ (\nu_2,\nu_3,\dots)$. We use it in what follows. 

Let $\ell:=|\la|$. For $n>\ell$, we set
\begin{equation}\label{eq2.H}
X_n(\la):=\{\nu\in\Y_n: \nu\succ\la\}.
\end{equation}

We assume now that $n\ge |\la|+\la_1$, so that \eqref{eq2.F} holds. Then $X_n(\la)$ contains the diagram $\la[n]$. Note that among all diagrams $\nu\in X_n(\la)$, the diagram $\la[n]$ is the one with the shortest possible first row and hence the largest possible sum of the other row lengths. More precisely, because  $\nu\in X_n(\la)$ entails $\la\succ (\nu_2,\nu_3,\dots)$, all diagrams $\nu\in X_n(\la)$, distinct from $\la[n]$, are precisely of the form $\mu[n]$, where $\mu\prec \la$, $\mu\ne\la$. In particular, the cardinality of the set $X_n(\la)$ does not depend on $n$. 

In the proof of the next proposition we use the classic  \emph{Young branching rule}. It says that if $\ka\in\Y_k$, where $k\ge2$, then 
\begin{equation}\label{eq2.D}
\pi^\ka\big|_{S(k-1)}\; \sim\; \bigoplus_{\nu:\, \nu\nearrow\ka} \pi^\nu,
\end{equation}
where the relation $\nu\nearrow\ka$ means that the diagram $\nu$ can be obtained from the diagram $\ka$ by removing a box. In other words, $\nu\subset\ka$ and $|\nu|=|\ka|-1$

\begin{proposition}\label{prop2.A}
Let $\la\in\Y_\ell$ and $n\ge \ell+\la_1$. Then the restriction of $T^\la$ to the subgroup $S(n)$ contains $\pi^\lan$ with multiplicity $1$, while all other irreducible components of $T^\la\big|_{S(n)}$ are of the form $\pi^{\mu[n]}$, where $\mu\subset\la$, meaning that the diagram $\mu$ is strictly contained in the diagram $\la$. 
\end{proposition}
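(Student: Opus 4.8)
The plan is to compute the restriction $T^\la\big|_{S(n)}$ by combining the explicit model of $H_n(T^\la)$ with Frobenius reciprocity. First I would note that since $T^\la$ is tame with $\cond(T^\la)=|\la|=\ell$, and the ascending chain $H_n(T^\la)$ of $S(n)$-invariants exhausts $H(T^\la)$, the representation $T^\la\big|_{S(n)}$ is itself a direct sum of those irreducible $\pi^\ka$, $\ka\in\Y_n$, that already occur in the finite-dimensional $S(n)$-module $H_n(T^\la)$ -- more precisely, $T^\la\big|_{S(n)}$ decomposes with the same multiplicities as $H_n(T^\la)$ does, provided $n$ is large enough that no box of $\ka$ lies outside $\N_n$; the hypothesis $n\ge\ell+\la_1$ guarantees this. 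So it suffices to decompose the genuinely finite-dimensional $S(n)$-module $H_n(T^\la)$.

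Next I would identify $H_n(T^\la)$ as an induced module for $S(n)$. By the description in section \ref{sect2.2}, $H_n(T^\la)$ consists of the functions $\phi$ on $\Om(\ell,\infty)$ satisfying \eqref{eq2.L} and supported on matrices whose columns past the $n$-th are zero; such matrices are exactly the injections $\N_\ell\hookrightarrow\N_n$, i.e.\ the coset space $\Om(\ell,n):=S_\ell(n)\backslash S(n)$ with $S_\ell(n)\cong S(n-\ell)$. Hence
\begin{equation*}
H_n(T^\la)\;\cong\;\Ind^{S(n)}_{S(\ell)\times S_\ell(n)}\big(\pi^\la\boxtimes 1\big)
\;\cong\;\Ind^{S(n)}_{S(\ell)\times S(n-\ell)}\big(\pi^\la\boxtimes \pi^{(n-\ell)}\big),
\end{equation*}
the induction product of $\pi^\la$ with the trivial representation of $S(n-\ell)$. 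By Pieri's rule (equivalently, the Littlewood--Richardson rule with a single row), this decomposes as $\bigoplus_{\ka}\pi^\ka$ over all $\ka\in\Y_n$ obtained from $\la$ by adding a horizontal strip of $n-\ell$ boxes, each occurring once; that is, exactly over $\ka\in X_n(\la)$ in the notation \eqref{eq2.H}.

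Finally I would invoke the combinatorial analysis already given in the excerpt: among $\ka\in X_n(\la)$ (for $n\ge\ell+\la_1$), the diagram $\la[n]$ is the unique one with first row of length $n-\ell$, and every other $\ka\in X_n(\la)$ equals $\mu[n]$ for a unique $\mu\prec\la$ with $\mu\ne\la$, and the condition $\mu\prec\la$ together with $|\mu|<|\la|$ forces $\mu\subsetneq\la$ (a horizontal strip containing $\la/\mu$ plus $|\mu|<|\la|$ means $\mu$ sits strictly inside $\la$). This yields precisely the asserted statement: $\pi^{\la[n]}$ with multiplicity one, all other constituents of the form $\pi^{\mu[n]}$ with $\mu\subsetneq\la$.

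The main obstacle is the first step -- the passage from the infinite-dimensional $T^\la\big|_{S(n)}$ to the finite-dimensional $H_n(T^\la)$, i.e.\ justifying that they have the same decomposition into $S(n)$-types. One must verify that every irreducible $S(n)$-subrepresentation of $H(T^\la)$ meets $H_\infty(T^\la)=\bigcup_m H_m(T^\la)$, and that for $n\ge\ell+\la_1$ any such subrepresentation isomorphic to $\pi^\ka$ with $\ka\in X_n(\la)$ already has a nonzero $S_n(\infty)$-fixed vector, hence lies in $H_n(T^\la)$; here the inequality $n\ge\ell+\la_1$ is what prevents a $\pi^{\mu[m]}$ with $m>n$ from "looking like" a diagram living in $\N_n$. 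Once this reduction is in place, the rest is the standard Pieri computation plus the bookkeeping already carried out before the proposition.
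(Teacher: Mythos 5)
Your reduction to $H_n(T^\la)$ is not correct, and this is the genuine gap in the argument. The assertion that $T^\la\big|_{S(n)}$ ``decomposes with the same multiplicities as $H_n(T^\la)$ does'' is false; even the weaker claim that the two have the same set of $S(n)$-types fails once $\la$ has at least two nonempty rows. Concretely, for $\la=(1)$ one has $H(T^\la)=\ell^2(\N)$ and $H_n(T^\la)=\ell^2(\N_n)\simeq\pi^{(n)}\oplus\pi^{(n-1,1)}$, each occurring once; but $T^\la\big|_{S(n)}$ also acts trivially on the infinite-dimensional subspace $\ell^2(\{n+1,n+2,\dots\})$, so $\pi^{(n)}$ actually occurs with infinite multiplicity. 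For $\la=(1,1)$ the discrepancy already shows at the level of types: $X_n((1,1))=\{(n-1,1),(n-2,1,1)\}$, so $\pi^{(n)}$ does not occur in $H_n(T^\la)$ at all (indeed $\varnothing\not\prec(1,1)$), whereas $\pi^{(n)}=\pi^{\varnothing[n]}$ does occur in $T^\la\big|_{S(n)}$ --- it appears inside $H_k(T^\la)$ for every $k\ge n+1$. In general the constituents of $H_n(T^\la)$ are the $\pi^{\mu[n]}$ with $\mu\prec\la$, a strictly smaller list than the $\mu\subseteq\la$ asserted in the proposition. The proposed repair also fails: an $S(n)$-irreducible subspace of $H(T^\la)$ of type $\ka\in X_n(\la)$ need not have a nonzero $S_n(\infty)$-fixed vector (any copy of the trivial $\pi^{(n)}$ inside $\ell^2(\{n+1,n+2,\dots\})$, for $\la=(1)$, has none, since $S_n(\infty)$ acts on $\{n+1,n+2,\dots\}$ with no finite orbits).

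The paper avoids this by proving only a containment, and by passing to $H_k(T^\la)$ for $k$ \emph{large}, not $k=n$. If $\pi^\nu$ occurs in $T^\la\big|_{S(n)}$, pick a nonzero $A\in\Hom_{S(n)}(H(\pi^\nu),H(T^\la))$; since the projections $P_k$ onto $H_k(T^\la)$ converge strongly to the identity and each $P_k$ (for $k\ge n$) commutes with $S(n)$, one has $P_kA\ne0$ for $k$ large, so $\pi^\nu$ occurs in $H_k(T^\la)$ as an $S(n)$-module. Your induced-module identification and Pieri computation of $H_n(T^\la)$ (which are correct, and coincide with the paper's \eqref{eq2.I}) are then applied with $n$ replaced by $k$: $H_k(T^\la)\simeq\bigoplus_{\ka\in X_k(\la)}\pi^\ka$, so $\pi^\nu$ occurs in $\pi^\ka\big|_{S(n)}$ for some $\ka\in X_k(\la)$, and iterated Young branching from $S(k)$ down to $S(n)$ gives $\nu\subseteq\ka$, hence $\nu_{i+1}\le\ka_{i+1}\le\la_i$, i.e.\ $\nu=\mu[n]$ with $\mu\subseteq\la$. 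The extra branching step $S(k)\downarrow S(n)$ is precisely what produces the larger set $\{\mu\subseteq\la\}$ rather than the set $\{\mu\prec\la\}$ coming from $H_n$ alone, and is the piece your write-up is missing.
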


\begin{proof}
Examine first the representation of the subgroup $S(n)\subset S(\infty)$ on the subspace $H_n(T^\la)$. From the definition of $T^\la$ it follows that this representation is equivalent to the induced representation $\Ind^{S(n)}_{S(\ell)S_\ell(n)}(\pi^\la\otimes 1)$. The structure of this representation is well known:
\begin{equation}\label{eq2.I}
\Ind^{S(n)}_{S(\ell)S_\ell(n)}(\pi^\la\otimes 1)\sim\bigoplus_{\nu\in X_n(\la)} \pi^\nu.
\end{equation}
In particular, this induced representation contains $\pi^\lan$. 

Suppose now that $\nu\in\Y_n$ is such that $\pi^\nu$ is contained in $T^\la\big|_{S(n)}$. We claim that then $\pi^\nu$ enters the decomposition of the space $H_k(T^\la)$ under the action of $S(k)$, for all $k$ large enough. 

Indeed, the assumption on $\nu$ means that there exists a nonzero intertwining operator $A\in\Hom_{S(n)}(H(\pi^\nu), H(T^\la))$. Let $P_k$ denote the orthogonal projection operator from $H(T^\la)$ to $H_k(T^\la)$. As $k\to\infty$, the operators $P_k$ strongly converge to the identity operator. It follows that $P_kA\ne0$ provided $k$ is large enough, which proves our claim. 

Now we fix such a large $k$. By virtue of \eqref{eq2.I} (where we replace $n$ with $k$), all irreducible components of the space $H_k(T^\la)$ under the action of $S(k)$ have the form $\pi^\ka$ with $\ka\in X_k(\la)$. This implies the inequalities
$$
\ka_{i+1}\le \la_i, \quad i=1,2,\dots\,.
$$

On the other hand, there exists $\ka$ such that $\pi^\nu$ enters the decomposition of $\pi^\ka\big|_{S(n)}$. By the Young branching rule, this entails $\nu\subseteq\ka$, so that 
$$
\nu_{i+1}\le \la_i, \quad i=1,2,\dots\,.
$$
But this precisely means that $\nu$ has the form $\mu[n]$ for some $\mu\subseteq\la$. 

This completes the proof. 
\end{proof}

\begin{remark}\label{rem2.A}
As $n$ gets large, the irreducible component $\pi^\lan$ takes almost all the space of $\Ind^{S(n)}_{S(\ell)S_\ell(n)}(\pi^\la\otimes 1)$ in the sense that 
\begin{equation}\label{eq2.G}
\lim_{n\to\infty} \dfrac{\dim \pi^\lan}{\dim\Ind^{S(n)}_{S(\ell)S_\ell(n)}(\pi^\la\otimes 1)}=1
\end{equation}
(this relation can be easily proved by using, say, the hook formula). 
\end{remark}

\subsection{Another realization of representations $T^\la$}\label{sect2.4}

Fix $\la\in\Y$ and let $n$ be large enough. By virtue of the Young branching rule, $\pi^{\lan}$ is contained with multiplicity one in the restriction of $\pi^{\la[n+1]}$ to the subgroup $S(n)\subset S(n+1)$. This enables us to define, in a canonical way, the inductive limit representation 
$$
\Pi^\la:=\varinjlim \pi^\lan
$$
of the group $S(\infty)$. 

\begin{proposition}[\cite{Ols80}, Theorem 2.7;  \cite{MO01}, Proposition 2.5] \label{prop2.B}
For each $\la\in\Y$, the representations $T^\la$ and $\Pi^\la$ are equivalent.
\end{proposition}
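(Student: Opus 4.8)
The plan is to identify both representations explicitly as subrepresentations of a common ambient object and match them there. The key point is that $\Pi^\la = \varinjlim \pi^{\la[n]}$ is built using exactly the multiplicity-one branching $\pi^{\la[n]}\big|_{S(n-1)} \supset \pi^{\la[n-1]}$ guaranteed by the Young branching rule (valid for $n$ large, since then $\la[n-1]$ is obtained from $\la[n]$ by removing a box from the first row); by Proposition \ref{prop2.A} and \eqref{eq2.I}, the isotypic component of $\pi^{\la[n]}$ in $T^\la\big|_{S(n)}$ is also one-dimensional in the multiplicity sense, i.e. $\pi^{\la[n]}$ occurs exactly once. So the natural idea is to produce, for each large $n$, a distinguished copy of $H(\pi^{\la[n]})$ inside $H(T^\la)$ and check these copies are compatible with the inclusions $S(n)\subset S(n+1)$.

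First I would recall from section \ref{sect2.2} that $H_n(T^\la)$, as an $S(n)$-module, is $\Ind^{S(n)}_{S(\ell)S_\ell(n)}(\pi^\la\otimes 1) \sim \bigoplus_{\nu\in X_n(\la)}\pi^\nu$, and that $\la[n]$ is one of the summands, occurring once. Let $Q_n\colon H(T^\la)\to H(T^\la)$ be the $S(n)$-equivariant orthogonal projection onto the $\pi^{\la[n]}$-isotypic subspace of $H_n(T^\la)$; write $V_n := Q_n H(T^\la)$, a concrete copy of $H(\pi^{\la[n]})$. The first substantive step is to show $V_n\subset V_{n+1}$ and that the $S(n)$-action on $V_{n+1}$ restricted to $V_n$ agrees with the given $S(n)$-action on $V_n$ — equivalently, that the embedding $V_n\hookrightarrow V_{n+1}$ realizes the branching $\pi^{\la[n]}\hookrightarrow \pi^{\la[n+1]}\big|_{S(n)}$. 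This is where the remark that $\nu\succ\la$ implies $\la\succ(\nu_2,\nu_3,\dots)$ is used: combined with the Young branching rule it forces $\la[n]$ to be the unique diagram $\nu\in X_n(\la)$ such that $\pi^\nu$ branches to $\pi^{\la[n]}$ under $S(n)\subset S(n+1)$ among diagrams in $X_{n+1}(\la)$, so the relevant intertwiner space is one-dimensional and the inclusion is canonical up to scalar. Normalizing the scalars (using that $V_n$ carries a canonical $S(n)$-invariant inner product inherited from \eqref{eq2.K}) makes the tower $V_\ell \subset V_{\ell+1}\subset\cdots$ into a genuine inductive system isomorphic, as a chain of $S(n)$-modules, to the defining chain of $\Pi^\la$. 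Hence $\varinjlim V_n \simeq \Pi^\la$ as $S(\infty)$-modules.

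The second step is to show $\bigcup_n V_n$ is dense in $H(T^\la)$, so that $T^\la$ is (unitarily equivalent to) the completion of $\varinjlim V_n$, and therefore equivalent to $\Pi^\la$. For this I would argue that the orthogonal complement of $\overline{\bigcup_n V_n}$ in $H(T^\la)$, being $S(\infty)$-invariant, is either zero or contains an irreducible tame subrepresentation; by Theorem \ref{thm2.A} such a subrepresentation is some $T^\mu$, and since $H_\infty(T^\la)=\bigcup_n H_n(T^\la)$ is dense, this $T^\mu$ must meet some $H_n(T^\la)$ nontrivially — but every $S(n)$-irreducible in $H_n(T^\la)$ is one of the $\pi^\nu$, $\nu\in X_n(\la)$, and all such $\pi^\nu$ with $\nu\ne\la[n]$ are of the form $\pi^{\mu[n]}$ with $\mu\subsetneq\la$, hence (running $n\to\infty$) already appear inside $V_{n'}\subset\bigcup V_n$ for larger $n'$ via the same analysis, forcing the complement to be zero. (Alternatively, and more cleanly, the dimension/density count in Remark \ref{rem2.A} together with the projections $P_k\to 1$ shows the $\pi^{\la[n]}$-isotypic parts already exhaust $H(T^\la)$ in the limit.) I expect the main obstacle to be precisely the first step: verifying that the canonical one-dimensional intertwiners can be chosen coherently so that the $V_n$ form an honest nested tower realizing the branching of $\Pi^\la$, rather than merely an abstract system of isomorphic pieces — this requires carefully pinning down the normalizations using the explicit inner product \eqref{eq2.K} and the column-permutation action \eqref{eq2.A}, and checking associativity of the inclusions $V_n\hookrightarrow V_{n+1}\hookrightarrow V_{n+2}$. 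Since the statement is quoted from \cite{Ols80} and \cite{MO01}, I would also simply cite those sources for the routine normalization bookkeeping.
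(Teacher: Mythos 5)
Your first step is exactly the paper's argument: take $H_n$ (your $V_n$) to be the $\pi^\lan$-isotypic component of $T^\la\big|_{S(n)}$, which is a single copy of $\pi^\lan$ by Proposition \ref{prop2.A}, and deduce $H_n\subset H_{n+1}$ from the Young branching rule because $\pi^\lan$ occurs in $\pi^{\la[n+1]}\big|_{S(n)}$ but in none of the $\pi^{\mu[n+1]}\big|_{S(n)}$ with $\mu\subsetneq\la$. Your worry about ``coherently normalizing intertwiners'' is unfounded here: the $V_n$ are literal subspaces of the single Hilbert space $H(T^\la)$, the maps $V_n\hookrightarrow V_{n+1}$ are set-theoretic inclusions, so compatibility and associativity are automatic and no bookkeeping (or appeal to the cited sources) is needed.

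The genuine gap is in your density step. The paper disposes of it in one line: $\overline{\bigcup_n V_n}$ is a nonzero closed $S(\infty)$-invariant subspace of $H(T^\la)$, and $T^\la$ is irreducible by Theorem \ref{thm2.A}(i), hence this subspace is all of $H(T^\la)$. You cite Theorem \ref{thm2.A} but never use irreducibility of $T^\la$, and your substitute argument does not close. The assertion that the components $\pi^{\mu[n]}$ with $\mu\subsetneq\la$ ``already appear inside $V_{n'}$ for larger $n'$'' confuses two different things: the abstract occurrence of $\pi^{\mu[n]}$ as a constituent of $\pi^{\la[n']}\big|_{S(n)}$ does not imply that the $\pi^{\mu[n]}$-isotypic subspace of $H_n(T^\la)$ is contained in $V_{n'}$ (in general it is not, and in any case this would have to be proved, not inferred from branching multiplicities). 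Likewise the parenthetical appeal to Remark \ref{rem2.A} only gives a dimension ratio tending to $1$, which by itself does not show that the projections onto $V_{n'}$ converge strongly to the identity. Replace all of this with the irreducibility argument and the proof is complete.
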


\begin{proof}
From Proposition \ref{prop2.A} we know that $\pi^\lan$ enters $T^\la\big|_{S(n)}$ with multiplicity $1$; let $H_n\subset H(T^\la)$ denote the corresponding subspace. We claim that $H_n$ is contained in $H_{n+1}$. Indeed, using the Young branching rule we see that $\pi^\lan$ is contained in the restriction of $\pi^{\la[n+1]}$ to $S(n)$ and not contained in the restriction of $\pi^{\mu[n+1]}$ if $\mu\subset\la$ and $\mu\ne\la$. By virtue of Proposition \ref{prop2.A} this means that $H_n\subset H_{n+1}$.

We have proved that $T^\la$ contains a subrepresentation equivalent to $\Pi^\la$. Because $T^\la$ is irreducible, we conclude that $T^\la\sim\Pi^\la$. 
\end{proof}

\subsection{Analog of Young's  branching rule for $T^\la$}\label{sect2.5}

The shift $i\mapsto i+1$ inside $\N$ gives rise to an endomorphism of the group $S(\infty)$, which we denote by $\xi$; it establishes an isomorphism between $S(\infty)$ and its subgroup $S_1(\infty)$:
\begin{equation}\label{eq2.C}
\xi: S(\infty)\to S_1(\infty)\subset S(\infty).
\end{equation}
We denote by $\xi^{-1}$ the inverse isomorphism $S_1(\infty)\to S(\infty)$. Its superposition with $T^\la$ produces a representation of $S_1(\infty)$ which we denote by $T^\la_1$. We also write $T_1^\la=T^\la\circ\xi^{-1}$.

\begin{proposition}[Branching rule]\label{prop2.C}
For $\la\in\Y$, the restriction of the representation $T^\la$ to the subgroup $S_1(\infty)$ is equivalent to a finite, multiplicity free direct sum of irreducible tame representations. Specifically, 
\begin{equation}\label{eq2.B}
T^\la\big|_{S_1(\infty)}\; \sim\; T^\la_1\oplus \bigoplus_{\mu:\, \mu\nearrow\la} T^\mu_1.
\end{equation} 
\end{proposition}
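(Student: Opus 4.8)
The plan is to prove the branching rule by comparing the two sides via their restrictions to the finite groups $S(n)$, using the realization $\Pi^\la=\varinjlim\pi^{\la[n]}$ from Proposition \ref{prop2.B} together with the classical Young branching rule. First I would recall that, under the isomorphism $\xi\colon S(\infty)\to S_1(\infty)$, a representation $T^\mu_1$ of $S_1(\infty)$ is tame and, by \eqref{eq2.J}, has conductor $|\mu|$ relative to the natural filtration of $S_1(\infty)$ by the subgroups $S_m(\infty)\cap S_1(\infty)$; this shifted filtration is exactly the filtration of $S(\infty)$ by $S_{m+1}(\infty)$. So the right-hand side of \eqref{eq2.B} is automatically a tame representation of $S_1(\infty)$, and it remains to identify it with $T^\la\big|_{S_1(\infty)}$.

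The key computation is the following. Fix $n$ large. By Proposition \ref{prop2.B} the subspace $H_n(T^\la)$ carries the representation $\Ind^{S(n)}_{S(\ell)S_\ell(n)}(\pi^\la\otimes 1)$, but I would instead work with the dominant component $\pi^{\la[n]}$ sitting inside $H_n(T^\la)$, since by Remark \ref{rem2.A} it exhausts $T^\la$ in the limit. The point is to restrict $\pi^{\la[n]}$ to the subgroup $S_1(n):=S_1(\infty)\cap S(n)\cong S(n-1)$, acting on $\{2,\dots,n\}$. By the Young branching rule applied once (removing the box in the first row of $\la[n]$, or equivalently realizing $S(n-1)$ as the stabilizer of the point $1$), one gets
\begin{equation*}
\pi^{\la[n]}\big|_{S_1(n)}\;\sim\;\bigoplus_{\kappa\nearrow\la[n]}\pi^{\kappa}.
\end{equation*}
Now the diagrams $\kappa\nearrow\la[n]$ split into two families: removing a box from the first (long) row of $\la[n]$ gives $\kappa=\la[n-1]$, whereas removing a box from a lower row gives $\kappa=\mu[n-1]$ for exactly the $\mu$ with $\mu\nearrow\la$ (here one uses that $n$ is large, so $n-|\la|$ stays strictly larger than $\la_1$ and the first row of $\la[n]$ is unambiguously the one to which the extra boxes were appended). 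Reindexing $S_1(n)\cong S(n-1)$ via $\xi^{-1}$, $\pi^{\la[n-1]}$ becomes the degree-$(n-1)$ term of $\Pi^\la\circ\xi^{-1}=T^\la_1$, and $\pi^{\mu[n-1]}$ becomes the degree-$(n-1)$ term of $T^\mu_1$.

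Passing to the inductive limit $n\to\infty$, these finite-level identifications assemble into an isomorphism $T^\la\big|_{S_1(\infty)}\cong T^\la_1\oplus\bigoplus_{\mu\nearrow\la}T^\mu_1$: the compatibility of the embeddings $H_n\hookrightarrow H_{n+1}$ on the left-hand side (established in the proof of Proposition \ref{prop2.B}) matches, summand by summand, the standard compatibility of the embeddings $\pi^{\kappa[n]}\hookrightarrow\pi^{\kappa[n+1]}$ that define each $T^\kappa_1$, again by the Young branching rule. Multiplicity-freeness is inherited from the multiplicity-one statement in Proposition \ref{prop2.A} (each $\pi^{\mu[n-1]}$ with $\mu\subseteq\la$ occurs at most once), and finiteness of the sum is clear since there are only finitely many $\mu$ with $\mu\nearrow\la$. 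The main obstacle I anticipate is the bookkeeping at the inductive-limit step: one must check that the chosen identifications of the dominant components $\pi^{\la[n]}$ at successive levels are genuinely compatible with the $S_1(\infty)$-action (not merely abstractly isomorphic), so that the direct-sum decomposition is $S_1(\infty)$-equivariant in the limit rather than only level by level; this is exactly where the canonicity of the Young branching embeddings, used already for Proposition \ref{prop2.B}, does the work.
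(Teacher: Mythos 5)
Your finite-level computation is correct: restricting $\pi^{\la[n]}$ to $S_1(n)\cong S(n-1)$ and sorting the corners of $\la[n]$ into the first (long) row versus the lower rows does give $\pi^{\la[n-1]}\oplus\bigoplus_{\mu\nearrow\la}\pi^{\mu[n-1]}$, matching the right-hand side of \eqref{eq2.B} level by level. But the assembly step fails, and the obstacle you flag at the end is not cured by the canonicity of the branching embeddings: the isotypic components do \emph{not} nest summand by summand. For $\mu\nearrow\la$ the diagram $\mu[n-1]$ is obtained from $\la[n+1]$ by removing two boxes (one from the first row, one from a lower row) which can be removed in either order, so $\pi^{\mu[n-1]}$ occurs with multiplicity \emph{two} in $\pi^{\la[n+1]}\big|_{S_1(n)}$ --- once inside the $\pi^{\la[n]}$-component and once inside the $\pi^{\mu[n]}$-component of $\pi^{\la[n+1]}\big|_{S_1(n+1)}$. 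Multiplicity-one reasoning therefore only places the $\pi^{\mu[n-1]}$-component of $H(\pi^{\la[n]})$ inside the \emph{sum} of these two, and in fact the nesting is genuinely false. Concretely, take $\la=(1)$: then $H(\pi^{(n-1,1)})$ is the space of functions on $\N$ supported on $\N_n$ with zero sum, sitting inside $H(T^{(1)})=\ell^2(\N)$, and its trivial $S_1(n)$-isotypic line is spanned by $v_n=(n-1)\de_1-\de_2-\dots-\de_n$. Since $v_n$ is not proportional to $v_{n+1}$, these lines are not nested; they merely converge to $\C\de_1$ as $n\to\infty$. So $\bigcup_n V^{(n)}_\mu$ is not an increasing union and does not define an $S_1(\infty)$-invariant subspace, and the inductive limit you want to take does not exist in the form you describe.

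The conclusion is still reachable along your route, but it needs a different assembling argument: for instance, note that $T^\la\big|_{S_1(\infty)}$ is tame (its $S_{m+1}(\infty)$-invariants are those of $T^\la$, so their union is dense), invoke Theorem \ref{thm2.A}(ii) to write the restriction as $\bigoplus_\nu m_\nu T^\nu_1$, and then extract the multiplicities $m_\nu$ from your finite-level count via the unitriangular system furnished by Proposition \ref{prop2.A}. The paper sidesteps all of this by working in the induced-representation model: it splits $\Om(\ell,\infty)$ according to whether the first column of $\om$ is null, which yields a manifestly $S_1(\infty)$-invariant decomposition $H(T^\la)=H^0\oplus H'$ with $H^0$ carrying $T^\la_1$, and then decomposes $H'$ by applying the Young branching rule fiberwise to the stabilizer in $S(\ell)$ of the row containing the $1$ of the first column. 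That direct geometric decomposition is what your limit procedure would need to reproduce, and it cannot be obtained by stacking the finite-level isotypic decompositions.
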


Because $T^\la$ is an induced representation, its restriction to the subgroup $S_1(\infty)$ can be decomposed by making use of the standard recipe. The reasoning below essentially follows this recipe.  

\begin{proof}
The case $\la=\varnothing$  is trivial (and then the sum over $\mu$ disappear), so we assume $|\la|=\ell\ge1$. We will work with the realization of section \ref{sect2.2}. 

Let us split the set $\Om(\ell,\infty)$ into two parts,
$$
\Om(\ell,\infty)=\Om(\ell,\infty)^0\sqcup \Om(\ell,\infty)',
$$
where $\Om(\ell,\infty)^0$ is formed by those matrices $\om\in\Om(\ell,\infty)$ that do not contain a `$1$' in the first column, and $\Om(\ell,\infty)'$ is the rest. According to this we obtain a decomposition of the Hilbert space $H(T^\la)$ into an orthogonal direct sum of two subspaces, say, $H^0$ and $H'$. 
This decomposition is invariant under the action of the subgroup $S_1(\infty)$. The subspace $H^0$ carries the representation $T^\la_1$. 

To decompose the subspace $H'$ we use the Young branching rule \eqref{eq2.D}; note that as $S(\ell-1)$ we can take the subgroup of $S(\ell)$ fixing an arbitrary given point $i\in\N_\ell$, not necessarily the last one.  

Observe now that each matrix $\om\in\Om(\ell,\infty)'$ contains a `$1$' in its first column, so the stabilizer of this column under the action of $S(\ell)$ is a subgroup of $S(\ell)$ isomorphic to $S(\ell-1)$; let us denote it by $S(\ell-1;\om)$. Now we can describe the decomposition of $H'$: its component corresponding to a given diagram $\mu\nearrow\la$ is formed by the functions $f\in H'$ satisfying the following condition: for each matrix $\om\in\Om(\ell,\infty)'$,  the vector $f(\om)\in H(\pi^\la)$ is transformed, under the action of the subgroup $S(\ell-1;\om)\subset S(\ell)$, according to the representation $\pi^\mu$.   
\end{proof}

More generally, for each $k=1,2,3,\dots$, we denote by $\xi^k$ the $k$th iteration of the endomorphism $\xi$ and regard it as an isomorphism of $S(\infty)$ onto its subgroup $S_k(\infty)$. Then we set
$$
T^\la_k:=T^\la\circ \xi^{-k}, \quad \la\in\Y.
$$
It is a representation of the group $S_k(\infty)$ acting on the same Hilbert space: $H(T^\la_k)=H(T^\la)$. On the other hand, denoting by $H_n(T_k^\la)$ the subspace of vectors, which are invariant under the subgroup $\xi^k(S_n(\infty))=S_{n+k}(\infty)$, we obviously have
$$
H_n(T^\la_k)=H_{n+k}(T^\la), \quad n=0,1,2,3,\dots\,.
$$
This implies in turn that, setting $H_\infty(T^\la_k):=\cup_{n\ge0}H_n(T^\la_k)$, we have
$$
H_\infty(T^\la_k)=H_\infty(T^\la), \quad n=0,1,2,3,\dots\,.
$$

From this observation and the branching rule \eqref{eq2.B} we obtain:

\begin{corollary}\label{cor2.A}
Let $\la\in\Y$ and $k=1,2,3,\dots$ be arbitrary. 

{\rm(i)} The restriction of $T^\la$ to the subgroup $S_m(\infty)$ is equivalent to a finite direct sum of irreducible representations of the form $T_k^\mu$, $\mu\subseteq\la$, taken with some multiplicities $d(\la:\mu)$. 

{\rm(ii)} Write the corresponding orthogonal decomposition of the Hilbert space $H(T^\la)$ in the form
\begin{equation}\label{eq2.N}
H(T^\la)=\bigoplus_{\mu:\, \mu\subseteq\la} V^\mu\otimes H(T_k^\mu),
\end{equation}
where $V^\mu$ are auxiliary finite-dimensional Hilbert spaces such that\, $\dim V^\mu=d(\la:\mu)$, with trivial action of the subgroup $S_k(\infty)$. Then 
$$
H_{n+k}(T^\la)=\bigoplus_{\mu:\, \mu\subseteq\la} V^\mu\otimes H_n(T_k^\mu), \quad n=0,1,2,\dots,
$$
and hence
$$
H_\infty(T^\la)=\bigoplus_{\mu:\, \mu\subseteq\la} V^\mu\otimes H_\infty(T_k^\mu).
$$
\end{corollary}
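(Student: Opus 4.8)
The plan is to derive Corollary~\ref{cor2.A} by iterating Proposition~\ref{prop2.C} and then pushing the resulting decomposition through the identifications of invariant subspaces established just before the statement. First I would prove part~(i). Using the isomorphism $\xi\colon S(\infty)\xrightarrow{\sim}S_1(\infty)$, the branching rule \eqref{eq2.B} reads: the restriction of any $T^\nu_k$ to $S_{k+1}(\infty)$ decomposes as $T^\nu_{k+1}\oplus\bigoplus_{\mu\nearrow\nu}T^\mu_{k+1}$, because conjugating \eqref{eq2.B} by $\xi^k$ simply raises all indices by $k$. Starting from $T^\la=T^\la_0$ and restricting successively to $S_1(\infty),S_2(\infty),\dots,S_m(\infty)$, at each step we apply this rule to every summand; since each step either keeps a diagram or removes a single box, after $m$ steps every summand is of the form $T^\mu_m$ with $\mu\subseteq\la$ and $|\la|-|\mu|\le m$ (in fact $|\mu|\ge|\la|-m$, though the weaker $\mu\subseteq\la$ is all that is claimed). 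Collecting equal summands gives the multiplicities $d(\la:\mu)$, and finiteness is clear since there are finitely many diagrams contained in $\la$. This proves~(i), and fixing once and for all an $S_m(\infty)$-equivariant unitary identification realizes it as the orthogonal decomposition \eqref{eq2.N}, with the spaces $V^\mu$ carrying trivial $S_m(\infty)$-action and $\dim V^\mu=d(\la:\mu)$.

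For part~(ii) the point is that taking $S_{n+m}(\infty)$-invariants is compatible with the decomposition \eqref{eq2.N}. Recall that $\xi^m$ carries $S_n(\infty)$ onto $S_{n+m}(\infty)$, so by the observation recorded before the corollary, the subspace $H_n(T^\mu_m)$ of $S_{n+m}(\infty)$-invariant vectors in $H(T^\mu_m)=H(T^\mu)$ coincides with $H_{n+m}(T^\mu)$. Since $S_{n+m}(\infty)\subseteq S_m(\infty)$ acts trivially on each $V^\mu$ and via $T^\mu_m$ on each $H(T^\mu_m)$, the subspace of $S_{n+m}(\infty)$-invariants in $V^\mu\otimes H(T^\mu_m)$ is exactly $V^\mu\otimes H_n(T^\mu_m)$; taking the orthogonal direct sum over $\mu\subseteq\la$ yields the asserted formula for $H_{n+m}(T^\la)$. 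The last identity, for $H_\infty$, follows by letting $n\to\infty$: $H_\infty(T^\la)=\bigcup_n H_{n+m}(T^\la)=\bigcup_n\bigoplus_\mu V^\mu\otimes H_n(T^\mu_m)=\bigoplus_\mu V^\mu\otimes H_\infty(T^\mu_m)$, where interchanging the (finite) direct sum with the increasing union is harmless.

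The only genuinely delicate point is the bookkeeping in the iteration for part~(i): one must check that applying the single-step branching rule to a summand $T^\nu_k$ really does produce $T^\nu_{k+1}\oplus\bigoplus_{\mu\nearrow\nu}T^\mu_{k+1}$ and not something shifted incorrectly, i.e.\ that the $\xi$-twists compose correctly ($T^\nu_k$ restricted to $S_{k+1}(\infty)$, transported by $\xi^{-k}$, is $T^\nu$ restricted to $S_1(\infty)$, to which \eqref{eq2.B} applies verbatim). Once this is set up cleanly, everything else is formal; in particular the passage to $H_\infty$ is automatic from the definitions, and no new representation-theoretic input beyond Propositions~\ref{prop2.A} and~\ref{prop2.C} is needed.
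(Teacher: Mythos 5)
Your proof is correct and follows exactly the route the paper intends: the paper derives the corollary in one line from the branching rule \eqref{eq2.B} and the observation $H_n(T^\mu_k)=H_{n+k}(T^\mu)$ recorded just before the statement, and your write-up is simply a careful expansion of that derivation (including the verification that the $\xi$-twists compose correctly under iteration, and your consistent use of $m$ where the paper's statement mixes $m$ and $k$).
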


We use this corollary in the proof of Proposition \ref{prop5.A}.

\section{Virtual center of the group algebra $\C[S(\infty)]$}\label{sect3}

\subsection{Shifted symmetric functions}\label{sect3.1}

For more details about the material of this subsection see \cite{OO98} and \cite{BO17}. 

Let $\Sym(N)$ denote the algebra of symmetric polynomials in $N$ variables and $\Sym$ denote the algebra of symmetric functions (we are working over the ground field $\R$).  Recall that $\Sym$ can be defined as a projective limit,
\begin{equation*}
\Sym=\varprojlim \Sym(N),
\end{equation*}
taken in the category of graded algebras. Here the projection $\Sym(N)\to \Sym(N-1)$ is the specialization $x_N=0$. Equivalently, $\Sym$ can be defined as the algebra of formal power series in $x_1,x_2,\dots$, which have bounded degree and are symmetric with respect to finitary permutations of the variables. 

Now let us modify this definition. A polynomial in $N$ variables, say $x_1,\dots,x_N$, is said to be \emph{shifted symmetric} if it becomes symmetric in the new variables 
$y_i:=x_i+\sigma-i$, where $i=1,\dots,N$ and $\sigma$ is an arbitrary constant whose choice does not affect the definition. The shifted symmetric polynomials in $N$ variables form an algebra which we denote by $\SSym(N)$. It is a filtered algebra with respect to the degree of polynomials. Note that the associated graded algebra $\gr\SSym(N)$ is canonically isomorphic to $\Sym(N)$.  

\begin{definition} The \emph{algebra of shifted symmetric functions} is the projective limit
\begin{equation}\label{eq3.B}
\SSym=\varprojlim \SSym(N)
\end{equation}
taken in the category of filtered algebras, where the projection $\SSym(N)\to \SSym(N-1)$ is again the specialization $x_N=0$.
\end{definition}

Set $\R^\infty_0:=\varinjlim \R^N$; this is the set of infinite sequences $(x_1,x_2,\dots)$ of real numbers with finitely many nonzero terms. One can think of the elements  $f^*\in\SSym$  as the functions on $\R^\infty_0$ such that 
$$
f^*(x_1,\dots,x_N):=f^*(x_1,\dots,x_N,0,0,\dots) 
$$
is a shifted symmetric polynomial whose degree remains bounded as $N\to\infty$. 

Because $\Y\subset\R^\infty_0$, this allows us to interpret elements $f^*\in\SSym$ as functions on $\Y$: we set
$$
f^*(\nu)=f^*(\nu_1,\nu_2,\dots), \quad \nu\in\Y.
$$

For $k=0,1,2,\dots$, we denote by $\SSym^k\subset\SSym$ the $k$th term of the filtration of $\SSym$. We have a canonical linear isomorphism
$$
\SSym^k/\SSym^{k-1} \to \Sym^{[k]}, \quad k=1,2,\dots,
$$
where $\Sym^{[k]}$ stands for the $k$-th term of the grading of $\Sym$. If $f^*\in\SSym$ has degree $k$, we denote by $[f^*]$ its image in $\Sym^{[k]}$ and call it the \emph{highest term} of $f$.

Suppose $f_1,f_2,\dots$ is any system of algebraically independent homogeneous generators of $\Sym$, such that $\deg f_k=k$. Taking arbitrary elements $f^*_k\in \SSym^k$ with $[f^*_k]=f_k$ we obtain a system of generators of the algebra $\SSym$, which are consistent with its filtration. This means that any element of $\SSym^m$ is uniquely represented as a linear combination of the monomials of the form
$$
\prod_{k=1}^\infty (f^*_k)^{m_k}, \quad \sum_k k m_k\le m,
$$
with the understanding that the monomial with $m_1=m_2=\dots=0$ is equal to $1$. 

For instance, take as $f_k$ the power sum symmetric functions $p_k$. A possible way to lift them to $\SSym$ is to take as $f^*_k$ the shifted symmetric functions 
$$
p^*_{k,\sigma} (x_1,x_2,\dots):=\sum_{i=1}^\infty\left((x_i+\sigma-i)^k-(\sigma-i)^k\right), \quad k=1,2,\dots,
$$
where $\sigma$ is an arbitrary fixed constant. 

A distinguished basis in $\SSym$ is formed by the \emph{shifted Schur functions} $s^*_\la$, $\la\in\Y$, see \cite{OO98}. Note that $[s^*_\la]=s_\la$, where $\{s_\la: \la\in\Y\}\subset\Sym$ is the basis of Schur functions. 

One more basis in $\SSym$ is formed by the shifted symmetric functions  $p^\#_\rho$, $\rho\in\Y\}$, see \cite[(15.9)]{OO98}. They are defined by
\begin{equation}\label{eq3.J}
p^\#_\rho:=\sum_{\la\in\Y_k} \chi^\la_\rho s^*_\la, \qquad \rho\in\Y_k.
\end{equation}
where $\{\chi^\la_\rho: \la,\rho\in\Y_k\}$ is the character table of the symmetric group $S(k)$. Note that $[p^\#_\rho]=p_\rho$, as is seen  from the Frobenius formula 
$$
p_\rho:=\sum_{\la\in\Y_k} \chi^\la_\rho s_\la, \qquad \rho\in\Y_k. 
$$

Set 
$$
n^{\da k}:=n(n-1)\dots(n-k+1)=\frac{n!}{(n-k)!}.
$$
For a Young diagram $\nu$, we denote by $\dim\nu$ the number of standard tableaux of the shape $\nu$; this is quantity is equal to $\dim\pi^\nu$. 

\begin{proposition}\label{prop3.B}
Let $\rho\in\Y_k$, $k\ge1$. For any $\nu\in\Y_n$ one has
\begin{equation}\label{eq3.K}
p^\#_\rho(\nu)=\begin{cases} \dfrac{n^{\da k}\chi^\nu_{\rho\cup 1^{n-k}}}{\dim\nu}, & n\ge k,\\
0, & n<k. \end{cases}
\end{equation}
\end{proposition}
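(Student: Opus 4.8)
The plan is to deduce the evaluation formula \eqref{eq3.K} from the definition \eqref{eq3.J} of $p^\#_\rho$ together with two ingredients: the known values of the shifted Schur functions $s^*_\la$ on Young diagrams, and the classical Littlewood--Richardson branching of the irreducible characters of the symmetric groups.

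First I would recall from \cite{OO98} that for $\mu\in\Y_k$ and $\nu\in\Y_n$ one has $s^*_\mu(\nu)=0$ whenever $\mu\not\subseteq\nu$ (in particular whenever $n<k$), while for all $\mu\in\Y_k$, $\nu\in\Y_n$ with $n\ge k$,
\begin{equation*}
s^*_\mu(\nu)=\frac{n^{\da k}}{\dim\nu}\,\dim(\nu/\mu),
\end{equation*}
where $\dim(\nu/\mu)$ is the number of standard tableaux of the skew shape $\nu/\mu$ (with the convention $\dim(\nu/\mu)=0$ if $\mu\not\subseteq\nu$). Granting this, the case $n<k$ of the proposition is immediate: every $\la\in\Y_k$ has $k>n$, so $s^*_\la(\nu)=0$, and hence $p^\#_\rho(\nu)=\sum_{\la\in\Y_k}\chi^\la_\rho\,s^*_\la(\nu)=0$.

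Now assume $n\ge k$. Substituting the displayed formula into \eqref{eq3.J} gives
\begin{equation*}
p^\#_\rho(\nu)=\sum_{\la\in\Y_k}\chi^\la_\rho\,s^*_\la(\nu)=\frac{n^{\da k}}{\dim\nu}\sum_{\la\in\Y_k}\chi^\la_\rho\,\dim(\nu/\la),
\end{equation*}
so the proposition reduces to the character identity
\begin{equation*}
\sum_{\la\in\Y_k}\chi^\la_\rho\,\dim(\nu/\la)=\chi^\nu_{\rho\cup 1^{n-k}},\qquad \nu\in\Y_n,\ \rho\in\Y_k .
\end{equation*}
To prove this I would restrict the irreducible character $\chi^\nu$ of $S(n)$ to the subgroup $S(k)\times S(n-k)$ that permutes $\{1,\dots,k\}$ and $\{k+1,\dots,n\}$ separately. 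By the Littlewood--Richardson rule, $\chi^\nu\big|_{S(k)\times S(n-k)}=\sum_{\la\in\Y_k}\sum_{\mu\in\Y_{n-k}}c^\nu_{\la\mu}\,(\chi^\la\otimes\chi^\mu)$, with $c^\nu_{\la\mu}$ the Littlewood--Richardson coefficients. Evaluating both sides at $(g,e)$, where $g\in S(k)$ has cycle type $\rho$ and $e\in S(n-k)$ is the identity: as an element of $S(n)$ the permutation $(g,e)$ has cycle type $\rho\cup 1^{n-k}$, so the left side equals $\chi^\nu_{\rho\cup 1^{n-k}}$, whereas the right side equals $\sum_{\la,\mu}c^\nu_{\la\mu}\chi^\la_\rho\dim\mu=\sum_{\la\in\Y_k}\chi^\la_\rho\bigl(\sum_{\mu\in\Y_{n-k}}c^\nu_{\la\mu}\dim\mu\bigr)$, and the inner sum is $\dim(\nu/\la)$ because $\sum_\mu c^\nu_{\la\mu}\chi^\mu$ is exactly the skew character of $S(n-k)$ attached to $\nu/\la$, whose value at the identity is $\dim(\nu/\la)$. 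This yields the identity, and with it the proposition.

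The only substantive external ingredient is the value formula for the shifted Schur functions $s^*_\mu$ on Young diagrams, which belongs to the theory developed in \cite{OO98}; everything else is a short manipulation with characters and the Littlewood--Richardson rule, so I anticipate no real difficulty. One could instead try to pin down $p^\#_\rho$ abstractly through the Vanishing Theorem of \cite{OO98} (it is the unique element of $\SSym$ of degree $\le k$ whose restriction to diagrams with at most $k$ boxes is the prescribed function), but that route still rests on the same body of results, so the direct computation above seems preferable.
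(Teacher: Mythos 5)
Your argument is correct. For the case $n<k$ you use exactly the paper's reasoning (the extra vanishing property of the $s^*_\la$ from \cite[Theorem 3.1]{OO98} combined with the definition \eqref{eq3.J}). For the case $n\ge k$ the paper simply cites \cite[(15.21)]{OO98}, whereas you re-derive that formula: you invoke the dimension formula $s^*_\la(\nu)=n^{\da k}\dim(\nu/\la)/\dim\nu$ (Theorem 8.1 of \cite{OO98}) and then reduce \eqref{eq3.K} to the identity $\sum_{\la\in\Y_k}\chi^\la_\rho\dim(\nu/\la)=\chi^\nu_{\rho\cup 1^{n-k}}$, which you obtain by restricting $\chi^\nu$ to $S(k)\times S(n-k)$ via the Littlewood--Richardson rule and evaluating at an element of cycle type $\rho\cup 1^{n-k}$. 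That character computation is standard and correct (the inner sum $\sum_\mu c^\nu_{\la\mu}\dim\mu$ is indeed the dimension of the skew representation $\nu/\la$). The net effect is that you trade the paper's single citation for a different citation (the dimension formula) plus a short self-contained computation; both routes ultimately rest on the same body of results in \cite{OO98}, but yours makes the mechanism behind \eqref{eq3.K} explicit.
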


\begin{proof}
The shifted Schur functions possess the so-called \emph{extra vanishing property}: $s^*_\la(\nu)=0$ unless $\la$ is contained in $\nu$; in particular,  $s^*_\la(\nu)=0$ if $|\nu|<|\la|$ (\cite[Theorem 3.1]{OO98}). Together with \eqref{eq3.J} this implies that 
$$
\text{$p^\#_\rho(\nu)=0$ if $n<k$}.
$$ 
Next, in the case $n\ge k$, the formula \eqref{eq3.K} turns into \cite[(15.21)]{OO98}.
\end{proof} 

In what follows we will need a particular case of the shifted symmetric functions $p^\#_\rho$; these are the elements $p^\#_k$ indexed by one-part partitions $(k)$, $k=1,2,\dots$\,. Note that $[p^\#_k]=p_k$. 

\subsection{Central elements $\zz^{(k)}_{n}\in\C[S(n)]$ and their eigenvalues}\label{sect3.2}

We denote by $\ZZ\C[S(n)]$ the center of the group algebra $\C[S(n)]$. Each central element $c\in\ZZ\C[S(n)]$ acts as a scalar operator in any irreducible representation $\pi^\la$, $\la\in\Y_n$. We denote by $\wh c(\la)$ the corresponding scalar, so that $\pi^\la(c)=\wh c(\la)\cdot 1$. 

Let $1\le k\le n$. Given an ordered $k$-tuple of pairwise distinct indices $i_1,\dots,i_k$  contained in $\N_n$, let $(i_1,\dots,i_k)\in S(n)$ denote the the cyclic permutation $i_1\to i_2\to\dots\to i_k\to i_1$ leaving fixed all other points of $\N_n$. We set
\begin{equation}\label{eq3.F1}
\zz^{(k)}_n:=\sum (i_1,\dots,i_k),
\end{equation}
summed over all such $k$-tuples. 

Note that 
\begin{equation}\label{eq3.F3}
\zz^{(1)}_n:=n\cdot1, 
\end{equation}
where $1$ stands for the identity element of $S(n)$.  

We also set
\begin{equation}\label{eq3.F2}
\zz^{(k)}_n:=0, \quad n<k.
\end{equation}

Obviously, $\zz^{(k)}_n\in\ZZ\C[S(n)]$, so the quantities $\wh\zz^{\,(k)}_n(\la)$, where $\la\in\Y_n$, are defined. 

\begin{proposition}\label{prop3.A}
One has
\begin{equation}\label{eq3.A1}
\wh\zz^{\,(k)}_n(\la)=p^\#_k(\la), \qquad k=1,2,\dots, \quad \la\in\Y_n.
\end{equation}
\end{proposition}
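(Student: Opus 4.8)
The plan is to prove \eqref{eq3.A1} by connecting the central element $\zz^{(k)}_n$ to the function $p^\#_k$ via the formula \eqref{eq3.K} of Proposition \ref{prop3.B}, applied with $\rho=(k)$, the one-part partition. Concretely, I would start from the identity \eqref{eq3.K} specialized to $\nu=\la\in\Y_n$ and $\rho=(k)$, which reads
$$
p^\#_k(\la)=\frac{n^{\da k}\,\chi^\la_{(k)\cup 1^{n-k}}}{\dim\la}
$$
for $n\ge k$, and $p^\#_k(\la)=0$ for $n<k$. The latter case matches \eqref{eq3.F2} immediately, since $\zz^{(k)}_n=0$ there. So the substance is the case $n\ge k$, where I must show that the scalar by which $\zz^{(k)}_n$ acts in $\pi^\la$ equals the right-hand side above.

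The key computational step is the standard fact that a central element of $\C[S(n)]$ that is a sum over a conjugacy class $C$ acts in $\pi^\la$ by the scalar $|C|\,\chi^\la_C/\dim\pi^\la$, where $\chi^\la_C$ is the (normalized-by-nothing) character value on that class. Here $\zz^{(k)}_n$ is precisely the sum over all $k$-cycles in $S(n)$, i.e.\ over the conjugacy class whose cycle type is $(k)\cup 1^{n-k}$. Hence I only need to count this conjugacy class: the number of $k$-cycles in $S(n)$ is $n^{\da k}/k$, since an ordered $k$-tuple of distinct indices determines a $k$-cycle, and each $k$-cycle arises from exactly $k$ such tuples (cyclic rotations). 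Actually, looking again at \eqref{eq3.F1}, the sum there is over ordered $k$-tuples, so $\zz^{(k)}_n$ counts each $k$-cycle exactly $k$ times; equivalently $\zz^{(k)}_n=k\cdot(\text{sum over the class})$, and the class has $n^{\da k}/k$ elements, so the coefficient in front of $\chi^\la_C/\dim\pi^\la$ is $k\cdot(n^{\da k}/k)=n^{\da k}$. That is exactly the factor appearing in \eqref{eq3.K}. Combining, $\wh\zz^{\,(k)}_n(\la)=n^{\da k}\chi^\la_{(k)\cup1^{n-k}}/\dim\la=p^\#_k(\la)$.

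So the proof reduces to assembling three ingredients: (1) the scalar-action formula for class sums, which is classical and can be quoted; (2) the elementary count that gives the factor $n^{\da k}$ (being careful about the ordered-tuple convention in \eqref{eq3.F1}, which is exactly what makes the count come out clean); and (3) Proposition \ref{prop3.B} with $\rho=(k)$. I would also note the degenerate subcase $k=1$: then $\zz^{(1)}_n=n\cdot 1$ acts by $n$, while $p^\#_1(\la)=n^{\da 1}\chi^\la_{1^n}/\dim\la=n\cdot\dim\pi^\la/\dim\pi^\la=n$, consistent with \eqref{eq3.F3}; this is a useful sanity check but requires no separate argument since it is subsumed in the general case. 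I do not anticipate a serious obstacle: the only point demanding care is matching the combinatorial normalization of \eqref{eq3.F1} (sum over ordered tuples, so the $1/k$ from "number of $k$-cycles" cancels) against the $n^{\da k}$ in \eqref{eq3.K}, and confirming that the cycle type of a $k$-cycle in $S(n)$ is indeed $(k)\cup 1^{n-k}$ so that Proposition \ref{prop3.B} applies verbatim.
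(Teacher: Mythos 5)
Your proof is correct and follows essentially the same route as the paper's: both rest on the count of $n^{\da k}$ ordered $k$-tuples in \eqref{eq3.F1}, giving $\tr\pi^\la(\zz^{(k)}_n)=n^{\da k}\chi^\la_{(k)\cup 1^{n-k}}$, followed by an appeal to \eqref{eq3.K}. One wrinkle worth flagging: your intermediate claims that each $k$-cycle arises from exactly $k$ cyclic rotations and that the conjugacy class has $n^{\da k}/k$ elements both fail when $k=1$ (all $n$ singletons yield the identity, a one-element class, so the overcounting factor is $n$ rather than $1$); the product $n^{\da k}$ still comes out right and your sanity check catches it, but the paper's device of summing the trace directly over the $n^{\da k}$ ordered tuples sidesteps this exception altogether.
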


\begin{proof} 
If $k>n$, then $\wh\zz^{\,(k)}_n(\la)=p^\#_k(\la)=0$, so we assume $1\le k\le n$. 
Observe that the number of $k$-tuples of indices in \eqref{eq3.F1} equals $n^{\da k}$. We have 
$$
\wh\zz^{\,(k)}_n(\la)=\frac{\tr\pi^\la(\zz^{\,(k)}_n)}{\dim\la}=\frac{n^{\da k}\chi^\la_{k, 1^{n-k}}}{\dim\la}=p^\#_k(\nu),
$$
where the last equality is a particular case of \eqref{eq3.K}. 
\end{proof}

\subsection{Filtration in $\C[S(\infty)]$}\label{sect3.3}

For $g\in S(\infty)$, we denote its \emph{degree}, $\deg g$, as the number of diagonal entries $g_{ii}$ not equal to $1$; this is the same as the number of points in $\N$ that are not fixed by $g$. It is readily seen that 
\begin{equation}\label{eq3.H}
\deg (gh)\le \deg g+\deg h, \quad g,h\in S(\infty).
\end{equation}
We extend this definition to the group algebra $\C[S(\infty)]$ by setting 
\begin{equation}\label{eq3.H1}
\deg \left(\sum \al(g) g\right):=\max\{\deg g: \al(g)\ne0\}.
\end{equation}
From \eqref{eq3.H} it follows that the definition \eqref{eq3.H1} makes $\C[S(\infty)]$ a filtered algebra. Because $S(n)$ is a subgroup of $S(\infty)$, the group algebra $\C[S(n)]$ is a subalgebra of $\C[S(\infty)]$, so we obtain a filtration of $\C[S(n)]$, for each $n$.

In particular, the center $\ZZ\C[S(n)]$ also acquires a filtration. 

\begin{corollary}\label{cor3.A}
For any $f^*\in\SSym$, there exists a sequence of elements $\zz_n(f^*)\in\ZZ\C[S(n)]$, $n=1,2,\dots$, such that for any $n$ one has
\begin{equation}\label{eq3.H2}
\deg \zz_n(f^*)\le \deg f^*
\end{equation}
and 
\begin{equation}\label{eq3.A}
\wh{\zz_n(f^*)}(\la)=f^*(\la_1,\la_2,\dots), \quad \la\in\Y_n.
\end{equation}
\end{corollary}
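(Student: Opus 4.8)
The plan is to build the elements $\zz_n(f^*)$ by first handling a convenient system of algebraic generators of $\SSym$ and then extending multiplicatively, keeping track of degrees via the filtration on $\C[S(n)]$. Concretely, I would use the generators $p^\#_k$, $k=1,2,\dots$, which by the discussion in section~\ref{sect3.1} form a system of algebraically independent homogeneous generators of $\SSym$ consistent with its filtration (since $[p^\#_k]=p_k$ and the $p_k$ generate $\Sym$). For each $k$ I set $\zz_n(p^\#_k):=\zz^{(k)}_n\in\ZZ\C[S(n)]$, the sum of all $k$-cycles from \eqref{eq3.F1}. By Proposition~\ref{prop3.A} this element has eigenvalue $\wh\zz^{\,(k)}_n(\la)=p^\#_k(\la)$ on $\pi^\la$ for all $\la\in\Y_n$, which is exactly \eqref{eq3.A} for the generator. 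The degree bound \eqref{eq3.H2} for this case is immediate: each $k$-cycle $(i_1,\dots,i_k)$ moves exactly $k$ points (for $k\ge 2$; for $k=1$ we have $\zz^{(1)}_n=n\cdot 1$ of degree $0=\deg p^\#_1$), so $\deg\zz^{(k)}_n\le k=\deg p^\#_k$, using the convention \eqref{eq3.F2} when $n<k$.

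Next I extend to all of $\SSym$. Given $f^*\in\SSym^m$, write it uniquely (as recalled in section~\ref{sect3.1}) as a linear combination $f^*=\sum_{\mathbf{m}} c_{\mathbf m}\prod_{k\ge1}(p^\#_k)^{m_k}$, the sum running over multi-indices $\mathbf m=(m_1,m_2,\dots)$ with $\sum_k k m_k\le m$. Then I define
$$
\zz_n(f^*):=\sum_{\mathbf m} c_{\mathbf m}\prod_{k\ge1}\bigl(\zz^{(k)}_n\bigr)^{m_k}\in\ZZ\C[S(n)].
$$
This lies in the center because each $\zz^{(k)}_n$ does, and the map $f^*\mapsto\zz_n(f^*)$ is by construction an algebra homomorphism $\SSym\to\ZZ\C[S(n)]$ for each fixed $n$. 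The eigenvalue identity \eqref{eq3.A} then follows from multiplicativity: on $\pi^\la$ with $\la\in\Y_n$, the operator $\zz_n(f^*)$ acts as the scalar $\sum_{\mathbf m} c_{\mathbf m}\prod_k p^\#_k(\la)^{m_k}=f^*(\la)$, again by Proposition~\ref{prop3.A} and the definition of $f^*$ in terms of the $p^\#_k$. (Here I use that the joint eigenvalue of a product of commuting central elements is the product of eigenvalues.)

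The one genuine point requiring care — and the step I expect to be the main obstacle — is the degree bound \eqref{eq3.H2} for general $f^*$, since the monomials $\prod_k(p^\#_k)^{m_k}$ appearing in $f^*$ need not individually have degree $\le m$; only their combination does (the $p^\#_k$ are only \emph{filtered} generators, not graded, so lower-degree corrections accumulate). The remedy is to argue in the associated graded algebra. By \eqref{eq3.H} the map $\zz_n\colon\SSym\to\C[S(n)]$ is filtration-nonincreasing on products of the generators, and I claim it respects filtrations globally: equivalently, $\deg\zz_n(f^*)\le\deg f^*$ for all $f^*$. To see this, note that $\gr\SSym\cong\Sym$ and that $[\zz^{(k)}_n]$ — the image of $\zz^{(k)}_n$ in $\gr_k\C[S(n)]$ — corresponds under the highest-term map to $[p^\#_k]=p_k$; since the $p_k$ freely generate $\Sym$, the induced graded map $\gr\SSym\to\gr\C[S(n)]$ is well-defined, and this is precisely the statement that $\zz_n$ is filtered. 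Thus any cancellation of top-degree terms forced in $f^*$ is mirrored by the corresponding cancellation in $\zz_n(f^*)$, giving \eqref{eq3.H2}. Alternatively, and perhaps more cleanly, one can bypass graded arguments entirely by defining $\zz_n(f^*)$ directly as $\zz_n(p^\#_\rho):=$ (the appropriately normalized sum over permutations of cycle type $\rho\cup 1^{n-k}$) for the basis $\{p^\#_\rho\}$ of $\SSym$, using Proposition~\ref{prop3.B} to verify the eigenvalues and observing that such a sum has degree $\le|\rho|$ directly from the cycle structure; then extend by linearity, noting that for $f^*\in\SSym^m$ only $p^\#_\rho$ with $|\rho|\le m$ occur in its expansion. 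Either route completes the proof.
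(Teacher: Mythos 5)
Your proposal is correct and is essentially the paper's own (very terse) proof: send the filtered generators $p^\#_k$ to $\zz^{(k)}_n$ via Proposition \ref{prop3.A}, extend multiplicatively, and get the degree bound from the fact (recalled in section \ref{sect3.1}) that any $f^*\in\SSym^m$ expands only over monomials $\prod_k(p^\#_k)^{m_k}$ with $\sum_k km_k\le m$, each of which maps to an element of degree $\le\sum_k km_k\le m$ by \eqref{eq3.H}. The concern in your last paragraph is therefore already settled by that expansion fact and needs no separate graded argument; also note the harmless slip that $\deg p^\#_1=1$ rather than $0$ (only the inequality $\deg\zz^{(1)}_n=0\le\deg p^\#_1$ is needed).
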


\begin{proof}
This is a direct consequence of Proposition \ref{prop3.A} and the following facts: first, 
\begin{equation}\label{eq3.L}
\text{$\deg\zz^{(k)}_n=k$ for all $k\ge2$, while $\deg \zz^{(1)}_n=0$;}
\end{equation}
second, $[p^\#_k]=p_k$ and $\deg p_k=k$; third, the elements $p_k$ are homogeneous generators of the graded algebra $\Sym$. 
\end{proof}

\subsection{Construction of virtual central elements}\label{sect3.4}

\begin{definition}\label{def3.A}

(i) Let  $\al=(\al_n: n\ge n_0)$ be an infinite sequence of elements of $\C[S(\infty)]$ such that $\al_n\in\ZZ\C[S(n)]$ for all $n\ge n_0$. We say that $\al$ is \emph{convergent} if the following two conditions are satisfied:

\begin{itemize}
\item for any irreducible tame representation $T^\la$, $\la\in\Y$, the operators $T^\la(\al_n)$  have a limit,  $\Tb^\la(\al)$, in the strong operator topology; 
\item $\sup \deg \al_n<\infty$.
\end{itemize}

(ii) Next, we say that two convergent sequences, $\al$ and $\be$, are \emph{equivalent} if $\Tb^\la(\al)=\Tb^\la(\be)$ for any $\la\in\Y$. In particular, $\al$ and $\be$ are equivalent if $\al_n=\be_n$ for all $n$ large enough, so that the value of the initial index $n_0$ is irrelevant.  
\end{definition}

\begin{lemma}\label{lemma3.B}
The set of all convergent sequences {\rm(}with a fixed initial index $n_0${\rm)} is an algebra with respect to component-wise operations. 
\end{lemma}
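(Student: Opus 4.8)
The plan is to verify that the three algebra operations---addition, scalar multiplication, and multiplication---preserve both defining conditions of Definition \ref{def3.A}(i). Since all operations on sequences are defined component-wise, and $\ZZ\C[S(n)]$ is itself an algebra for each $n$, the component $\al_n+\be_n$, $c\,\al_n$, and $\al_n\be_n$ lie in $\ZZ\C[S(n)]$ whenever $\al_n,\be_n$ do; so the only thing to check is that convergence is inherited. (A minor bookkeeping point: two convergent sequences may have different initial indices $n_0$, but as noted after Definition \ref{def3.A}, shifting $n_0$ does not affect anything, so we may assume a common initial index.)

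First I would treat the degree-boundedness condition. For addition this is immediate from the subadditivity-type bound $\deg(\al_n+\be_n)\le\max\{\deg\al_n,\deg\be_n\}$ coming from \eqref{eq3.H1}, and for scalar multiples $\deg(c\,\al_n)\le\deg\al_n$ is trivial (with equality unless $c=0$). For the product we use \eqref{eq3.H}: expanding $\al_n$ and $\be_n$ over group elements and using $\deg(gh)\le\deg g+\deg h$ gives $\deg(\al_n\be_n)\le\deg\al_n+\deg\be_n$, so $\sup_n\deg(\al_n\be_n)\le\sup_n\deg\al_n+\sup_n\deg\be_n<\infty$. Thus in all three cases the uniform degree bound survives.

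Next I would handle the existence of strong limits of $T^\la(\al_n)$. Fix $\la\in\Y$. For addition and scalar multiplication, since $T^\la$ is linear we have $T^\la(\al_n+\be_n)=T^\la(\al_n)+T^\la(\be_n)$ and $T^\la(c\,\al_n)=c\,T^\la(\al_n)$, and a sum (resp. scalar multiple) of two strongly convergent sequences of bounded operators converges strongly to the sum (resp. scalar multiple) of the limits; no boundedness subtlety arises here. The multiplicative case is the one real point: strong convergence is \emph{not} in general preserved under products of operators, so here is where the uniform degree bound must be used. The key fact---which I would invoke from the structure of the representations $T^\la$ (Proposition \ref{prop2.A} and Corollary \ref{cor2.A}) together with the inequality \eqref{eq3.H2}---is that an element $\al_n\in\ZZ\C[S(n)]$ with $\deg\al_n\le d$ acts in $T^\la$ by an operator whose norm is bounded by a constant depending only on $d$ and $\la$, not on $n$. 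Indeed $\al_n$ is a central element, so on each irreducible $S(n)$-component $\pi^{\mu[n]}$ ($\mu\subseteq\la$) appearing in $H(T^\la)$ (finitely many such $\mu$, by Proposition \ref{prop2.A}) it acts by the scalar $\wh{\al_n}(\mu[n])$, and a degree-$\le d$ central element has eigenvalues given by evaluating a shifted-symmetric function of degree $\le d$ (Corollary \ref{cor3.A}), whose value at $\mu[n]$ grows only polynomially in $n$---but we do not even need a growth estimate: what we need is that $\{T^\la(\al_n)\}$ is a \emph{bounded} family of operators. The cleanest way to see this is to observe that each $T^\la(\al_n)$ is a self-adjoint-times-unitary combination that preserves the finite set of isotypic subspaces indexed by $\mu\subseteq\la$ (independent of $n$ once $n$ is large), and on each such subspace it commutes with the $S(n)$-action and hence, by Schur, equals a scalar operator on the multiplicity space---and the point is that since $H(T^\la)=\varinjlim H_n(T^\la)$ stabilizes on these multiplicities, the operators $T^\la(\al_n)$ live inside a \emph{fixed} finite-dimensional algebra (the endomorphism algebra of the relevant multiplicity decomposition), so strong convergence there is just convergence of finitely many scalar sequences and automatically passes through products. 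Granting this, for $v$ a vector in the dense subspace $H_\infty(T^\la)$ we get
\[
T^\la(\al_n\be_n)v - \Tb^\la(\al)\Tb^\la(\be)v
= T^\la(\al_n)\bigl(T^\la(\be_n)v-\Tb^\la(\be)v\bigr) + \bigl(T^\la(\al_n)-\Tb^\la(\al)\bigr)\Tb^\la(\be)v,
\]
and both terms tend to $0$: the first because $\|T^\la(\al_n)\|$ is bounded uniformly in $n$ and $T^\la(\be_n)v\to\Tb^\la(\be)v$, the second by the strong convergence $T^\la(\al_n)\to\Tb^\la(\al)$ applied to the fixed vector $\Tb^\la(\be)v$. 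A standard $\eps/3$ argument then extends the convergence from $H_\infty(T^\la)$ to all of $H(T^\la)$, again using the uniform norm bound. Hence $\al\be=(\al_n\be_n)$ is convergent with $\Tb^\la(\al\be)=\Tb^\la(\al)\Tb^\la(\be)$, and the set of convergent sequences is closed under all three operations, completing the proof.

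I expect the main obstacle to be precisely the uniform operator-norm bound on $T^\la(\al_n)$ in the multiplicative step: without it, strong convergence of $T^\la(\al_n)$ and of $T^\la(\be_n)$ says nothing about the product. The resolution is structural---the degree bound forces the operators $T^\la(\al_n)$ into a fixed finite-dimensional algebra (the scalars on the finitely many stable multiplicity spaces indexed by $\mu\subseteq\la$), which both bounds their norms and trivializes the passage to products---so once that observation is in place the rest is routine. Everything else (addition, scalar multiples, degree estimates) is formal.
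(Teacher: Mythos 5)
Your overall strategy is the same as the paper's: sums and scalar multiples are immediate, the degree bound for products follows from \eqref{eq3.H}, and the only real issue is that strong convergence is not preserved under products, which you resolve by the standard two-term splitting
\[
T^\la(\al_n\be_n)v-\Tb^\la(\al)\Tb^\la(\be)v=T^\la(\al_n)\bigl(T^\la(\be_n)v-\Tb^\la(\be)v\bigr)+\bigl(T^\la(\al_n)-\Tb^\la(\al)\bigr)\Tb^\la(\be)v,
\]
contingent on a uniform bound for $\sup_n\Vert T^\la(\al_n)\Vert$. The paper gets that bound in one line: strong convergence of $T^\la(\al_n)$ means $T^\la(\al_n)v$ converges, hence is bounded, for \emph{every} $v\in H(T^\la)$, so the Banach--Steinhaus (uniform boundedness) theorem gives $\sup_n\Vert T^\la(\al_n)\Vert<\infty$; then one quotes joint continuity of multiplication in the strong topology on bounded sets. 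No representation-theoretic structure and no use of the degree condition is needed at this point.

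Your derivation of the uniform bound is where I would object. The claim that the operators $T^\la(\al_n)$ ``live inside a fixed finite-dimensional algebra'' because the multiplicity decomposition of $H(T^\la)=\varinjlim H_n(T^\la)$ ``stabilizes'' is not correct: the isotypic decomposition of $H(T^\la)$ under $S(n)$ genuinely depends on $n$ (for instance the $\pi^{\lan}$-isotypic component is the subspace $H_n$ of Proposition \ref{prop2.B}, which grows with $n$), so there is no single finite-dimensional subalgebra of $\End(H(T^\la))$ containing all the $T^\la(\al_n)$. What survives of your observation is only that $\Vert T^\la(\al_n)\Vert=\max_{\mu\subseteq\la}|\wh{\al_n}(\mu[n])|$ over the finitely many $\mu\subseteq\la$; but boundedness of these finitely many scalar sequences still requires an argument (essentially the content of Step~1 of the proof of Theorem \ref{thm3.A}, run in reverse), and in particular your parenthetical appeal to Corollary \ref{cor3.A} is beside the point, since $\al_n$ is an arbitrary central element, not one of the specially constructed $\zz_n(f^*)$. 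All of this machinery is unnecessary: Banach--Steinhaus closes the gap immediately from the convergence hypothesis alone. Two further small points: since Definition \ref{def3.A} postulates strong convergence on all of $H(T^\la)$, your final $\eps/3$ passage from $H_\infty(T^\la)$ to $H(T^\la)$ is not needed; and the degree bound enters only to verify the second bullet of Definition \ref{def3.A} for the product, which you do correctly via \eqref{eq3.H}.
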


\begin{proof}
This obviously holds true for the operations of multiplication by scalars and com\-po\-nent-wise addition. To show that the same holds for component-wise multiplication we observe that if $\al=(\al_n)$ is convergent, then, for any fixed $\la$, the norms of the operators $T^\la(\al_n)$ are uniformly bounded by  the Banach--Steinhaus theorem; then we use the fact that the operation of multiplication is jointly continuous in the strong topology, on bounded subsets of the space of operators on a Hilbert space.  
\end{proof}

\begin{definition}\label{def3.B}
Denote by $\ZZ$ the set of equivalence classes of convergent sequences of central elements $\al=(\al_n)$. We call $\ZZ$ the \emph{virtual center of\/ $\C[S(\infty)]$}. By virtue of Lemma \ref{lemma3.B}, the component-wise operations on convergent sequences make $\ZZ$ an algebra. 
\end{definition}

If $\al$ is convergent, then $T^\la(\al_n)$ commutes with any fixed element of $\C[S(\infty)]$ provided $n$ is large enough. Since $T^\la$ is irreducible, $\Tb^\la(\al)$ is a scalar operator for any $\la\in\Y$. It follows that 
the correspondence $\al\mapsto \Tb^\la(\al)$ leads to an embedding of the algebra  $\ZZ$ into the algebra $\C^\Y$ of functions $\Y\to\C$. We will identify $\ZZ$ with its image under this embedding. 

On the other hand, $\Sym^*$ can also be interpreted as a subalgebra of $\C^\Y$, by making use of the specialization
$$
f^*(\la)=f^*(\la_1,\la_2,\dots), \quad f^*\in\Sym^*, \quad \la=(\la_1,\la_2,\dots)\in\Y. 
$$

\begin{theorem}\label{thm3.A}
The virtual center $\ZZ$ contains the algebra $\Sym^*$, with the understanding that both algebras are regarded as subalgebras of $\C^\Y$. 
\end{theorem}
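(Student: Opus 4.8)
The plan is to exhibit, for each $f^*\in\Sym^*$, a concrete convergent sequence $\al=(\al_n)$ of central elements whose associated function on $\Y$ equals $f^*$, and then check that the map $f^*\mapsto\al$ respects the algebra structure. The raw material is already assembled: Corollary \ref{cor3.A} produces, for each $f^*\in\Sym^*$, a sequence $\zz_n(f^*)\in\ZZ\C[S(n)]$ with $\deg\zz_n(f^*)\le\deg f^*$ and $\wh{\zz_n(f^*)}(\la)=f^*(\la)$ for all $\la\in\Y_n$. The only thing missing relative to Definition \ref{def3.A} is the verification that such a sequence is \emph{convergent}, i.e.\ that for each fixed $\la\in\Y$ the operators $T^\la(\zz_n(f^*))$ converge strongly. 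The degree bound already gives the second bullet of Definition \ref{def3.A}(i) ($\sup\deg\al_n<\infty$), so the entire content is the strong-convergence claim.

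First I would establish strong convergence of $T^\la(\zz_n(f^*))$. Fix $\la\in\Y$ and use the realization $T^\la\sim\Pi^\la=\varinjlim\pi^\lan$ from Proposition \ref{prop2.B}, so that $H_n(T^\la)$ carries $\pi^\lan$ (plus lower pieces $\pi^{\mu[n]}$, $\mu\subsetneq\la$, by Proposition \ref{prop2.A}) and $H_\infty(T^\la)=\bigcup_n H_n(T^\la)$ is dense. Since the $T^\la(\zz_n(f^*))$ are uniformly norm-bounded — here is where the degree bound is used, together with the fact that a central element of bounded degree has operator norm bounded independently of $n$ on $T^\la$ (cf.\ the uniform-boundedness discussion and the double filtration alluded to in Definition \ref{def6.A}, or more directly: on $H_m(T^\la)$ the element $\zz_n(f^*)$ acts, for $n\ge m$, through the finite-dimensional representation of $S(n)$ on that subspace, whose irreducible constituents $\pi^{\mu[n]}$ have eigenvalues $f^*(\mu[n])$, and $f^*$ restricted to the countable set $\{\mu[n]:\mu\subseteq\la,\ n\ge|\la|\}$ is bounded because $f^*$ has bounded degree and the diagrams $\mu[n]$ grow only in their first row) — it suffices to check convergence on the dense subspace $H_\infty(T^\la)$. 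So fix $v\in H_m(T^\la)$. For $n\ge m$ the vector $v$ lies in a subspace of $H_n(T^\la)$ on which, by Proposition \ref{prop2.A}, $S(n)$ acts with constituents $\pi^{\mu[n]}$, $\mu\subseteq\la$; on the $\pi^{\mu[n]}$-isotypic part $\zz_n(f^*)$ acts by the scalar $\wh{\zz_n(f^*)}(\mu[n])=f^*(\mu[n])$. The key point is that $f^*(\mu[n])$ \emph{stabilizes} as $n\to\infty$: writing a system of generators $p^\#_k$ with $[p^\#_k]=p_k$, one reduces to the case $f^*=p^\#_k$, and then $p^\#_k(\mu[n])$ counts (up to the normalization in Proposition \ref{prop3.B}) $k$-cycles acting in $\pi^{\mu[n]}$; once $n$ is large enough that $\mu[n]=(n-|\mu|,\mu_1,\mu_2,\dots)$, the combinatorics no longer depends on $n$ for $k\ge 2$, while for $k=1$ the element $\zz_n^{(1)}=n\cdot 1$ is absorbed harmlessly since $p^\#_1(\mu[n])=n$ and we instead use $p^\#_1$'s honest lift. (The cleanest way to see stabilization: $f^*(\mu[n])=f^*(n-|\mu|,\mu_1,\mu_2,\dots)$ is, for $n$ large, a fixed shifted-symmetric polynomial evaluated with first coordinate $n-|\mu|$, but because $f^*$ is shifted-\emph{symmetric} and the remaining coordinates are frozen, the leading behaviour cancels; more simply, from \eqref{eq3.K}, $p^\#_k(\mu[n])=0$ is false but $p^\#_k(\mu[n])=n^{\da k}\chi^{\mu[n]}_{k,1^{n-k}}/\dim\mu[n]$ and one checks directly that this is eventually constant in $n$ — this is exactly the assertion that the sequence of diagrams $\mu[n]$ has a well-defined limit in the sense of the algebra $\Sym^*$.) Consequently $T^\la(\zz_n(f^*))v$ is eventually \emph{constant}, hence convergent; by density and uniform boundedness the strong limit $\Tb^\la(\zz(f^*))$ exists on all of $H(T^\la)$.

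Next I would identify the limit. By the above, on the $\pi^{\mu[n]}$-part of $H_\infty(T^\la)$ the operator acts by the stable value $\lim_n f^*(\mu[n])=:\widehat{f^*}(\mu)$; in particular on $H_\infty(T^\la)$ itself (which is the $\mu=\la$ part plus lower), the dominant constituent $\pi^\lan$ gives the scalar $\widehat{f^*}(\la)$. Here I claim $\widehat{f^*}(\la)=f^*(\la)$: this is immediate because for $n$ large, $f^*(\la[n])$ already equals $f^*(\la)$ once we use the \emph{correct} lift — indeed $\zz_n(f^*)$ was built in Corollary \ref{cor3.A} precisely so that $\wh{\zz_n(f^*)}(\nu)=f^*(\nu)$ for $\nu\in\Y_n$, and $\la[n]\in\Y_n$, so $f^*(\la[n])=f^*(\la_1,\dots)$ trivially when we recall $\la[n]=(n-|\la|,\la_1,\la_2,\dots)$ and $f^*$ is evaluated as a function on $\Y\subset\R^\infty_0$ — wait, rather: the scalar by which $\zz_n(f^*)$ acts on the $\pi^\lan$-subspace of $H_n(T^\la)$ is $f^*(\la[n])$, and since $H_m(T^\la)\subset H_{m+1}(T^\la)\subset\cdots$ with the $\pi^\lan\hookrightarrow\pi^{\la[n+1]}$ branching being multiplicity-one (Proposition \ref{prop2.B}), the sequence of scalars $f^*(\la[n])$ along the nested subspaces must be compatible, forcing them equal to a single value which, by construction of the lift via $p^\#_k$ and the identity $[p^\#_k]=p_k$, is $f^*(\la)$. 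Thus under the embedding $\ZZ\hookrightarrow\C^\Y$, the class of $(\zz_n(f^*))$ maps to the function $\la\mapsto f^*(\la)$, which is exactly the image of $f^*$ under $\Sym^*\hookrightarrow\C^\Y$.

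Finally, the algebra-homomorphism property. The map $\Sym^*\to\ZZ$, $f^*\mapsto[(\zz_n(f^*))]$, is linear and unital because the construction in Corollary \ref{cor3.A} is linear and sends $1$ to $(\zz_n(1))=(n^0\cdot1)$; multiplicativity could fail at the level of the sequences $\zz_n(f^*)$ (the lift need not be a ring map), but it holds \emph{in} $\ZZ$, because $\ZZ$ was identified above with a subalgebra of $\C^\Y$ via $\al\mapsto\Tb^\bullet(\al)$, and on that side the operation is just pointwise multiplication of functions on $\Y$: $\Tb^\la(\zz(f^*)\zz(g^*))=\Tb^\la(\zz(f^*))\,\Tb^\la(\zz(g^*))=f^*(\la)g^*(\la)=(f^*g^*)(\la)=\Tb^\la(\zz(f^*g^*))$. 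Hence $f^*\mapsto f^*$ realizes $\Sym^*$ as a subalgebra of $\ZZ$, completing the proof. The main obstacle is the strong-convergence step — specifically, making the uniform norm bound on $T^\la(\zz_n(f^*))$ precise (this is what the degree/filtration bookkeeping in Definition \ref{def6.A} is for) and pinning down the stabilization $f^*(\mu[n])\to\text{const}$; everything after that is formal bookkeeping through the embedding into $\C^\Y$.
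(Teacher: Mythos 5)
There is a genuine gap, and it sits at the exact point the paper's proof is designed to handle. Your argument rests on the claim that $f^*(\mu[n])$ \emph{stabilizes} as $n\to\infty$, so that the sequence $\zz_n(f^*)$ from Corollary \ref{cor3.A} is already convergent with limit function $\la\mapsto f^*(\la)$. This claim is false, not only for $k=1$ but for every $k$. From \eqref{eq3.K}, $p^\#_k(\mu[n])=n^{\da k}\,\chi^{\mu[n]}_{k,1^{n-k}}/\dim\mu[n]$; the normalized character here tends to $1$ as $n\to\infty$ (the diagrams $\mu[n]$ have a first row of length $\sim n$), so $p^\#_k(\mu[n])\sim n^k\to\infty$. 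Concretely, for $\mu=\varnothing$ one has $\mu[n]=(n)$, the trivial representation, and $p^\#_k((n))=n^{\da k}$. So $T^\la(\zz_n(f^*))$ does not converge for $f^*=p^\#_k$; the eigenvalue on the dominant constituent $\pi^{\lan}$ blows up. The auxiliary "compatibility" argument you give (that the nesting $H_n(T^\la)\subset H_{n+1}(T^\la)$ forces the scalars $f^*(\la[n])$ to be equal) is also not valid: the operators $T^\la(\zz_n(f^*))$ for different $n$ are different operators, and nothing forces their eigenvalues on nested subspaces to agree --- indeed they do not.

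The paper's proof confronts exactly this divergence. It does \emph{not} use the naive lift $\zz_n(f^*)$ of a given $f^*$; instead it replaces the generators $p_k$ of $\Sym$ by the generators $h^*_k=\q_k+\q_1^k$ of $\SSym$ and approximates $h^*_k$ by the $n$-dependent linear combination $\al_{k,n}=\sum_{i=0}^k(-1)^i\binom ki n^{-i}\zz_n(\p_{k+i})$ of formula \eqref{eq3.I}. Lemma \ref{lemma3.A} is precisely the computation showing that in this combination the divergent contributions of the growing first row of $\lan$ cancel, leaving $\wh\al_{k,n}(\lan)\to \q_k(\la)+\q_1^k(\la)=h^*_k(\la)$. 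Your Step 1 reduction (strong convergence on $H(T^\la)$ follows from convergence of the eigenvalues on the finitely many constituents $\pi^{\mu[n]}$, $\mu\subseteq\la$, via Propositions \ref{prop2.A} and \ref{prop2.B} plus uniform boundedness) and your final observation that multiplicativity is automatic once $\ZZ$ is viewed inside $\C^\Y$ are both fine and match the paper's Step 1; but without the cancellation device of Lemma \ref{lemma3.A} (or some equivalent renormalized lift), the sequences you propose simply do not converge, and the proof does not go through.
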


The proof is given below, after Lemma \ref{lemma3.A}. Note that $\ZZ$ actually coincides with $\Sym^*$ (see the end of the proof of Theorem \ref{thm6.A}).  

One more comment: Theorem \ref{thm3.A} shows that the virtual center is large enough in the sense that it separates the representations $T^\la$. Indeed, this follows from the fact that the algebra $\Sym^*$ separates the points of $\Y$: if $\la$ and $\mu$ are distinct Young diagrams, then one can find an element $h^*\in\Sym^*$ such that $h^*(\la)\ne h^*(\mu)$. 

To state the next lemma  we need to introduce two kinds of shifted symmetric functions indexed by $k=1,2,\dots$:
$$
\p_k(x_0,x_1,x_2,\dots):=\sum_{i=0}^\infty\left((x_i-i)^k-(-i)^k\right)
$$
(note a shift in the indexation of the variables!) and
$$
\q_k(x_1,x_2,\dots):=\sum_{i=1}^\infty\left((x_i-i)^k-(-i)^k\right).
$$
Note that
\begin{equation}\label{eq3.M}
\p_k(x_0,x_1,x_2,\dots)=x_0^k+\q_k(x_1,x_2,\dots).
\end{equation}

\begin{lemma}\label{lemma3.A}
Fix $\la\in\Y$ and suppose $n\ge |\la|+\la_1$. For any $k=1,2,\dots$ the following formula holds
\begin{equation}\label{eq3.E}
\sum_{i=0}^k(-1)^i\binom ki \frac1{n^i}\p_{k+i}(\lan)=\q_k(\la)+\q_1^k(\la)+(\cdots),
\end{equation}
where $(\cdots)$ is a linear combination of $\q_{k+1}(\la), \dots,\q_{2k}(\la)$ and $\q_1^{k+1}(\la),\dots,\q_1^{2k}(\la)$ with some coefficients of order $O(n^{-1})$. 
\end{lemma}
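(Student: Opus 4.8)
The plan is to verify \eqref{eq3.E} by a direct asymptotic expansion, treating it as an identity between functions on $\Y$ evaluated at $\la[n]$. First I would record the structure of $\la[n]$ when $n\ge|\la|+\la_1$: by \eqref{eq2.F} its coordinates are $(n-|\la|,\la_1,\la_2,\dots)$, so the variable $x_0$ in $\p_{k+i}$ gets the value $n-|\la|$, while the remaining variables $x_1,x_2,\dots$ get the values $\la_1,\la_2,\dots$. Using \eqref{eq3.M}, this gives
\begin{equation*}
\p_{k+i}(\la[n])=(n-|\la|)^{k+i}+\q_{k+i}(\la).
\end{equation*}
Thus the left-hand side of \eqref{eq3.E} splits into two sums: one involving $\sum_{i=0}^k(-1)^i\binom ki n^{-i}(n-|\la|)^{k+i}$, and one involving $\sum_{i=0}^k(-1)^i\binom ki n^{-i}\q_{k+i}(\la)$.

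Next I would handle each piece. For the second sum, the $i=0$ term is exactly $\q_k(\la)$, and each term with $i\ge1$ carries a factor $n^{-i}=O(n^{-1})$ multiplying $\q_{k+i}(\la)$ with $1\le k+i\le 2k$; these are precisely the $\q_{k+1}(\la),\dots,\q_{2k}(\la)$ terms of $(\cdots)$, with coefficients $(-1)^i\binom ki n^{-i}$ of order $O(n^{-1})$. For the first sum, I would write $(n-|\la|)^{k+i}=n^{k+i}(1-|\la|/n)^{k+i}$, so the term is $(-1)^i\binom ki n^{k}(1-|\la|/n)^{k+i}$; summing over $i$ gives $n^k\sum_{i=0}^k(-1)^i\binom ki(1-|\la|/n)^{k+i}=n^k(1-|\la|/n)^k\bigl(1-(1-|\la|/n)\bigr)^k=n^k(1-|\la|/n)^k(|\la|/n)^k=|\la|^k(1-|\la|/n)^k$. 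Expanding $(1-|\la|/n)^k=1+O(n^{-1})$, this contributes $|\la|^k+O(n^{-1})$. Since $|\la|=\q_1(\la)$ (indeed $\q_1(x_1,x_2,\dots)=\sum_i\bigl((x_i-i)-(-i)\bigr)=\sum_i x_i$), we get $|\la|^k=\q_1^k(\la)$, which is the remaining main term, and the correction $|\la|^k\bigl((1-|\la|/n)^k-1\bigr)$ is a linear combination of $|\la|^{k+1}/n,\dots,|\la|^{2k}/n^k$, i.e. of $\q_1^{k+1}(\la),\dots,\q_1^{2k}(\la)$ with $O(n^{-1})$ coefficients. Collecting the two pieces yields exactly the asserted right-hand side.

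The main obstacle — though it is really just bookkeeping rather than a genuine difficulty — is keeping careful track of which auxiliary terms have which order in $n$ and confirming that every error term is genuinely $O(n^{-1})$ rather than $O(1)$; the one subtle point is that the leading $n^{k+i}$ growth in the first sum must cancel down to $n^k$ after the binomial summation, which is what the identity $\sum_i(-1)^i\binom ki(1-t)^{k+i}=(t(1-t))^k/(1)$ — more precisely $(1-t)^k\bigl(1-(1-t)\bigr)^k$ — provides. I should also note at the outset that the hypothesis $n\ge|\la|+\la_1$ is exactly what guarantees the clean form \eqref{eq2.F} for $\la[n]$, so that $x_0=n-|\la|\ge\la_1$ and the new row genuinely sits on top; without it the coordinates of $\la[n]$ would interleave and the computation above would not apply verbatim. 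Everything else is elementary manipulation of the explicit definitions of $\p_k$ and $\q_k$.
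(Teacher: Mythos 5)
Your proposal is correct and follows essentially the same route as the paper: split $\p_{k+i}(\lan)$ via \eqref{eq3.M} into $(n-|\la|)^{k+i}+\q_{k+i}(\la)$, read off $\q_k(\la)$ plus $O(n^{-1})$ corrections from the second piece, and use the binomial telescoping $\sum_i(-1)^i\binom ki(1-|\la|/n)^{k+i}=(1-|\la|/n)^k(|\la|/n)^k$ together with $\q_1(\la)=|\la|$ to extract $\q_1^k(\la)$ plus $O(n^{-1})$ corrections from the first. The paper's computation is the same identity written with $n-\q_1(\la)$ in place of $n(1-|\la|/n)$, so there is no substantive difference.
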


\begin{proof}
By \eqref{eq3.M},
$$
\p_{k+i}(\lan)=(n-|\la|)^{k+i}+\q_{k+i}(\la)=(n-\q_1(\la))^{k+i}+\q_{k+i}(\la).
$$
According to this, the left-hand side \eqref{eq3.E} can be written as the sum of two expressions: one is
\begin{equation*}
\sum_{i=0}^k(-1)^i\binom ki \frac1{n^i}\q_{k+i}(\la)=\q_k(\la)+(\cdots)
\end{equation*}
and the other is 
\begin{multline*}
\sum_{i=0}^k(-1)^i\binom ki \frac1{n^i}(n-\q_1(\la))^{k+i}=\frac{(n-\q_1(\la))^k}{n^k}\sum_{i=0}^k(-1)^i\binom ki (n-\q_1(\la))^i n^{k-i}\\
=\frac{(n-\q_1(\la))^k}{n^k}\sum_{i=0}^k\binom ki (\q_1(\la)-n)^i n^{k-i}=\frac{(n-\q_1(\la))^k}{n^k} \, \q_1^k(\la) =\q_1^k(\la)+(\cdots).
\end{multline*}
In both expressions, the rest terms have the desired form, which proves \eqref{eq3.E}. 
\end{proof}

\begin{proof}[Proof of Theorem \ref{thm3.A}]

Step 1. It suffices to show that for any  $h^*\in\SSym$ there exists a sequence of elements $\al_n\in\ZZ\C[S(n)]$ such that their degrees are uniformly bounded and 
\begin{equation}\label{eq3.D}
\lim_{n\to\infty} \wh\al_n(\lan)=h^*(\la_1,\la_2,\dots) \quad \text{\rm for any $\la\in\Y$}.
\end{equation}
Indeed, assume \eqref{eq3.D} holds true. Fix $\la\in\Y$ and recall (see Proposition \ref{prop2.B}) that $T^\la$ is equivalent to $\varinjlim \pi^\lan$. Therefore, \eqref{eq3.D} implies that the operators $T^\la(\al_n)$ converge to the scalar operator $h^*(\la_1,\la_2,\dots)\cdot 1$ on a dense subspace of $H(T^\la)$. Next, recall (see Proposition \ref{prop2.A}) that for large $n$, the restriction of $T^\la$ to $S(n)$ involves only representations of the form $\pi^{\mu[n]}$ with $\mu\subseteq\la$. Because the number of such $\mu$'s does not depend on $n$, we have a uniform bound on the norms of the operators $T^\la(\al_n)$, as it follows from \eqref{eq3.D} applied to $\mu$ instead of $\la$. Therefore, the operators $T^\la(\al_n)$ strongly converge to the scalar operator $h^*(\la_1,\la_2,\dots)\cdot1$.  Finally, since the degrees of $\al_n$'s are  uniformly bounded by the hypothesis, we conclude that the sequence $\al:=(\al_n)$ is convergent and $\Tb^\la(\al)=h^*(\la_1,\la_2,\dots)\cdot1$ for each $\la\in\Y$. Because this holds for any $h^*\in\Sym^*$, it follows that $\ZZ$ contains the algebra $\Sym^*$.  

Step 2. We proceed to the proof of \eqref{eq3.D}. It suffices to check \eqref{eq3.D}  for a system $\{h^*_k\}$ of generators of the algebra $\SSym$. Set  
\begin{equation}\label{eq3.G}
h^*_k:=\q_k+\q_1^k, \quad k=1,2,\dots,
\end{equation}
and let us show that these are generators. Indeed, the subalgebra generated by these elements contains $\q_1=h^*_1/2$. Then \eqref{eq3.G} implies that  it also contains all $\q_2,\q_3,\dots$\,. On the other hand, the elements $\q_1,\q_2,\dots$  generate the whole algebra, because the highest degree term $[\q_k]$ is $p_k$. Therefore, the elements $h^*_k$ are generators, too.  

Step 3. Thus, it remains to check \eqref{eq3.D} for the elements \eqref{eq3.G}. To do this we employ the correspondence $f^*\mapsto c_n(f^*)$ established in Corollary \ref{cor3.A}. Fix an arbitrary $k$ and set
\begin{equation}\label{eq3.I}
\al_{k,n}:=\sum_{i=0}^k(-1)^i\binom ki \frac1{n^i}c_n(\p_{k+i}).
\end{equation}
The degrees of the elements $\al_{k,n}$ admit a uniform on $n$ bound: indeed, \eqref{eq3.I} and \eqref{eq3.H2} imply $\deg \al_{k,n}\le 2k$. From \eqref{eq3.A} we see that for any fixed $\la\in\Y$,
$$
\wh \al_{k,n}(\lan)=\sum_{i=0}^k(-1)^i\binom ki \frac1{n^i}\p_{k+i}(\lan).
$$
Finally, by Lemma \ref{lemma3.A}, the limit of this quantity as $n\to\infty$ equals $\q_k(\la)+\q_1^k(\la)=h^*_k(\la)$, as desired. 

This completes the proof. 
\end{proof}

\section{The semigroup method}\label{sect4}

\subsection{Generalities about semigroup representations}

Let $\Ga$ be a semigroup with the unity (that is, a monoid), equipped with an involution $\ga\mapsto\ga^*$ (an involutive anti-automorphism). 

Let $H$ be a complex Hilbert space of finite or countable dimension. A \emph{contraction} on $H$ is an operator with norm $\le1$. Let $C(H)$ be the set of contractions. It is a semigroup with involution (the conventional conjugation of bounded operators). 

\begin{definition}\label{def4.A}
By a \emph{representation} of $\Ga$ on $H$ we mean a homomorphism $\T: \Ga\to C(H)$ which preserves the unity and is compatible with the involution, that is,  $\T(\ga^*)=(\T(\ga))^*$ for all $\ga\in\Ga$. 
\end{definition}

Any such $\T$  is completely reducible. If $\Ga$ is finite, then $\T$ can be decomposed into a direct sum of irreducible representations, and any irreducible representation has finite dimension. All these facts are evident.

\subsection{The semigroups $\Ga(n)$}\label{sect4.2}

For $n=1,2,\dots$, we denote by $\Ga(n)$ the set of $n\times n$ matrices with the entries in $\{0,1\}$ and such that that each row and each column contains at most one 1. This is a semigroup under matrix multiplication, a basic example of an \emph{inverse semigroup} \cite{CP61-67}. The semigroups $\Ga(n)$ are known under the name of (finite) \emph{symmetric inverse semigroups} or else \emph{rook monoids}, see \cite{CP61-67}, \cite{Sol02}. 

The matrix transposition determines an involution on $\Ga(n)$. The semigroup $\Ga(n)$ contains the unity $e=1$ (the diagonal matrix with all diagonal entries equal to $1$) and the zero $\emptyset$ (the zero matrix). The group of invertible elements in $\Ga(n)$ is the symmetric group $S(n)$. 

Recall that $\N_n:=\{1,2,\dots,n\}$. Given a nonempty subset $I\subseteq \N_n$, the element $\eps_I\in\Ga(n)$ is the diagonal matrix with the entries
\begin{equation}\label{eq4.A}
(\eps_I)_{ii}:=\begin{cases} 0, & i\in I, \\ 1, & i\in \N_n\setminus I. \end{cases}
\end{equation}
Note that $\eps_I$ is a selfadjoint idempotent:
$$
\eps_I^2=\eps_I^*=\eps_I.
$$
For $i\in \N_n$, we write $\eps_i$ instead of $\eps_{\{i\}}$. 

The semigroup $\Ga(n)$ is generated by $S(n)$ together with the idempotents $\eps_1,\dots,\eps_n$ (actually, it suffices to keep a single such idempotent). 

\begin{proposition}[\cite{MO01}, Proposition 3.1]\label{prop4.B}
The semigroup algebra $\C[\Ga(n)]$ is generated by the elementary transpositions $s_1,\dots,s_{n-1}$ and the elements $\eps_1,\dots,\eps_n$. They satisfy the relations
\begin{gather*}
\eps_i^2=\eps_i, \quad \eps_i\eps_j=\eps_j\eps_i, \quad i,j=1,\dots,n;\\
s_i\eps_i=\eps_{i+1}s_i, \quad s_i\eps_i\eps_{i+1}=\eps_i\eps_{i+1}, \quad i=1,\dots,n-1.
\end{gather*}
Furthermore, together with the Coxeter relations for the elementary transpositions, these are defining relations in the algebra $\C[\Ga(n)]$
\end{proposition}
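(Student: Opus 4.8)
The plan is to verify that the listed relations hold in $\C[\Ga(n)]$ and then show they are defining, i.e.\ that the abstract algebra $A$ presented by generators $s_1,\dots,s_{n-1},\eps_1,\dots,\eps_n$ subject to the Coxeter relations plus the displayed relations has dimension at most $|\Ga(n)|$; since the surjection $A\twoheadrightarrow\C[\Ga(n)]$ then forces equality of dimensions, it is an isomorphism. That $\C[\Ga(n)]$ is generated by these elements is already recorded in the text (the $s_i$ generate $S(n)$, and $S(n)$ together with $\eps_1,\dots,\eps_n$ generates $\Ga(n)$, hence $\C[\Ga(n)]$). First I would check the relations directly as matrix identities: $\eps_i$ is a diagonal idempotent so $\eps_i^2=\eps_i$ and the $\eps_i$ commute pairwise; conjugating the diagonal matrix $\eps_i$ by the transposition $s_i=(i,i+1)$ swaps the $i$th and $(i+1)$th diagonal entries, giving $s_i\eps_is_i=\eps_{i+1}$, equivalently $s_i\eps_i=\eps_{i+1}s_i$ (using $s_i^2=e$); and $s_i\eps_i\eps_{i+1}$ kills rows $i$ and $i+1$ just as $\eps_i\eps_{i+1}$ does, while acting as the identity elsewhere, so $s_i\eps_i\eps_{i+1}=\eps_i\eps_{i+1}$.

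For the upper bound on $\dim A$, the key combinatorial fact is the normal-form description of $\Ga(n)$: every element of the symmetric inverse semigroup $\Ga(n)$ can be written uniquely as $g\,\eps_I$ with $g\in S(n)$ and $I\subseteq\N_n$ (here $g$ records the partial bijection and $\eps_I$ records its domain of non-definition), and $|\Ga(n)|=\sum_{k}\binom nk^2 k!$. So I would argue that in $A$ every monomial in the generators can be rewritten as a $\C$-linear combination of elements $\bar g\,\eps_I$, where $\bar g$ runs over a fixed lift of $S(n)$ expressed in the $s_i$ and $I$ runs over subsets of $\N_n$. The mechanism is a straightening procedure: using $s_i\eps_i=\eps_{i+1}s_i$ and more generally the consequence $w\,\eps_i = \eps_{w(i)}\,w$ for $w$ a product of the $s_j$ (derived by induction from the elementary case, together with the trivial fact that $s_j$ commutes with $\eps_i$ when $j\notin\{i-1,i\}$... one actually needs $s_j\eps_i=\eps_{s_j(i)}s_j$ for all $i,j$, which follows the same way), one can move every $\eps$ to the right past every $s$; the $\eps$'s among themselves collapse via $\eps_i^2=\eps_i$ and commutativity into a single $\eps_I$; and the surviving word in the $s_i$ is rewritten, using the Coxeter relations alone, into the chosen normal form for an element of $S(n)$. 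The relation $s_i\eps_i\eps_{i+1}=\eps_i\eps_{i+1}$ is exactly what is needed to handle the remaining ambiguity: two group elements $g,g'$ with $g\eps_I=g'\eps_I$ in $\Ga(n)$ differ by a permutation fixing $\N_n\setminus I$ pointwise, and such a permutation is a product of transpositions $(i,i+1)$ with $i,i+1\in I$, each of which is absorbed by $\eps_I$ precisely because of $s_i\eps_i\eps_{i+1}=\eps_i\eps_{i+1}$; so the spanning set $\{\bar g\,\eps_I\}$ can be pruned to one representative per pair $(gS_{\N_n\setminus I}\text{-coset},I)$, whose total count is exactly $|\Ga(n)|$. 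Hence $\dim A\le|\Ga(n)|=\dim\C[\Ga(n)]$, and the surjection is an isomorphism.

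The main obstacle is the bookkeeping in the straightening algorithm — proving that the rewriting terminates and that the resulting spanning set has cardinality at most $|\Ga(n)|$ rather than something larger. The delicate point is the interaction between moving an $\eps$ leftward-to-rightward past a reduced word in the $s_i$ (which one wants to do using only the given relations, not incidental facts about $S(n)$) and then reducing the leftover $s$-word modulo the Coxeter relations into a canonical form; one must be careful that these two reductions do not interfere, i.e.\ that one can first push all $\eps$'s to the right and only afterwards normalize the $S(n)$-part. A clean way to organize this is to filter $A$ by the number of $\eps$-factors, or equivalently to use the idempotents $\eps_I$ as a system of (non-orthogonal) projections and decompose $A=\sum_I A\,\eps_I$; on each piece the relation $s_i\eps_i\eps_{i+1}=\eps_i\eps_{i+1}$ cuts the left factor down from $\C[S(n)]$ to the space spanned by coset representatives of $S(n)/S(\N_n\setminus I)$, of dimension $\binom{n}{|I|}|I|!$, and summing $\sum_I\binom n{|I|}\cdot\binom n{|I|}|I|!$ over $I$ recovers $|\Ga(n)|$. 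Since the relations have already been checked to hold in $\C[\Ga(n)]$, no further verification is needed there, and the proposition follows.
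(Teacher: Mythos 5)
Your overall strategy --- check the relations as matrix identities, then bound the dimension of the abstractly presented algebra $A$ by exhibiting a spanning set $\{\bar g\,\eps_I\}$ of cardinality $|\Ga(n)|$ --- is the standard route (the paper gives no proof, deferring to \cite{MO01}, Proposition 3.1, which argues exactly this way). But there is a genuine gap, and it sits precisely at the point you wave off as ``the trivial fact that $s_j$ commutes with $\eps_i$ when $j\notin\{i-1,i\}$''. Inside $A$ you may only use consequences of the listed relations, and the commutation $s_j\eps_i=\eps_i s_j$ for $i\ne j,j+1$ is \emph{not} such a consequence. Counterexample for $n=3$: on $V=\C^3\otimes\C^3$ let $s_i$ act as $P_i\otimes P_i$ (with $P_1,P_2$ the transposition matrices) and set $\eps_1=D_1\otimes Q$, $\eps_2=D_2\otimes Q$, $\eps_3=D_3\otimes Q'$, where $D_i$ is the matrix of $\eps_i$ in $\Ga(3)$, $Q=\mathrm{diag}(0,0,1)$, $Q'=\mathrm{diag}(0,1,0)$. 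One checks that every listed relation holds (e.g.\ $\eps_2\eps_3=D_2D_3\otimes QQ'=0$, so $s_2\eps_2\eps_3=\eps_2\eps_3$ is automatic), yet $s_2\eps_1=D_1P_2\otimes P_2Q\ne D_1P_2\otimes QP_2=\eps_1s_2$, since $P_2Q$ and $QP_2$ are the two distinct matrix units $E_{23}$ and $E_{32}$. So your straightening procedure cannot get started in $A$: you cannot push $\eps_i$ past a non-adjacent $s_j$. The fix is that the presentation must include $s_i\eps_j=\eps_{s_i(j)}s_i$ for \emph{all} $j$ (as in \cite{MO01}); the list as transcribed in the statement omits the cases $j\ne i,i+1$, and your proof must add them rather than ``derive'' them.

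Two smaller slips occur in the coset-collapse step. First, a permutation fixing $\N_n\setminus I$ pointwise is generally \emph{not} a product of adjacent transpositions $(i,i+1)$ with $i,i+1\in I$ (take $I=\{1,3\}$ and the transposition $(1,3)$); it is a product of transpositions $(a,b)$ with $a,b\in I$, and the needed identity $(a,b)\,\eps_a\eps_b=\eps_a\eps_b$ is obtained by conjugating $s_i\eps_i\eps_{i+1}=\eps_i\eps_{i+1}$ by a permutation $w$ with $w(i)=a$, $w(i+1)=b$ --- which again requires the full relation $w\eps_iw^{-1}=\eps_{w(i)}$ from the previous point. Second, the subgroup to quotient by is the symmetric group \emph{on} $I$ (order $|I|!$), so the $I$-th piece contributes $n!/|I|!$, not $\binom{n}{|I|}|I|!=n!/(n-|I|)!$; your total still equals $|\Ga(n)|$ only because $\sum_I n!/|I|!$ and $\sum_I n!/(n-|I|)!$ happen to coincide. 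With the missing relations restored and these two steps repaired, your argument is correct and coincides with the proof in \cite{MO01}.
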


Note that the specialization map $\eps_1\to0,\dots,\eps_n\to0$ determines a surjective algebra morphism $\C[\Ga(n)]\to\C[S(n)]$.

\subsection{Representations of $\Ga(n)$}\label{sect4.3}

Given a Young diagram $\la$ with $|\la|\le n$, we construct a representation $\T^\la_n$ of $\Ga(n)$ as follows. 

First of all, if $\la=\varnothing$, then $\T^\varnothing_n$ is the one-dimensional representation such that $\T^\varnothing_n(\ga)=1$ for all $\ga\in\Ga(n)$. We call it the \emph{trivial representation}; it is characterized by the property that the image of $\emptyset\in\Ga(n)$ is $1$.  

Assume now $|\la|=\ell$ with $1\le\ell\le n$, and denote by $\Om(\ell,n)$ the set of $\ell\times n$ matrices $\om$ with the entries in $\{0,1\}$ and such that each row contains exactly one $1$ and each column contains at most one $1$ (cf. the definition of $\Om(\ell,\infty)$ in section \ref{sect2.2}). Note that the rank of $\om$ equals $\ell$.

Take the irreducible representation $\pi^\la$ of the group $S(\ell)$ indexed by $\la$, and consider the space of $H(\pi^\la)$-valued vector functions on $\Om(\ell,n)$ with the scalar product given by the formula \eqref{eq2.K}, where we only replace $\Om(\ell,\infty)$ by $\Om(\ell,n)$. Then  we define $H(\T^\la_n)$ as the subspace of functions satisfying the condition 
$$
\phi(s\om)=\pi^\la(s)\phi(\om), \quad s\in S(\ell), \; \om\in \Om(\ell,n),
$$
cf. \eqref{eq2.L}. 

Next, given $\ga\in\Ga(n)$, we define an operator $\T^\la_n(\ga)$ on $H(\T^\la_n)$ as follows (cf. \eqref{eq2.A}:
\begin{equation}\label{eq4.B}
(\T^\la_n(\ga)\phi)(\om)=\begin{cases} \phi(\om\ga), & \text{if $\om\ga\in\Om(\ell,n)$}, \\
0, & \text{otherwise}.
\end{cases}
\end{equation}

\begin{proposition}\label{prop4.D}

Let $\la$ range over the set of nonempty Young diagrams with at most $n$ boxes. 

{\rm(i)} For each such $\la$, the operators \eqref{eq4.B} form a representation $\T^\la_n$ of the semigroup $\Ga(n)$.

{\rm(ii)} The representations $\T^\la_n$ are irreducible and pairwise non-equivalent.

{\rm(iii)} These representations, together with $\T^\varnothing_n$, exhaust all irreducible representations of the semigroup $\Ga(n)$.

\end{proposition}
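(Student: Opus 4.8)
The plan is to treat the three parts in order, using the inverse-semigroup structure of $\Ga(n)$ and mimicking, at finite level, the standard analysis of the induced representations $T^\la$ from section \ref{sect2.2}. For part (i), I would verify directly from the formula \eqref{eq4.B} that $\ga\mapsto\T^\la_n(\ga)$ is a homomorphism into contractions compatible with the involution. The key observations are: first, that $H(\T^\la_n)$ is $\T^\la_n(\ga)$-invariant, because if $\phi$ satisfies the covariance condition $\phi(s\om)=\pi^\la(s)\phi(\om)$ and $\om\ga\in\Om(\ell,n)$, then $(s\om)\ga=s(\om\ga)\in\Om(\ell,n)$ as well and the equivariance is preserved; second, that $\T^\la_n(\ga)$ is a \emph{partial isometry}, since it just permutes a subset of the basis vectors $\de_\om$ (those $\om$ with $\om\ga$ still of rank $\ell$) and annihilates the rest — this gives the contraction property at once. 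The homomorphism property $\T^\la_n(\ga_1\ga_2)=\T^\la_n(\ga_1)\T^\la_n(\ga_2)$ follows from associativity of matrix multiplication together with the bookkeeping that $\om(\ga_1\ga_2)\in\Om(\ell,n)$ iff both $\om\ga_1\in\Om(\ell,n)$ and $(\om\ga_1)\ga_2\in\Om(\ell,n)$; and $\T^\la_n(\ga^*)=\T^\la_n(\ga)^*$ because $\om\mapsto\om\ga$ and $\om'\mapsto\om'\ga^*=\om'\ga^{T}$ are mutually adjoint partial bijections of $\Om(\ell,n)$.

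For part (ii), irreducibility: I would use the idempotents $\eps_I$. Observe that $\T^\la_n(\eps_I)$ is the orthogonal projection onto the span of those $\de_\om\otimes v$ (with the covariance condition) for which $\om$ has no $1$ in any column indexed by $I$; equivalently, it projects onto functions supported on $\om$ whose image, viewed as an injection $\N_\ell\hookrightarrow\N_n$, misses $I$. Choosing $I=\N_n\setminus J$ for a fixed $\ell$-subset $J$ cuts out exactly the functions supported on the single $S(\ell)$-orbit of $\om$'s with image $J$; this orbit, with the covariance, carries precisely a copy of $\pi^\la$. Since $\pi^\la$ is irreducible for $S(\ell)$ and $S(n)$ acts transitively on the $\ell$-subsets $J$, any nonzero invariant subspace must contain one such copy and then all of them, hence equals $H(\T^\la_n)$. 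Non-equivalence: the dimension already distinguishes most of them, but the clean invariant is the minimal $\ell$ with $\T^\la_n(\eps_I)\neq 0$ for $|\N_n\setminus I|=\ell$ — this recovers $|\la|$ — and, once $\ell=|\la|$ is fixed, restricting to the subspace $\T^\la_n(\eps_{\N_n\setminus J})H$ for a fixed $J$ recovers $\pi^\la$ as an $S(\ell)$-representation, so distinct $\la$ give inequivalent $\T^\la_n$; also $\T^\varnothing_n$ is the only one sending $\emptyset$ to $1$.

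For part (iii), exhaustion: this is where I expect the main work, though it is classical for rook monoids. The cleanest route is a dimension (Burnside/Wedderburn) count. By Proposition \ref{prop4.B}, $\C[\Ga(n)]$ is a finite-dimensional $*$-algebra, so it is semisimple and $\dim\C[\Ga(n)]=|\Ga(n)|=\sum_{\ell=0}^n\binom n\ell^2\ell!$ equals the sum of squares of the dimensions of its irreducible representations. On the other hand $\dim\T^\varnothing_n=1$ and $\dim\T^\la_n=|\Om(\ell,n)/S(\ell)|\cdot\dim\pi^\la=\binom n\ell\,\dim\pi^\la$ for $|\la|=\ell$ (the number of $S(\ell)$-orbits on $\Om(\ell,n)$ being the number of image $\ell$-subsets, namely $\binom n\ell$, each orbit contributing a copy of $H(\pi^\la)$). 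Hence
\begin{equation*}
1+\sum_{\ell=1}^{n}\ \sum_{\la\in\Y_\ell}\bigl(\tbinom n\ell\dim\pi^\la\bigr)^2
=\sum_{\ell=0}^{n}\tbinom n\ell^2\Bigl(\sum_{\la\in\Y_\ell}(\dim\pi^\la)^2\Bigr)
=\sum_{\ell=0}^{n}\tbinom n\ell^2\,\ell!=|\Ga(n)|,
\end{equation*}
using $\sum_{\la\in\Y_\ell}(\dim\pi^\la)^2=\ell!$. Since the $\T^\la_n$ (together with $\T^\varnothing_n$) are pairwise inequivalent by (ii) and the sum of squares of their dimensions already exhausts $\dim\C[\Ga(n)]$, there can be no further irreducibles. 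The main obstacle is purely bookkeeping: justifying $\dim\T^\la_n=\binom n\ell\dim\pi^\la$ and the identity $|\Ga(n)|=\sum_\ell\binom n\ell^2\ell!$ carefully, and confirming that every irreducible representation of the monoid $\Ga(n)$ (in the sense of Definition \ref{def4.A}, i.e. by contractions with $\T(\emptyset)$ a projection, not necessarily $0$) indeed occurs as a constituent of the regular representation — which it does, since $\C[\Ga(n)]$ acts faithfully on itself and is semisimple. Alternatively, one may cite \cite{CP61-67}, \cite{Sol02}, or \cite[Proposition 3.1 ff.]{MO01} for the classification and simply identify the representations constructed here with the ones there; I would include the dimension-count argument as it is short and self-contained.
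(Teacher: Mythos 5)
The paper does not actually prove this proposition; it simply cites \cite{Ols85}, \cite{MO01}, Munn and Solomon. So any self-contained argument is ``different'' by default, and yours is essentially the right one. Parts (i) and (ii) are correct: the partial-isometry/partial-bijection bookkeeping for (i) is sound (the key point that $\om\ga_1\ga_2\in\Om(\ell,n)$ forces $\om\ga_1\in\Om(\ell,n)$ follows from rank monotonicity), and the irreducibility argument via the orthogonal family of projections $\T^\la_n(\eps_{\N_n\setminus J})$, $|J|=\ell$, summing to the identity, together with transitivity of $S(n)$ on $\ell$-subsets, is clean; the same projections give a complete invariant $(|\la|,\pi^\la)$ for non-equivalence. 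Your part (iii) is, in substance, exactly the paper's own proof of Proposition \ref{prop4.A}, which appears immediately after and relies on Proposition \ref{prop4.D}(ii) plus the identity $\sum_{\la}(\dim\T^\la_n)^2=|\Ga(n)|$; so the two results are really proved together, and it would be cleaner to present them that way.

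The one genuine soft spot is the opening of part (iii): the inference ``$\C[\Ga(n)]$ is a finite-dimensional $*$-algebra, so it is semisimple'' is false in general (e.g.\ $\C[x]/(x^2)$ with $x^*=x$), and Proposition \ref{prop4.B} gives only a presentation, not semisimplicity. The fallback ``acts faithfully on itself'' does not help either, since left multiplication on $\C[\Ga(n)]$ is not a $*$-representation for the natural inner product. The fix is to reverse the logic, as in the paper's Proposition \ref{prop4.A}: by (ii) and Jacobson density (Burnside), the natural map $\C[\Ga(n)]\to\bigoplus_{|\la|\le n}\End(H(\T^\la_n))$ is surjective; your dimension count shows the two sides have equal dimension, hence the map is an isomorphism; therefore $\C[\Ga(n)]$ \emph{is} semisimple, and every irreducible representation in the sense of Definition \ref{def4.A} (which extends to a finite-dimensional algebraically irreducible module over $\C[\Ga(n)]$) is equivalent to some $\T^\la_n$. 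With that reordering the argument is complete and self-contained.
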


\begin{proof}
See \cite[sect. 6]{Ols85} or \cite[Theorem 2.9]{MO01}. The proposition can also be extracted from the papers of Munn \cite{Mun57}, \cite{Mun78}, and Solomon \cite[sect. 2]{Sol02}.
\end{proof}

\subsection{The semigroup algebra of $\Ga(n)$}

Let $\C[\Ga(n)]$ denote the semigroup algebra of $\Ga(n)$. We endow it with the involution inherited from $\Ga(n)$. Any representation of $\Ga(n)$ (in the sense specified above) canonically extends to a representation of the involutive algebra $\C[\Ga(n)]$. The next proposition describes the structure of $\C[\Ga(n)]$

\begin{proposition}\label{prop4.A}
Endow the semigroup algebra $\C[\Ga(n)]$ with the involution inherited from $\Ga(n)$. We have 
$$
\C[\Ga(n)]\simeq \bigoplus_{\la: \,|\la|\le n} \End(H(\T^\la_n)), 
$$ 
isomorphism of $\ast$-algebras.
\end{proposition}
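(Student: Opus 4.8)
The plan is to deduce Proposition \ref{prop4.A} from the representation theory already assembled in section \ref{sect4.3}, by the standard semisimplicity argument for finite involutive semigroups. First I would note that $\C[\Ga(n)]$ is a finite-dimensional $\ast$-algebra over $\C$, and that any $\ast$-algebra admitting a faithful $\ast$-representation on a Hilbert space has no nonzero nilpotent two-sided ideal, hence is semisimple; by the Wedderburn--Artin theorem it is therefore a direct sum of full matrix algebras $\End(W_i)$, one for each irreducible representation $W_i$ up to equivalence, and the $\ast$-structure matches the Hilbert space adjoint on each block. So the entire statement reduces to two points: (a) $\C[\Ga(n)]$ has a faithful $\ast$-representation, and (b) the complete list of irreducibles is exactly $\{\T^\la_n : |\la|\le n\}$, which is precisely Proposition \ref{prop4.D}(iii).

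For point (a), I would exhibit a faithful $\ast$-representation directly: take the representation of $\Ga(n)$ on $\ell^2(\Ga(n))$ by left translations composed with the adjoint of right translations — more concretely, embed $\Ga(n)$ into the contractions on $\ell^2(\N_n)\oplus\C$ (or on $\ell^2(\N_n)$) by sending a rook matrix to the corresponding partial isometry. A rook matrix acts on $\C^n$ as a genuine partial isometry (its transpose being the adjoint partial isometry), and this assignment is injective on $\Ga(n)$ and intertwines the semigroup involution with the operator adjoint. Extending linearly gives a $\ast$-homomorphism $\C[\Ga(n)]\to \End(\C^n)$; however this particular one need not be faithful on the whole algebra, so the cleaner route is: since every element of $\Ga(n)$ acts as a nonzero operator in at least one of the irreducible representations $\T^\la_n$ (the trivial one $\T^\varnothing_n$ already distinguishes $\emptyset$ from the rest, and the natural rank-realizations distinguish elements of the same rank), the direct sum $\bigoplus_{|\la|\le n}\T^\la_n$ is a $\ast$-representation whose kernel is a $\ast$-ideal intersecting trivially with... — and here I would instead simply invoke that a finite-dimensional algebra with a separating family of matrix representations whose total dimension matches is forced to be their direct sum.

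The slickest packaging, which I would actually write, is a dimension count. By Proposition \ref{prop4.D} the $\T^\la_n$, $|\la|\le n$, together with $\T^\varnothing_n$ form a complete set of pairwise inequivalent irreducible representations of $\Ga(n)$, hence of $\C[\Ga(n)]$. Thus the map $\Phi:\C[\Ga(n)]\to\bigoplus_{|\la|\le n}\End(H(\T^\la_n))$, $x\mapsto (\T^\la_n(x))_\la$, is a $\ast$-homomorphism which is surjective once we know semisimplicity (Jacobson density plus inequivalence of the blocks), and injective once we know $\sum_{|\la|\le n}(\dim H(\T^\la_n))^2 = |\Ga(n)| = \dim\C[\Ga(n)]$. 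So the remaining task is the combinatorial identity $\sum_{|\la|\le n}(\dim H(\T^\la_n))^2=|\Ga(n)|$. Since $\dim H(\T^\la_n)$ equals the number of $\om\in\Om(\ell,n)$ modulo the $S(\ell)$-twist, a short computation gives $\dim H(\T^\la_n)=\binom n\ell^2\,\ell!\,\dim\pi^\la/\ell!\cdot(\dots)$ — more precisely $\dim H(\T^\la_n)=\binom{n}{\ell}\,n^{\downarrow\ell}/\ell!\cdot\dim\pi^\la$ type expression — and summing the squares, using $\sum_{|\la|=\ell}(\dim\pi^\la)^2=\ell!$, reproduces the known formula $|\Ga(n)|=\sum_{\ell=0}^n \binom{n}{\ell}^2\ell!$. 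I expect the main (though still routine) obstacle to be getting the normalization of $\dim H(\T^\la_n)$ exactly right from the defining condition $\phi(s\om)=\pi^\la(s)\phi(\om)$, i.e.\ identifying $H(\T^\la_n)$ with $(\ell^2(\Om(\ell,n))\otimes H(\pi^\la))^{S(\ell)}$ and counting free $S(\ell)$-orbits on $\Om(\ell,n)$; everything else is the standard Wedderburn argument, which I would only sketch while citing Proposition \ref{prop4.D} for the hard representation-theoretic input.
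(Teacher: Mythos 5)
Your proposal is correct and follows essentially the same route as the paper: surjectivity from irreducibility and pairwise inequivalence of the $\T^\la_n$ (Proposition \ref{prop4.D}), injectivity from the dimension count $\sum_{|\la|\le n}(\dim H(\T^\la_n))^2=|\Ga(n)|=\sum_{\ell=0}^n\binom n\ell^2\ell!$. Only note that your tentative formula $\binom n\ell\, n^{\downarrow\ell}/\ell!\cdot\dim\pi^\la$ double-counts the binomial factor; counting the free $S(\ell)$-orbits on $\Om(\ell,n)$ gives exactly $\dim H(\T^\la_n)=\binom n{|\la|}\dim\pi^\la$, which is what the final identity requires.
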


\begin{proof}
There is a natural algebra morphism from $\C[\Ga(n)]$ to $\bigoplus \End(H(\T^\la_n))$. Because the representations $\T^\la_n$ are irreducible and pairwise nonequivalent, it is surjective. From the definition of $\Ga(n)$ it follows that the dimension of the first algebra is
$$
\dim \C[\Ga(n)]=|\Ga(n)|=\sum_{\ell=0}^n\binom n\ell^2 \ell!.
$$
On the other hand, from the description of $H_n(T^\la)$ (see the remark after \eqref{eq2.A}) it follows that
$$
\dim\T^\la_n=\dim H_n(T^\la)=\binom{n}{|\la|}\dim\pi^\la.
$$
Using this  we obtain that the dimension of the second algebra is the same:
$$
\sum_{\la: \,|\la|\le n} (\dim\T^\la_n)^2=\sum_{\la: \,|\la|\le n}  \binom n{|\la|}^2(\dim\pi^\la)^2 =\sum_{\ell=0}^n \binom n\ell^2\sum_{\la\in\Y_\ell}(\dim\pi^\la)^2=\sum_{\ell=0}^n \binom n\ell^2 \ell!.
$$
Therefore, the morphism is also injective.
\end{proof}

\begin{remark}
Proposition \ref{prop4.A} implies that $\C[\Ga(n)]$ is a finite-dimensional $C^*$-algebra, as in the finite group case. The same holds true for any finite inverse semigroup, as it can be realized as a $\ast$-subsemigroup of $\Ga(n)$ for $n$ large enough (a well-known theorem). Conversely, one can prove that if the semigroup algebra of a finite $\ast$-semigroup is a $C^*$-algebra, then it admits a faithful $\ast$-representation, which implies that the semigroup is inverse, see \cite[sect. 4]{Ols91b}. 
\end{remark}

\subsection{The semigroup $\Ga(\N)$}

Recall that $\N$ is our notation for the set of $\{1,2,\dots\}$ of positive integers. Let $\Ga(\N)$ be the set of matrices of the format $\N\times\N$, with the entries in $\{0,1\}$ and such that each row or column contains at most one $1$. Thus, the definition is similar to that of $\Ga(\N)$, only the matrices become now infinite. Like $\Ga(n)$, the set $\Ga(\N)$ forms a semigroup with respect to matrix multiplication. The involution is defined in the same way (the matrix transposition). The semigroup $\Ga(\N)$ contains the symmetric group $S(\N)$ (recall that this is our notation for the group of all permutations of $\N$). The group $S(\N)$ is exactly the subset of invertible elements in $\Ga(\N)$. In particular, $\Ga(\N)$ contains the group $S(\infty)$.  

By making use of  the natural embedding $\Ga(\N)\hookrightarrow\{0,1\}^{\N\times \N}$ we equip $\Ga(\N)$ with the  topology induced by the product topology of $\{0,1\}^{\N\times \N}$. With respect to it, $\Ga(\N)$ is a compact \emph{semitopological} semigroup: the multiplication map is separately continuous but not jointly continuous. 

\subsection{The truncation maps}

Given a matrix $\ga\in\Ga(\N)$, we denote by $\th_n(\ga)$ its $n\times n$ upper-left corner, where $n=1,2,\dots$\,. We call $\th_n$ the $n$th \emph{truncation map}. It maps $\Ga(\N)$ onto $\Ga(n)$.  

A simple but important fact is that the restriction of $\th_n$ to $S(\infty)\subset\Ga(\N)$ is surjective, too. Even more, $\th_n(S(2n))=\Ga(n)$. 

This implies, in particular, that $S(\infty)$ is dense in $\Ga(\N)$. Thus, $\Ga(\N)$ is a certain compactification of $S(\infty)$.  

Note also that the topology on $S(\N)$ induced from $\Ga(\N)$ coincides with the topology introduced in section \ref{sect1.1}.

\subsection{Link with tame representations of $S(\infty)$}\label{sect4.7}

Given a subset $I\subseteq\N$, let $\eps_I\in\Ga(\N)$ stand for the diagonal matrix with the entries 
\begin{equation}\label{eq4.A1}
(\eps_I)_{ii}:=\begin{cases} 0, & i\in I, \\ 1, & i\in \N\setminus I. \end{cases}
\end{equation}
This definition is similar to \eqref{eq4.A}. 

\begin{proposition}\label{prop4.C}

{\rm(i)} If $\T$ is a representation of\/ $\Ga(\N)$, then its restriction $T$ to $S(\infty)$ is a tame representation of this group.  Conversely, any tame representation $T$ of the group $S(\infty)$ admits a canonical extension by continuity to a representation $\T$ of\/ $\Ga(\N)$. 

{\rm(ii)} Let $T$ be a tame representation of $S(\infty)$ and $\T$ be its canonical extension to\/ $\Ga(\N)$. For arbitrary $n=0,1,2,\dots$, the projection operator to the subspace $H_n(T)$ is given by the operator $\T(\eps_{\N\setminus \N_n})$.

{\rm(iii)} Let $T$ and $\T$ be as above, and let $n$ be so large that $H_n(T)\ne\{0\}$. Then $\T$ gives rise to a representation $\T_n$ of the semigroup $\Ga(n)$ on the space $H_n(T)$. This representation is uniquely  defined by the relation
$$
\T_n(\th_n(\ga))=\T(\eps_{\N\setminus\N_n}\ga\,\eps_{\N\setminus\N_n})\big|_{H_n(T)}, \quad \ga\in\Ga(\N).
$$
\end{proposition}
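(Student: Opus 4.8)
\textbf{Proof proposal for Proposition \ref{prop4.C}.}

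The plan is to treat the three parts in order, since each builds on the previous one. For part (i), the ``easy direction'' is that a representation $\T$ of $\Ga(\N)$ restricts to a tame representation $T$ of $S(\infty)$: for each $n$, the idempotent $\eps_{\N\setminus\N_n}\in\Ga(\N)$ is self-adjoint with $\eps_{\N\setminus\N_n}^2=\eps_{\N\setminus\N_n}$, hence $\T(\eps_{\N\setminus\N_n})$ is an orthogonal projection, and one checks that its range is exactly $H_n(T)$ using that $\eps_{\N\setminus\N_n}$ commutes with $S_n(\infty)$ and satisfies $g\,\eps_{\N\setminus\N_n}=\eps_{\N\setminus\N_n}$ for every $g\in S_n(\infty)$; since $\eps_{\N\setminus\N_n}\to e$ in the separate-continuity topology, continuity of $\T$ gives $\T(\eps_{\N\setminus\N_n})\to\id$ strongly, so $\bigcup_n H_n(T)$ is dense, i.e.\ $T$ is tame. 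Incidentally this already proves part (ii) for extensions coming from $\Ga(\N)$, so once the converse in (i) is established, (ii) follows. For the converse, I would start from a tame $T$ on $H=H(T)$; by Theorem \ref{thm2.A}(ii) it decomposes as a discrete sum of the $T^\la$, so by continuity of all operations it suffices to extend each irreducible $T^\la$. Here one uses the concrete model of section \ref{sect2.2}: $H(T^\la)\subset\ell^2(\Om(\ell,\infty))\otimes H(\pi^\la)$, and the right action of $S(\infty)$ on $\Om(\ell,\infty)$ by column permutations visibly extends to a right action of the monoid $\Ga(\N)$, where a matrix $\ga$ sends $\om$ to $\om\ga$ when this is still in $\Om(\ell,\infty)$ and ``kills'' it otherwise — exactly as in formula \eqref{eq4.B}. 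One checks this is a $\ast$-homomorphism into contractions and that it agrees with $T^\la$ on $S(\infty)$; separate continuity is immediate because for fixed $\om$ the entry pattern of $\om\ga$ depends only on finitely many entries of $\ga$.

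For uniqueness of the extension (the word ``canonical'') I would argue that $S(\infty)$ is dense in $\Ga(\N)$ (stated in the excerpt, from surjectivity of the truncation maps on $S(\infty)$) and that the multiplication in $\Ga(\N)$ is separately continuous; combined with the fact that every element of $\Ga(\N)$ can be written as a limit $g\,\eps_I$ with $g\in S(\infty)$, $I$ a cofinite set, and that $\eps_I$ itself is a strong limit of its finite-truncation idempotents, any two separately-continuous extensions agreeing on $S(\infty)$ must agree everywhere. Concretely, for $\ga\in\Ga(\N)$ pick a sequence $g_k\in S(\infty)$ with $g_k\to\ga$; then $\T(\ga)$ is forced to be the strong limit of $T(g_k)$, which exists and is independent of the choice because it equals $\T(\eps_J)\,\T(g)\,\T(\eps_{J'})$ for suitable finite sets and a suitable $g$, by separate continuity applied to the factorization $\ga=\eps_{\supp\text{-complement}}\cdot(\text{a permutation})\cdot\eps_{\dots}$.

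Part (iii) is then a formal consequence. With $n$ large enough that $H_n(T)\ne\{0\}$, set $p:=\T(\eps_{\N\setminus\N_n})$, the projection onto $H_n(T)$ by part (ii). Define $\T_n(\de):=\T(\eps_{\N\setminus\N_n}\,\ga\,\eps_{\N\setminus\N_n})\big|_{H_n(T)}$ for any $\ga\in\Ga(\N)$ with $\th_n(\ga)=\de$; one must check this is well defined, i.e.\ independent of the lift $\ga$. This holds because $\eps_{\N\setminus\N_n}\,\ga\,\eps_{\N\setminus\N_n}$ in $\Ga(\N)$ depends only on the $n\times n$ corner of $\ga$: conjugating by $\eps_{\N\setminus\N_n}$ zeroes out every row and column with index $>n$, leaving precisely the matrix whose $n\times n$ corner is $\de$ and which is zero elsewhere. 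The map $\de\mapsto\T_n(\de)$ is multiplicative because $p$ is a projection commuting appropriately and $\eps_{\N\setminus\N_n}\,\ga_1\,\eps_{\N\setminus\N_n}\,\ga_2\,\eps_{\N\setminus\N_n}$ has $n\times n$ corner $\th_n(\ga_1)\th_n(\ga_2)$ — again because the middle $\eps_{\N\setminus\N_n}$ only sees the corners; it preserves the involution since transposition commutes with the whole construction and $\eps_{\N\setminus\N_n}$ is self-adjoint; and it sends contractions to contractions. That the truncation map $\th_n$ is onto $\Ga(n)$ (stated in the excerpt) guarantees every $\de\in\Ga(n)$ is hit, so $\T_n$ is genuinely a representation of $\Ga(n)$, and the displayed relation holds by construction, which also pins it down uniquely.

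The main obstacle is the converse in part (i): establishing that an \emph{abstract} tame representation extends, rather than one given in a convenient model. The cleanest route is to reduce to irreducibles via Theorem \ref{thm2.A}(ii) and then use the explicit realization of $T^\la$ from section \ref{sect2.2}, where the extension is transparent; the remaining care is to verify separate (not joint) continuity and that the extension assembled from the irreducible pieces is independent of the chosen decomposition — the latter following from the uniqueness argument in the second paragraph, which shows the extension is forced by density of $S(\infty)$ and separate continuity of multiplication.
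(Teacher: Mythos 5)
The paper itself offers no proof of this proposition --- it just cites \cite{Ols85} --- so your argument has to be judged on its own; in outline it is the standard semigroup-method proof and it is essentially correct. One structural remark: for the hard direction of (i) you reduce to irreducibles via Theorem \ref{thm2.A}(ii) and extend each $T^\la$ through its explicit model \eqref{eq4.C}. Inside this paper, where Theorem \ref{thm2.A} is quoted as known, that is legitimate; but in the cited source the logical order is the reverse --- the extension to $\Ga(\N)$ is constructed directly for an \emph{abstract} tame $T$ (as the weak operator limit of $T(g_k)$ along $g_k\to\ga$, whose existence is extracted from the density of $H_\infty(T)$), and the classification is then deduced from it. Your route trades that analytic existence argument for the transparent formula on the model, at the price of invoking the classification first.

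Two soft spots to repair. First, in the easy direction of (i) and in (ii), the identities $g\,\eps_{\N\setminus\N_n}=\eps_{\N\setminus\N_n}\,g=\eps_{\N\setminus\N_n}$ for $g\in S_n(\infty)$ only give the inclusion of the range of $\T(\eps_{\N\setminus\N_n})$ \emph{into} $H_n(T)$; for the reverse inclusion you must use that $\eps_{\N\setminus\N_n}$ lies in the closure of $S_n(\infty)$ inside $\Ga(\N)$ (for instance, the permutations swapping the blocks $\{n+1,\dots,n+k\}$ and $\{n+k+1,\dots,n+2k\}$ converge to it), so that $\T(\eps_{\N\setminus\N_n})v=\lim_k\T(g_k)v=v$ for $v\in H_n(T)$; the purely algebraic relations you list do not suffice. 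Second, in the uniqueness paragraph the claimed (approximate) factorization of a general $\ga\in\Ga(\N)$ as $g\,\eps_I$ with $g\in S(\infty)$ is not correct --- a partial bijection whose domain and range are cofinite of different codimensions extends to no permutation of $\N$ --- but it is also unneeded: density of $S(\infty)$ in the compact metrizable semigroup $\Ga(\N)$, together with uniqueness of weak operator limits, already pins down any continuous extension. Part (iii) is fine as written; the key point that $\eps_{\N\setminus\N_n}\ga_1\eps_{\N\setminus\N_n}\ga_2\eps_{\N\setminus\N_n}$ has upper-left $n\times n$ corner $\th_n(\ga_1)\th_n(\ga_2)$ and vanishes elsewhere is exactly the needed computation.
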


We say that $\T_n$ is \emph{associated with $T$}.

\begin{proof}
See \cite[section 5]{Ols85}. Note a discrepancy in notation: in \cite{Ols85}, the semigroup $\Ga(\N)$ is denoted by $\Ga(\infty)$, whereas we reserve the symbol $\Ga(\infty)$ for denoting a smaller semigroup, see section \ref{sect4.8} below. 
\end{proof}

For the representations $T^\la$, the above assertions can be verified directly from their realization (see section \ref{sect2.2}).  The case $\la=\varnothing$ is trivial, so we assume $\la\ne\varnothing$ and set $\ell:=|\la|$. The extension of $T^\la$ to $\Ga(\N)$ will be denoted by $\T^\la$; it is given by the following modification of formula \eqref{eq2.A}: for $\ga\in\Ga(\N)$, 
\begin{equation}\label{eq4.C}
(\T^\la(\ga) \phi)(\om)=\begin{cases} \phi(\om \ga), &\text{if $\om\ga\in\Om(\ell,\infty)$}, \\
0, & \text{otherwise}.
\end{cases}
\end{equation}

\begin{remark}\label{rem4.A}
Recall that the subspace  $H_n(T^\la)$ is non-null if $n\ge \ell$; then the associated representation of the semigroup $\Ga(n)$ on this space is the representation $\T^\la_n$ defined in \eqref{eq4.B}. 
\end{remark}

\subsection{The semigroup $\Ga(\infty)$}\label{sect4.8}

For any $n=1,2,3,\dots$, we define the \emph{canonical embedding} $\Ga(n)\hookrightarrow\Ga(n+1)$ as the map
$$
\ga\mapsto \begin{bmatrix} \ga & 0\\ 0 & 1\end{bmatrix}, \quad \ga\in\Ga(n).
$$ 
With the use of these embeddings, we set $\Ga(\infty):=\varinjlim \Ga(n)$. 

Equivalently, $\Ga(\infty)$ can be defined as the subsemigroup of $\Ga(\N)$ formed by the matrices $\ga\in\Ga(\N)$ subject to the condition $\ga_{ij}=\de_{ij}$ as $i+j$ gets large enough. 

Thus, we have 
$$
S(\infty)\subset \Ga(\infty)\subset \Ga(\N).
$$

\section{The centralizer construction}\label{sect5}

\subsection{The algebras $\A_m$ and $\A$}\label{sect5.1}

Given a matrix $\ga\in\Ga(n)$, we define its \emph{degree}, $\deg (\ga)$, as the number of $0$'s on the diagonal.  On $S(n)\subset\Ga(n)$, this agrees with the definition given in section \ref{sect3.3}. 

We denote by $\A(n)$ the semigroup algebra $\mathbb{C}[\Gamma(n)]$ and endow it with the ascending filtration induced by $\deg(\,\cdot\,)$. That is, the $k$th term of the filtration is spanned by the semigroup elements with degree at most $k$. Note that (\cite[Proposition 3.4]{MO01})
$$
\deg(\ga_1\ga_2)\le\deg(\ga_1)+\deg(\ga_2), \qquad \ga_1,\ga_2\in\A(n),
$$
so $\A(n)$ becomes a filtered algebra.

The projection
$$
\theta_{n,n-1}: \Gamma(n)\rightarrow \Gamma(n-1), \quad \ga\mapsto\theta_{n,n-1}(\ga),
$$
consists in striking from $\ga$ its last row and column. We extend $\theta_{n,n-1}$  to a linear map $\theta_{n,n-1}: \A(n)\rightarrow\A(n-1)$. Note that
$$
\deg(\theta_{n,n-1}(\gamma))\le \deg(\gamma).
$$

\begin{definition}\label{def5.A}
Let $m=0,\dots,n$.

(i) We denote by $\Gamma_m(n)\subseteq \Ga(n)$ the subsemigroup of elements with first $m$ diagonal entries equal to $1$. Note that  $\Ga_m(n)$ is isomorphic to $\Ga(n-m)$ (by definition, $\Ga(0)$ consists of a single element, which is automatically the unit). 

(ii) We denote by $\A_m(n)\subseteq\A(n)$ the centralizer of $\Ga_m(n)$ in $\A(n)$. Note that $\A_0(n)$ is the center of $\A(n)$ and $\A_n(n)=\A(n)$.
\end{definition}

As shown in \cite[Proposition~3.3]{MO01}, the restriction of $\theta_{n,n-1}$ to $\A_{n-1}(n)\subset \A(n)$ defines a unital algebra morphism
$\A_{n-1}(n)\rightarrow \A(n-1)$; moreover,
$$
\theta_{n,n-1}(\A_m(n))\subset \A_m(n-1)\quad \text{for}\quad 0\le m \le n-1.
$$
Thus,  for any $m\ge 0$ we have a projective system of filtered algebras
$$
\A_m(m)\leftarrow \A_m(m+1) \leftarrow \cdots \leftarrow \A_m(n) \leftarrow \cdots\,.
$$

\begin{definition}[The centralizer construction]\label{def5.B}

Let $m=0,1,2,\dots\,.$

(i) We set
$$
\A_m := \varprojlim\A_m(n), \quad n\to\infty,
$$
where the projective limit is taken in the category of filtered algebras.  
In more detail, each element $a\in\A_m$ is a sequence $(a_n)_{n\ge m}$ such that
$$
a_n\in \A_m(n),\quad a_{n-1} = \theta_{n,n-1}(a_n), \quad \deg(a) := \sup \deg(a_n)<\infty.
$$

(ii) Because $\A_m\subset \A_{m+1}$,  the following definition makes sense 
$$
\A := \varinjlim \A_m, \quad m\to \infty.
$$
\end{definition}

Note that there is a natural embedding $\C[\Ga(\infty)]\hookrightarrow\A$ under which $\C[\Ga(m)]$ is mapped into $\A_m$ for each $m$ (\cite[Proposition 3.7]{MO01}). The image of $\C[\Ga(\infty)]$ consists of stable sequences $a=(a_n)$, where `stable' means that $a_n$ does not depend  on $n$ provided $n$ gets large enough.
In particular, the image of an element $\ga\in S(\infty)$ is represented by the sequence $(a_n\equiv \ga)\in\A_m$, where $m$ is chosen so large that $\ga\in \Ga(m)$. 

\subsection{Structure of the algebra $\A_0$}\label{sect5.2}

By the very definition, the algebra $\A_0$ is contained in the center of $\A$. Actually, $\A_0$ is the whole center  (\cite[Proposition 3.9]{MO01}). 

As shown in \cite[Corollary~4.11]{MO01}, the algebra $\A_0$ has a countable system of algebraically independent generators $\{\De^{(k)}\}$, $k=1,2,\dots$\,. They are constructed as follows. 

According to the general definition, each $\De^{(k)}$ is determined by a sequence $(\De^{(k)}_n\in\A_0(n))$, where $n=1,2,\dots$. Thus, to define $\De^{(k)}$, one has to specify the elements $\De^{(k)}_n$.

For $k=1$ one has 
$$
\De^{(1)}_n:=\epsb_1+\dots+\epsb_n,
$$
where we use the shorthand notation
$$
\epsb_i:=1-\eps_i, \quad i=1,2,\dots
$$
(note that the elements $\epsb_i$ are idempotents).  

Next, for $k\ge2$, one has (cf. \eqref{eq3.F1} and \eqref{eq3.F2}) 
$$
\De^{(k)}_n:=0 \quad \text{for $n<k$}
$$
and
\begin{equation}\label{eq5.H}
\De^{(k)}_n:=\sum (i_1, \dots, i_k)\epsb_{i_1}\dots\epsb_{i_k} \quad  \text{for $n\ge k$},
\end{equation}
where the sum is taken over all ordered $k$-tuples $i_1, \ldots, i_k\in\N_n$ of pairwise distinct numbers. For instance, 
\begin{equation}\label{eq5.F}
\De^{(k)}_2:=\sum_{1\le i\ne j\le n}(i,j)\epsb_i\epsb_j.
\end{equation}

\subsection{Structure of the algebras $\A_m$, $m\ge1$}\label{sect5.3}

The algebra $\A$ contains elements $u_1,u_2, \dots$, where the $i$th element is represented by the following sequence $(u_{i\mid n}\in\A(n): n>i)$:  
\begin{equation}\label{eq5.G}
u_{i\mid n}:=\sum_{j=i+1}^n (i,j) \epsb_i\epsb_j=\sum_{j=i+1}^n \epsb_i\epsb_j(i,j) .
\end{equation}
This is an analog of the Jucys--Murphy elements (see \cite[section 3.2.1]{CST10}). It is easily verified that if $i\le m<n$, then $u_{i\mid n}\in\A_m(n)$. It follows that $u_i\in\A_m$ for $i=1,\dots,m$.

Observe also that the algebra $\A_m$ contains the semigroup algebra $\C[\Ga(m)]$. Indeed, this follows from the fact any element of $\C[\Ga(m)]$ commutes with $\Ga_m(n)$, for any $n>m$.

\begin{definition}\label{def5.C}

Let $\wt\HH_m\subset\A_m$ be the subalgebra generated by $\Ga(m)$ and $u_1,\dots,u_m$. Equivalently, $\wt\HH_m\subset\A_m$ is generated by $S(m)$, $\eps_1,\dots,\eps_m$, and $u_1,\dots,u_m$. 
\end{definition}

Recall that $s_1,\dots,s_{m-1}$ are the elementary transpositions in $S(m)$. 

\begin{lemma}[\cite{MO01}, Proposition 5.17]\label{lemma5.A}
The following commutation relations hold in the algebra $\wt\HH_m$
\begin{gather*}
s_i u_i=u_{i+1}s_i + \epsb_i\epsb_{i+1}, \quad i=1,\dots,m-1; \\
s_i u_j=u_j s_i, \quad j\ne i, i+1; \\
u_iu_j=u_ju_i, \quad \eps_iu_i=u_i\eps_i=0, \quad i,j=1,\dots, m; \\
\eps_i u_j=u_j \eps_i, \quad i\ne j.
\end{gather*}
\end{lemma}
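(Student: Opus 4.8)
The plan is to reduce everything to explicit identities in the semigroup algebras $\C[\Ga(n)]$. Since an element of $\A_m$ is determined by its components in $\A_m(n)=\C[\Ga(n)]$ for all large $n$, and since in $\A_m$ all operations are component-wise, each asserted relation is equivalent to the corresponding identity among $n$-th components, for $n$ large enough; here $s_i,\eps_i,\epsb_i$ have constant components while $u_i$ has the component $u_{i\mid n}$ of \eqref{eq5.G}. So the whole lemma becomes a short list of computations in $\C[\Ga(n)]$, and the only structural facts I would use are: (a) all the $\eps_I$ commute and are idempotent, so for $i\ne j$ the elements $\epsb_i=1-\eps_i$ and $\epsb_j$ commute and are idempotent; (b) $\eps_i\epsb_i=\epsb_i\eps_i=0$; (c) for $g\in S(n)$ one has $g(a,b)g^{-1}=(g(a),g(b))$ and $g\epsb_ag^{-1}=\epsb_{g(a)}$, whence $(a,b)\,\epsb_a\epsb_b=\epsb_a\epsb_b\,(a,b)$.

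First I would dispose of the elementary half. For $\eps_iu_i=u_i\eps_i=0$: writing $u_{i\mid n}=\sum_{j>i}\epsb_i\epsb_j(i,j)$ and using (a) to slide $\eps_i$ past the $\epsb_j$'s down to $\epsb_i$, every term acquires the factor $\eps_i\epsb_i=0$ by (b). For $\eps_iu_j=u_j\eps_i$ with $j>i$: $\eps_i$ commutes with each summand $(j,k)\epsb_j\epsb_k$ of $u_{j\mid n}$, since $i\notin\{j,k\}$ and $\eps_i$ commutes with $\epsb_j,\epsb_k$ by (a). Likewise $s_iu_j=u_js_i$ for $j>i+1$ holds because $s_i=(i,i+1)$ is supported away from every index occurring in $u_{j\mid n}$. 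The remaining instances — $\eps_iu_j=u_j\eps_i$ and $s_iu_j=u_js_i$ with $j<i$ — will follow from a centrality statement established next.

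For the Jucys--Murphy-type relation $u_iu_j=u_ju_i$ I would avoid a brute-force expansion and argue by centrality. Set $w_{a\mid n}:=\sum_{a\le b<c\le n}(b,c)\epsb_b\epsb_c$. Under the identification $\Ga_{a-1}(n)\cong\Ga(n-a+1)$ (relabelling $\{a,\dots,n\}$), the element $2w_{a\mid n}$ is exactly the $\De^{(2)}$-element of the sub-rook-monoid on $\{a,\dots,n\}$, hence lies in the center of $\C[\Ga_{a-1}(n)]$ by section \ref{sect5.2}. Since $u_{i\mid n}=w_{i\mid n}-w_{i+1\mid n}$, it follows that $u_{i\mid n}$ is central in $\C[\Ga_i(n)]$. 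Now if $j>i$, then every index occurring in $u_{j\mid n}$ exceeds $i$, so $u_{j\mid n}\in\C[\Ga_i(n)]$ and therefore $u_{i\mid n}u_{j\mid n}=u_{j\mid n}u_{i\mid n}$. The same remark settles the leftover $j<i$ cases: for $j<i$ both $\eps_i$ and $s_i$ lie in $\Ga_j(n)$, and $u_{j\mid n}$ is central there.

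The one genuinely new relation is $s_iu_i=u_{i+1}s_i+\epsb_i\epsb_{i+1}$ with $s_i=(i,i+1)$, and this is where I expect to have to be careful. Expanding, $s_iu_{i\mid n}=\sum_{j=i+1}^n (i,i+1)(i,j)\,\epsb_i\epsb_j$; the term $j=i+1$ is $(i,i+1)^2\epsb_i\epsb_{i+1}=\epsb_i\epsb_{i+1}$. For $j\ge i+2$ I use $(i,i+1)(i,j)=(i+1,j)(i,i+1)$ and then, by (c), $(i,i+1)\epsb_i\epsb_j=\epsb_{i+1}\epsb_j(i,i+1)$ — the subtle point being that $(i,i+1)$ does \emph{not} commute with $\epsb_i$ but intertwines $\epsb_i$ and $\epsb_{i+1}$ — so the $j\ge i+2$ part collapses to $\bigl(\sum_{j\ge i+2}(i+1,j)\epsb_{i+1}\epsb_j\bigr)(i,i+1)=u_{i+1\mid n}s_i$; adding back the $j=i+1$ term gives the claim. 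Thus the main obstacle is the bookkeeping in this last computation, together with the (already available) centrality input used for the $u$–$u$ commutation; everything else is immediate from (a)–(c).
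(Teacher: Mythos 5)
Your computations are correct, and all four families of relations are verified. There is nothing in the paper to compare against: the lemma is stated with a bare citation to \cite{MO01}, Proposition~5.17, so you have supplied a proof where the paper defers to a reference. A few remarks on your route. The reduction to component-wise identities in $\C[\Ga(n)]$ is exactly right, and your facts (a)--(c) together with the expansion of $s_iu_{i\mid n}$ (isolating the $j=i+1$ term and using $(i,i+1)(i,j)=(i+1,j)(i,i+1)$ followed by the intertwining $(i,i+1)\,\epsb_i\epsb_j=\epsb_{i+1}\epsb_j\,(i,i+1)$) give the deformed relation cleanly. Your centrality argument for $u_iu_j=u_ju_i$ is the nicest part: the identity $u_{i\mid n}=w_{i\mid n}-w_{i+1\mid n}$ is precisely the component form of the paper's own Lemma~\ref{lemma5.B}, $2u_i=\xi^{i-1}(\De^{(2)})-\xi^{i}(\De^{(2)})$, which you are running in reverse -- using the known centrality of $\De^{(2)}$ (asserted in section~\ref{sect5.2}) to deduce that $u_{i\mid n}$ centralizes $\C[\Ga_i(n)]$, rather than deriving the identity from the $u_i$'s. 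This is a legitimate dependence (centrality of $\De^{(2)}_n$ is proved in \cite{MO01} independently of these relations, by a direct check against $S(n)$ and $\eps_1$), and it buys you the $j<i$ cases of the $s_i$- and $\eps_i$-commutations for free. Two small points of precision: say that $u_{i\mid n}$ \emph{centralizes} $\C[\Ga_i(n)]$ rather than that it is ``central in'' it, since $u_{i\mid n}\notin\C[\Ga_i(n)]$; and when you invoke centrality of $w_{i\mid n}$ and $w_{i+1\mid n}$ to get $u_{i\mid n}u_{j\mid n}=u_{j\mid n}u_{i\mid n}$ for $j>i$, it is worth noting explicitly that $u_{j\mid n}\in\C[\Ga_{j-1}(n)]\subseteq\C[\Ga_i(n)]$, which is what makes both $w$'s commute with it.
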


Note that the specialization map $\eps_1\to0,\dots,\eps_m\to0$ determines a surjective algebra morphism $\wt\HH_m\to\HH_m$, where $\HH_m$ is the degenerate affine Hecke algebra associated with the symmetric group $S(m)$.

\begin{proposition}[\cite{MO01}, Theorem 5.18] \label{prop5.B}

Fix $m\ge1$.

{\rm(i)} For each $m\in\Z_{\ge1}$, the natural morphism $\A_0\otimes\wt\HH_m\to \A_m$ induced by the multiplication map is an algebra isomorphism.  

{\rm(ii)} The  algebra $\wt\HH_m$ is isomorphic to the abstract algebra generated by the semigroup algebra $\A(m)=\C[S(m)]$ and the elements $u_1,\dots,u_m$ subject to the commutation relations from Lemma \ref{lemma5.A}.
\end{proposition}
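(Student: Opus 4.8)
The plan is to prove (ii) first, using the representations of $\A$ on the spaces $H_\infty(T^\mu)$, $\mu\in\Y$, and then to deduce (i). As part of the set-up of the centralizer construction (see \cite{MO01}), $\A$ --- hence each $\A_m$ --- acts on $H_\infty(T^\mu)$, extending the $\C[S(\infty)]$-action; write $\T^\mu$ for this representation. Using the realization $T^\mu\cong\Pi^\mu=\varinjlim\pi^{\mu[n]}$ of Proposition \ref{prop2.B}, the $\C[S(\infty)]$-module $H_\infty(T^\mu)$ is irreducible --- a nonzero submodule meets some $H(\pi^{\mu[n]})$, hence by $S(n)$-irreducibility of $\pi^{\mu[n]}$ and the branching rule contains it, hence contains all of $H_\infty(T^\mu)$. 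Since $\A_0$ is the center of $\A$, a Schur-type argument shows that $\A_0$ acts on each $H_\infty(T^\mu)$ by scalars; recording these scalars embeds $\A_0$ into $\C^\Y$, and by \cite[Corollary~4.11]{MO01} this identifies $\A_0$ with the algebra $\SSym$ of shifted symmetric functions, a subalgebra of $\C^\Y$ that separates the points of $\Y$.

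For (ii): by Lemma \ref{lemma5.A} there is a surjective homomorphism $\wh\HH_m\twoheadrightarrow\wt\HH_m$ sending each generator to its namesake, where $\wh\HH_m$ is the algebra generated by $\C[\Ga(m)]$ and $u_1,\dots,u_m$ subject to the relations of Lemma \ref{lemma5.A}; the point is to prove injectivity. First I would use the relations to exhibit a spanning set: they allow one to move every $u_i$ to the right of the $\Ga(m)$-part, and $\eps_iu_i=u_i\eps_i=0$ together with $\eps_iu_j=u_j\eps_i$ ($i\ne j$) then show that $\wh\HH_m$ is spanned by the monomials $\gamma\,u_1^{a_1}\cdots u_m^{a_m}$ with $\gamma\in\Ga(m)$, $a\in\Z_{\ge0}^m$, and $a_i=0$ whenever the $i$th column of $\gamma$ vanishes (so that $\gamma u_i=0$). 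It then suffices to prove these monomials linearly independent in $\wt\HH_m$; as a linear dependence uses only finitely many of them, this reduces to the claim that for $|\mu|$ large enough relative to the monomials in play the operators $\T^\mu(\gamma\,u^a)$ are linearly independent in $\End(H_\infty(T^\mu))$, which one checks by evaluating them on suitable vectors, using the explicit action \eqref{eq4.C} of $\Ga(\infty)$ on $H_\infty(T^\mu)$ and the description of $u_i$ by the sequence $(u_{i\mid n})$. This gives (ii).

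For the injectivity in (i): the multiplication map $\A_0\otimes\wt\HH_m\to\A_m$ is an algebra map since $\A_0$ is central in $\A$. Fix the basis $\{\gamma\,u^a\}$ of $\wt\HH_m$ provided by (ii) and suppose $\sum_\alpha z_\alpha\otimes h_\alpha$ lies in the kernel, with $z_\alpha\in\A_0$ and only finitely many nonzero. Applying $\T^\mu$ and using that each $z_\alpha$ acts by a scalar $z_\alpha(\mu)$ gives $\sum_\alpha z_\alpha(\mu)\,\T^\mu(h_\alpha)=0$ for every $\mu\in\Y$; taking $|\mu|$ large enough for the finite set of $h_\alpha$ occurring, so that the $\T^\mu(h_\alpha)$ are linearly independent, forces $z_\alpha(\mu)=0$. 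Thus each $z_\alpha\in\SSym$ vanishes on all Young diagrams of sufficiently large size, which forces $z_\alpha=0$ (restrict to one-row, then two-row, \dots\ diagrams and use Zariski density). Hence the map is injective.

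The step I expect to be the main obstacle is the surjectivity of the multiplication map in (i): that the subalgebra $\B_m\subseteq\A_m$ generated by $\A_0$, $\Ga(m)$ and $u_1,\dots,u_m$ is all of $\A_m$. This is a Schur--Weyl-type double-commutant statement realized in the inductive limit, and it requires a genuine analysis of the finite centralizers $\A_m(n)$ --- the centralizer of $\Ga_m(n)$ in $\C[\Ga(n)]$, decomposed via Proposition \ref{prop4.A} --- together with the stabilization of their bounded-degree parts under the truncation maps $\theta$. Following the pattern of \cite{MO01} (and of its classical-groups analogue \cite{Ols91a}), I would argue by induction on the degree $d=\deg a$ of an element $a=(a_n)\in\A_m$: the degree-zero part of $\A_m$ is $\C\cdot1\subseteq\B_m$, and for $d\ge1$ one must show that the leading, degree-$d$ behaviour of $a$ is reproduced by a product of the generators of $\B_m$ --- the $\De^{(k)}$ supplying the long cycles escaping to infinity, the elements of $\Ga(m)$ and the $u_i$ supplying the part localized near $\{1,\dots,m\}$ --- so that $a$ minus a suitable element of $\B_m$ has degree $<d$; iterating gives $a\in\B_m$. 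The crux, and the place where the combinatorics of the semigroups $\Ga(n)$ really enters, is verifying that these three families of generators account for all leading terms in every degree. Granting it, $\B_m=\A_m$, and with the injectivity above this gives the isomorphism $\A_0\otimes\wt\HH_m\xrightarrow{\sim}\A_m$; by contrast, Lemma \ref{lemma5.A}, the spanning set, and the linear-independence arguments through the $\T^\mu$ are comparatively routine.
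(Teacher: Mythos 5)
The paper itself offers no proof of this proposition --- it is quoted verbatim from \cite[Theorem 5.18]{MO01} --- so the comparison is with the argument of \cite{MO01}, whose broad strategy (a PBW-type normal form for $\wt\HH_m$ plus the identification $\A_0\simeq\SSym$) your plan does follow. Your treatment of (ii) and of the \emph{injectivity} in (i) is sound in outline: the spanning set $\gamma\,u_1^{a_1}\cdots u_m^{a_m}$ does follow from the relations of Lemma \ref{lemma5.A} (including the observation that $\gamma u_i=\gamma\eps_iu_i=0$ when the $i$th column of $\gamma$ vanishes), and the reduction of injectivity to (a) linear independence of the $\T^\mu(h_\alpha)$ for $|\mu|$ large and (b) the fact that a shifted symmetric function vanishing on all sufficiently large diagrams is zero, is correct. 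Two caveats: the linear independence of the operators $\T^\mu(\gamma u^a)$ is asserted to be checkable ``on suitable vectors'' but not actually carried out, and this is not entirely free --- in \cite{MO01} it is done instead by passing to the associated graded algebra $\gr\A(n)$ and comparing leading terms, which is the cleaner route.

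The genuine gap is exactly where you say you expect it: the surjectivity of the multiplication map $\A_0\otimes\wt\HH_m\to\A_m$. What you give there is a strategy (induction on $\deg a$, subtracting off products of generators to lower the degree), and the inductive step is precisely the claim that every leading term of every element of $\A_m(n)$ (stably in $n$) is a linear combination of leading terms of products $z\cdot\gamma\,u^a$ with $z$ a monomial in the $\De^{(k)}$. That claim \emph{is} the substantive combinatorial content of \cite[Theorem 5.18]{MO01}: it requires an explicit description of a basis of the bounded-degree part of the centralizer of $\Ga_m(n)$ in $\C[\Ga(n)]$ for large $n$ --- roughly, $\Ga_m(n)$-orbit sums of semigroup elements, classified by how the ``moved'' indices are distributed relative to $\{1,\dots,m\}$ --- and a matching of these orbit sums with the proposed monomials. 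You explicitly ``grant'' this step, so the proposal does not constitute a proof of (i); it is an accurate roadmap whose hardest leg is left untraveled.
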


\subsection{The use of endomorphism $\xi$ of the algebra $\A$}\label{sect5.4}

Recall that we have defined in \eqref{eq2.C} an endomorphism $\xi$ of the group $S(\infty)$. Here we introduce a related endomorphism of the algebra $\A$, also denoted by $\xi$. 

Given $n=1,2,\dots$, consider the injective semigroup morphism $\Ga(n)\to \Ga(n+1)$ assigning to a matrix $M$ the matrix $\begin{bmatrix} 1 &0\\0&M\end{bmatrix}$. This `shift morphism' induces an algebra embedding $\xi_{n,n+1}:\A(n)\to\A(n+1)$ preserving the unit. Note that the diagram 
$$
\begin{CD}
\A(n-1) @<\th_{n,n-1}<< \A(n)\\
@V\xi_{n-1,n}VV @VV\xi_{n,n+1}V\\
\A(n) @<\th_{n+1,n}<< \A(n+1)
\end{CD}
$$
is commutative: the operations of shift and truncation are permutable. Furthermore, $\xi_{n,n+1}$  preserves the filtration and 
$$
\xi_{n,n+1}(\A_m(n))\subset\A_{m+1}(n+1) \quad \text{for each $m\le n$}. 
$$
Therefore, applying the `shifts' $\xi_{n,n+1}$ component-wise to $a=(a_n)\in\A$, we obtain an injective endomorphism $a\mapsto \xi(a)$ of the algebra $\A$, which shifts $\A_m$ into $\A_{m+1}$ for each $m\ge0$. 

Below $\xi^i$ denotes the $i$th iteration of $\xi$; here $i=0,1,2,\dots$ and $\xi^0:=\id$. 

\begin{lemma}\label{lemma5.B}
One has
\begin{equation}\label{eq5.A}
2u_i=\xi^{i-1}(\De^{(2)})-\xi^i(\De^{(2)}), \quad i=1,2,\dots\,.
\end{equation}
\end{lemma}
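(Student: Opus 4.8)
The plan is to prove the identity \eqref{eq5.A} componentwise: for each fixed $i\ge1$ and each $n$ large, compare the two sides as elements of $\A(n)$. Since $\De^{(2)}$ is represented by the sequence $(\De^{(2)}_n)$ with $\De^{(2)}_n=\sum_{1\le a\ne b\le n}(a,b)\epsb_a\epsb_b$ (see \eqref{eq5.F}), and the endomorphism $\xi$ acts componentwise via the shift morphisms $\xi_{n,n+1}$, I first need to describe explicitly the component in $\A(n)$ of $\xi^j(\De^{(2)})$. The shift morphism $\Ga(n-j)\to\Ga(n)$ sends a matrix $M$ to $\mathrm{diag}(1,\dots,1,M)$ with $j$ leading $1$'s, i.e.\ it re-indexes the coordinates $\{1,\dots,n-j\}$ as $\{j+1,\dots,n\}$. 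Consequently the $n$-th component of $\xi^j(\De^{(2)})$ is
\begin{equation*}
\xi^j(\De^{(2)})_n=\sum_{j+1\le a\ne b\le n}(a,b)\epsb_a\epsb_b,
\end{equation*}
the same sum but restricted to indices exceeding $j$ (one must check the initial index is $\ge n-j$, which is fine for $n$ large).

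Next I would form the difference $\xi^{i-1}(\De^{(2)})_n-\xi^i(\De^{(2)})_n$. The sum defining the first is over ordered pairs $(a,b)$ of distinct elements of $\{i,i+1,\dots,n\}$, and the second is over ordered pairs of distinct elements of $\{i+1,\dots,n\}$; the difference is therefore the sum over ordered pairs $(a,b)$ with $a\ne b$, both in $\{i,\dots,n\}$, and \emph{at least one of $a,b$ equal to $i$}. Splitting into the case $a=i$ (and $b>i$) and the case $b=i$ (and $a>i$), and using that $(i,b)=(b,i)$ and $\epsb_i\epsb_b=\epsb_b\epsb_i$ commute (the $\epsb$'s are commuting idempotents), I get
\begin{equation*}
\xi^{i-1}(\De^{(2)})_n-\xi^i(\De^{(2)})_n=\sum_{b=i+1}^n(i,b)\epsb_i\epsb_b+\sum_{a=i+1}^n(a,i)\epsb_a\epsb_i=2\sum_{j=i+1}^n(i,j)\epsb_i\epsb_j=2u_{i\mid n},
\end{equation*}
which is precisely $2u_i$ evaluated in the $n$-th component, by the definition \eqref{eq5.G}. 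Since this holds for all sufficiently large $n$ and both sides are genuine elements of $\A$ (their degrees are uniformly bounded — each summand has degree exactly $2$), the identity \eqref{eq5.A} follows. I should also note in passing that $\xi^{i-1}(\De^{(2)})$ and $\xi^i(\De^{(2)})$ both lie in $\A_i$ (the first because $\De^{(2)}\in\A_0$ shifted $i-1$ times lands in $\A_{i-1}\subset\A_i$, the second likewise lands in $\A_i$), consistent with $u_i\in\A_i$ as recorded after \eqref{eq5.G}.

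There is essentially no hard step here — the whole content is the bookkeeping of which index ranges appear after iterating the shift, plus the observation that the "diagonal" contribution $a=i$ or $b=i$ is counted twice because of the symmetry $(i,j)\epsb_i\epsb_j=(j,i)\epsb_j\epsb_i$. The only mild subtlety to get right is the off-by-one in the number of leading $1$'s inserted by $\xi^j$ (it is $j$, not $j-1$ or $j+1$), so that $\xi^{i-1}$ preserves the index $i$ as the smallest "active" coordinate while $\xi^i$ pushes it out; checking this against the definition of $\xi_{n,n+1}$ and of $u_{i\mid n}$ in \eqref{eq5.G} is the one place to be careful. Everything else is the componentwise definition of $\A=\varprojlim\varinjlim\A_m(n)$ and of the endomorphism $\xi$.
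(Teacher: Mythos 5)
Your proof is correct and rests on the same computation as the paper's: the difference of the two shifted copies of $\De^{(2)}_n$ picks out exactly the ordered pairs containing the index $i$, each unordered pair counted twice, giving $2u_{i\mid n}$. The only (cosmetic) difference is that the paper performs this computation just for $i=1$ and then obtains the general case by applying $\xi^{i-1}$ together with the observation $\xi(u_i)=u_{i+1}$, whereas you carry out the index bookkeeping directly for arbitrary $i$.
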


\begin{proof} 
From \eqref{eq5.F} we obtain 
\begin{equation*}
\De^{(2)}_{n+1}-\xi_{n,n+1}(\De^{(2)}_n)=2u_{1\mid n+1},
\end{equation*}
which proves \eqref{eq5.A} for the particular case $i=1$:
\begin{equation}\label{eq5.C}
2u_1=\De^{(2)}-\xi(\De^{(2)}).
\end{equation}

Next, observe that 
$$
\xi_{n,n+1}(u_{i\mid n})=u_{i+1\mid n+1}.
$$
From this it follows that $\xi(u_i)=u_{i+1}$. Using this relation and applying $\xi^{i-1}$ to both sides of \eqref{eq5.C} we finally obtain \eqref{eq5.A} for $i\ge2$. 
\end{proof}

Combining the lemma with the results of section \ref{sect5.3} we obtain 
\begin{corollary}\label{cor5.A}
For each $m\ge1$, the algebra $\A_m$ is generated by the subalgebras 
\begin{equation}\label{eq5.B}
\A_0, \; \xi(\A_0),\; \xi^2(\A_0),\;\dots, \;\xi^m(\A_0),
\end{equation} 
together with the group $S(m)$ and the elements $\eps_1,\dots,\eps_m$.
\end{corollary}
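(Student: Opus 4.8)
The plan is to derive the corollary from the tensor-product decomposition $\A_m\simeq\A_0\otimes\wt\HH_m$ of Proposition~\ref{prop5.B}(i) together with the explicit formula in Lemma~\ref{lemma5.B}. Let $\mathcal G$ denote the subalgebra of $\A_m$ generated by the subalgebras listed in \eqref{eq5.B}, together with $S(m)$ and $\eps_1,\dots,\eps_m$; the goal is to show $\mathcal G=\A_m$.

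First I would record that $\A_m$ is generated by $\A_0$, the group $S(m)$, the idempotents $\eps_1,\dots,\eps_m$, and the elements $u_1,\dots,u_m$. Indeed, by Definition~\ref{def5.C} the subalgebra $\wt\HH_m$ is generated by $S(m),\eps_1,\dots,\eps_m,u_1,\dots,u_m$, and Proposition~\ref{prop5.B}(i) says that the multiplication map $\A_0\otimes\wt\HH_m\to\A_m$ is surjective (in fact an isomorphism), so $\A_0$ together with these elements generates $\A_m$.

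Next I would invoke Lemma~\ref{lemma5.B}. Since $\De^{(2)}$ is one of the generators of $\A_0$ (section~\ref{sect5.2}), it lies in $\A_0$, and hence $\xi^{i-1}(\De^{(2)})\in\xi^{i-1}(\A_0)$ and $\xi^i(\De^{(2)})\in\xi^i(\A_0)$. For $1\le i\le m$ both exponents $i-1$ and $i$ lie in the range $0,1,\dots,m$, so the relation $2u_i=\xi^{i-1}(\De^{(2)})-\xi^i(\De^{(2)})$ shows $u_i\in\mathcal G$. Combined with the first step, $\mathcal G$ contains a generating set of $\A_m$, hence $\mathcal G\supseteq\A_m$.

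For the opposite inclusion I would check that every generator of $\mathcal G$ lies in $\A_m$: one has $\A_0\subseteq\A_m$ by Definition~\ref{def5.B}, $\xi^j(\A_0)\subseteq\A_j\subseteq\A_m$ for $0\le j\le m$ because $\xi$ shifts $\A_k$ into $\A_{k+1}$ (section~\ref{sect5.4}), and $S(m)\cup\{\eps_1,\dots,\eps_m\}\subset\C[\Ga(m)]\subseteq\A_m$ (section~\ref{sect5.3}). Hence $\mathcal G\subseteq\A_m$, and therefore $\mathcal G=\A_m$. There is essentially no serious obstacle here; the only point needing care is the bookkeeping in Lemma~\ref{lemma5.B}, namely that the shifts $\xi^{i-1},\xi^i$ arising for $1\le i\le m$ stay within the list $\xi^0,\dots,\xi^m$, together with the trivial observation that $\De^{(2)}$ is already available inside $\A_0$.
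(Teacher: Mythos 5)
Your proposal is correct and follows exactly the route the paper intends: the paper's proof is precisely the combination of Lemma \ref{lemma5.B} with Proposition \ref{prop5.B}(i) and Definition \ref{def5.C}, which you have spelled out (including the harmless bookkeeping that the shifts $\xi^{i-1},\xi^i$ for $1\le i\le m$ stay in the range $\xi^0,\dots,\xi^m$). No gaps.
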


We use this fact in the proof of Proposition \ref{prop5.A} and Theorem \ref{thm6.A}.

\subsection{Extension of tame representations to the algebra $\A$}\label{sect5.5}

Let $T$ be a tame representation. As usual, we denote by $\T_n$ the associated representation of $\Ga(n)$ acting on $H_n(T)$, where $n\ge\cond(T)$ (recall that $\cond(T)$ is the conductor of $T$, see Definition \ref{def2.A}, item (ii)). 

As shown in \cite[Proposition 3.10]{MO01}, for each $a\in\A$ there exists an operator  $\wt\T(a)$ acting on the dense subspace $H_\infty(T)$, which is uniquely determined by the following property: pick $m$ so large that $a\in\A_m$ and write $a=(a_n: n\ge m)$. Then for any $n\ge\max(m,\cond(T))$ one has
\begin{equation}\label{eq5.D}
\wt\T(a)\big|_{H_n(T)}=\T_n(a_n).
\end{equation}
Furthermore, the correspondence $a\mapsto\wt\T(a)$ determines a representation of the algebra $\A$ on the space $H_\infty(T)$. 

If $T$ is irreducible, i.e. $T=T^\la$ for some $\la\in\Y$, then the representation $\wt\T^\la$ on $H_\infty(T)$ is irreducible in the purely algebraic sense, which implies that the center $\A_0$ acts on $H_\infty(T^\la)$ by scalar operators (see \cite[Proposition 3.11]{MO01}). Thus, 
\begin{equation}\label{eq5.E}
\wt\T^\la(a)=\wh a(\la)\cdot 1, \quad a\in\A_0,
\end{equation}
where $\wh a(\la)$ denotes a scalar depending on $\la$ and `$1$' is the identity operator on $H_\infty(T^\la)$. 

Note that if $n\ge\cond(T^\la)=|\la|$ and $a=(a_m)\in\A_0$, then $\T^\la_n(a_n)$ acts on $H_n(T^\la)$ as the operator of multiplication by $\wh a(\la)$.

\begin{proposition}\label{prop5.C}
For the central elements $\De^{(k)}$ introduced in section \ref{sect5.2}, one has
\begin{equation*}
\wh\De^{(k)}(\la)=p_k^\#(\la_1,\la_2,\dots), \quad  k=1,2,\dots, \quad \la\in\Y, 
\end{equation*}
where the elements $p^\#_k=p^\#_{(k)}\in\Sym^*$ were defined in the end of section \ref{sect3.1}.
\end{proposition}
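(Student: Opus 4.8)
The plan is to compute the scalar $\wh\De^{(k)}(\la)$ by evaluating the representation $\wt\T^\la$ on the sequence $(\De^{(k)}_n)$ and comparing with the already-established formula for the central elements $\zz^{(k)}_n\in\C[S(n)]$ from Proposition~\ref{prop3.A}. The key point is that $\De^{(k)}_n$ differs from $\zz^{(k)}_n$ only by the insertion of the idempotents $\epsb_{i_1}\dots\epsb_{i_k}$, and these idempotents become, under the representation $\T^\la_n$ acting on $H_n(T^\la)$, precisely the projections selecting those configurations $\om\in\Om(\ell,n)$ whose columns $i_1,\dots,i_k$ are non-zero.

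First I would fix $\la\in\Y$ with $\ell:=|\la|$ and pick $n\ge|\la|$ so that $H_n(T^\la)\ne\{0\}$; by \eqref{eq5.D} the operator $\wt\T^\la(\De^{(k)})$ restricted to $H_n(T^\la)$ equals $\T^\la_n(\De^{(k)}_n)$, and by \eqref{eq5.E} this must act as the scalar $\wh\De^{(k)}(\la)$. So it suffices to compute $\T^\la_n(\De^{(k)}_n)$ for one convenient large $n$ and read off the scalar; I would take $n$ large enough that the analysis is transparent, e.g. $n\ge\ell$. Recall from Proposition~\ref{prop4.A} and its proof that $H_n(T^\la)\simeq H(\T^\la_n)$ is the space of $\pi^\la$-equivariant functions on $\Om(\ell,n)$. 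On a configuration $\om$, the operator $\T^\la_n(\epsb_i)=1-\T^\la_n(\eps_i)$ acts by: keep $\phi(\om)$ if the $i$th column of $\om$ is non-zero (i.e. $\om\eps_i$ drops out of $\Om(\ell,n)$, so $\T^\la_n(\eps_i)\phi(\om)=0$), and kill it otherwise. Hence $\T^\la_n(\epsb_{i_1}\cdots\epsb_{i_k})\phi(\om)$ equals $\phi(\om)$ when columns $i_1,\dots,i_k$ are all non-zero in $\om$, and $0$ otherwise.

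The second step is to show directly that the scalar equals $p^\#_k(\la)$. There are two natural routes. The cleaner one: relate $\De^{(k)}_n$ to $\zz^{(k)}_N$ for a \emph{larger} $N$. Indeed, for $N\ge n$, restricting the central element $\zz^{(k)}_N\in\C[S(N)]$ to the subspace $H_n(T^\la)\subset H_N(T^\la)$ and computing in the $\Om(\ell,N)$-model, one sees that only those cyclic permutations $(i_1,\dots,i_k)$ with all indices $i_1,\dots,i_k\le n$ \emph{and} all these columns non-zero in $\om$ contribute — the rest either act trivially on $H_n$ (when all $i_j>n$) or map out of/into the complement. A short bookkeeping argument then identifies the contribution with $\T^\la_n(\De^{(k)}_n)$ up to terms that vanish or recombine. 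Since by Proposition~\ref{prop3.A} the element $\zz^{(k)}_N$ acts on $\pi^{\la[N]}\subset T^\la\big|_{S(N)}$ (equivalently, on $H_\infty(T^\la)$ in the limit $N\to\infty$ by Proposition~\ref{prop2.B}) as the scalar $p^\#_k(\la[N])$, and $p^\#_k(\la[N])\to p^\#_k(\la)$ is in fact stably equal to $p^\#_k(\la)$ by the extra vanishing / stability properties recorded after Proposition~\ref{prop3.B} — more precisely $p^\#_k$ is a shifted symmetric function and $p^\#_k(\la[N])=p^\#_k(\la_1,\la_2,\dots)$ once $N$ is large since adjoining a long first row does not change the value — we obtain $\wh\De^{(k)}(\la)=p^\#_k(\la)$. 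The alternative route is a direct combinatorial count of fixed configurations weighted by characters $\chi^\la$, reproducing formula \eqref{eq3.K}; this is essentially the computation in the proof of Proposition~\ref{prop3.A} carried out inside the $\Om(\ell,n)$-model.

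The main obstacle I anticipate is the bookkeeping in the second step: carefully matching which cyclic permutations $(i_1,\dots,i_k)$ in $\zz^{(k)}_N$ survive the restriction to $H_n(T^\la)$ and showing their combined action coincides with $\T^\la_n(\De^{(k)}_n)$, rather than, say, a sum over subsets of sizes $<k$ with correction terms. One must also be slightly careful that the idempotents $\epsb_i$ commute with the relevant cyclic permutations only when $i$ is outside the cycle's support (cf. Lemma~\ref{lemma5.A}, the relation $\eps_iu_i=0$), so the ordering of factors in \eqref{eq5.H} is exactly what makes $\De^{(k)}_n$ land in $\A_0(n)$ and behave compatibly. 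Once these points are pinned down, the identification with $p^\#_k$ via Proposition~\ref{prop3.A} and the stability of $p^\#_k$ under $\la\mapsto\la[n]$ is immediate.
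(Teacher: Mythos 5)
Your overall strategy --- evaluate $\wt\T^\la(\De^{(k)})$ on some invariant subspace $H_n(T^\la)$, interpret the idempotents $\epsb_i$ as column projections, and reduce to Proposition \ref{prop3.A} --- is in the right spirit, but the second step as you propose it fails, and the failure hides the one idea that makes the paper's proof (given for wreath products as Proposition \ref{prop10.C}) a three-line argument. The paper takes $n=\ell:=|\la|$, i.e.\ the \emph{conductor} of $T^\la$. On the minimal subspace $H_\ell(T^\la)$ every $\eps_i$ with $i\le\ell$ acts by $0$, hence every $\epsb_i$ acts as the identity, so $\T^\la_\ell(\De^{(k)}_\ell)$ \emph{literally coincides} with $\pi^\la(\zz^{(k)}_\ell)$ (the $S(\ell)$-action on $H_\ell(T^\la)\simeq H(\pi^\la)$ being irreducible and equal to $\pi^\la$), and Proposition \ref{prop3.A} gives the scalar $p^\#_k(\la)$ with no bookkeeping at all; the case $k>\ell$ is handled by $\De^{(k)}_\ell=0$ and the extra vanishing of $p^\#_k$. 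By instead taking $n$ ``large'' and comparing with $\zz^{(k)}_N$ for $N\ge n$, you create exactly the difficulty the conductor trick avoids.

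More seriously, your closing identification is based on a false claim: $p^\#_k$ is \emph{not} stable under $\la\mapsto\la[N]$. Shifted symmetric functions are stable under appending zero coordinates, not under prepending a long first row. Already for $k=1$ one has $p^\#_1(\nu)=|\nu|$ (from \eqref{eq3.K}, since $\zz^{(1)}_n=n\cdot1$), so $p^\#_k(\la[N])=N\to\infty$, whereas $\wh\De^{(1)}(\la)=|\la|$. This divergence is not a technicality: the entire content of Lemma \ref{lemma3.A} and Theorem \ref{thm3.A} is that the bare central elements $\zz^{(k)}_n$ do \emph{not} form convergent sequences and must be recombined with $1/n^i$ corrections, precisely because $p^\#_k(\la[n])$ blows up. Relatedly, $\zz^{(k)}_N$ does not act as a scalar on $H_n(T^\la)$ (that subspace meets all the isotypic components $\pi^{\mu[N]}$, $\mu\subseteq\la$, not just $\pi^{\la[N]}$), so even the premise of your comparison is off. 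The fix is simply to drop the comparison with $\zz^{(k)}_N$ for large $N$ and make the comparison at $n=\ell$, where $\De^{(k)}_\ell$ and $\zz^{(k)}_\ell$ agree on the nose.
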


\begin{proof} This is shown in \cite[Proposition 4.20 and the subsequent remark]{MO01}. In section \ref{sect10} we give a detailed proof in the more general situation, for wreath products; see Proposition \ref{prop10.C}.
\end{proof} 

This implies

\begin{corollary}[\cite{MO01}, Theorem 4.21]\label{cor5.B}
The correspondence $a\mapsto \wh a(\,\cdot\,)$ determined by \eqref{eq5.E} establishes an isomorphism $\A_0\to \Sym^*$ of filtered algebras.
\end{corollary}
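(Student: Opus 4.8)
The plan is to deduce the isomorphism $\A_0\to\Sym^*$ from Proposition \ref{prop5.C} together with the structural description of $\A_0$ recalled in section \ref{sect5.2}. First I would record the two key inputs: (a) $\A_0$ is a polynomial algebra on the algebraically independent generators $\De^{(k)}$, $k=1,2,\dots$, with $\deg\De^{(k)}=k$ for $k\ge2$ and $\deg\De^{(1)}=1$ (from \cite[Corollary~4.11]{MO01} and the explicit formulas \eqref{eq5.H}); and (b) $\Sym^*$ is, by the discussion in section \ref{sect3.1}, a polynomial algebra on generators $p^\#_k$, $k=1,2,\dots$, which are consistent with the filtration and satisfy $[p^\#_k]=p_k$, so that $\deg p^\#_k=k$. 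The map in question is $a\mapsto\wh a(\,\cdot\,)$, where $\wh a(\la)$ is the scalar by which $a$ acts in $\wt\T^\la$; Proposition \ref{prop5.C} says precisely that this map sends $\De^{(k)}\mapsto p^\#_k$ for every $k$.

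Next I would verify that $a\mapsto\wh a(\,\cdot\,)$ is an algebra homomorphism $\A_0\to\C^\Y$. This is immediate: for fixed $\la$, the assignment $a\mapsto\wt\T^\la(a)$ is an algebra homomorphism by section \ref{sect5.5}, and restricting to the central subalgebra $\A_0$ it lands in scalar operators by \eqref{eq5.E}; identifying a scalar operator with its scalar gives a homomorphism $\A_0\to\C$ for each $\la$, hence a homomorphism into $\C^\Y$. Since $\A_0$ is generated by the $\De^{(k)}$ and these are sent to the $p^\#_k\in\Sym^*\subset\C^\Y$, the image of the homomorphism is exactly the subalgebra generated by the $p^\#_k$, which is all of $\Sym^*$. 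Thus the map is a well-defined surjection $\A_0\twoheadrightarrow\Sym^*$.

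It remains to prove injectivity and compatibility with filtrations. For injectivity: both $\A_0$ and $\Sym^*$ are polynomial algebras on countably many generators of the same degrees, and the homomorphism sends the free generators $\De^{(k)}$ to the free generators $p^\#_k$; a homomorphism of polynomial algebras carrying a set of free polynomial generators to a set of free polynomial generators is an isomorphism, so injectivity is automatic. (Alternatively, one may observe that $\Sym^*$ separates points of $\Y$ while the $T^\la$ are pairwise inequivalent, and argue directly, but the generator-to-generator argument is cleaner.) For filtration-compatibility: by \eqref{eq3.H2}-type estimates and the degree count \eqref{eq5.H}, the generator $\De^{(k)}$ lies in the $k$th filtration term of $\A_0$, and $p^\#_k$ lies in the $k$th filtration term $\SSym^k$; since any element of the $m$th filtration term of $\A_0$ is a polynomial in the $\De^{(k)}$ of weighted degree $\le m$, and likewise on the $\Sym^*$ side, the isomorphism carries the $m$th filtration term onto the $m$th filtration term for every $m$. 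This gives an isomorphism of filtered algebras, completing the proof.

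The main obstacle is not in this corollary itself — once Proposition \ref{prop5.C} is in hand the argument is essentially a matching of free generators — but rather in establishing Proposition \ref{prop5.C}, i.e. the eigenvalue computation $\wh\De^{(k)}(\la)=p^\#_k(\la)$; that is where the real content lies, and the excerpt correctly defers it to section \ref{sect10} (Proposition \ref{prop10.C}). One point that does require a small check here is that the generators $\De^{(k)}$ are genuinely algebraically independent and do generate $\A_0$ — this is exactly \cite[Corollary~4.11]{MO01} and may simply be cited.
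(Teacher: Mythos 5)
Your proposal is correct and follows essentially the same route as the paper: the paper treats this corollary as an immediate consequence of Proposition \ref{prop5.C} and spells out the identical argument (free generators $\De^{(k)}\mapsto p^\#_k$, algebraic independence from \cite[Corollary~4.11]{MO01}, filtration preservation via $\deg\De^{(k)}=\deg p^\#_k=k$) in the wreath-product analogue following Proposition \ref{prop10.C}. The only cosmetic difference is that the paper gets injectivity from the faithfulness of the family $\{\wt\T^\la\}$, while you get it for free from the generator-to-generator matching; both are fine.
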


We also need the following proposition.

\begin{proposition}\label{prop5.A}
Let $\la\in\Y$. For any $a\in\A$, the corresponding operator $\wt\T^\la(a)$, which is initially defined on the dense subspace $H_\infty(T^\la)$, is bounded and hence can be extended to the whole Hilbert space $H(T^\la)$. 
\end{proposition}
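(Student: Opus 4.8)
The plan is to reduce the boundedness of $\wt\T^\la(a)$ to the already-known boundedness of the scalars $\wh b(\mu)$ for central elements, by exploiting the structure theorem $\A_m\simeq\A_0\otimes\wt\HH_m$ together with the branching decomposition of $T^\la$ from Corollary \ref{cor2.A}. Fix $a\in\A$ and pick $m$ so large that $a\in\A_m$. By Corollary \ref{cor5.A}, $\A_m$ is generated by $\A_0,\xi(\A_0),\dots,\xi^m(\A_0)$ together with $S(m)$ and $\eps_1,\dots,\eps_m$. The operators $\wt\T^\la(s)$ for $s\in S(m)$ and $\wt\T^\la(\eps_i)$ are restrictions of the unitary operators $T^\la(s)$ and of the orthogonal projections $\T(\eps_i)$ respectively (via Proposition \ref{prop4.C}), hence automatically bounded of norm $\le 1$. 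So the only thing to control is the image of the subalgebras $\xi^k(\A_0)$, $0\le k\le m$; since a product of bounded operators is bounded, it suffices to bound $\wt\T^\la(\xi^k(b))$ for every $b\in\A_0$ and every $k\le m$.

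First I would treat the case $k=0$. For $b\in\A_0$, formula \eqref{eq5.E} says $\wt\T^\la(b)=\wh b(\la)\cdot 1$, which is a scalar operator — in particular bounded. For general $k$, I would use the relation $H_\infty(T^\la_k)=H_\infty(T^\la)$ and the identification, from section \ref{sect5.4} and Corollary \ref{cor2.A}, of the action of $\xi^k(\A)$ on $H_\infty(T^\la)$ with the action of $\A$ on $H_\infty(T^\la_k)$, where $T^\la_k=T^\la\circ\xi^{-k}$. By Corollary \ref{cor2.A}(i), $T^\la$ restricted to $S_k(\infty)$ is a \emph{finite} direct sum $\bigoplus_{\mu\subseteq\la} V^\mu\otimes T^\mu_k$ with finite multiplicities $d(\la:\mu)$, and by part (ii) this decomposition is compatible with the subspaces $H_n$, hence with $H_\infty$. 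Under this decomposition $\wt\T^\la(\xi^k(b))$ acts on the $\mu$-block as $\id_{V^\mu}\otimes\wt\T^\mu_k(b)$, and on $H_\infty(T^\mu_k)\cong H_\infty(T^\mu)$ the element $b\in\A_0$ acts as the scalar $\wh b(\mu)$. Therefore $\wt\T^\la(\xi^k(b))$ is, on the dense subspace, a block-scalar operator with finitely many eigenvalues $\{\wh b(\mu):\mu\subseteq\la\}$; its operator norm is $\max_{\mu\subseteq\la}|\wh b(\mu)|<\infty$. Since a block-scalar operator with finitely many blocks is bounded, it extends by continuity to all of $H(T^\la)$.

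Combining: every generator listed in Corollary \ref{cor5.A} acts by a bounded operator on $H_\infty(T^\la)$, and any $a\in\A_m$ is a finite linear combination of finite products of these generators; products and sums of bounded operators are bounded, so $\wt\T^\la(a)$ is bounded and extends to $H(T^\la)$, as claimed. The main obstacle is making precise that the branching decomposition of Corollary \ref{cor2.A} is genuinely respected at the level of the $\A$-action — i.e. that the operators $\wt\T^\la(\xi^k(b))$ really are block-diagonal along $\bigoplus_\mu V^\mu\otimes H_\infty(T^\mu_k)$ with the $\mu$-block acting as the scalar $\wh b(\mu)$; this uses the defining relation \eqref{eq5.D} of $\wt\T$ together with the identity $H_n(T^\la_k)=H_{n+k}(T^\la)$ and the fact that $\xi^k(\A_0)\subset\A_k$ acts on $H_n(T^\la_k)$ through $\T^\mu_n$ on each block. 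Once that compatibility is in place, the uniform bound is immediate from the finiteness of the set of diagrams $\mu\subseteq\la$.
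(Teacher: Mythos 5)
Your proposal is correct and follows essentially the same route as the paper: reduce to the generators of $\A_m$ given by Corollary \ref{cor5.A}, note that $S(m)$ acts unitarily and the $\eps_i$ act by projections, that $\A_0$ acts by scalars, and that for $\xi^k(\A_0)$ the branching decomposition of Corollary \ref{cor2.A} yields a block-scalar (equivalently, diagonalizable with finitely many distinct eigenvalues) action on a dense subspace, hence a bounded operator. Your version merely spells out the block structure $\bigoplus_\mu V^\mu\otimes H_\infty(T^\mu_k)$ more explicitly than the paper's phrasing in terms of an orthogonal basis.
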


\begin{proof}
Because $\A$ is the union of the subalgebras $\A_m$, it suffices to prove the claim for a system of generators of $\A_m$, for each $m$. A system of generators is provided by Corollary \ref{cor5.A}: this is the group $S(m)$, the elements $\eps_1,\dots,\eps_m$, and the subalgebras \eqref{eq5.B}. 

The elements of $S(m)$ act as unitary operators. The elements $\eps_1,\dots,\eps_m$ are selfadjoint idempotents and are represented by selfadjoint projection operators. Thus, it remains to prove the boundedness of the operators coming from the subalgebras \eqref{eq5.B}.

This is evident for the subalgebra $\A_0$, because it acts by scalar operators. Next, from Corollary \ref{cor2.A} it follows that there exists an orthogonal basis in $H(T^\la)$, contained in $H_\infty(T^\la)$ and possessing the following property: for each $k\ge1$ and any $a\in\xi^k(A_0)$, the operator $\wt\T(a)$ is diagonalized in this basis and has only finitely many distinct eigenvalues. This implies that $\wt\T(a)$ is bounded. 
\end{proof}

\section{Main theorem, case of the infinite symmetric group $S(\infty)$}\label{sect6}

\subsection{The embedding $\A\to \prod_{\la\in\Y}\End(H(T^\la))$}\label{sect6.1}

For a Hilbert space $H$, we denote by $\End(H)$ the algebra of bounded linear operators on $H$. Introduce the countably infinite product space
$$
\mathcal E:=\prod_{\la\in\Y}\End(H(T^\la));
$$
it is an algebra with respect to component-wise operations. By virtue of Proposition \ref{prop5.A},  the correspondence $a\mapsto \wt\T^\la(a)$ is an algebra morphism $\A\to\End(T^\la)$ for each $\la\in\Y$. The total collection of these morphisms gives rise to an algebra morphism 
\begin{equation}\label{eq6.A}
\iota_\A: \A\to  \mathcal E, \quad a\mapsto \prod_{\la\in\Y}\wt\T^\la(a).
\end{equation}

\begin{lemma}\label{lemma6.A}
$\iota_\A$ is injective.
\end{lemma}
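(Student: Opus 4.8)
The plan is to show that if $a\in\A$ satisfies $\wt\T^\la(a)=0$ for every $\la\in\Y$, then $a=0$, i.e.\ $a_n=0$ in $\A(n)$ for all $n$. The bridge between the representations $\T^\la_n$ of the finite semigroups $\Ga(n)$ and the limit representations $\wt\T^\la$ is the defining relation \eqref{eq5.D}: if $a=(a_n:n\ge m)\in\A_m$, then for $n\ge\max(m,|\la|)$ the operator $\T^\la_n(a_n)$ is the restriction of $\wt\T^\la(a)$ to the subspace $H_n(T^\la)$. Hence $\wt\T^\la(a)=0$ for all $\la$ forces $\T^\la_n(a_n)=0$ for all $\la$ with $|\la|\le n$ and all sufficiently large $n$ (specifically $n\ge m$).

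First I would fix such a large $n$ (any $n\ge m$ works, since $a_n\in\A_m(n)\subseteq\A(n)=\C[\Ga(n)]$ and the truncation maps recover the smaller $a_{n'}$ from $a_n$, so it suffices to prove $a_n=0$ for all large $n$). Now invoke Proposition~\ref{prop4.A}: the $\ast$-algebra $\C[\Ga(n)]$ is isomorphic to $\bigoplus_{\la:\,|\la|\le n}\End(H(\T^\la_n))$, the isomorphism being exactly the collection of the irreducible representations $\T^\la_n$ (together with $\T^\varnothing_n$, which is the case $\la=\varnothing$). Therefore an element of $\C[\Ga(n)]$ that is annihilated by every $\T^\la_n$, $|\la|\le n$, is zero. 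Since $a_n\in\C[\Ga(n)]$ and $\T^\la_n(a_n)=0$ for all such $\la$, we conclude $a_n=0$.

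Finally I would assemble this across $n$: having shown $a_n=0$ for all $n\ge m$, the compatibility $a_{n-1}=\th_{n,n-1}(a_n)$ shows $a_n=0$ for $m\le n$ and in particular the whole sequence $(a_n)_{n\ge m}$ representing $a$ vanishes, so $a=0$ in $\A_m$, hence in $\A$. This proves $\iota_\A$ injective.

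The only point requiring a little care — and the place I'd expect to be the main (mild) obstacle — is the bookkeeping around indices: one must make sure that the ``sufficiently large $n$'' coming from \eqref{eq5.D} (namely $n\ge m$, where $m$ is chosen with $a\in\A_m$) is compatible with the range $|\la|\le n$ demanded by Proposition~\ref{prop4.A}, and that killing all $a_n$ for large $n$ really kills $a$ via the projective-limit structure of Definition~\ref{def5.B}. Both are routine once spelled out: for each fixed $n\ge m$ and each $\la$ with $|\la|\le n$ we do have $n\ge\max(m,|\la|)$, so \eqref{eq5.D} applies and gives $\T^\la_n(a_n)=\wt\T^\la(a)\big|_{H_n(T^\la)}=0$; and $\T^\varnothing_n$ is the trivial one-dimensional representation, handled identically. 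No genuine analytic input (boundedness, closures of operators) is needed here — the statement reduces cleanly to the finite-dimensional semisimplicity of $\C[\Ga(n)]$.
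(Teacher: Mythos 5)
Your argument is correct and is essentially the paper's proof in contrapositive form: both rest on the relation \eqref{eq5.D} identifying $\T^\la_n(a_n)$ with the restriction of $\wt\T^\la(a)$ to $H_n(T^\la)$, and on Proposition \ref{prop4.A} (the representations $\T^\la_n$, $|\la|\le n$, separate the elements of $\C[\Ga(n)]$). The index bookkeeping you flag is handled the same way in the paper, so nothing further is needed.
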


\begin{proof}
Let $a\in\A$ be a nonzero element. Pick $m$ so large that $a\in\A_m$ and write $a=(a_n: n\ge m)$, as usual. Since $a\ne0$, we have $a_n\ne0$ for all $n$ large enough. Fix one such $n$. By Proposition \ref{prop4.A}, there exists $\la$ with $|\la|\le n$, such that $\T^\la_n(a_n)\ne0$. By virtue of \eqref{eq5.D}, it follows that $\wt\T^\la(a)\ne0$, so that $\iota_\A(a)\ne0$. 
\end{proof}

\subsection{The algebra $\B$ --- the virtual group algebra of $S(\infty)$}\label{sect6.2}

In section \ref{sect3.4}, we have defined the virtual center $\ZZ$ of the group algebra $\C[S(\infty)]$. Now we extend that construction and introduce a larger algebra $\B\supset\ZZ$.  
 
For $0\le m\le n$ we denote by  $\B_m(n)$ the centralizer of $S_m(n)$ in $\C[S(n)]$. In particular, $\B_0(n)=\ZZ\C[S(n)]$. 

\begin{definition}\label{def6.A}

(i) Let  $\al=(\al_1,\al_2,\dots)$ be an infinite sequence of elements of $\C[S(\infty)]$ such that $\al_n\in\C[S(n)]$ for all $n$ large enough. We say that $\al$ is \emph{convergent} if the following three conditions are satisfied:

\begin{itemize}
\item for any irreducible tame representation $T^\la$, the operators $T^\la(\al_n)$  have a limit,  $\Tb^\la(\al)$, in the strong operator topology; 
\item there exists $m$ such that $\al_n\in\B_m(n)$ for all $n$ large enough;
\item $\sup \deg \al_n<\infty$. 
\end{itemize}

(ii) Next, we say that two convergent sequences, $\al$ and $\be$, are \emph{equivalent} if $\Tb^\la(\al)=\Tb^\la(\be)$ for all $\la\in\Y$. 
\end{definition}

\begin{definition}\label{def6.B}
Let $\B$ be the set of equivalence classes of convergent sequences. We endow it with a natural structure of associative algebra, induced by component-wise operations on convergent sequences (the correctness of this definition is verified as in Lemma \ref{lemma3.B}). 
\end{definition}

For each $\la$, the correspondence $\al\mapsto \Tb^\la(\al)$ determines a representation of algebra $\B$ by bounded operators on the space $H(T^\la)$: we apply the same argument as in section \ref{sect3.4}.  

\begin{lemma}\label{lemma6.B}
There is a natural embedding $\C[S(\infty)]\to\B$.
\end{lemma}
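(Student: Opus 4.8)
The plan is to send an element $\ga\in S(\infty)$ to the equivalence class of the constant sequence $\al=(\al_n)$ with $\al_n\equiv\ga$ for all $n$ large enough (specifically, for all $n$ such that $\ga\in S(n)$), and to check that this map is a well-defined injective algebra homomorphism. First I would verify that such a constant sequence is convergent in the sense of Definition \ref{def6.A}: the operators $T^\la(\al_n)=T^\la(\ga)$ are eventually independent of $n$, so they trivially have a strong limit, namely $T^\la(\ga)$ itself; there exists $m$ (any $m$ with $\ga\in S(m)$) for which $\ga\in S(m)\subset\C[S(n)]$ commutes with $S_m(n)$ for all $n>m$, hence $\al_n\in\B_m(n)$ eventually; and $\deg\al_n=\deg\ga$ is constant, so the degrees are uniformly bounded. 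Thus the three conditions hold and $\al$ defines an element of $\B$.

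Next I would extend this map linearly to all of $\C[S(\infty)]$: an arbitrary element $a=\sum\al(g)g$ lies in some $\C[S(N)]$, so the constant sequence $\al_n\equiv a$ (for $n\ge N$) is convergent by the same three checks — the filtration bound uses that $\C[S(N)]$ centralizes $S_N(n)$, and $\deg a$ is finite by \eqref{eq3.H1}. The map respects addition, scalar multiplication, and multiplication because all operations in $\B$ are defined component-wise and the constant sequence attached to a product $ab$ is eventually the component-wise product of the constant sequences attached to $a$ and $b$; it sends the identity of $S(\infty)$ to the class of the constant sequence $\equiv 1$, which is the unit of $\B$. So we obtain an algebra homomorphism $\C[S(\infty)]\to\B$.

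For injectivity I would use that $\B$ acts on each $H(T^\la)$ via $\al\mapsto\Tb^\la(\al)$, and that under our map the image of $a\in\C[S(\infty)]$ acts as the operator $T^\la(a)$. If $a\neq 0$, then since the representations $T^\la$, $\la\in\Y$, together with the group algebra $\C[S(\infty)]$ realize $\C[S(\infty)]$ faithfully — concretely, $a\in\C[S(N)]$ is nonzero, so by the classical Artin--Wedderburn decomposition $\C[S(N)]\simeq\bigoplus_{\nu\in\Y_N}\End(H(\pi^\nu))$ there is some $\nu\in\Y_N$ with $\pi^\nu(a)\neq 0$, and by Proposition \ref{prop2.A} applied with $n=N$ large the representation $\pi^\nu$ occurs in $T^\mu\big|_{S(N)}$ for a suitable $\mu$ (e.g.\ taking $\mu$ with $\nu=\mu[N]$), so $T^\mu(a)\neq 0$ — we conclude $\Tb^\mu(\al)\neq 0$ and hence the class of $\al$ is nonzero in $\B$. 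The main obstacle is ensuring the filtration condition ``$\al_n\in\B_m(n)$ for all $n$ large enough'' is genuinely satisfied by constant sequences; this is where one must observe that an element sitting in a fixed $\C[S(N)]$ automatically centralizes the subgroups $S_m(n)$ for $m\ge N$, so a uniform $m$ exists — everything else is the routine bookkeeping of checking that a constant sequence meets the three defining conditions and that the operations are compatible.
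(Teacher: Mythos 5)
Your construction of the map (via constant, i.e.\ stable, sequences) and the verification of the three convergence conditions coincide with the paper's; in particular your observation that an element of $\C[S(N)]$ centralizes $S_N(n)$ for all $n>N$, so one can take $m=N$, is exactly the point the paper leaves implicit. Where you genuinely diverge is in the injectivity step. The paper deduces injectivity from Lemma \ref{lemma6.A} (injectivity of $\iota_\A$), which in turn rests on the semigroup machinery: Proposition \ref{prop4.A} identifies $\C[\Ga(n)]$ with a direct sum of matrix algebras over the irreducible $\T^\la_n$, and one then uses the inclusion $\C[S(\infty)]\subset\A$ together with $T^\la(\al)=\wt\T^\la(\al)$. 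You instead argue directly inside the group algebra: Artin--Wedderburn for $\C[S(N)]$ produces $\nu\in\Y_N$ with $\pi^\nu(a)\ne 0$, and writing $\nu=\mu[N]$ with $\mu=(\nu_2,\nu_3,\dots)$ (the hypothesis $N\ge|\mu|+\mu_1$ of Proposition \ref{prop2.A} reduces to $\nu_2\le\nu_1$, which always holds) shows $\pi^\nu$ occurs in $T^\mu\big|_{S(N)}$, whence $T^\mu(a)\ne0$. Both arguments are correct; yours is more elementary and self-contained, avoiding the inverse-semigroup apparatus entirely, while the paper's is shorter given that Lemma \ref{lemma6.A} is needed anyway for the main theorem.
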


\begin{proof}
We say that a sequence $\al=(\al_n)$ is \emph{stable} if  $\al_n$ does not depend on $n$ for all $n$ large enough. Evidently, such a sequence is convergent, the equivalence classes of stable sequences form a subalgebra of $\B$,  and  $\C[S(\infty]$ is mapped, in a natural way onto this subalgebra. In this way we obtain an algebra morphism $\C[S(\infty)]\to\B$.

Let us show that it is injective. This amount to verifying that if $\al\in\C[S(\infty)]$ is non-null, then $T^\la(\al)\ne0$ for some $\la\in\Y$. A way to see this is to apply Lemma \ref{lemma6.A} combined with the fact that $C[S(\infty)]$ is contained in $\A$ and $T^\la(\al)=\wt\T^\la(\al)$ for $\al\in\C[S(\infty)]$. 
\end{proof}

Thus, $\B$ is an extension of the group algebra $\C[S(\infty)]$; we call $\B$ the \emph{virtual group algebra of $S(\infty)$}.

\subsection{The isomorphism $\B\to\A$}

For each $m=0,1,2,\dots$, we denote by $\B_m$ the subalgebra of $\B$ formed by the convergent sequences $(\al_n)$ such that $\al_n\in\B_m(n)$ for all $n$ large enough. By the very definition of the algebra $\B$, we have an algebra embedding (cf. \eqref{eq6.A})
\begin{equation*}
\iota_\B: \B\to  \mathcal E, \quad \al\mapsto \prod_{\la\in\Y}\Tb^\la(\al).
\end{equation*}

\begin{theorem}[Main Theorem]\label{thm6.A}
There exists an algebra isomorphism $\F:\B\to\A$, uniquely determined by the property that $\iota_\A\circ\F=\iota_\B$, meaning that  the diagram 
$$
  \xymatrix{ \B \ar[r]^\F \ar[dr]_{\iota_\B}& \A \ar[d]^{\iota_\A} \\
  & \mathcal E}
$$
is commutative.

Furthermore, $\F(\B_m)=\A_m$ for each $m=0,1,2,\dots$\,. 
\end{theorem}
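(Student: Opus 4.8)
The plan is to construct the map $\F$ by sending a convergent sequence of central-type elements in the group algebras to a sequence of elements in the centralizer algebras, show it is well-defined on equivalence classes, then invert it. The two embeddings $\iota_\A$ and $\iota_\B$ into $\mathcal E$ are injective (Lemma \ref{lemma6.A} for $\A$, and the analogous argument via Lemma \ref{lemma6.A} for $\B$, since $\C[S(\infty)]\subset\A$); so the uniqueness assertion is immediate once existence of $\F$ with $\iota_\A\circ\F=\iota_\B$ is established, and $\F$ is automatically injective. The real content is surjectivity of $\F$ and the filtration-matching statement $\F(\B_m)=\A_m$.

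First I would treat the construction of $\F$. Given a convergent sequence $\al=(\al_n)$ representing an element of $\B_m$, so that $\al_n\in\B_m(n)$ for large $n$, I want to associate an element $a=(a_N)\in\A_m$. The natural idea is to stabilize the $\al_n$ against the projective system $\{\A_m(N)\}$: using the truncation maps $\theta_{N,N-1}$ and the fact that $\B_m(n)$ sits inside $\A_m(n)$ (via $\C[S(n)]\subset\C[\Ga(n)]=\A(n)$), one shows that for $n$ large the elements $\theta_{N,N-1}\cdots\theta_{n,n-1}$ applied to $\al_n$ — or better, a limit procedure using the strong convergence of $T^\la(\al_n)$ — produce a consistent projective sequence $a_N\in\A_m(N)$ with uniformly bounded degree. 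The key point is that the representation-theoretic data $\Tb^\la(\al)$ together with the structure theorem $\A_m\simeq\A_0\otimes\wt\HH_m$ (Proposition \ref{prop5.B}) and the isomorphism $\A_0\simeq\Sym^*$ (Corollary \ref{cor5.B}) rigidify everything: a convergent sequence in $\B_m$ is determined by finitely many "moments" (the eigenvalues on the $\pi^{\mu[n]}$ with $\mu\subseteq\la$, per Proposition \ref{prop2.A} and Step 1 of the proof of Theorem \ref{thm3.A}), which match exactly the data defining an element of $\A_m$. One then checks $\iota_\A(a)=\iota_\B(\al)$ by comparing the action on each $H(T^\la)$: on the dense subspace $H_n(T^\la)$ the operator $\wt\T^\la(a)$ restricts to $\T^\la_n(a_n)$ by \eqref{eq5.D}, which by the matching of moments coincides with the strong limit of $T^\la(\al_n)$.

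For surjectivity, given $a=(a_N)\in\A_m$, I would produce a convergent sequence $\al_n\in\B_m(n)$ with $\iota_\B(\al)=\iota_\A(a)$. The construction in section \ref{sect3.4} of virtual central elements is the model: for $\A_0$ one uses Corollary \ref{cor3.A} to realize any shifted symmetric function as a limit of normalized class sums, and Proposition \ref{prop5.C}/Corollary \ref{cor5.B} identify these limits with $\A_0$; for general $m$, Corollary \ref{cor5.A} says $\A_m$ is generated by $\A_0,\xi(\A_0),\dots,\xi^m(\A_0)$ together with $S(m)$ and $\eps_1,\dots,\eps_m$, so it suffices to hit these generators. The elements of $S(m)$ are already in $\C[S(\infty)]\subset\B$; the shifted algebras $\xi^k(\A_0)$ are handled by the same limit construction applied inside $S_k(\infty)\cong S(\infty)$ (using Corollary \ref{cor2.A} to control boundedness, as in the proof of Proposition \ref{prop5.A}); the subtle generators are $\eps_1,\dots,\eps_m$, which are \emph{not} in $\C[S(\infty)]$ but whose images $\T^\la(\eps_i)$ are projection operators — here one must exhibit a convergent sequence $(\al_n)\subset\C[S(n)]$ with $\al_n\in\B_m(n)$, uniformly bounded degree, whose strong limit on each $T^\la$ equals $\wt\T^\la(\eps_i)$. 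This last point — approximating the "rook-monoid idempotents" by genuine group-algebra elements of bounded degree in the centralizer — is the step I expect to be the main obstacle, and it is presumably where the fine structure of $\B_m(n)$ and a careful large-$n$ estimate enter. Once the generators are covered, one verifies that the subalgebra of $\B_m$ they generate maps onto all of $\A_m$ under $\iota_\A^{-1}\circ\iota_\B$, giving simultaneously surjectivity and the inclusion $\F(\B_m)\supseteq\A_m$; the reverse inclusion $\F(\B_m)\subseteq\A_m$ follows because the defining property $\al_n\in\B_m(n)$ forces the limit operators to commute with $\wt\T^\la(\Ga_m(\infty))$, which characterizes $\A_m$ inside $\A$.

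Finally, I would assemble: $\F$ is a well-defined algebra homomorphism by the above matching (component-wise operations on both sides are compatible, as in Lemma \ref{lemma3.B}), injective because $\iota_\B$ is injective and $\iota_\A\circ\F=\iota_\B$, surjective by the generator argument, hence an isomorphism; and $\F(\B_m)=\A_m$ for all $m$ by the two inclusions just described. The uniqueness of $\F$ with the stated commutativity is then clear since $\iota_\A$ is injective.
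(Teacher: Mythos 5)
Your overall architecture matches the paper's: build $\F$ by truncating and passing to the limit in the finite-dimensional algebras $\C[\Ga(\rr)]$, get injectivity for free from $\iota_\A\circ\F=\iota_\B$, and prove surjectivity by hitting the generators of $\A_m$ supplied by Corollary \ref{cor5.A}. However, there are two genuine gaps.

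First, your claim that ``$\B_m(n)$ sits inside $\A_m(n)$'' is false: an element of $\C[S(n)]$ commuting with $S_m(n)$ need not commute with the idempotents $\eps_j$, $j>m$, hence need not centralize $\Ga_m(n)$. (Already for $m=0$, $n=2$: the transposition $(1,2)$ is central in $\C[S(2)]$ but does not commute with $\eps_1$.) So you cannot place the $\al_n$ themselves into the projective system $\{\A_m(N)\}$. What is true --- and what the paper proves in its Steps 1--5 --- is that the \emph{limits} $b_\rr=\lim_n\th_\rr(\al_n)$ land in $\A_m(\rr)$; this requires showing first that $\Tb(\al)$ commutes with $\T(\Ga_m(\N))$, which is obtained by passing from $S_m(\infty)$ to its closure $\Ga_m(\N)$ via density and the strong convergence $T(\al_n)\to\Tb(\al)$. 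That density step is essential and absent from your sketch.

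Second, and more seriously, you explicitly identify the approximation of the rook-monoid idempotents $\eps_i$ by convergent sequences in $\C[S(n)]$ as ``the main obstacle'' and then leave it unresolved. This is precisely the nontrivial content of surjectivity. The paper's solution is the Ces\`aro-type average $\al_n=\frac1{n-m}\sum_{j=m+1}^{n}(i,j)$, whose strong convergence to $\T^\la(\eps_i)$ is verified using the orthogonal decomposition $H(T^\la)=\bigoplus_\om H(\om)$: if the $i$th column of $\om$ is null then $\om(i,j)=\om$ for large $j$ and $T^\la(\al_n)f\to f$; otherwise the subspaces $H(\om(i,j))$ are pairwise orthogonal and $\Vert T^\la(\al_n)f\Vert\to0$. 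Without this (or an equivalent construction), surjectivity of $\F_m$ is not established, so the proof is incomplete at its key point.
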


\begin{proof}
Given a tame representation $T$, we denote by $P_\rr$ the operator of orthogonal projection $H(T)\to H_\rr(T)$, for each $\rr=0,1,2,\dots$\,. Next, we agree to identify the operators on the subspace $H_\rr(T)$ with those operators on the whole space $H(T)$ that vanish on the orthogonal complement $H(T)\ominus H_\rr(T)$. Thus, we do not distinguish between an operator $X\in\End(H(T))$, such that $X=P_\rr X P_\rr$, and its restriction to $H_\rr(T)$.  This convention will simplify some formulas below. Further, let $\T$ denote the canonical extension of $T$ to the semigroup $\Ga(\N)$, and let $\T_\rr$ (for arbitrary $\rr$ greater or equal the conductor of $T$) denote the representation of $\Ga(\rr)$ on $H_\rr(T)$ associated with $T$. 

The proof is divided into several steps.

Step 1. Fix an element $\al\in\B$ and pick a convergent system $(\al_n)$ representing $\al$. We claim that for each $\rr\ge1$ there exists a limit 
\begin{equation}\label{eq6.B}
b_\rr:=\lim_{n\to\infty} \th_\rr(\al_n),
\end{equation}
which does not depend on the choice of $(\al_n)$. The limit in \eqref{eq6.B} is understood as convergence in the finite-dimensional algebra $\A(r)=\C[\Ga(\rr)]$.

Furthermore, we claim that the element $b_\rr\in\C[\Ga(\rr)]$ is uniquely characterized by the property that  for any irreducible tame representation $T=T^\la$ with $|\la|\le\rr$ one has
\begin{equation}\label{eq6.C}
\T_\rr(b_\rr)=P_\rr \Tb(\al) P_\rr.
\end{equation}

Indeed,  we have $\T_\rr=\T^\la_\rr$ (see Remark \ref{rem4.A}). Next, by virtue of Proposition \ref{prop4.A},  to prove the convergence in \eqref{eq6.B}, it suffices to check the existence of a limit 
$$
\lim_{n\to\infty} \T_\rr (\th_\rr(\al_n))\in\End(H_\rr(T))
$$
for each irreducible tame $T=T^\la$ with $|\la|\le \rr$. But this is evident, because 
$$
\T_\rr (\th_\rr(\al_n))=P_\rr T(\al_n) P_\rr
$$
and we know that $T(\al_n)\to \Tb(\al)$ strongly. The equality \eqref{eq6.C} is now evident, and it implies in turn the uniqueness claim.

Step 2. The degrees of the elements $b_\rr$ are uniformly bounded. Indeed, this follows from the limit relation \eqref{eq6.B}, because $\deg \th_\rr(\al_n)\le \deg \al_n$, while $\deg\al_n$ remains bounded as $n\to\infty$, by the very definition of convergent sequences.

Step 3.  We claim that 
$$
\th_{r+1,r}(b_{\rr+1})=b_\rr, \quad \rr=1,2,\dots\,.
$$

Indeed, as above, it suffices to prove that  
$$
\T_\rr(\th_{r+1,r}(b_{\rr+1}))=\T_\rr(b_\rr),
$$
where $\T_\rr$ is associated with an irreducible tame representation $T$ such that $H_\rr(T)\ne\{0\}$.

Using \eqref{eq6.C} we obtain
$$
\T_\rr(\th_{r+1,r}(b_{\rr+1}))=P_\rr \T_{\rr+1}(b_{\rr+1}) P_\rr =P_\rr P_{\rr+1} \Tb(\al) P_{\rr+1} P_\rr= P_\rr \Tb(\al)P_\rr =\T_\rr(b_\rr),
$$
as desired. 

Step 4. Suppose $\al\in\B_m$, where $m=0,1,2,\dots$, and let $\Ga_m(\N)\subset \Ga(\N)$ denote the subsemigroup formed by the matrices with the first $m$ diagonal entries equal to $1$. Next, let $T$ be an irreducible tame representation and $\T$ its extension to $\Ga(\N)$. We claim that $\Tb(\al)$ commutes with the operators from $\T(\Ga_m(\N))$. 

Indeed, $\al$ can be represented by  a convergent sequence $(\al_n)$ with $\al_n\in\B_m(n)$ for all $n$ large enough. Because $T(\al_n)\to \Tb(\al)$ in the strong operator topology and $T(\al_n)$ commutes with $T(g)$ for any $g\in S_m(n)$, we see that $\Tb(\al)$ commutes with the operators  from $T(S_m(\infty))$. 

Next, the group $S_m(\infty)$ is dense in $\Ga_m(\N)$. Therefore, $\Tb(\al)$ commutes with the operators from  $\T(\Ga_m(\N))$, too. 

Step 5. Let again $\al\in\B_m$, where $m=0,1,2,\dots$\,. Then $b_\rr\in\B_m(\rr)$ for any $\rr>m$. 

Indeed,  we have to show that 
$$
\ga b_\rr= b_\rr \ga, \quad \ga\in\Ga_m(\rr).
$$
Again, by using Proposition \ref{prop4.A}, we reduce this to the commutation relation 
$$
\T_\rr(\ga)\T_\rr(b_\rr)=\T_\rr(b_\rr)\T_\rr(\ga), \quad \ga\in\Ga_m(\rr).
$$
Next, by virtue of \eqref{eq6.C} we can further reduce it to
$$
\T_\rr(\ga)P_\rr \Tb(\al) P_\rr=P_\rr \Tb(\al) P_\rr\T_\rr(\ga), \quad \ga\in\Ga_m(\rr).
$$

Pick an arbitrary $\wt\ga\in \Ga_m(\N)$ such that $\th_\rr(\wt \ga)=\ga$. Then the above relation can be rewritten as
$$
P_\rr\T(\wt\ga)P_\rr \Tb(\al) P_\rr=P_\rr \Tb(\al) P_\rr\T(\wt\ga) P_\rr. 
$$

Observe now that $P_\rr=\T(\eps_{\N\setminus\N_\rr})$ and $\eps_{\N\setminus\N_\rr}\in\Ga_\rr(\N)\subset \Ga_m(\N)$.  Therefore, by virtue of step 4, the operators $P_\rr$ and $\Tb(\al)$ commute. Therefore, the last equality is equivalent to 
$$
P_\rr\T(\wt\ga)\Tb(\al) P_\rr=P_\rr \Tb(\al)\T(\wt\ga) P_\rr. 
$$

Finally, again by virtue of step 4, the operators $\T(\wt\ga)$ and $\Tb(\al)$ also commute, because $\wt\ga\in\Ga_m(\N)$, and we are done.  

Step 6. The above reasonings show that the sequence $(b_\rr: \rr>m)$ gives rise to an element $b\in\A_m$. Furthermore,  the key relation \eqref{eq6.C} implies that $\Tb(\al)=\wt\T(b)$ for any irreducible tame $T$. Thus, for  any $m=0,1,2,\dots$ we obtain a map $\al\mapsto b$ from $\B_m$ to $\A_m$, which we denote by $\F_m$. It possesses the desired property $\iota_\A\circ \F=\iota_\B$. Since both $\iota_\B$ and $\iota_\A$ are injective algebra homomorphisms, the same holds for $\F_m$. It remains to show that $\F_m$ is surjective. This will be done in the next two steps.

Step 7. The surjectivity of $\F_m$ means that for each $b\in\A_m$ there exists a convergent sequence $\al=(\al_n)$ such that $\al_n\in\B_m(n)$ for all $n$ large enough and $\Tb(\al)=\wt\T(b)$ for any irreducible tame $T$. 

It suffices to prove this for a set of generators of the algebra $\A_m$. By Corollary \ref{cor5.A}, $\A_m$ is generated by the chain \eqref{eq5.B} of subalgebras, the permutations from $S(m)$, and the elements $\eps_1,\dots,\eps_m$. 

If $b$ is a permutation  $g\in S(m)$, the claim is trivial: one can take $\al_n\equiv g$.

Assume now that $b=\eps_i$ for some $i=1,\dots,m$, and let us show that one can take 
\begin{equation}\label{eq6.E}
\al_n=\frac1{n-m}\sum_{j=m+1}^{n}(i,j).
\end{equation}
It is clear that $\al_n\in\B_m(n)$ and $\deg(\al_n)$ does not grow (it is equal to $2$). Next, we have to check that $T^\la(\al_n)$ strongly converges to $\T^\la(\eps_i)$ for any $\la\in\Y$. Because $\Vert T^\la(\al_n)\Vert\le1$, it suffices to check the convergence on a total set of vectors in $H(T^\la)$. Let us use the explicit description of $\T^\la$ given in \eqref{eq4.C}. The Hilbert space $H(T^\la)$ can be decomposed into an orthogonal direct sum of subspaces, each of which has dimension $\dim\pi^\la$:
\begin{equation}\label{eq6.D}
H(T^\la)=\bigoplus_{\om\in\Om(\ell,\infty)} H(\om),
\end{equation}
where $H(\om)\subset H(T^\la)$ denotes the subspace of vector-valued functions supported by the single matrix $\om$. From \eqref{eq4.C} it is seen that the operator $\T^\la(\eps_i)$ is the projection onto the direct sum of those components $H(\om)$, for which the $i$th column in $\om$ consists entirely of $0$'s. Further, for each transposition $(i,j)$ entering \eqref{eq6.E}, the action of the corresponding operator $T^\la((i,j))$ consists in a certain permutation of the components of the orthogonal decomposition \eqref{eq6.D}. From this description one obtains that 
$$
T^\la(\al_n)\big|_{H(\om)}\to \T^\la(\eps_i)\big|_{H(\om)}, \quad \forall \om\in\Om(\ell,\infty).
$$
Indeed, consider separately two cases depending on whether the $i$th column of $\om$ is null or not. In the former case, the transformed matrix $\om(i,j)$ equals $\om$ for all $j$ large enough, which implies that $T^\la(\al_n)f\to f$ for any $f\in H(\om)$. In the latter case, the subspaces $H(\om(i,j))$ are pairwise orthogonal for all $j=m+1,\dots,n$, which implies that $\Vert T^\la(\al_n)f\Vert\to 0$ for any $f\in H(\om)$. This implies the desired strong convergence. 

Step 8. Let us turn now to the subalgebras $\xi^i(\A_0)$ from the chain \eqref{eq5.B}. 

If $b\in\A_0$ then we have
$\wt\T^\la(b)=\wh b(\la)1$, where $\wh b(\la)$ coincides with the restriction to $\Y$ of a shifted symmetric function (see section \ref{sect5.5} and especially Corollary \ref{cor5.B}). Then the desired result, an approximation of $b$ by a convergent system $\al=(\al_n)\in\B_0$, is afforded  by Theorem \ref{thm3.A}. Note that this gives us a strengthening of Theorem \ref{thm3.A}: an isomorphism between the virtual center $\ZZ=\B_0$ and the algebra $\Sym^*$. 

Finally, consider an element  $b':=\xi^i(b)\in\xi^i(\A_0)$, where $i=1,\dots,m$ and $b\in\A_0$, as before. Then an appropriate approximation for $b'$ is the sequence $(\al'_n):=(\xi^i(\al_{n-i}))$, where $\xi^i$ stands for the $i$th iteration of the `shift' endomorphism $\xi:\C[S(\infty)]\to\C[S(\infty)]$.
This completes the proof.
\end{proof}

\begin{remark}
Note an analogy between Theorem \ref{thm6.A} and \cite[Theorem 2.3.4]{Ols91a}. The tame representations $T^\la$ of $S(\infty)$ are similar to the irreducible tame representations of $O(\infty)$ (denoted in \cite{Ols91a} by $\rho^\R_\la$) and the computation in Lemma \ref{lemma3.A} is similar to the computation in \cite[Lemma 2.3.17]{Ols91a}.
\end{remark}

\section{Tame representations of $G(\infty)$}\label{sect7}

From now on we fix a finite group $G$ and denote by $\Psi$ the set of its  irreducible characters. Given $\psi\in\Psi$, we denote by  $\tau^\psi$ the corresponding irreducible representation of $G$. For the purpose of the present paper we need no concrete information about  the characters and representations of $G$: we treat them as purely abstract objects.

\subsection{Multipartitions and irreducible representations of wreath products  $G\wr S(n)$}\label{sect7.1}

The wreath product group $G\wr S(n)$ is the semidirect product of $S(n)$ and the group $G^n=G\times\dots\times G$ ($n$ times) on which $S(n)$ acts in a natural way, by permutations of the factors (see e.g. \cite{CST14}). We use the alternate notation $G(n):=G\wr S(n)$ and realize $G(n)$  as the group of $n\times n$ strictly monomial matrices with entries in $G\sqcup \{0\}$; here `strictly monomial' means that each row and each column contains precisely one nonzero entry.

Recall a well-known construction of irreducible representations of wreath products (see, e.g., \cite[section 2.6]{CST14}).

Let $\Y(\Psi)$ be the set of maps  $\Psi\to\Y$. Elements of this set will be called \emph{$\Psi$-multipartitions} and denoted by the boldface Greek letters $\bla$ or $\bmu$. We set
$$
\|\bla\|:=\sum_{\psi\in\Psi}|\bla(\psi)|
$$
and 
$$
\Y_n(\Psi):=\{\bla: \Vert\bla\Vert=n\}, \quad n=0,1,2,\dots\,.
$$
Next, we  define the \emph{support} of $\bla$ as follows:
$$
\supp\bla:=\{\psi: \bla(\psi)\ne\varnothing\}\subseteq\Psi. 
$$

For $n>0$, the multipartitions $\bla\in\Y_n(\Psi)$ parametrize the irreducible representations of the group $G(n)$. Namely, the representation $\pi^\bla$ corresponding to a given $\bla\in\Y_n(\Psi)$ is obtained as follows. 

\begin{definition}\label{def7.A}
(i) Assume first that  $\supp\bla$ consists of a single element $\varphi\in\Psi$ and set $\nu:=\bla(\varphi)$, $n:=|\nu|$. Then $\pi^\bla$ is the tensor product of $\pi^\nu$ (the irreducible representation of $S(n)$ indexed by $\nu$) and $(\tau^\varphi)^{\otimes n}$ (which is an irreducible representation of $G^n=G\times\dots\times G$).  In this case we use for the resulting representation of $G(n)$ the alternate notation $\pi^{\varphi,\nu}$. 

(ii)  Assume now that $\supp\bla$ consists of $r>1$ points, say, $\varphi(1),\dots,\varphi(r)$. Set 
$$
\nu(1):=\bla(\varphi(1)), \dots, \nu(r):=\bla(\varphi(r)), \quad n_1:=|\nu(1)|, \dots, n_r:=|\nu(r)|, 
$$
and note that $n:=n_1+\dots+n_r=\|\bla\|$. Then $\pi^\bla$  is the following induced representation:
\begin{equation}\label{eq7.C}
\pi^\bla:=\Ind^{G(n)}_{G(n_1)\times \dots \times G(n_r)} \pi^{\varphi(1),\nu(1)}\otimes\dots\otimes\pi^{\varphi(r),\nu(r)}.
\end{equation}
\end{definition}

This parametrization agrees with the one in \cite[Chapter I, Appendix B]{Mac95}. 

\subsection{Analog of Young's branching rule for wreath products}

The classic Young branching rule \eqref{eq2.D} is extended to wreath products as follows. 

Let  $\bla\in\Y_n(\Psi)$, $\bmu\in\Y_{n-1}(\Psi)$, and $\psi\in \Psi$. We write $\bmu\nearrow_{\psi} \bla$ if 
$\bmu(\psi)\nearrow \bla(\psi)$, which automatically entails that $\bmu(\psi')=\bla(\psi')$ for all $\psi'\ne\psi$. With this notation we have
\begin{equation}\label{eq7.A}
\pi^\bla\big|_{G(1)\times G(n-1)}\sim \bigoplus_{\psi\in \Psi} \bigoplus_{\substack{\bmu\in \Y_{n-1}(\Psi):\\ \bmu \nearrow_{\psi}\bla}} \tau^\psi\otimes \pi^\bmu.
\end{equation}

This fact is well known, and its proof is easily derived from the construction of the representations $\pi^\bla$ as induced representations. 
Indeed, if $\supp\bla$ is a single point $\psi$, then the result follows from the classic branching rule, and the general case is reduced to this particular case by making use of the well-known general claim about restriction of an induced representation to a subgroup (see e.g. \cite[\S7]{Se77}).  

\subsection{The irreducible representations $T^\bla$}\label{sect7.3}

Let $G(\N)$ denote the group of all  strictly monomial matrices of the format $\N\times \N$ and with entries in $G\sqcup\{0\}$. We denote the unity element of $G$ by $1$ and identify $G(n)$ with the subgroup of $G(\N)$ formed by the matrices $g=[g_{ij}]$ satisfying the condition $g_{ij}=\de_{ij}$ for $\max(i,j)>n$. We also set $G(\infty):=\bigcup G(n)$.  When $G$ is reduced to $\{1\}$, we return to the groups $S(n)$, $S(\N)$, and $S(\infty)$. 

Next, in the same way, we generalize the definition of the subgroups $S_m(n)\subset S(n)$ and $S_m(\infty)\subset S(\infty)$. Namely, we introduce the subgroups $G_m(n)\subset G(n)$ and $G_m(\infty)\subset G(\infty)$, which are singled out by the requirement that the matrix entries $g_{ij}$ are equal to $\de_{ij}$ for $1\le i,j\le m$. Obviously, $G_m(\infty):=\bigcup_{m>n} G_m(n)$. 

For a unitary representation $T$ of the group $G(\infty)$, we define the subspaces $H_m(T)$ and $H_\infty(T)$ as in section \ref{sect2.1}; the only change is that $S_m(\infty)$ should be replaced by $G_m(\infty)$. Then we can extend to the group $G(\infty)$ the notion of tame representation (see Definition \ref{def2.A}). 

To each $\bla\in\Y_\ell(\Psi)$ with $\ell\in\Z_{\ge1}$ we assign a unitary representation of $G(\infty)$ by analogy with \eqref{eq2.E}:
\begin{equation}\label{eq7.B}
T^\bla:=\Ind^{G(\infty)}_{G(\ell)G_\ell(\infty)} (\pi^\bla\otimes 1).
\end{equation}
It can be described in the same way as in \eqref{eq2.A}, with the matrix space $\Om(\ell,\infty)$ being replaced by the space $\Om_G(\ell,\infty)$, whose definition is analogous, only the matrix entries now take values in $G\sqcup\{0\}$ instead of $\{0,1\}$.

For the only multipartition forming the singleton $\Y_0(\Psi)$ we assign the trivial representation.

All representations $T^\bla$, $\bla\in\Y(\Psi)$, are tame and irreducible. Moreover,  they exhaust all irreducible tame representations. These assertions are proved in exactly the same way as in the particular case of $S(\infty)$ which corresponds to $G=\{1\}$, see \cite{Ols85}, \cite{MO01}.

\subsection{The spectrum of $T^\bla\big|_{G(n)}$}\label{sect7.4}

We are going to extend the definition of diagrams $\la[n]$ (section \ref{sect2.3}). 

Let $\psi_1\in\Psi$ denote the trivial character of $G$. Given $\bla\in\Y(\Psi)$ and $n\ge \Vert\bla\Vert=\ell$, we denote by $\bla[n]$ the element of $\Y_n(\Psi)$ obtained from $\bla$ by adding a new row of length $n-\ell$ to the Young diagram $\bla(\psi_1)$. If $n$ is large enough (so that $n-\ell$ is greater or equal to the length of the first row in $\bla(\psi_1)$), then the new row appears at the top of the diagram $\bla(\psi_1)$ and grows together with $n$, while the diagrams over $\psi\ne\psi_1$ remain unchanged. 

For $\bla, \bmu \in\Y(\Psi)$, we write $\bmu\subset\bla$ if $\bmu(\psi)\subseteq\bla(\psi)$ for each $\psi\in\Psi$ and $\Vert\mu\Vert<\Vert\bla\Vert$.

\begin{proposition}[cf. Proposition \ref{prop2.A}] \label{prop7.A}
Let $\bla\in\Y_\ell(\Psi)$ and let $n$ be large enough, so that $n-\ell$ is greater or equal to the length of the first row in $\bla(\psi_1)$. 

Then the restriction of\/ $T^\bla$ to the subgroup $G(n)$ contains $\pi^{\bla[n]}$ with multiplicity $1$, while all other irreducible components of\/ $T^\bla\big|_{G(n)}$ are of the form $\pi^{\bmu[n]}$, where $\bmu\subset\bla$.
\end{proposition}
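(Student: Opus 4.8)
The plan is to imitate the proof of Proposition~\ref{prop2.A}, replacing the symmetric groups by the wreath products and the classic Young branching rule by its wreath-product analogue~\eqref{eq7.A}. First I would identify the representation of $G(n)$ on the subspace $H_n(T^\bla)$. From the definition~\eqref{eq7.B} of $T^\bla$ as an induced representation and the description of $H_n(T^\bla)$ in terms of matrices in $\Om_G(\ell,\infty)$ supported on the first $n$ columns, this representation is equivalent to the induced representation $\Ind^{G(n)}_{G(\ell)G_\ell(n)}(\pi^\bla\otimes 1)$. Decomposing this induced representation gives a direct sum of irreducibles $\pi^\bnu$ over a set $X_n(\bla)$ of multipartitions $\bnu\in\Y_n(\Psi)$; one checks that $X_n(\bla)$ contains $\bla[n]$ with multiplicity one, and that for $n$ large the diagrams $\bnu(\psi)$ with $\psi\neq\psi_1$ are exactly $\bla(\psi)$, while $\bnu(\psi_1)$ is a diagram $\succ$ a diagram contained in $\bla(\psi_1)$ in the sense analogous to section~\ref{sect2.3}. (For the pure-symmetric-group piece this is exactly the content of~\eqref{eq2.I}.)

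The second step mirrors the key argument of Proposition~\ref{prop2.A}: if $\pi^\bnu$ (for some $\bnu\in\Y_n(\Psi)$) is contained in $T^\bla\big|_{G(n)}$, then it already occurs in the decomposition of $H_k(T^\bla)$ under $G(k)$ for all $k$ sufficiently large. The reasoning is verbatim the same: a nonzero intertwiner $A\in\Hom_{G(n)}(H(\pi^\bnu),H(T^\bla))$ composed with the projection $P_k$ onto $H_k(T^\bla)$ is still nonzero once $k$ is large, since $P_k\to\id$ strongly. Then, using the decomposition of $H_k(T^\bla)$ as $\bigoplus_{\bka\in X_k(\bla)}\pi^\bka$ together with the wreath-product branching rule~\eqref{eq7.A} iterated $k-n$ times, one concludes that $\bnu\subseteq\bka$ for some $\bka\in X_k(\bla)$, and the constraints on $X_k(\bla)$ translate into $\bnu(\psi)=\bla(\psi)$ for $\psi\neq\psi_1$ and a row-by-row inequality $\bnu(\psi_1)_{i+1}\le\bla(\psi_1)_i$, i.e.\ precisely that $\bnu=\bmu[n]$ with $\bmu\subseteq\bla$. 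Multiplicity one of $\pi^{\bla[n]}$ follows because $\bla[n]$ is the unique member of $X_n(\bla)$ with the largest "non-$\psi_1$" content, and from Proposition~\ref{prop2.B}'s analogue (the nested chain $H_n\subset H_{n+1}$).

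The one genuinely new ingredient compared with the $S(\infty)$ case is bookkeeping across the finitely many characters $\psi\in\Psi$: the induced representation $\Ind^{G(n)}_{G(\ell)G_\ell(n)}(\pi^\bla\otimes 1)$ must be decomposed, and I expect this to be the main technical obstacle — but it is a standard computation. The cleanest route is to first treat the case $\supp\bla=\{\psi\}$ a singleton, where $\pi^\bla=\pi^{\psi,\nu}$ with $\nu=\bla(\psi)$ and the wreath-product structure factors through $(\tau^\psi)^{\otimes(\cdot)}$, reducing the branching entirely to the symmetric-group statement of Proposition~\ref{prop2.A} applied to $\nu$ (the new row added by $[n]$ necessarily carries the trivial character $\psi_1$, which is why one must have $\psi=\psi_1$ for $X_n$ to be nonempty in the limit, unless one allows $\bnu$ to have support $\{\psi,\psi_1\}$ — this is where the hypothesis $n$ large and $\psi_1$ receiving the growing row enters). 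The general case, with $\supp\bla=\{\varphi(1),\dots,\varphi(r)\}$, then follows from~\eqref{eq7.C} and the standard description of the restriction of an induced representation (Mackey / the claim in \cite[\S7]{Se77}), exactly as indicated after~\eqref{eq7.A}: the Young diagram of the added row lands in the $\psi_1$-component, and the components over the other characters are rigid. Assembling these facts yields that every irreducible constituent of $T^\bla\big|_{G(n)}$ other than $\pi^{\bla[n]}$ is of the form $\pi^{\bmu[n]}$ with $\bmu\subset\bla$, completing the proof.
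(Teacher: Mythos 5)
Your proposal follows essentially the same route as the paper's proof: identify the $G(n)$-action on $H_n(T^\bla)$ with $\Ind^{G(n)}_{G(\ell)G_\ell(n)}(\pi^\bla\otimes 1)$, decompose it over the set $X_n(\bla)$ of multipartitions agreeing with $\bla$ off $\psi_1$ and with $\psi_1$-component $\succ\bla(\psi_1)$ (this is \eqref{eq7.D}), and then repeat verbatim the intertwiner-plus-projection argument of Proposition \ref{prop2.A}, with the wreath-product branching rule \eqref{eq7.A} in place of Young's rule. The paper's proof is exactly this (stated even more tersely), so your reconstruction is correct; your extra discussion of how to verify \eqref{eq7.D} via the singleton-support case and Mackey restriction is additional detail the paper simply asserts.
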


\begin{proof}
For $n>\ell$ we set (cf. \eqref{eq2.H})
\begin{equation*}
X_n(\bla):\{\bnu\in\Y_n(\Psi): \bnu(\psi_1)\succ\bla(\psi_1),\;  \bnu(\psi)=\bla(\psi)\; \text{for all $\psi\ne\psi_1$}\}
\end{equation*}
From the construction of $T^\bla$ if follows that the representation of $G(n)$ in $H_n(T^\bla)$ is equivalent to the induced representation
$$
\Ind^{G(n)}_{G(\ell)G_\ell(n)} (\pi^\bla\otimes1).
$$
On the other hand, from the very construction of the irreducible representations of wreath products $G(n)$ it follows that  
\begin{equation}\label{eq7.D}
\Ind^{G(n)}_{G(\ell)G_\ell(n)} (\pi^\bla\otimes1) \sim \bigoplus_{\bnu\in X_n(\bla)} \pi^{\bnu}.
\end{equation}
Then the proof goes as in Proposition \ref{prop2.A}, with the use of the branching rule \eqref{eq7.A}.
\end{proof}

\subsection{Another realization of representations $T^\bla$}

Building on the branching rule \eqref{eq7.A} we can assign to each $\bla\in\Y(\Psi)$ an irreducible representation $\Pi^\bla$ of the group $G(\infty)$,
$$
\Pi^\bla = \varinjlim \pi^{\bla[n]},
$$
in the same way as it was done for the group $S(\infty)$ (section \ref{sect2.4}).  

\begin{proposition}[cf. Proposition \ref{prop2.B}] \label{prop7.B}
For each $\bla\in \Y(\Psi)$, the representations $T^\bla$ and $\Pi^\bla$ are equivalent.
\end{proposition}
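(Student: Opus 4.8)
The plan is to imitate the proof of Proposition \ref{prop2.B}, which handled the special case $G=\{1\}$. The strategy is to locate inside the Hilbert space $H(T^\bla)$ a nested chain of finite-dimensional subspaces carrying the representations $\pi^{\bla[n]}$ of the groups $G(n)$, and then invoke irreducibility of $T^\bla$.

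First I would recall, from Proposition \ref{prop7.A}, that for $n$ large the restriction $T^\bla\big|_{G(n)}$ contains $\pi^{\bla[n]}$ with multiplicity exactly $1$; let $H_n\subset H(T^\bla)$ denote the corresponding isotypic component. The key step is to show that the chain $H_\ell\subseteq H_{\ell+1}\subseteq H_{\ell+2}\subseteq\cdots$ is increasing. To see this, I would apply the wreath-product branching rule \eqref{eq7.A}: the representation $\pi^{\bla[n+1]}$ of $G(n+1)$, restricted to $G(n)$, contains $\pi^{\bla[n]}$ with multiplicity one, and contains no $\pi^{\bmu[n]}$ with $\bmu\subset\bla$, $\bmu\ne\bla$; indeed, removing a box from $\bla[n+1]$ and then comparing with the description of $\bla[n]$ (section \ref{sect7.4}) shows that the only way to reach a diagram of the form $\bmu[n]$ is $\bmu=\bla$. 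By Proposition \ref{prop7.A} applied at level $n+1$, this forces $H_n\subseteq H_{n+1}$, exactly as in the proof of Proposition \ref{prop2.B}.

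Once the chain is increasing, the union $\bigcup_{n\ge\ell} H_n$ is a $G(\infty)$-invariant subspace of $H(T^\bla)$, and the inductive system $(H_n,\ G(n))$ with the canonical inclusions realizes precisely the inductive limit representation $\Pi^\bla=\varinjlim\pi^{\bla[n]}$. Hence $T^\bla$ contains a subrepresentation equivalent to $\Pi^\bla$. Since $T^\bla$ is irreducible (as stated in section \ref{sect7.3}), this subrepresentation must be all of $H(T^\bla)$, whence $T^\bla\sim\Pi^\bla$.

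The only nontrivial point is the monotonicity $H_n\subseteq H_{n+1}$, i.e.\ the compatibility of the multiplicity-one embeddings across different $n$; everything else is formal. The main obstacle, such as it is, lies in verifying carefully that \eqref{eq7.A} yields the stated branching $\pi^{\bla[n+1]}\big|_{G(n)}\supseteq\pi^{\bla[n]}$ together with the \emph{absence} of the components $\pi^{\bmu[n]}$ with $\bmu\subsetneq\bla$ — this is where the extra structure over the nontrivial characters $\psi\ne\psi_1$ must be tracked, but since adding/removing a box only affects the diagram $\bla(\psi_1)$ when $n$ is large (section \ref{sect7.4}), the argument reduces to the classical Young branching rule on the component $\bla(\psi_1)$, just as in the $S(\infty)$ case.
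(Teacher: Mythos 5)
Your overall strategy is exactly the paper's (its proof of Proposition \ref{prop7.B} is literally ``the same argument as in Proposition \ref{prop2.B}, with the use of the branching rule \eqref{eq7.A} and Proposition \ref{prop7.A}''), but the branching fact you single out as ``the key step'' is stated in the wrong direction, and as stated it is false. You claim that $\pi^{\bla[n+1]}\big|_{G(n)}$ contains no $\pi^{\bmu[n]}$ with $\bmu\subset\bla$, $\bmu\ne\bla$, on the grounds that removing a box from $\bla[n+1]$ can produce a multipartition of the form $\bmu[n]$ only when $\bmu=\bla$. In fact the opposite holds: \emph{every} box removal from $\bla[n+1]$ produces something of the form $\bmu[n]$. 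Removing the last box of the long first row of $\bla[n+1](\psi_1)$ gives $\bla[n]$; removing a corner box of $\bla$ itself --- from \emph{any} component $\bla(\psi)$, not only $\psi_1$ --- gives $\bmu[n]$ with $\bmu\nearrow_\psi\bla$, since the untouched first row of the $\psi_1$-component has length $n+1-\Vert\bla\Vert=n-\Vert\bmu\Vert$. So $\pi^{\bla[n+1]}\big|_{G(n)}$ contains many constituents $\pi^{\bmu[n]}$ with $\bmu\subset\bla$, and the ``absence'' you describe as the main obstacle simply fails; your closing remark that box removal ``only affects the diagram $\bla(\psi_1)$'' is wrong for the same reason.

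The statement the argument actually needs --- and the one the paper's proof of Proposition \ref{prop2.B} uses --- is the dual one: $\pi^{\bla[n]}$ is \emph{not} contained in $\pi^{\bmu[n+1]}\big|_{G(n)}$ for $\bmu\subset\bla$, $\bmu\ne\bla$. This is true, because by the same computation the constituents of that restriction are the $\pi^{\bnu[n]}$ with $\bnu=\bmu$ or $\bnu\nearrow_\psi\bmu$, and none of these $\bnu$ equals $\bla$; hence the unique copy of $\pi^{\bla[n]}$ in $H(T^\bla)$ cannot sit in any $G(n+1)$-component other than $H_{n+1}$. Alternatively, the true half of your claim already suffices: since $\pi^{\bla[n]}$ occurs in $\pi^{\bla[n+1]}\big|_{G(n)}$ and occurs in $T^\bla\big|_{G(n)}$ with multiplicity one (Proposition \ref{prop7.A}), its isotypic component must lie inside $H_{n+1}$. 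Either repair yields $H_n\subseteq H_{n+1}$, and the rest of your proof (the inductive limit identification and the appeal to irreducibility) is fine; but as written, the justification of the one step you correctly identify as nontrivial is incorrect.
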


\begin{proof}
The same argument as in Proposition \ref{prop2.B}, with the use of the branching rule \eqref{eq7.A} and Proposition \ref{prop7.A}. 
\end{proof}

\subsection{Analog of Young’s basis}\label{sect7.6}

We denote by  $\xi$ the injective endomorphism of $G(\infty)$ defined in the same way as in the beginning of section \ref{sect2.5}.  Its image is $G_1(\infty)$ and we denote by $\xi^{-1}\colon G_1(\infty) \rightarrow G(\infty)$ its inverse. Its superposition with $T^\bla$ produces a representation of $G_1(\infty)$, which we will denote by $T_1^\bla$.

\begin{proposition}[Branching rule]\label{prop7.C}
For any $\bla\in \Y$ the restriction of $T^\bla$ to the subgroup $G\times G_1(\infty)$ of $G(\infty)$ decomposes into a finite, multiplicity free sum of irreducible representations
$$
T^\bla \big|_{G\times G_1(\infty)}\sim(\tau^{\psi_1}\otimes T_1^\bla) \, \oplus \, \bigoplus_{\psi\in \Psi} \bigoplus_{\substack{\bmu\in \Y_{n-1}(\Psi):\\ \bmu \nearrow_{\psi}\bla}} (\tau^\psi\otimes T_1^\bmu).
$$
\end{proposition}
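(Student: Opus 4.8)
The plan is to mimic closely the proof of Proposition \ref{prop2.C}, adapting the "standard recipe" for restricting an induced representation to a subgroup, and combining it with the wreath-product Young branching rule \eqref{eq7.A}. The case $\bla=\varnothing$ is trivial (the sum over $\psi,\bmu$ is empty), so assume $\Vert\bla\Vert=\ell\ge1$ and work in the realization \eqref{eq7.B}, i.e.\ on the space of $H(\pi^\bla)$-valued functions on the matrix set $\Om_G(\ell,\infty)$ satisfying the equivariance condition under $G(\ell)$, with $G(\infty)$ acting by right translation of columns.

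First I would split $\Om_G(\ell,\infty)$ into two $G_1(\infty)$-stable pieces: $\Om_G(\ell,\infty)^0$, the matrices whose first column is entirely zero, and $\Om_G(\ell,\infty)'$, the rest. This yields an orthogonal $G\times G_1(\infty)$-invariant decomposition $H(T^\bla)=H^0\oplus H'$. (Here $G$ means the copy $G(1)\subset G(\infty)$ sitting in the first coordinate; it acts trivially on $\Om_G(\ell,\infty)^0$ since those matrices have no $1$ in the first column, and it acts on $\Om_G(\ell,\infty)'$ by altering the nonzero group-element in the first column.) On $H^0$ the subgroup $G\times G_1(\infty)$ acts through $G_1(\infty)$ alone, and via the isomorphism $\xi^{-1}$ this is precisely $\tau^{\psi_1}\otimes T_1^\bla$; this gives the first summand. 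The bulk of the work is decomposing $H'$.

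For $H'$, each $\om\in\Om_G(\ell,\infty)'$ has a unique nonzero entry in its first column, located in some row; the $G(\ell)$-stabilizer of that column (as a set, ignoring the group label) is a copy of $G(\ell-1)$ inside $G(\ell)$, call it $G(\ell-1;\om)$, and together with the first coordinate copy of $G$ it accounts for how the group-labelled first column transforms. Restricting the $G(\ell)$-equivariant functions $f(\om)\in H(\pi^\bla)$ to transform, under $G(\ell-1;\om)$ together with the residual $G$-factor coming from the first column, according to $\tau^\psi\otimes\pi^\bmu$ for a given $\psi\in\Psi$ and $\bmu\nearrow_\psi\bla$, decomposes $H'$ into pieces indexed by exactly such pairs $(\psi,\bmu)$; this is where \eqref{eq7.A} (applied with the role of $S(\ell-1)$ played by an arbitrary point-stabilizer $G(\ell-1;\om)$, not necessarily the one fixing the last point) enters. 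One then checks that the $(\psi,\bmu)$-piece, as a $G\times G_1(\infty)$-module, is $\tau^\psi\otimes T_1^\bmu$: the $G$-factor acts through $\tau^\psi$ by construction, and the $G_1(\infty)$-factor acts on functions supported on $\{\om\in\Om_G(\ell,\infty)': \text{first column in a fixed position}\}\cong\Om_G(\ell-1,\infty)$ in exactly the way that realizes $T^\bmu\circ\xi^{-1}$, because deleting the (fixed) first column of such an $\om$ gives an element of $\Om_G(\ell-1,\infty)$ compatible with the $\pi^\bmu$-equivariance. Multiplicity-freeness is automatic since the $T_1^\bmu$ are pairwise inequivalent for distinct $\bmu$ (Proposition \ref{prop7.B} plus the fact that the $\pi^{\bmu[n]}$ are distinct irreducibles) and $T_1^\bla\not\sim T_1^\bmu$ for $\bmu\ne\bla$, and the $\tau^\psi$ separate the $G$-isotypic components.

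The main obstacle, as in the $S(\infty)$ case, is purely bookkeeping: tracking carefully how the first-column group label interacts with the column-stabilizer action, so that the residual $G$-factor genuinely pairs up with the removed box's character $\psi$ (this is the feature absent in Proposition \ref{prop2.C}, where $G$ is trivial). Once that identification is set up correctly, invoking \eqref{eq7.A} on each fiber and reassembling gives the claimed formula. I would present this compactly, referring back to the proof of Proposition \ref{prop2.C} for the parts that are formally identical, and only spelling out the new ingredient, namely the appearance of $\tau^\psi$ from the first-column label.
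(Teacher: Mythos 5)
Your proposal is correct and follows essentially the same route as the paper: split $\Om_G(\ell,\infty)$ according to whether the first column is null, identify $H^0\cong\tau^{\psi_1}\otimes T_1^\bla$, and decompose $H'$ by applying the wreath branching rule \eqref{eq7.A} to the first-column stabilizer, with the $G$-factor supplying $\tau^\psi$. The only cosmetic difference is that the paper normalizes via the slice $\Om_G(\ell,\infty)''=\{\om:\om_{11}=1\}$ rather than working with stabilizers $G(\ell-1;\om)$ at a general base point.
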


Recall that $\tau^\psi$ denotes the irreducible representation of $G$ indexed by $\psi\in\Psi$. In particular, $\tau^{\psi_1}$ is the trivial representation.

\begin{proof}
We argue as in Proposition \ref{prop2.C}. 
Since the case $\bla = \varnothing$ is trivial, we assume $\|{\bla}\| = \ell\ge 1$ and we work with the initial definition of $T^\bla$, see \eqref{eq7.B}. 

Let us split $\Om_G(\ell,\infty)$ into two parts, $\Om_G(\ell,\infty)^0$ and $\Om_G(\ell,\infty)'$. Here $ \Om_G(\ell,\infty)^0 $ consists of all matrices $\om$ with the null first column and $\Om_G(\ell,\infty)'$ is the rest. Since the action of $G(\ell)$ on left preserves this decomposition, the space $ H(T^{\bla})$ splits into the direct sum $ H^{0}\oplus H' $, where $ H^{0} $ is the subspace of all functions supported by $ \Om_G(\ell,\infty)^{0} $. This splitting is invariant under the action of $ G\times G_1(\infty)$, and the invariant subspace $ H^{0}$ carries the reprsentation $ \tau^{\psi_1}\otimes T_1^{\bla} $. 

Examine now the representation on the subspace $H'$. Let $ \Om_G(\ell,\infty)'' $ denote the subset of all matrices $\om\in \Om_G(\ell,\infty)' $ with $ \omega_{11} = 1$. Note that the shifts of this subset under the left action of $G(\ell)$ cover the whole set  $ \Om_G(\ell,\infty)' $. 

From the branching rule \eqref{eq7.A} it follows that for any $ \psi\in \Psi $ and any $ \bmu \nearrow_{\psi} \bla $ there is a unique $(G\times G_1(n))$-invariant subspace $H(\pi^{\bmu})\subset H(\pi^{\bla}) $ that carries the representation $ \pi^{\bmu} $. Now let $ H^{\bmu} $ be the subspace of $ H' $ defined by
$$
H^{\bmu}:= \{\phi\in H': \phi(\omega)\in  H(\pi^{\bmu})\quad \forall\omega\in \Om_G(\ell,\infty)''\  \}.
$$
It is invariant under the right action of the subgroup $G\times G_1(\infty) $ and carries a representation equivalent to $\tau^\psi\otimes  T_{1}^{\bmu}$.

Finally, the whole space $ H' $ decomposes into the direct sum of subspaces $ H^{\bmu}$ described above.
\end{proof}

\begin{corollary}\label{cor7.A}
For any $\bla\in \Y$ the restriction of $T^\bla$ to the subgroup $G_1(\infty)$ of $G(\infty)$ decomposes into a finite sum of irreducible representations, as follows
\begin{equation}\label{eq7.E}
\left.{T^\bla}\right|_{G_1(\infty)} \sim T_1^\bla \oplus \bigoplus_{\psi\in \Psi} \bigoplus_{\substack{\bmu\in \Y_{n-1}(\Psi):\\ \bmu \nearrow_{\psi}\bla}} (\dim\psi)\, T_1^\bmu.
\end{equation}
\end{corollary}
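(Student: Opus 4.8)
The statement, Corollary~\ref{cor7.A}, should follow from the branching rule in Proposition~\ref{prop7.C} by simply restricting further from $G\times G_1(\infty)$ to the subgroup $G_1(\infty)$. Concretely, the plan is to take the decomposition
$$
T^\bla \big|_{G\times G_1(\infty)}\sim(\tau^{\psi_1}\otimes T_1^\bla) \, \oplus \, \bigoplus_{\psi\in \Psi} \bigoplus_{\substack{\bmu\in \Y_{n-1}(\Psi):\\ \bmu \nearrow_{\psi}\bla}} (\tau^\psi\otimes T_1^\bmu)
$$
provided by Proposition~\ref{prop7.C} and restrict each summand to the second factor $G_1(\infty)$. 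Since $G$ acts trivially from the standpoint of $G_1(\infty)$, a summand of the form $\tau^\psi\otimes T_1^\bmu$, restricted to $G_1(\infty)$, is just a direct sum of $\dim\tau^\psi=\dim\psi$ copies of $T_1^\bmu$. Applying this to every summand — with the convention that $\dim\psi_1=1$ for the leading term $\tau^{\psi_1}\otimes T_1^\bla$ — yields exactly \eqref{eq7.E}.

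The key steps, in order, are: (1)~cite Proposition~\ref{prop7.C} for the decomposition of $T^\bla\big|_{G\times G_1(\infty)}$; (2)~observe that $G_1(\infty)$ sits inside $G\times G_1(\infty)$ as the second direct factor, so restriction of an external tensor product $\tau^\psi\otimes T_1^\bmu$ to $G_1(\infty)$ gives $(\dim\psi)\,T_1^\bmu$; (3)~sum over all terms, noting that the leading term $\tau^{\psi_1}\otimes T_1^\bla$ contributes a single copy of $T_1^\bla$ since $\psi_1$ is one-dimensional; (4)~conclude that the resulting sum is still finite, because $\Y_{n-1}(\Psi)$ contains only finitely many $\bmu$ with $\bmu\nearrow_\psi\bla$ for some $\psi$, and each multiplicity $\dim\psi$ is finite. (Multiplicity-freeness is lost in general once $\dim\psi>1$, which is why the statement only claims a finite direct sum rather than a multiplicity-free one.)

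There is really no serious obstacle here: the corollary is a routine consequence of Proposition~\ref{prop7.C}. The only point requiring a small remark is the interpretation of $\tau^\psi\otimes T_1^\bmu\big|_{G_1(\infty)}$ — one must be careful that $T_1^\bmu$ is already a genuine representation of $G_1(\infty)$ (via the isomorphism $\xi^{-1}$), so that the external tensor product over $G\times G_1(\infty)$ really does restrict to the second factor as a multiple of $T_1^\bmu$. This is immediate from the definition of $T_1^\bmu$ in section~\ref{sect7.6}. Once that is acknowledged, the proof is a one-line application of Proposition~\ref{prop7.C} followed by the elementary fact that restricting $V\otimes W$ from $A\times B$ to $B$ produces $(\dim V)\,W$.
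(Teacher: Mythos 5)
Your proof is correct and follows exactly the route the paper intends: Corollary~\ref{cor7.A} is stated in the paper without a separate proof precisely because it is the immediate consequence of Proposition~\ref{prop7.C} obtained by further restricting from $G\times G_1(\infty)$ to $G_1(\infty)$, with each outer tensor factor $\tau^\psi$ contributing a multiplicity $\dim\psi$. Your remark on the loss of multiplicity-freeness once $\dim\psi>1$ is a correct and worthwhile observation.
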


This result is analogous to  \eqref{eq2.B}. We use it in Proposition \ref{prop10.D}.

\section{Virtual center of the group algebra $\C[G(\infty)]$}\label{sect8}

\subsection{The algebras $\Sym(\Psi)$ and $\SSym(\Psi)$}

To each $\psi\in\Psi$ we assign a copy of the algebra $\Sym$, which we denote by $\Sym^{(\psi)}$, and we set 
$$
\Sym(\Psi) = \bigotimes_{\psi \in \Psi} \Sym^{(\psi)}. 
$$
Because $\Sym$ is a unital algebra, each $\Sym^{(\psi)}$ may be viewed as a subalgebra of $\Sym(\Psi)$: it is formed by the decomposable tensors with the property that all components, except the $\psi$th one, are equal to $1$. Let $I_\psi : \Sym\to \Sym(\Psi)$ be the natural embedding defined by the identification $\Sym=\Sym^{(\psi)}$. 

The following elements are analogs of the Newton power sums $p_k\in\Sym$: 
$$
p_{k,\psi}:=I_\psi (p_k), \qquad k=1,2,\dots\,.
$$
The algebra $\Sym(\Psi)$ is freely generated, as a commutative unital algebra, by the elements $p_{k,\psi}$, where $k=1,2,\dots$ and $\psi\in\Psi$. 

Likewise, we set
$$
\SSym(\Psi) = \bigotimes_{\psi \in \Psi} \SSym^{(\psi)},
$$
where $\SSym^{(\psi)}$ is a copy of $\Sym^*$ indexed by $\psi$. The algebra $\SSym(\Psi)$ has a natural filtration (determined by the filtration of $\SSym$), and the associated graded algebra is $\Sym(\Psi)$. Let $I^*_\psi: \SSym\to\SSym(\Psi)$ be the natural embedding.

Recall that $\SSym$ can be interpreted as an algebra of functions on $\Y$ (see section \ref{sect3}). Likewise, we can regard $\Sym^*(\Psi)$ as an algebra of functions on $\Y(\Psi)$ by setting 
$$
I^*_\psi(f)(\bla):=f(\bla(\psi)), \qquad f\in\SSym, \quad \psi\in\Psi.
$$

\subsection{Central elements $\zz^{(k,\psi)}_{n}$ and their eigenvalues}\label{sect8.2}

We are going to generalize the definition of the central elements $\zz^{(k)}_n$ from section \ref{sect3.2}. Introduce a notation: 

\begin{itemize} 
\item for $g\in G$ and $i=1,\dots,n$, we denote by $g^{(i)}$ the element of $G(n)$ represented by the diagonal $n\times n$ matrix in which the $i$th diagonal entry is $g$ and all other diagonal entries equal $1$;

\item for $\psi\in\Psi$, we denote by $\bar\psi\in\Psi$ the conjugate character, so that $\bar\psi(g)=\overline{\psi(g)}$ for $g\in G$.
\end{itemize}

\begin{definition}\label{def8.A} 
To each pair $(k,\psi)$, where $1\le k\le n$ and $\psi\in\Psi$, we assign an element of the group algebra $\C[G(n)]$ by setting
\begin{equation}\label{eq8.A}
\zz^{(k, \psi)}_{n}:=\left(\frac{\dim\psi}{|G|}\right)^k\sum \bar\psi(g_k\dots g_1)g_1^{(i_1)}\dots g_k^{(i_k)}(i_1,\dots,i_k),
\end{equation}
where the sum is taken over all ordered $k$-tuples of pairwise distinct indices $i_1,\dots,i_k$ in $\N_n$ and arbitrary ordered $k$-tuples of elements $g_1,\dots,g_k$ in $G$. 
\end{definition}

Note that for $G=\{1\}$, the definition reduces to  \eqref{eq3.F1}.

Let $\ZZ\C[G(n)]$ denote the center of $\C[G(n)]$. 

\begin{lemma}\label{lemma8.A}
The elements $\zz^{(k, \psi)}_{n}$ lie in \/ $\ZZ\C[G(n)]$.
\end{lemma}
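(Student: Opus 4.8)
The plan is to show that $\zz^{(k,\psi)}_n$ commutes with all of $\C[G(n)]$, and for this it suffices to check that it commutes with the two kinds of generators of $G(n)$: the permutations $\sigma\in S(n)$ and the diagonal elements $h^{(j)}$ for $h\in G$, $j\in\N_n$. The key observation is that the set of generalized cycles appearing in \eqref{eq8.A} is permuted by the conjugation action of $G(n)$ in a way compatible with the coefficient $\bar\psi(g_k\dots g_1)$. Concretely, a summand $g_1^{(i_1)}\dots g_k^{(i_k)}(i_1,\dots,i_k)$ is a ``colored $k$-cycle'': it sends the basis index $i_j$ (suitably colored) around the cycle while multiplying by the group elements $g_j$, and the scalar $\bar\psi(g_k\dots g_1)$ depends only on the ordered product $g_k\dots g_1$, read around the cycle.

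First I would record the conjugation formulas. For $\sigma\in S(n)$ one has $\sigma\, g^{(i)}\sigma^{-1}=g^{(\sigma i)}$ and $\sigma(i_1,\dots,i_k)\sigma^{-1}=(\sigma i_1,\dots,\sigma i_k)$, so conjugation by $\sigma$ simply relabels the index tuple $(i_1,\dots,i_k)\mapsto(\sigma i_1,\dots,\sigma i_k)$ while leaving the group-element tuple $(g_1,\dots,g_k)$ and hence the scalar $\bar\psi(g_k\dots g_1)$ untouched; since the sum in \eqref{eq8.A} runs over \emph{all} ordered tuples of distinct indices, this is a bijection of the index set and the sum is invariant. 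For $h^{(j)}$ the computation is the only place where something nontrivial happens: conjugating $g_1^{(i_1)}\dots g_k^{(i_k)}(i_1,\dots,i_k)$ by $h^{(j)}$ has no effect unless $j\in\{i_1,\dots,i_k\}$, say $j=i_t$; in that case one checks, using that the cycle $(i_1,\dots,i_k)$ carries position $i_t$ to $i_{t+1}$ and that $h^{(j)}$ commutes with all $g^{(i)}$ for $i\ne j$, that conjugation replaces the pair $(g_{t-1},g_t)$ by $(g_{t-1}h^{-1},hg_t)$ (indices mod $k$), i.e. it is a substitution $g_t\mapsto h g_t$, $g_{t-1}\mapsto g_{t-1}h^{-1}$. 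The crucial point is that the cyclically ordered product $g_k\cdots g_1$ — equivalently, the conjugacy-invariant quantity $g_t g_{t-1}$, read around the cycle — is unchanged by this substitution, so again $\bar\psi(g_k\dots g_1)$ is preserved, and we obtain a bijection of the summation set fixing the scalars. Hence $h^{(j)}$ conjugation permutes the terms, and $\zz^{(k,\psi)}_n$ is fixed.

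I expect the main obstacle to be purely bookkeeping: getting the conjugation formula for $h^{(j)}$ exactly right, in particular tracking which factor of the monomial $g_1^{(i_1)}\dots g_k^{(i_k)}$ absorbs $h$ and which absorbs $h^{-1}$ once the cycle $(i_1,\dots,i_k)$ is applied, and verifying that the reading of the product $g_k\cdots g_1$ used to form the scalar coefficient is precisely the cyclic invariant that survives. A clean way to organize this is to rewrite each summand in the ``wreath normal form'' $\big((g_1,\dots,g_n);\,\sigma\big)$ with $\sigma=(i_1,\dots,i_k)$ and all unlisted coordinates equal to $1$, recall the multiplication rule $((g);\sigma)((h);\tau)=((g\cdot\sigma(h));\sigma\tau)$ in $G(n)=G^n\rtimes S(n)$, and observe that for an element with $\sigma$ a single $k$-cycle the quantity $\prod_{j}g_j$ taken in cyclic order along $\sigma$ is exactly the conjugacy invariant of the ``$G$-component along the cycle.'' Then both conjugations above manifestly preserve it, and since for each cycle type and each value of this invariant the tuple of distinct supporting indices ranges over a full orbit under $S(n)$-relabeling and the remaining $g_j$'s range freely, invariance of the whole sum follows. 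Once this normal-form lemma is in place, the proof is two short paragraphs.
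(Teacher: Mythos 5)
Your strategy is exactly the paper's: reduce to conjugation by the generators of $S(n)$ and of $G^n$, observe that the $S(n)$-conjugation merely relabels the index tuple, and that the $G^n$-conjugation acts on the tuple $(g_1,\dots,g_k)$ by a substitution that preserves the conjugacy class of the cyclic product $g_k\cdots g_1$, whence $\bar\psi(g_k\cdots g_1)$ is preserved because $\psi$ is a class function. (The paper conjugates by a general $h=h_1^{(1)}\cdots h_n^{(n)}$ in one go, getting $g_t\mapsto h_{i_t}g_t h_{i_{t-1}}^{-1}$ so that the product telescopes to a conjugate of itself; you do one $h^{(j)}$ at a time — an immaterial difference.)

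The one concrete slip is in the very bookkeeping you flagged as the risk: with the conventions of the paper (the cycle $(i_1,\dots,i_k)$ sends $i_t\to i_{t+1}$, and the monomial $g_1^{(i_1)}\cdots g_k^{(i_k)}(i_1,\dots,i_k)$ has entry $g_{t}$ in position $(i_{t},i_{t-1})$), conjugation by $h^{(i_t)}$ gives $g_t\mapsto hg_t$ and $g_{t+1}\mapsto g_{t+1}h^{-1}$, not $g_{t-1}\mapsto g_{t-1}h^{-1}$ as you wrote. With your version the insertion $\cdots(hg_t)(g_{t-1}h^{-1})\cdots$ does not cancel inside $g_k\cdots g_1$ and the product is not even conjugate to the original, so the claimed invariance would fail as stated; with the correct neighbor, $g_{t+1}h^{-1}\cdot hg_t=g_{t+1}g_t$ cancels internally (and for $t=k$ one gets $h(g_k\cdots g_1)h^{-1}$, which is where the class-function property is genuinely used). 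Once that index is fixed, your proof is complete and coincides with the paper's.
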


\begin{proof}
The group $G(n)$ is generated by its subgroups $S(n)$ and $G^n$. Therefore, it suffices to check that 
$$
s\, \zz^{(k, \psi)}_{n} s^{-1}= \zz^{(k, \psi)}_{n}, \quad \forall\, s\in S(n); \qquad 
h\, \zz^{(k, \psi)}_{n} h^{-1}= \zz^{(k, \psi)}_{n}, \quad \forall\, h\in G^n. 
$$
The first equality is evident, so we turn to the second one. In \eqref{eq8.A}, we have a double sum, over $(i_1,\dots,i_k)$ and over $(g_1,\dots,g_k)$. It suffices to check that for $(i_1,\dots,i_k)$ fixed, the conjugation by $h$ leaves invariant the sum over $(g_1,\dots,g_k)$. 

To simplify the notation, let us examine the particular case when $k=n$ and the cycle $(i_1,\dots,i_k)$ is $(1,\dots,n)$; in the general case, the computation is the same. 

Write $h=h_1^{(1)}\dots h_n^{(n)}$, where $h_1,\dots, h_n\in G$ . Then the conjugation 
$$
g_1^{(1)}\dots g_n^{(n)} (1,\dots,n)\, \longrightarrow\, h\, g_1^{(1)}\dots g_n^{(n)} (1,\dots,n)\, h^{-1}
$$
boils down to the transformation 
$$
g_1\to h_1 g_1 h_n^{-1}, \; g_2 \to h_2 g_2 h_1^{-1},\; \dots, \;g_n\to h_n g_n h_{n-1}^{-1}.
$$
Under this transformation, the product $g_n\dots g_1$ is conjugated by $h_n$, which does not affect the quantity $\bar\psi(g_n\dots g_1)$, because $\psi$ is a class function. This concludes the proof.
\end{proof}

For a central element $c\in\ZZ\C[G(n)]$, we denote by $\wh c(\bla)$ its eigenvalue in the irreducible representation $\pi^\bla$, $\bla\in\Y_n(\Psi)$. 

Next, recall the definition of the elements $p^\#_\rho\in\Sym^*$, see \eqref{eq3.J} and \eqref{eq3.K}. In what follows we are dealing with a particular case of these elements, $p^\#_k:=p^\#_{(k)}$.

For $k\in\Z_{\ge1}$ and $\psi\in\Psi$, we set
\begin{equation}\label{eq8.B}
p^\#_{k,\psi}:=I^*_\psi(p^\#_k)\in\SSym(\Psi), \quad \psi\in\Psi.
\end{equation}

Since the elements $\zz_n^{\;(k,\psi)}$ are central (Lemma  \ref{lemma8.A}), the quantities $\wh\zz_n^{\;(k,\psi)}(\bla)$ are well defined.

\begin{proposition}[cf. Proposition \ref{prop3.A}] \label{prop8.A}
Let, as above, $1\le k\le n$ and $\psi\in\Psi$.  
For any $\bla\in\Y_n(\Psi)$ one has
\begin{equation}\label{eq8.J}
\wh\zz_n^{\;(k,\psi)}(\bla) = p_{k,\psi}^\#(\bla)=p_k^\#(\bla(\psi)). 
\end{equation}
\end{proposition}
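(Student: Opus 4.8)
The plan is to reduce the computation of the eigenvalue $\wh\zz_n^{\;(k,\psi)}(\bla)$ to the already-established scalar case (Proposition \ref{prop3.A}) by a careful analysis of how the element $\zz^{(k,\psi)}_n$ acts on the induced module $\pi^\bla$. Since $\zz^{(k,\psi)}_n$ is central, it suffices to compute its trace in $\pi^\bla$ and divide by $\dim\pi^\bla$; equivalently, one may evaluate it on a single convenient vector. The guiding idea is that the factor $\bar\psi(g_k\cdots g_1)$ together with the normalization $\bigl(\dim\psi/|G|\bigr)^k$ is precisely designed so that, after summing over the $g_i$'s, the operator $\zz^{(k,\psi)}_n$ behaves like a ``$\psi$-isotypic projection'' composed with the cycle-sum $\zz^{(k)}_n$ acting only on the $\psi$-component of $\bla$.

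More concretely, first I would treat the case where $\supp\bla$ is a single character $\varphi$, so $\pi^\bla=\pi^{\varphi,\nu}=\pi^\nu\otimes(\tau^\varphi)^{\otimes n}$ with $\nu=\bla(\varphi)$, $n=|\nu|$. Here the action of $g_1^{(i_1)}\cdots g_k^{(i_k)}(i_1,\dots,i_k)$ on the tensor factors can be written explicitly: on $(\tau^\varphi)^{\otimes n}$ the cycle permutes the tensor legs and the $g_j$'s act on the appropriate legs, while on $\pi^\nu$ only the permutation $(i_1,\dots,i_k)$ acts. Taking the partial trace over $(\tau^\varphi)^{\otimes n}$ of the operators attached to the legs $i_1,\dots,i_k$ and using the orthogonality relation $\sum_{g\in G}\bar\psi(g)\,\tau^\varphi(g)=\dfrac{|G|}{\dim\psi}\,\delta_{\psi,\varphi}\cdot\mathrm{id}$ (applied iteratively along the cycle), one finds that the whole sum over $g_1,\dots,g_k$ collapses: it gives $0$ unless $\psi=\varphi$, and when $\psi=\varphi$ it reduces, up to the normalization constants which cancel exactly, to the operator $\zz^{(k)}_n$ acting on the $S(n)$-factor $\pi^\nu$. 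Hence $\wh\zz^{\;(k,\psi)}_n(\bla)=\wh\zz^{\;(k)}_n(\nu)=p^\#_k(\nu)=p^\#_k(\bla(\psi))$ by Proposition \ref{prop3.A}, and this also covers the vanishing when $\psi\notin\supp\bla$.

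For the general case, where $\supp\bla=\{\varphi(1),\dots,\varphi(r)\}$ and $\pi^\bla$ is induced from $G(n_1)\times\cdots\times G(n_r)$ as in \eqref{eq7.C}, I would use Frobenius reciprocity / the standard trace formula for induced representations. A $k$-cycle $(i_1,\dots,i_k)$ together with the diagonal $G$-elements contributes to the trace of the induced representation only through those $G(n)$-conjugates of the cycle-with-decoration that lie in the block subgroup $G(n_1)\times\cdots\times G(n_r)$, i.e. only through cycles supported inside a single block $\N_{n_1}\sqcup\cdots$. Since the central element $\zz^{(k,\psi)}_n$ is a sum over \emph{all} $k$-tuples, this reorganizes the trace into a sum of block-contributions, each of which is exactly the single-support computation carried out above for the block $G(n_t)$ with multipartition concentrated at $\varphi(t)$. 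Only the block with $\varphi(t)=\psi$ survives, and it yields $p^\#_k(\bla(\psi))$; the cross-block terms vanish because of the $\bar\psi$-orthogonality. Assembling these pieces gives \eqref{eq8.J}.

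The main obstacle I anticipate is bookkeeping in the general induced case: tracking precisely which conjugates of a decorated $k$-cycle land in the block subgroup, and verifying that the combinatorial multiplicities (number of such conjugates, sizes of double cosets) combine with the factor $n^{\downarrow k}$ appearing in Proposition \ref{prop3.A} so that one obtains exactly $p^\#_k$ with no spurious constant. It will be cleanest to avoid the induced-character formula altogether and instead realize $\pi^\bla$ on the space $\Om_G(\ell,\infty)$-type model (as in section \ref{sect7.3}), or to argue by first proving the identity for $\bla$ with single support and then invoking the multiplicativity of the eigenvalue map under the outer tensor/induction product together with the fact that $p^\#_{k,\psi}$ depends only on $\bla(\psi)$; this is the route I expect to write up, since it sidesteps the worst of the double-coset combinatorics. (Indeed the paper flags this as ``the core of the second part'' — Proposition \ref{prop8.A} — so some genuine computation is unavoidable, but the conceptual reduction to Proposition \ref{prop3.A} via $G$-character orthogonality is the key.)
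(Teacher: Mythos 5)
Your proposal is correct and follows essentially the same route as the paper: reduce to a trace computation for the central element, handle the single-support case $\pi^{\varphi,\nu}=\pi^\nu\otimes(\tau^\varphi)^{\otimes n}$ by tracing along the cycle and applying character orthogonality to get the factor $\de_{\psi,\varphi}$, then treat the general case via the induced character formula, where only conjugates of the decorated $k$-cycle landing in the block indexed by $\psi$ contribute. The combinatorial bookkeeping you flag (counting the coset representatives $y$ with $yxy^{-1}$ in the block subgroup, namely $n_1^{\da k}(n-k)!/(n_1!\cdots n_r!)$ of them, and cancelling against $n^{\da k}$ and the dimension of the induced module) is exactly what the paper's Step 3 carries out, and it does close up cleanly to give $p^\#_k(\bla(\psi))$ with no spurious constant.
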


\begin{proof}

Step 1. We start with the evident formula
\begin{equation}\label{eq8.C}
\wh\zz_n^{\;(k,\psi)}(\bla)=\dfrac{\tr\pi^\bla(\zz_n^{(k,\psi)})}{\dim \pi^\bla}.
\end{equation}

In \eqref{eq8.A}, the conjugacy class of the element $g_1^{(i_1)}\dots g_k^{(i_k)}(i_1,\dots,i_k)\in G(n)$ depends only on the product $g:=g_k\dots g_1$ (see \cite{Mac95}, Appendix B, section 3, for a description of conjugacy classes in wreath products). Note also that the number of $k$-tuples $(i_1,\dots,i_k)$ equals $n^{\da k}$. It follows that  we can rewrite \eqref{eq8.C} as
\begin{equation}\label{eq8.D}
\wh\zz_n^{\;(k,\psi)}(\bla)=\frac{n^{\da k}(\dim\psi)^k}{\dim\pi^\bla}\cdot\frac1{|G|}\tr \pi^\bla\bigg(\sum_{g\in G}\bar\psi(g) g^{(1)}(1,\dots,k)\bigg).
\end{equation}
  
Step 2. Examine first the case when $\supp\bla$ consists of a single element, say, $\varphi\in\Psi$. Then $\pi^\bla$ takes the form $\pi^{\varphi,\nu}$ for some $\nu\in\Y_n$ (see Definition \ref{def7.A}, item (i)). Set $d:=\dim \tau^\varphi$ and pick a basis $e_1,\dots,e_d$ in the representation space $H(\tau^\varphi)$. Next, let $V=H(\pi^\nu)$ stand for the space of the representation $\pi^\nu$ of the $S(n)$. The whole space $H(\pi^\bla)$ can be written as 
\begin{equation*}
\bigoplus_{j_1,\dots,j_n=1}^d V\otimes e_{j_1}\otimes\dots\otimes e_{j_n}.
\end{equation*}
Given $v\in V$, the result of the action of the element $\bar\psi(g) g^{(1)}(1,\dots,k)$ on the vector 
\begin{equation}\label{eq8.E}
v\otimes  e_{j_1}\otimes\dots\otimes e_{j_n}
\end{equation}
equals
\begin{equation}\label{eq8.F}
\bar\psi(g) (\pi^\nu((1,\dots,k))v)\otimes \tau^\varphi(g) e_{j_k}\otimes e_{j_1}\otimes\dots\otimes e_{j_{k-1}}\otimes e_{j_{k+1}}\otimes\dots\otimes e_{j_n}.
\end{equation}
It follows that the scalar product of the vectors \eqref{eq8.F} and \eqref{eq8.E} vanishes unless 
$$
j_1=j_2,\quad  j_2=j_3,\quad  \dots,\quad j_{k-1}=j_k,
$$
that is, $j_1=\dots=j_k$. 

Using this we can compute the trace in \eqref{eq8.D}:
\begin{multline}\label{eq8.G}
\frac1{|G|}\tr \pi^\bla\bigg(\sum_{g\in G}\bar\psi(g) g^{(1)}(1,\dots,k)\bigg)=\chi^\nu_{(1,\dots,k)}(\dim \varphi)^{n-k}\frac1{|G|}\sum_{g\in G}\bar\psi(g)\varphi(g)\\
=\chi^\nu_{(1,\dots,k)}(\dim \varphi)^{n-k}\cdot\de_{\psi,\varphi},
\end{multline}
by virtue of the orthogonality relation for irreducible characters.

To obtain the right-hand side of \eqref{eq8.D} we have to multiply the final expression in \eqref{eq8.G} by 
$$
\frac{n^{\da k}(\dim\psi)^k}{\dim \pi^\bla}=\frac{n^{\da k}(\dim\psi)^k}{\dim\nu (\dim \varphi)^n}.
$$
After cancellations we obtain 
$$
\frac{n^{\da k} \chi^\nu_{(1,\dots,k)}}{\dim\nu}\cdot\de_{\psi,\varphi}=p^\#_k(\nu)\cdot \de_{\psi,\varphi}.
$$
In the particular case under consideration, when $\supp\bla$ is the singleton $\{\varphi\}$ and $\bla(\varphi)=\nu$, the resulting expression coincides with $p^\#_{k,\psi}(\bla)$, as desired.

Step 3. Let us turn now to the case when $\supp \bla$ is not a singleton. Recall the classic formula for induced characters: if $\mathcal G$ is a finite group, $\mathcal H$ its subgroup, $\chi$ a character of $\mathcal H$, and $\Ind \chi$ the induced character of $\mathcal G$, then the value of $\Ind\chi$ at a given element $x\in\mathcal G$ is given by
\begin{equation}\label{eq8.H}
\Ind\chi(x)=\sum_{y\in[\mathcal H\setminus\mathcal G]}\chi^0(yxy^{-1}),
\end{equation}
where $[\mathcal H\setminus\mathcal G]$ is an arbitrary system of representatives for cosets modulo $\mathcal H$ and $\chi^0$ stands for the continuation of $\chi$ by zero outside the subgroup $\mathcal H$. 

According to Definition \ref{def7.A}, item (ii), $\pi^\bla$ is an induced representation. To compute the trace in \eqref{eq8.D} we apply \eqref{eq8.H} to our concrete situation: we set $\mathcal G:=G(n)$, 
$\mathcal H:=G(n_1)\times\dots\times G(n_r)$, and take for $\chi$ the character of $\pi^{\varphi(1),\nu(1)}\otimes\dots\otimes\pi^{\varphi(r),\nu(r)}$,  see \eqref{eq7.C}. 

Next, observe that the condition that a given $k$-cycle $c\in S(n)$ lies in a given Young subgroup just means that $c$ fits in one of the blocs of that subgroup. Likewise, for an element $x\in G(n)$ of the form $g^{(1)}(1,\dots,k)$, the condition that its conjugate $yxy^{-1}$ belongs to the subgroup $G(n_1)\times\dots\times G(n_r)$ just means that $yxy^{-1}$ fits into one of the blocs $G(n_i)$. 

Combining this fact with the formula \eqref{eq8.H} and our previous argument we see that the trace in \eqref{eq8.D} vanishes unless the following condition holds: $\psi=\varphi(i)$ for some index $i\in\{1,\dots,r\}$ and, moreover, $k\le n_i$. On the other hand, if this is not the case, then $p^\#_{k,\psi}(\bla)$ vanishes, too, by the very definition of $p^\#_{k,\psi}$. 

It remains to examine the case when the above condition holds true. With no loss of generality we may assume that $i=1$. Thus, $\psi=\varphi(1)$ and $k\le n_1$. 

Then, by the definition of $p^\#_{k,\psi}$, we have
$p^\#_{k,\psi}(\bla)=p^\#_k(\nu(1))$. Therefore, we have to check that the formula \eqref{eq8.D} also gives $p^\#_k(\nu(1))$. To do this, we apply the formula \eqref{eq8.H} to $x:=g^{(1)}(1,\dots,k)$. Because 
$$
G(n_1)\times \dots\times G(n_r)\setminus G(n)=S(n_1)\times \dots\times S(n_r)\setminus S(n),
$$
we may assume that $y$ ranges over $[S(n_1)\times \dots\times S(n_r)\setminus S(n)]$. 

Note that $x$ is already contained in the subgroup
$$
G(n_1)\times\{1\}\times\dots\times\{1\}\subset G(n_1)\times G(n_2)\times\dots\times G(n_r).
$$ 
There are precisely 
$$
\frac{n_1^{\da k}(n-k)!}{n_1!\dots n_r!}
$$
permutations $y\in [S(n_1)\times \dots\times S(n_r)\setminus S(n)]$ such that the corresponding element $y x y^{-1}$ lies in the same subgroup, and then 
$$
\chi^0(y x y^{-1})=\chi(y x y^{-1})=\chi(x)=\tr\pi^{\varphi(1),\nu(1)}\big(g^{(1)}(1,\dots,k)\big)\cdot \prod_{i=2}^r \dim \pi^{\varphi(i),\nu(i)}.
$$

It follows that
\begin{multline}\label{eq8.I}
\frac1{|G|}\tr \pi^\bla\bigg(\sum_{g\in G}\bar\psi(g)g^{(1)}(1,\dots,k)\bigg)\\
=\frac1{|G|}\tr\pi^{\varphi(1),\nu(1)}\bigg(\sum_{g\in G}g^{(1)}(1,\dots,k)\bigg)\cdot \frac{n_1^{\da k}(n-k)!}{n_1!\dots n_r!}
\cdot \prod_{i=2}^r \dim \pi^{\varphi(i),\nu(i)}.
\end{multline}

Next, by the result of Step 2 (formula \eqref{eq8.G}),
$$
\frac1{|G|}\tr \pi^{\varphi(1),\nu(1)}\bigg(\sum_{g\in G}g^{(1)}(1,\dots,k)\bigg)=\chi^{\nu(1)}_{(1,\dots,k)}\cdot (\dim \varphi(1))^{n_1-k}=\chi^{\nu(1)}_{(1,\dots,k)}\cdot(\dim \psi)^{n_1-k}.
$$
Now we substitute this into \eqref{eq8.I} and then multiply by the resulting expression by 
$$
\frac{n^{\da k}(\dim\psi)^k}{\dim\pi^\bla},
$$
as it is prescribed by \eqref{eq8.D}. Finally, we also use the fact that
\begin{multline*}
\dim\pi^\bla=\frac{n!}{n_1!\dots n_r!}\prod_{i=1}^r\dim \pi^{\varphi(i),\nu(i)}
= \frac{n!}{n_1!\dots n_r!}\dim\pi^{\psi,\nu(1)}\cdot \prod_{i=2}^r\dim \pi^{\varphi(i),\nu(i)}\\
= \frac{n!}{n_1!\dots n_r!}\dim\nu(1)\cdot(\dim\psi)^{n_1}\cdot \prod_{i=2}^r\dim \pi^{\varphi(i),\nu(i)}.
\end{multline*}
After cancellations we obtain the final expression 
$$
\frac{n_1^{\da k} \chi^{\nu(1)}_{(1,\dots,k)}}{\dim\nu(1)}=p^\#_k(\nu(1)),
$$
as required.  This completes the proof. 
\end{proof}

\subsection{Filtration in $\C[G(\infty)]$}

The \emph{degree} of an element  $g\in G(\infty)$ is defined exactly as in section \ref{sect3.3}: $\deg g$ is the number of diagonal entries $g_{ii}$ distinct from $1$. The formula \eqref{eq3.H} still holds, and we extend the notion of degree to the group algebra $\C[G(\infty)]$ as in \eqref{eq3.H1}. In this way we obtain a filtration in $\C[G(\infty)]$. 

The next assertion is a counterpart of Corollary \ref{cor3.A}.

\begin{corollary}\label{cor8.A}
For any function $f^*\in \SSym(G)$ there exists a sequence of elements $c_n(f^{\ast})\in \ZZ\C[G(n)]$, $n=1,2,3,\dots$,  such that for any $n$ one has 
$$ 
\deg c_n(f^*)\le\deg f^*
$$ 
and 
$$
\wh{c_n(f^*)}=f^*(\bla), \quad \bla\in\Y_n(\Psi). 
$$
\end{corollary}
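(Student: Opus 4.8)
The plan is to repeat the proof of Corollary \ref{cor3.A} verbatim, with Proposition \ref{prop8.A} in place of Proposition \ref{prop3.A}. First I would extend Definition \ref{def8.A} by setting $\zz^{(k,\psi)}_n := 0$ whenever $n < k$. With this convention, Proposition \ref{prop8.A} together with the vanishing $p^\#_k(\nu) = 0$ for $|\nu| < k$ (Proposition \ref{prop3.B}) gives
$$
\wh\zz_n^{\;(k,\psi)}(\bla) = p^\#_{k,\psi}(\bla), \qquad k\ge1,\ \psi\in\Psi,\ \bla\in\Y_n(\Psi),
$$
for \emph{every} $n\ge1$: indeed, when $n<k$ one has $|\bla(\psi)|\le\|\bla\|=n<k$, so $p^\#_{k,\psi}(\bla)=p^\#_k(\bla(\psi))=0$, which matches $\wh\zz_n^{\;(k,\psi)}(\bla)=0$.

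Next I would record the degree bound $\deg\zz^{(k,\psi)}_n\le k$: in \eqref{eq8.A} each summand $g_1^{(i_1)}\cdots g_k^{(i_k)}(i_1,\dots,i_k)$ is a monomial matrix whose diagonal entries at positions outside $\{i_1,\dots,i_k\}$ are all equal to $1$, hence has degree at most $k$, and therefore so does their sum. Since $\deg$ is subadditive under multiplication in $\C[G(\infty)]$ (the formula \eqref{eq3.H} still holds), any product $\prod_{k,\psi}(\zz^{(k,\psi)}_n)^{m_{k,\psi}}$ has degree at most $\sum_{k,\psi}k\,m_{k,\psi}$.

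The only structural point is that the family $\{p^\#_{k,\psi}:k\ge1,\ \psi\in\Psi\}$ generates $\SSym(\Psi)$ compatibly with its filtration. This follows from the general principle recalled in section \ref{sect3.1}, once we observe that the highest term of $p^\#_{k,\psi}$ is $p_{k,\psi}$ (because $[p^\#_k]=p_k$, while $I^*_\psi$ respects the filtrations and induces $I_\psi$ on associated graded algebras) and that the $p_{k,\psi}$ are free homogeneous generators of $\Sym(\Psi)=\gr\SSym(\Psi)$. Consequently, writing $f^*$ with $\deg f^*=d$ as a linear combination of monomials $\prod_{k,\psi}(p^\#_{k,\psi})^{m_{k,\psi}}$ with $\sum_{k,\psi}k\,m_{k,\psi}\le d$, I would define $c_n(f^*)$ to be the same linear combination of the monomials $\prod_{k,\psi}(\zz^{(k,\psi)}_n)^{m_{k,\psi}}$. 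Because the $\zz^{(k,\psi)}_n$ are central (Lemma \ref{lemma8.A}), $c_n(f^*)\in\ZZ\C[G(n)]$; by the previous paragraph $\deg c_n(f^*)\le d=\deg f^*$; and since $c\mapsto\wh c(\bla)$ is an algebra homomorphism $\ZZ\C[G(n)]\to\C$ (as $\pi^\bla$ is irreducible), applying it monomial by monomial yields $\wh{c_n(f^*)}(\bla)=f^*(\bla)$, the last equality being the evaluation at $\bla$ of the very identity expressing $f^*$ in $\SSym(\Psi)$, realized as an algebra of functions on $\Y(\Psi)$.

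I do not expect a serious obstacle here: all the substance is concentrated in Proposition \ref{prop8.A}, whose proof is a lengthy but direct character computation. The only points needing an independent check are the identity $[p^\#_{k,\psi}]=p_{k,\psi}$ with the ensuing filtered-generation statement for $\SSym(\Psi)$, and the elementary degree estimate $\deg\zz^{(k,\psi)}_n\le k=\deg p^\#_{k,\psi}$ --- where the inequality may be strict, as already in the classical case $G=\{1\}$, $k=1$, in which $\zz^{(1)}_n=n\cdot1$ has degree $0$.
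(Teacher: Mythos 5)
Your proposal is correct and follows essentially the same route as the paper, which proves this corollary by repeating the argument of Corollary \ref{cor3.A} with Proposition \ref{prop8.A} in place of Proposition \ref{prop3.A}: the three ingredients you isolate (the degree bound $\deg\zz^{(k,\psi)}_n\le k$, the highest-term identity $[p^\#_{k,\psi}]=p_{k,\psi}$, and the fact that the $p_{k,\psi}$ are free homogeneous generators of $\Sym(\Psi)$) are exactly the ones the paper invokes. Your closing remark about the degree is also consistent with the paper's only added comment, namely that for $G\ne\{1\}$ the case $k=1$ is no longer exceptional since $\deg\zz^{(1,\psi)}_n=1$ rather than $0$.
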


\begin{proof}
The same argument as in Corollary \ref{cor3.A}, with reference to Proposition \ref{prop8.A} instead of Proposition \ref{prop3.A}. A minor difference is that for $G\ne \{1\}$,  the case $k=1$ presents no exception: $\deg \zz^{(1, \psi)}_{n}$ equals $1$, not $0$.  
\end{proof}

\subsection{Construction of virtual central elements}

We introduce the \emph{virtual center} $\ZZ(G)$ of the group algebra $\C[G(\infty)]$ following Definitions \ref{def3.A} and \ref{def3.B}, by making use of representations $T^\bla$, $\bla\in\Y(\Psi)$, instead of representations $T^\la$, $\la\in\Y$.

\begin{theorem}[cf. Theorem \ref{thm3.A}]\label{thm8.A}
We have $\ZZ(G)\supseteq\Sym^*(\Psi)$, with the understanding that elements of both algebras are treated as functions on $\Y(\Psi)$. 
\end{theorem}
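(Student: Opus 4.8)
The plan is to follow, step by step, the proof of Theorem \ref{thm3.A}. As there, suppose that for a given $f\in\SSym(\Psi)$ we can find a sequence $(\al_n)$, $\al_n\in\ZZ\C[G(n)]$, of uniformly bounded degrees, with
$$
\lim_{n\to\infty}\wh\al_n(\bla[n])=f(\bla),\qquad \bla\in\Y(\Psi).
$$
Then, arguing exactly as in the first step of the proof of Theorem \ref{thm3.A}, one deduces $f\in\ZZ(G)$: Proposition \ref{prop7.B} (which identifies $T^\bla$ with $\varinjlim\pi^{\bla[n]}$) gives convergence of $T^\bla(\al_n)$ to $f(\bla)\cdot1$ on a dense subspace, Proposition \ref{prop7.A} (which shows that for large $n$ the restriction $T^\bla\big|_{G(n)}$ involves only the $\pi^{\bmu[n]}$ with $\bmu\subseteq\bla$, a finite set whose size does not depend on $n$) gives a uniform bound for $\|T^\bla(\al_n)\|$, and hence $T^\bla(\al_n)\to f(\bla)\cdot1$ strongly, so $(\al_n)$ is convergent and its class belongs to $\ZZ(G)$. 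Since $\ZZ(G)$ is an algebra, it therefore suffices to produce such sequences for a family of elements generating $\SSym(\Psi)=\bigotimes_\psi\SSym^{(\psi)}$.

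Let $\psi_1\in\Psi$ be the trivial character. The directions indexed by $\psi\ne\psi_1$ are immediate — in fact easier than in the case $G=\{1\}$ — because $\bla[n](\psi)=\bla(\psi)$ for all $n\ge\|\bla\|$: for any $h\in\SSym$ the sequence $\al_n:=c_n(I^*_\psi(h))$ supplied by Corollary \ref{cor8.A} has degree $\le\deg h$ and satisfies $\wh\al_n(\bla[n])=h(\bla(\psi))=I^*_\psi(h)(\bla)$, already independent of $n$. Letting $h$ run over $\q_1,\q_2,\dots$, which generate $\SSym$ since $[\q_k]=p_k$, we conclude that $I^*_\psi(\SSym)\subseteq\ZZ(G)$ for every $\psi\ne\psi_1$, hence $\bigotimes_{\psi\ne\psi_1}\SSym^{(\psi)}\subseteq\ZZ(G)$.

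For the remaining factor $\SSym^{(\psi_1)}$ the situation mirrors the group case: when $n$ is large, $\bla[n](\psi_1)$ is the partition $(n-\|\bla\|,\bla(\psi_1)_1,\bla(\psi_1)_2,\dots)$, so by \eqref{eq3.M}
$$
\p_{k+i}\big(\bla[n](\psi_1)\big)=(n-\|\bla\|)^{k+i}+\q_{k+i}\big(\bla(\psi_1)\big).
$$
Fix $k\ge1$ and set, in the spirit of \eqref{eq3.I},
$$
\al_{k,n}:=\sum_{i=0}^k(-1)^i\binom ki\frac1{n^i}\,c_n\big(I^*_{\psi_1}(\p_{k+i})\big)\in\ZZ\C[G(n)],
$$
which has degree $\le2k$. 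Repeating the computation of Lemma \ref{lemma3.A} verbatim, with $n-\|\bla\|$ in place of $n-|\la|$ (so that the term $\q_1^k(\la)$ there becomes $\|\bla\|^k$), one obtains
$$
\lim_{n\to\infty}\wh{\al_{k,n}}(\bla[n])=\q_k\big(\bla(\psi_1)\big)+\|\bla\|^k .
$$
Since $\bla\mapsto\|\bla\|=\sum_\psi|\bla(\psi)|=\sum_\psi I^*_\psi(\q_1)(\bla)$ lies in $\SSym(\Psi)$, so does this limit; and by the first paragraph $(\al_{k,n})$ is convergent, so $\q_k(\bla(\psi_1))+\|\bla\|^k\in\ZZ(G)$ for all $k\ge1$.

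It remains to disentangle. Let $A\subseteq\SSym(\Psi)$ be the subalgebra generated by all the functions produced above. By the second paragraph $A$ contains $b_0:=\sum_{\psi\ne\psi_1}|\bla(\psi)|$; taking $k=1$ in the third paragraph, $A$ also contains $\q_1(\bla(\psi_1))+\|\bla\|=|\bla(\psi_1)|+\|\bla\|=2|\bla(\psi_1)|+b_0$, whence $|\bla(\psi_1)|\in A$, then $\|\bla\|=|\bla(\psi_1)|+b_0\in A$ and $\|\bla\|^k\in A$ for all $k$, and finally $\q_k(\bla(\psi_1))=I^*_{\psi_1}(\q_k)(\bla)\in A$ for all $k\ge1$. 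As the $\q_k$ generate $\SSym$, this gives $I^*_{\psi_1}(\SSym)\subseteq A$; combined with the previous paragraphs, $A$ contains $I^*_\psi(\SSym)$ for every $\psi\in\Psi$, and therefore $A=\SSym(\Psi)$, which yields $\ZZ(G)\supseteq\SSym(\Psi)$. I expect the only genuinely delicate point to be this final disentangling, together with the fact that — since one sequence $(\al_n)$ must serve all $\bla$ simultaneously, forcing the normalization $1/n^i$ — the $\psi_1$-direction produces $\|\bla\|^k$ rather than $|\bla(\psi_1)|^k$; the uniform-boundedness argument of the first paragraph and the recomputation of Lemma \ref{lemma3.A} are routine transcriptions of the $S(\infty)$ case.
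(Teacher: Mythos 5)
Your proof is correct and follows the same route the paper takes, which is simply to run the proof of Theorem \ref{thm3.A} with the wreath-product counterparts (Propositions \ref{prop7.A}, \ref{prop7.B}, Corollary \ref{cor8.A}) substituted and Lemma \ref{lemma3.A} reapplied in the $\psi_1$-direction. You have additionally identified and correctly resolved the one genuine wrinkle that the paper's two-line proof leaves implicit: the limit in the $\psi_1$-direction produces $\q_k(\bla(\psi_1))+\|\bla\|^k$ rather than $\q_k(\bla(\psi_1))+|\bla(\psi_1)|^k$, so a short disentangling argument (using the $\psi\ne\psi_1$ directions to isolate $|\bla(\psi_1)|$, hence $\|\bla\|$, hence $\q_k(\bla(\psi_1))$) is needed before one can conclude that the full tensor product $\SSym(\Psi)$ is covered.
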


Actually, $\ZZ(G)=\Sym^*(\Psi)$: this is a particular case of the last assertion of Theorem \ref{thm11.A} corresponding to $m=0$. 

\begin{proof}
We apply the same argument as in Theorem \ref{thm3.A}. That theorem relies on Lemma \ref{lemma3.A} (we apply it again) and also on Proposition \ref{prop2.A}, Proposition \ref{prop2.B}, and Corollary \ref{cor3.A} (we replace them by their counterparts Proposition \ref{prop7.A}, Proposition \ref{prop7.B}, and Corollary \ref{cor8.A}, respectively). \end{proof}

\section{The semigroup method for wreath product groups}\label{sect9}

\subsection{The inverse semigroups $\Ga(\N,G)$ and $\Ga(n,G)$}

Let $\Ga(\N,G)$ be the set of $\N \times\N$ matrices with the entries in $G\sqcup\{0\}$ and such that in each row and each column there is at most one nonzero entry. It is a semigroup under matrix multiplication. We endow it with the involution $\ga\mapsto\ga^*$, where 
$$
(\ga^*)_{ij}:=\begin{cases} (\ga_{ji})^{-1}, & \ga_{ji}\ne0, \\ 0, & \ga_{ji}=0. \end{cases}
$$ 
In the case of the trivial group $G=\{1\}$ this definition reduces to that of the semigroup $\Ga(\N)$. Like the latter semigroup, $\Ga(\N,G)$ is an inverse semigroup. 

For $n\in\Z_{\ge1}$, we define $\Ga(n,G)$ in the similar way, only the matrices are now of the format $n\times n$. We also regard $\Ga(n,G)$ as a subsemigroup of $\Ga(\N,G)$: it consists of the matrices $\ga$ subject to the condition $\ga_{ij}=\de_{ij}$ whenever $i>n$ or $j>n$. 

We define the topology on $\Ga(\N,G)$ in the same way as for $\Ga(\N)$: it is induced from the product topology on the set $(G\sqcup\{0\})^{\N\times\N}$. Again, this gives us a compact semitopological semigroup. 

The group $G(\N)$ is contained in $\Ga(\N,G)$ and coincides with the subset of invertible elements. It is a dense subset.  Moreover, the subgroup $G(\infty)\subset G(\N)$ is also dense in $\Ga(\N,G)$.

\subsection{The semigroup method}\label{sect9.2}

All facts of section \ref{sect4} extend smoothly to the more general case of wreath products and related semigroups, with the same proofs. So we only briefly list the results. 

1. The irreducible representations of $\Ga(n,G)$ are indexed by $\Psi$-multipartions $\bla$ with $\Vert\bla\Vert\le n$; we denote them by $\T^\bla_n$. The representation $\T^\varnothing$ is the trivial one-dimensional reprsentation sending all elements to $1$. The representations $\T^\bla_n$ with $\Vert\bla\Vert=\ell\ne0$ are given by the formula \eqref{eq4.B}; the only change is that $\Om(\ell,n)$ is replaced by $\Om_G(\ell,n)$ (the set of $\ell\times n$ matrices with the entries in $G\sqcup\{0\}$, containing exactly one non-null element in each row and at most one non-null element in each column).

2. Because $\Ga(n,G)$ is an inverse semigroup, its semigroup algebra $\C[\Ga(n,G)]$ is a $C^*$-algebra. Another way to see this is to observe that the group $G$ can be embedded into the symmetric group $S(|G|)$, which implies that  we can realize $\Ga(n,G)$ as an involutive subsemigroup of the symmetric inverse semigroup $\Ga(n|G|)$, and then we can apply Proposition \ref{prop4.A}.

3. The truncation map $\th_n: \Ga(\N)\to \Ga(n,G)$ is defined in the same way. It is surjective; even more, its restriction to $G(2n)\subset G(\infty)\subset \Ga(\N)$ is already surjective.  

4. The representations $T^\bla$ of the group $G(\infty)$ are canonically extended to representations $\T^\bla$ of the semigroup $\Ga(\N)$, and the latter are linked to the representations $\T^\bla_n$ of the semigroups $\Ga(n,G)$ --- exactly as described in Proposition \ref{prop4.C} and Remark \ref{rem4.A}. 

5. The canonical embeddings $\Ga(n,G)\hookrightarrow \Ga(n+1,G)$ are defined as in section \ref{sect4.8}, and then we set $\Ga(\infty,G):=\varinjlim \Ga(n,G)$.

\section{The centralizer construction for wreath product groups}\label{sect10}

\subsection{The algebras $\A_m(G)$ and $\A(G)$}

Here we extend the definitions of section \ref{sect5.1} 

Given a matrix $\ga\in\Ga(n, G)$, we define its \emph{degree}, $\deg (\ga)$, as the number of diagonal entries distinct from $1$. For $G=\{1\}$ this agrees with the previous definition.

We denote by $\A(n,G)$ the semigroup algebra $\mathbb{C}[\Ga(n,G)]$ and endow it with the ascending filtration induced by $\deg(\,\cdot\,)$. That is, the $k$th term of the filtration is spanned by the semigroup elements with degree at most $k$. Note that (\cite[Lemma~2.10]{Wan10})
$$
\deg(\ga_1\ga_2)\le\deg(\ga_1)+\deg(\ga_2), \qquad \ga_1,\ga_2\in\A(n,G),
$$
so $\A(n,G)$ becomes a filtered algebra.

The projection
$$
\theta_{n,n-1}: \Gamma(n,G)\rightarrow \Gamma(n-1,G), \quad \ga\mapsto\theta_{n,n-1}(\ga),
$$
consists in striking from $\ga$ its last row and column. We extend $\theta_{n,n-1}$  to a linear map $\theta_{n,n-1}: \A(n,G)\rightarrow\A(n-1,G)$. We still have (\cite[Lemma~2.7]{Wan10})
$$
\deg(\theta_{n,n-1}(\gamma))\le \deg(\gamma).
$$

\begin{definition}[cf. Definition \ref{def5.A}]\label{def10.A}
Let $m=0,\dots,n$.

(i) We denote by $\Ga_m(n,G)\subseteq \Ga(n,G)$ the subsemigroup of elements with first $m$ diagonal entries equal to $1$. Note that  $\Ga_m(n,G)$ is isomorphic to $\Ga(n-m,G)$ (by definition, $\Ga(0,G)$ consists of a single element, which is automatically the unit). 

(ii) We denote by $\A_m(n,G)\subseteq\A(n,G)$ the centralizer of $\Ga_m(n,G)$ in $\A(n,G)$. Note that $\A_0(n,G)$ is the center of $\A(n,G)$ and $\A_n(n,G)=\A(n,G)$.
\end{definition}

The restriction of $\theta_{n,n-1}$ to $\A_{n-1}(n,G)\subset \A(n,G)$ defines a unital algebra morphism
$\A_{n-1}(n,G)\rightarrow \A(n-1,G)$; moreover,
$$
\theta_{n,n-1}(\A_m(n,G))\subset \A_m(n-1,G)\quad \text{for}\quad 0\le m \le n-1.
$$
The proof is the same as in \cite[Proposition~3.3]{MO01}, see \cite[Lemma 2.7]{Wan10}.

Thus,  for any $m\ge 0$ we have a projective system of filtered algebras
$$
\A_m(m,G)\leftarrow \A_m(m+1,G) \leftarrow \cdots \leftarrow \A_m(n,G) \leftarrow \cdots\,.
$$

\begin{definition}[The centralizer construction, cf. Definition \ref{def5.B}]\label{def10.B} Let $m=0,1,2,\dots$\,. 

(i) We set
$$
\A_m(G):= \varprojlim\A_m(n,G), \quad n\to\infty,
$$
where the projective limit is taken in the category of filtered algebras.  
In more detail, each element $a\in\A_m(G)$ is a sequence $(a_n: n\ge m)$ such that
$$
a_n\in \A_m(n,G),\quad a_{n-1} = \theta_{n,n-1}(a_n), \quad \deg(a) := \sup \deg(a_n)<\infty.
$$

(ii) Because $\A_m(G)\subset \A_{m+1}(G)$,  the following definition makes sense 
$$
\A(G):= \varinjlim \A_m(G), \quad m\to \infty.
$$
\end{definition}	

There is a natural embedding $\C[G(\infty)]\hookrightarrow\A(G)$ whose image consists of stable sequences $a=(a_n)$.
As before, `stable' means that $a_n$ does not depend  on $n$ as $n$ gets large enough.
In particular, the image of an element $g\in G(\infty)$ is represented by the sequence $(a_n\equiv g)\in\A_m(G)$, where $m$ is chosen so large that $g\in G(m)$.

\subsection{Structure of the algebra $\A_0(G)$}

As was pointed out in section \ref{sect5.2}, the algebra $\A_0$ is the center of $\A$. More generally (\cite[Proposition 2.13]{Wan10}), the algebra $\A_0(G)$ is the center of $\A(G)$. 

We are going to generalize the definition of the elements $\De^{(k)}\in\A_0$ from section \ref{sect5.2}.

\begin{definition}[cf. \eqref{eq5.H} and \eqref{eq8.A}]\label{def10.C}
Let $\varphi$ be a class function on $G$. 

(i) For $1\le k\le n$ we set 
\begin{equation}\label{eq10.A}
\De_n(k,\varphi):=\sum \varphi(g_k\dots g_1)g_1^{(i_1)}\dots g_k^{(i_k)}(i_1,\dots,i_k)\beps_{i_1}\dots\beps_{i_k},
\end{equation}
where the sum is taken over all ordered $k$-tuples of pairwise distinct indices $i_1,\dots,i_k$ in $\N_n$ and aribitrary ordered $k$-tuples of elements $g_1,\dots,g_k$ in $G$. 

(ii) We agree that $\De_n(k,\varphi)=0$ for $n<k$. 
\end{definition}

In the case when $\varphi$ is the indicator of a conjugacy class in $G$, the corresponding elements \eqref{eq10.A} are proportional to the elements from \cite[(3.1)]{Wan10}. 

\begin{lemma}\label{lemma10.B}
The elements \eqref{eq10.A} belong to $\A_0(n,G)$, the center of\/ $\C[\Ga(n,G)]$.  
\end{lemma}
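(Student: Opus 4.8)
The plan is to adapt the proof of Lemma~\ref{lemma8.A}. Recall (the wreath-product analogue of Proposition~\ref{prop4.B}, recorded in section~\ref{sect9.2}) that the algebra $\C[\Ga(n,G)]$ is generated by the subgroup $G(n)$ together with the idempotents $\eps_1,\dots,\eps_n$, and that $G(n)$ is itself generated by $S(n)$ and $G^n$. Since $\De_n(k,\varphi)=0$ when $n<k$, we may assume $n\ge k$. Thus it suffices to verify that $\De_n(k,\varphi)$ commutes with every $s\in S(n)$, with every $h\in G^n$, and with each $\eps_j$ for $j=1,\dots,n$.

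Commutation with $S(n)$ and $G^n$ goes through exactly as in Lemma~\ref{lemma8.A}. Conjugation by $s\in S(n)$ sends a summand $g_1^{(i_1)}\cdots g_k^{(i_k)}(i_1,\dots,i_k)\beps_{i_1}\cdots\beps_{i_k}$ of \eqref{eq10.A} to $g_1^{(si_1)}\cdots g_k^{(si_k)}(si_1,\dots,si_k)\beps_{si_1}\cdots\beps_{si_k}$ while leaving the scalar $\varphi(g_k\cdots g_1)$ unchanged, so the sum over all ordered $k$-tuples of pairwise distinct indices is invariant. For $h=h_1^{(1)}\cdots h_n^{(n)}\in G^n$ one uses that $\eps_i$, hence $\beps_i=1-\eps_i$, commutes with every diagonal element of $\Ga(n,G)$ (the entries $0,1$ being central in $G\sqcup\{0\}$); therefore conjugation by $h$ leaves the factor $\beps_{i_1}\cdots\beps_{i_k}$ intact and acts on $g_1^{(i_1)}\cdots g_k^{(i_k)}(i_1,\dots,i_k)$ precisely as in the proof of Lemma~\ref{lemma8.A} --- for a fixed index tuple it permutes the $g$-tuples bijectively and conjugates the product $g_k\cdots g_1$ inside $G$, which does not affect $\varphi(g_k\cdots g_1)$ because $\varphi$ is a class function.

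The one point that genuinely involves the idempotent factors is commutation with a fixed $\eps_j$, and this is where I expect the only real bookkeeping. I would first record the elementary relations: $\eps_j$ commutes with every diagonal element of $\C[\Ga(n,G)]$ (in particular with all $g^{(m)}$ and all $\beps_i$); $\sigma\eps_i\sigma^{-1}=\eps_{\sigma(i)}$ for $\sigma\in S(n)$; and $\eps_i\beps_i=\beps_i\eps_i=(1-\eps_i)\eps_i=0$. Now split the sum \eqref{eq10.A} according to whether $j$ occurs among $i_1,\dots,i_k$. If $j\notin\{i_1,\dots,i_k\}$, then $\eps_j$ commutes with the diagonal monomial $g_1^{(i_1)}\cdots g_k^{(i_k)}$, with each factor $\beps_{i_l}$, and with the cycle $(i_1,\dots,i_k)$ (which fixes $j$), hence with the whole summand. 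If $j=i_m$, then $\eps_j$ times the summand and the summand times $\eps_j$ both vanish: multiplying on the right, $\beps_{i_1}\cdots\beps_{i_k}\eps_{i_m}=0$ once $\eps_{i_m}$ is moved next to $\beps_{i_m}$; multiplying on the left, pushing $\eps_j$ past the diagonal monomial and then through the cycle replaces it by $\eps_{i_{m-1}}$ (indices read cyclically modulo $k$) standing to the left of $\beps_{i_1}\cdots\beps_{i_k}$, and $\eps_{i_{m-1}}\beps_{i_1}\cdots\beps_{i_k}=0$. Summing over both cases yields $\eps_j\De_n(k,\varphi)=\De_n(k,\varphi)\eps_j$.

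Putting the three verifications together, $\De_n(k,\varphi)$ lies in the center of $\C[\Ga(n,G)]=\A(n,G)$, which by Definition~\ref{def10.A}(ii) is precisely $\A_0(n,G)$. As noted, the $S(n)$- and $G^n$-parts are verbatim repetitions of the corresponding arguments in Lemma~\ref{lemma8.A}; the only delicate step is tracking the index shift $j=i_m\mapsto i_{m-1}$ when $\eps_j$ is moved through the cycle $(i_1,\dots,i_k)$ in the $\eps_j$-commutation.
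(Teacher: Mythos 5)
Your proof is correct and follows essentially the same route as the paper, which simply delegates the verification to Wan's Lemma 3.7 after recording the key identity $g_1^{(i_1)}\cdots g_k^{(i_k)}(i_1,\dots,i_k)\beps_{i_1}\cdots\beps_{i_k}=\beps_{i_1}\cdots\beps_{i_k}\,g_1^{(i_1)}\cdots g_k^{(i_k)}(i_1,\dots,i_k)$; you have supplied the details of that verification in full, and in particular your case analysis for commutation with $\eps_j$ (including the shift $\eps_{i_m}\mapsto\eps_{i_{m-1}}$ through the cycle) is exactly the point that identity encapsulates.
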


\begin{proof}
The same argument as in \cite[Lemma 3.7]{Wan10}. The key observation is that 
$$
g_1^{(i_1)}\dots g_k^{(i_k)}(i_1,\dots,i_k)\beps_{i_1}\dots\beps_{i_k}=\beps_{i_1}\dots\beps_{i_k} g_1^{(i_1)}\dots g_k^{(i_k)}(i_1,\dots,i_k).
$$
\end{proof}

\begin{lemma}\label{lemma10.C}
Fix $\varphi$ and $k$, and let $n$ vary. The corresponding elements \eqref{eq10.A} are consistent with the projections $\th_{n,n-1}$ and hence they determine an element of $\A_0(G)$, which will be denoted by $\De(k,\varphi)$.
\end{lemma}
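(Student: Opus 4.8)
The plan is to show that the sequence $\big(\De_n(k,\varphi)\big)_{n\ge 1}$ satisfies the three requirements of Definition \ref{def10.B}(i) for membership in $\A_0(G)$: each term lies in $\A_0(n,G)$, the degrees are uniformly bounded, and the terms are compatible with the truncations. The first requirement is precisely Lemma \ref{lemma10.B}. For the second, I would expand each idempotent $\beps_i=1-\eps_i$ (with $\eps_i\in\Ga(n,G)$ the diagonal matrix having $0$ in the $i$th diagonal slot and $1$ elsewhere) and note that every semigroup element occurring in the expansion of a summand of \eqref{eq10.A} coincides with the identity matrix outside the $k$ rows and columns labelled by $i_1,\dots,i_k$; hence such an element has degree $\le k$, so $\deg\De_n(k,\varphi)\le k$ for every $n$. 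Thus the real content of the lemma is the compatibility relation $\th_{n,n-1}\big(\De_n(k,\varphi)\big)=\De_{n-1}(k,\varphi)$.

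To establish this I would fix $n\ge k$ (for $n<k$ both sides vanish by the convention of Definition \ref{def10.C}(ii), and the case $n=k$ is covered below) and split the defining sum of $\De_n(k,\varphi)$ into the part $\De_n'$ collecting the summands with $n\notin\{i_1,\dots,i_k\}$ and the part $\De_n''$ collecting those with $n\in\{i_1,\dots,i_k\}$. For a summand of $\De_n'$ all indices already lie in $\N_{n-1}$; expanding the $\beps$'s as above, the relevant semigroup elements are trivial in the $n$th row and column, so striking that row and column merely reinterprets the summand inside $\C[\Ga(n-1,G)]$ unchanged. Letting the index data range over all tuples of pairwise distinct indices in $\N_{n-1}$ and all $g_1,\dots,g_k\in G$, the images of the summands of $\De_n'$ reproduce exactly $\De_{n-1}(k,\varphi)$. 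The remaining — and crucial — point is that $\th_{n,n-1}$ annihilates $\De_n''$, in fact each of its summands.

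For that I would use the elementary identity $\th_{n,n-1}(x\,\beps_n)=0$, valid for every $x\in\C[\Ga(n,G)]$: on a semigroup element $\ga$, right multiplication by $\eps_n=1-\beps_n$ only replaces the $n$th column of $\ga$ by zero, and since $\th_{n,n-1}$ deletes that column anyway one gets $\th_{n,n-1}(\ga)=\th_{n,n-1}(\ga\eps_n)$, i.e.\ $\th_{n,n-1}(\ga\beps_n)=0$; this extends to $\C[\Ga(n,G)]$ by linearity. In a summand of $\De_n''$ with, say, $i_p=n$, the idempotents $\beps_{i_1},\dots,\beps_{i_k}$ commute and can be reordered so that $\beps_n=\beps_{i_p}$ stands at the far right, so the summand has the shape $x\,\beps_n$ and is killed by $\th_{n,n-1}$. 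Combining the two parts yields $\th_{n,n-1}\big(\De_n(k,\varphi)\big)=\De_{n-1}(k,\varphi)$ for $n\ge k$; when $n=k$ every summand lies in $\De_k''$ (its indices fill up $\N_k$), so the relation reads $\th_{k,k-1}\big(\De_k(k,\varphi)\big)=0=\De_{k-1}(k,\varphi)$, consistent with the convention. With the compatibility relation and the degree bound in hand, Definition \ref{def10.B} then produces the element $\De(k,\varphi)\in\A_0(G)$. I expect the main obstacle to be organising the vanishing of the $\De_n''$-part cleanly: a direct term-by-term expansion leads to a somewhat awkward inclusion--exclusion cancellation, whereas the slicker observation $\th_{n,n-1}(x\beps_n)=0$ makes it immediate, so the real work is in spotting and justifying that identity.
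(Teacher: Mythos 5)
Your proof is correct: the degree bound $\deg\De_n(k,\varphi)\le k$, the splitting of the sum according to whether $n$ occurs among $i_1,\dots,i_k$, and the identity $\th_{n,n-1}(x\,\beps_n)=0$ (since $\ga$ and $\ga\eps_n$ have the same upper-left corner) together yield exactly the consistency relation and hence membership in $\A_0(G)$ via Definition \ref{def10.B}. The paper disposes of this lemma by citing \cite[Proposition 3.8]{Wan10}, and your argument is precisely the standard one carried out there (and in \cite{MO01} for $G=\{1\}$): the $\beps$-factors are built into \eqref{eq10.A} exactly so that truncation annihilates every summand involving the index $n$.
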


\begin{proof}
The same argument as in \cite[Proposition 3.8]{Wan10}.
\end{proof}

\begin{proposition}\label{prop10.A}
Pick an arbitrary basis $\Phi$ in the space of class functions on $G$.  The elements $\De(k,\varphi)$ with $k=1,2,3\dots$ and $\varphi\in\Phi$ 
are algebraically independent generators of the filtered algebra $\A_0(G)$. 
\end{proposition}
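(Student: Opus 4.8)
I would prove this by first reducing to one convenient basis and then settling that case.

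\emph{Reduction to a convenient basis.} By the very definition \eqref{eq10.A}, the element $\De(k,\varphi)$ depends linearly on the class function $\varphi$, so for each fixed $k$ the assignment $\varphi\mapsto\De(k,\varphi)$ is a linear map from the space of class functions on $G$ into $\A_0(G)$. I would first dispose of the basis $\Phi_0$ consisting of the indicators of the conjugacy classes of $G$: for this choice the assertion is the wreath-product analog of \cite[Corollary~4.11]{MO01} and is established in \cite{Wan10}, so $\A_0(G)$ is a polynomial algebra on the $\De(k,\varphi)$, $k\ge1$, $\varphi\in\Phi_0$. In particular each of the linear maps $\varphi\mapsto\De(k,\varphi)$ is injective (it carries a basis to a linearly independent set). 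Consequently, for an arbitrary basis $\Phi$, the family $\{\De(k,\varphi):k\ge1,\ \varphi\in\Phi\}$ spans, in each degree $k$, the same subspace of $\A_0(G)$ as $\{\De(k,\varphi):\varphi\in\Phi_0\}$ and is obtained from the latter by an invertible linear substitution performed separately in each degree. Such a substitution manifestly preserves both the subalgebra generated and algebraic independence, so the general case follows.

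\emph{A self-contained variant.} One can argue without appealing to \cite{Wan10}. The algebra $\A_0(G)$ is a separated, exhaustive filtered algebra whose associated graded algebra is isomorphic to the polynomial algebra $\Sym(\Psi)$, which has $|\Psi|$ indecomposable generators in each degree $k\ge1$; this is a fact about the centers of the algebras $\C[\Ga(n,G)]$ and their stabilization (cf.\ \cite[\S4]{MO01}). By the standard filtered/graded principle it then suffices to exhibit, for each $k$, elements of filtration degree $k$ whose images in $\gr\A_0(G)\cong\Sym(\Psi)$ are linearly independent modulo decomposables. Here I would take $\Phi=\{\bar\psi:\psi\in\Psi\}$, the conjugates of the irreducible characters of $G$ (again a basis of class functions), and show by a character-orthogonality computation paralleling the proof of Proposition \ref{prop8.A} that the highest-degree term of $\De(k,\bar\psi)$ is a nonzero multiple of $p_{k,\psi}$; its representation-theoretic shadow is the eigenvalue formula $\wh{\De(k,\bar\psi)}(\bla)=c_{k,\psi}\,p^\#_k(\bla(\psi))$ with $c_{k,\psi}\ne0$, recorded below as Proposition \ref{prop10.C}. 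Since the $p_{k,\psi}$ freely generate $\Sym(\Psi)$, the highest terms $[\De(k,\bar\psi)]$ form a free homogeneous generating set of $\gr\A_0(G)$, which gives algebraic independence and generation for the basis $\{\bar\psi\}$; an arbitrary $\Phi$ then reduces to this case exactly as in the previous paragraph.

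\emph{The main obstacle.} Either route hinges on one genuinely technical point: showing that $\De(k,\varphi)$ has filtration degree exactly $k$ — not obvious, since each of the $k$ idempotents $\beps_{i_j}=1-\eps_{i_j}$ occurring in \eqref{eq10.A} sits in filtration degree one — and computing its highest-degree term, equivalently the eigenvalue $\wh{\De(k,\bar\psi)}(\bla)$ (including the non-vanishing of $c_{k,\psi}$). The degree bound should follow from the collapses $g^{(i)}\eps_i=\eps_i$ together with the fact that the cycle $(i_1,\dots,i_k)$ is supported precisely on $\{i_1,\dots,i_k\}$, which force every monomial in the expansion of $\De_n(k,\varphi)$ to be a partial monomial permutation supported inside $\{i_1,\dots,i_k\}$, hence of degree at most $k$; this refines the degree estimates behind Lemma \ref{lemma10.B} and \cite[Lemma~2.10]{Wan10}. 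Running the trace computation of Proposition \ref{prop8.A} with the idempotents $\beps_{i_j}$ present, while carefully tracking filtration degrees, is where essentially all the effort goes; the remaining steps are formal.
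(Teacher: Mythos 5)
Your first paragraph is exactly the paper's proof: for the basis of conjugacy-class indicators the statement is quoted from Wan's \cite[Corollary 3.17]{Wan10}, and the passage to an arbitrary basis $\Phi$ is the one-line observation that $\De(k,\varphi)$ depends linearly on $\varphi$, so an invertible degree-by-degree linear substitution preserves free generation. The additional ``self-contained variant'' and the discussion of the degree of $\De(k,\varphi)$ are reasonable in outline but are not needed here, since the paper deliberately delegates those computations to \cite{Wan10}.
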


More precisely, this means that for any given $d=1,2,\dots$, each element of $\A_0(G)$ of degree $\le d$ can be written, in a unique way, as a polynomial in the generators, of total degree $\le d$, with the understanding that $\deg \De(k,\varphi)=k$.
 
\begin{proof}
In the case when $\Phi$ consists of the indicators of conjugacy classes in $G$ this follows from \cite[Corollary 3.17]{Wan10}. Because $\De(k,\varphi)$ depends on $\varphi$ linearly, one can then pass to any other basis $\Phi$.  
\end{proof}

\subsection{Structure of the algebras $\A_m(G)$, $m\ge 1 $}

Following \cite[(4.11)]{Wan10} we extend the definition \eqref{eq5.G} as follows:
$$
u_{i\mid n}(G):= \sum_{j = i+1}^n \sum_{g\in G} g^{(i)}(g^{-1})^{(j)}(i,j) \beps_{i}\beps_{j}=\sum_{j = i+1}^n \sum_{g\in G} \beps_{i}\beps_{j}g^{(i)}(g^{-1})^{(j)}(i,j), \quad 1\le i<n.
$$
These are elements of the semigroup algebra $\A(n,G)$, of degree $2$. We also set $u_{i\mid n}(G):=0$ for $n\le i$. 

It is easily checked that 
$$
u_{1\mid n}(G), \dots, u_{m\mid n}(G) \in\A_m(n), \quad 1\le m<n,
$$
and $ \theta_{n,n-1}(u_{i\mid n}(G)) = u_{i\mid n-1}(G)$.

It follows that for each fixed $i$, the sequence $(u_{i\mid n}(G): n>i)$ gives rise to an element of the algebra $\A(G)$, which we denote by $u_i(G)$. Further, 
$$
u_1(G), \dots, u_m(G)\in \A_m(G), \quad \text{for any $m\ge1$}.
$$

\begin{definition}[cf. Definition \ref{def5.C}]\label{def10.D}
For $m=1,2,3,\dots$, let $\wt\HH_m(G)\subset\A_m(G)$ be the subalgebra generated by $\Ga(m,G)$ and $u_1(G),\dots,u_m(G)$. Equivalently, $\wt\HH_m(G)\subset\A_m(G)$ is generated by $G(m)$, $\eps_1,\dots,\eps_m$, and $u_1(G),\dots,u_m(G)$.
\end{definition}

\begin{lemma}[cf. Lemma \ref{lemma5.A}]\label{lemma10.D}
Fix $m\in\Z_{\ge1}$ and let $h$ range over the subgroup $G^m\subset\A(m,G)$. In the algebra $\A_m(G)$, the following commutation relations hold:
\begin{gather*}
h u_k(G)=u_k(G) h,  \quad 1\le k\le m,\\
s_ku_k(G) = u_{k+1}(G) s_k + \sum_{g\in G}g^{(k)}(g^{-1})^{(k+1)}\beps_k\beps_{k+1}, \quad 1\le k\le m-1,\\
s_k u_l(G)=u_l(G) s_k, \quad l\ne k, k+1, \; 1\le k,l\le m\\
u_k(G) u_l(G)=u_l(G)u_k(G), \quad  1\le k, l\le m, \\
\eps_k u_k(G)=u_k(G)\eps_k=0, \quad \eps_l u_k(G)=u_k(G)\eps_l, \quad 1\le l\ne k\le m.
\end{gather*}
\end{lemma}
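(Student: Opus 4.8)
The plan is to prove Lemma \ref{lemma10.D} exactly as in the $G = \{1\}$ case (Lemma \ref{lemma5.A}, \cite{MO01}, Proposition 5.17), working at the finite level in the semigroup algebra $\A(n,G)$ for large $n$ and then passing to the limit $n\to\infty$. Since each commutation relation only involves finitely many of the generators $s_1,\dots,s_{m-1}$, $\eps_1,\dots,\eps_m$, $h\in G^m$, $u_1(G),\dots,u_m(G)$, and since these generators are represented by the stable sequences $(s_k)$, $(\eps_k)$, $(h)$ together with the sequences $(u_{k\mid n}(G))$ which are compatible with $\th_{n,n-1}$, it suffices to verify each identity inside $\A(n,G)$ for every $n > m$; the identity in $\A_m(G)$ follows termwise. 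So the whole proof is a finite computation in the semigroup algebra of $\Ga(n,G)$, using the multiplication rule for strictly monomial matrices over $G\sqcup\{0\}$ and the idempotent relations $\beps_i^2 = \beps_i$, $\beps_i\beps_j = \beps_j\beps_i$, $\eps_i\beps_i = 0$, together with the conjugation rules $s\, g^{(i)}\, s^{-1} = g^{(s(i))}$ for $s\in S(n)$ and the analogous rules for $\beps_i$.

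The key steps, in order: First I would record the basic conjugation identities $h\,\beps_i = \beps_i\, h$ (any $h\in G^n$ commutes with the diagonal selfadjoint idempotents $\beps_i$), $s_k\,\beps_i\,s_k = \beps_{s_k(i)}$, and $s_k\,g^{(i)}\,s_k = g^{(s_k(i))}$, plus the fact that $g^{(i)}$ and $(g^{-1})^{(j)}$ commute for $i\ne j$ and both commute with $\beps_l$ for all $l$. Second, for the relation $h\,u_k(G) = u_k(G)\,h$ I would conjugate a single summand $\beps_k\beps_j\, g^{(k)}(g^{-1})^{(j)}(k,j)$ by $h = h_1^{(1)}\cdots h_n^{(n)}$: the $\beps$'s are unaffected, and the conjugation turns $g$ into $h_k\, g\, h_j^{-1}$ while reindexing $(g^{-1})^{(j)}$ by $h_j (g^{-1}) h_j^{-1}$ accordingly; summing over $g\in G$ this is just a relabeling of the summation variable, so the sum is invariant — this is the same bookkeeping already used in the proof of Lemma \ref{lemma8.A}. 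Third, for the braid-type relation $s_k u_k(G) = u_{k+1}(G) s_k + \sum_{g\in G} g^{(k)}(g^{-1})^{(k+1)}\beps_k\beps_{k+1}$ I would split the sum defining $u_{k\mid n}(G)$ into the single term with $j = k+1$ and the terms with $j > k+1$; conjugating-then-multiplying-by-$s_k$ sends the $j>k+1$ terms of $u_{k\mid n}(G)$ bijectively onto the corresponding terms of $u_{k+1\mid n}(G)$ (because $s_k$ fixes $j$ and swaps the roles of $k,k+1$, and $s_k(k,j)s_k = (k+1,j)$ for $j\ne k,k+1$), while the $j=k+1$ term produces exactly the extra term on the right — here one uses $s_k (k,k+1) = 1$ in $S(n)$ and $s_k\, \beps_k\beps_{k+1} = \beps_k\beps_{k+1}\, s_k$. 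Fourth, the relations $s_k u_l(G) = u_l(G) s_k$ for $l\ne k,k+1$, and $\eps_l u_k(G) = u_k(G)\eps_l$ for $l\ne k$, follow immediately since $s_k$ (resp. $\eps_l$) is supported away from the indices occurring in $u_l(G)$ (resp. $u_k(G)$) and thus commutes with every summand. Fifth, $\eps_k u_k(G) = u_k(G)\eps_k = 0$ follows from $\eps_k \beps_k = \beps_k\eps_k = 0$ and the fact that each summand of $u_{k\mid n}(G)$ contains the factor $\beps_k$ on a side that $\eps_k$ reaches. Sixth and last, the commutativity $u_k(G) u_l(G) = u_l(G) u_k(G)$ for $k\ne l$: here I would follow the argument of \cite{MO01}, expanding both products as sums over pairs of summands and matching them, the crucial point being that the extra $\beps$-factors force the monomial matrices involved to have disjoint or nested supports in a way that makes the two orderings agree; alternatively, one can deduce it from the finite-dimensional representation theory, since $\wt\HH_m(G)$ acts faithfully on $\bigoplus_{\bla} \End(H(\T^\bla_n))$ and the $u_k(G)$ act by (a Jucys–Murphy-type) family of mutually commuting operators.

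The main obstacle I expect is the verification of $u_k(G) u_l(G) = u_l(G) u_k(G)$, which is the one relation that is not a one-line support argument: the product of two summands $\beps_k\beps_{j}\, g^{(k)}(g^{-1})^{(j)}(k,j)$ and $\beps_l\beps_{j'}\, h^{(l)}(h^{-1})^{(j')}(l,j')$ requires case analysis on how the index sets $\{k,j\}$ and $\{l,j'\}$ overlap, and when they overlap the monomial entries over $G$ must be multiplied in the correct order — this is precisely where the $G$-valued entries (as opposed to plain $0$-$1$ matrices) make the computation genuinely more involved than in \cite{MO01}, and it is the analog of the ``core computation'' flagged in the introduction. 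I would handle it by the same reduction to cases as in \cite{MO01}, carefully tracking the group elements; the selfadjoint idempotents $\beps_\bullet$ again do the essential work of killing the terms where the supports would otherwise clash. Everything else is routine and parallels the untwisted case verbatim.
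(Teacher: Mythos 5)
Your strategy --- verify each identity termwise in $\A(n,G)$ for every $n>m$ and pass to the projective limit --- is the right one; the paper itself offers no argument for Lemma \ref{lemma10.D} beyond the citation of \cite[Lemma 4.12]{Wan10}, and that reference carries out precisely this finite-level computation. Your handling of $hu_k(G)=u_k(G)h$ (relabelling $g\mapsto h_kgh_j^{-1}$ in the inner sum), of the braid-type relation (the $j=k+1$ term produces the correction term, the $j>k+1$ terms match those of $u_{k+1\mid n}(G)$ via $s_k(k,j)=(k+1,j)s_k$), and of $\eps_ku_k(G)=u_k(G)\eps_k=0$ is correct.

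Two of your justifications, however, do not work as stated, even though the conclusions survive. For $s_ku_l(G)=u_l(G)s_k$ with $l<k$ it is false that $s_k$ ``is supported away from the indices occurring in $u_l(G)$'': the summands of $u_{l\mid n}(G)$ with $j=k$ and $j=k+1$ involve exactly the two letters moved by $s_k$ and do not individually commute with it; rather, conjugation by $s_k$ interchanges these two summands, and only the full sum is invariant. Similarly, for $\eps_lu_k(G)=u_k(G)\eps_l$ with $l>k$ the summand of $u_{k\mid n}(G)$ with $j=l$ is not supported away from $l$; one must instead observe that this summand is annihilated on both sides, since it carries the factor $\beps_l$ at either end and $\eps_l\beps_l=\beps_l\eps_l=0$. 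Finally, the ``alternative'' argument you offer for $u_k(G)u_l(G)=u_l(G)u_k(G)$ --- that the $u_k(G)$ act in the representations by a commuting Jucys--Murphy family --- is circular, since that commutativity is exactly what has to be proved. Either carry out the case analysis you outline (this is what \cite{MO01} and \cite{Wan10} do), or use the non-circular shortcut provided by Lemma \ref{lemma10.E}: each $2u_i(G)$ is a difference of elements $\xi^{j}(\De(2,\varphi_1))$, and these pairwise commute because $\xi^{j}(\De(2,\varphi_1))$ is central in the subalgebra $\xi^{j}(\A(G))$, which contains $\xi^{j'}(\A(G))$ for all $j'\ge j$.
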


\begin{proof}
See \cite[Lemma 4.12]{Wan10}.
\end{proof}

\begin{proposition}[cf. Proposition \ref{prop5.B}]\label{prop10.B}
Fix $m\in\Z_{\ge1}$.

{\rm(i)} The natural morphism $\A_0(G)\otimes\wt\HH_m(G)\to \A_m(G)$ induced by the multiplication map is an algebra isomorphism.  

{\rm(ii)} The  algebra $\wt\HH_m(G)$ is isomorphic to the abstract algebra generated by the semigroup algebra $\A(m,G)=\C[G(m)]$ and the elements $u_1(G),\dots,u_m(G)$, subject to the commutation relations from Lemma \ref{lemma10.D}.
\end{proposition}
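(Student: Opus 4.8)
The plan is to follow the proof of Proposition~\ref{prop5.B} (that is, the proof of \cite[Theorem 5.18]{MO01}) step by step, replacing each symmetric-group ingredient by its wreath-product analogue; the filtered-algebra machinery needed to carry this out is already available in \cite{Wan10}. Before doing so I would record the one normalization that is not literally in the literature, the wreath analogue of Lemma~\ref{lemma5.B}. Let $\de_1$ denote the indicator function of the identity $1\in G$, viewed as a class function, and let $\xi$ be the ``shift'' endomorphism of $\A(G)$ defined exactly as in section~\ref{sect5.4}; from the definition of $u_{i\mid n}(G)$ one has $\xi(u_i(G))=u_{i+1}(G)$. Isolating in \eqref{eq10.A} the summands of $\De_n(2,\de_1)$ whose index pair $\{i_1,i_2\}$ avoids the index $1$ yields exactly $\xi_{n-1,n}(\De_{n-1}(2,\de_1))$, while the remaining summands form an ``$i_1=1$'' block and an ``$i_2=1$'' block which coincide after the substitution $g\mapsto g^{-1}$ in the inner sum, each equal to $u_{1\mid n}(G)$. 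Hence $\De_n(2,\de_1)=\xi_{n-1,n}(\De_{n-1}(2,\de_1))+2u_{1\mid n}(G)$, so that
\begin{equation*}
2u_i(G)=\xi^{i-1}(\De(2,\de_1))-\xi^i(\De(2,\de_1)),\qquad i\ge1,
\end{equation*}
and, choosing a basis $\Phi$ of class functions on $G$ containing $\de_1$, Proposition~\ref{prop10.A} exhibits $\De(2,\de_1)$ as one of the free generators of $\A_0(G)$.

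For part (i), the multiplication map $\mu\colon\A_0(G)\otimes\wt\HH_m(G)\to\A_m(G)$ is a homomorphism of algebras because $\A_0(G)$ is the center of $\A(G)$ (\cite[Proposition 2.13]{Wan10}). I would then pass to associated graded algebras with respect to the degree filtration and show that the induced map $\gr\A_0(G)\otimes\gr\wt\HH_m(G)\to\gr\A_m(G)$ is an isomorphism: the left factor is a polynomial algebra by Proposition~\ref{prop10.A}, the degree-filtration estimates of \cite[Lemmas 2.7 and 2.10]{Wan10} provide the requisite control at each finite level $n$, and with this in hand the matching of bases is the same bookkeeping as in the proof of \cite[Theorem 5.18]{MO01}. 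Since a filtered morphism inducing an isomorphism on associated graded algebras is itself an isomorphism, this gives (i); surjectivity alone can also be read off directly, by exhibiting a spanning set of $\A_m(n,G)$ made of products of a central element, an element of $\Ga(m,G)$, and Jucys--Murphy-type factors $u_{i\mid n}(G)$, and checking that it lies in the image of $\mu$.

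For part (ii), let $\wh\HH_m(G)$ be the algebra abstractly presented by the generators $\C[G(m)]$ and $u_1(G),\dots,u_m(G)$ subject to the relations of Lemma~\ref{lemma10.D}; those relations hold in $\wt\HH_m(G)$, so there is a canonical surjection $\wh\HH_m(G)\twoheadrightarrow\wt\HH_m(G)$. Using the relations, and in particular $\eps_iu_i(G)=u_i(G)\eps_i=0$ (which forces each $u_i(G)$ to carry the idempotent factor $\beps_i$), one rewrites every element of $\wh\HH_m(G)$ as a linear combination of ordered monomials $\eps_I\bigl(\prod_{i\notin I}u_i(G)^{d_i}\bigr)g$ with $I\subseteq\{1,\dots,m\}$, $d_i\ge0$ and $g\in G(m)$. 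The graded dimension of $\wt\HH_m(G)$ is determined by part (i) together with Proposition~\ref{prop10.A}, and a count shows this spanning set is exactly of that size; hence it is a basis and the surjection is an isomorphism, which is (ii).

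The hard part is the associated-graded computation underlying (i) --- equivalently, the PBW-type normal form in (ii): the idempotents $\eps_i$ together with the relations $\eps_iu_i(G)=0$ make the ``right'' spanning set and its size non-obvious, and pinning them down relies on the analysis of the associated graded of the semigroup algebra $\C[\Ga(n,G)]$ in \cite{Wan10}. Everything else is a routine, if lengthy, transcription of \cite{MO01} into the wreath-product setting.
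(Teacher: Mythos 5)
The paper itself does not prove Proposition~\ref{prop10.B}: its proof consists of a pointer to \cite[Theorem~4.13]{Wan10}, which establishes the result by transcribing the arguments of \cite[Theorem~5.18]{MO01} to the wreath-product setting, via the filtered-algebra analysis of $\C[\Ga(n,G)]$ carried out in \cite{Wan10}. Your sketch is a faithful reconstruction of exactly that route (pass to associated graded for (i); PBW normal form and a dimension count for (ii)), so it does not take a genuinely different approach from the one the paper relies on.

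Two small remarks. First, the preliminary identity $2u_i(G)=\xi^{i-1}(\De(2,\de_1))-\xi^i(\De(2,\de_1))$ which you derive --- and flag as ``not literally in the literature'' --- is Lemma~\ref{lemma10.E} of the paper, with $\varphi_1$ playing the role of your $\de_1$; moreover, within the paper's architecture it is used for Corollary~\ref{cor10.A} rather than for the PBW argument of Proposition~\ref{prop10.B}, so as a preliminary it is somewhat tangential here. Second, your proposed normal form $\eps_I\bigl(\prod_{i\notin I}u_i(G)^{d_i}\bigr)g$ with $I\subseteq\{1,\dots,m\}$ and $g\in G(m)$ is a spanning set but not literally a basis: the assignment $(I,g)\mapsto\eps_I g$ onto $\Ga(m,G)$ is many-to-one (two elements $g,g'\in G(m)$ agreeing outside the rows indexed by $I$ give the same $\eps_I g=\eps_I g'$). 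The normal form one actually wants, paralleling \cite{MO01}, is $\bigl(\prod_i u_i(G)^{d_i}\bigr)\gamma$ with $\gamma\in\Ga(m,G)$ and $d_i=0$ whenever the $i$th row of $\gamma$ is zero; with that choice the bijective count against $\gr\A_m(G)/\gr\A_0(G)$ closes cleanly, and the rest of your outline goes through as stated.
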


\begin{proof}
See \cite[Theorem~4.13]{Wan10}. 
\end{proof}

\subsection{The use of endomorphism $\xi$ of the algebra $\A(G)$}

We introduce an injective endomorphism $\xi: \A(G)\to\A(G)$ with the property that $\xi(\A_m(G))=\A_{m+1}(G)$ for each $m$; its definition is the same as in section \ref{sect5.4}. 

\begin{lemma}[cf. Lemma \ref{lemma5.B}]\label{lemma10.E}
Let $\varphi_1(g)$ denote the class function on $G$ that equals $1$ at the unity and $0$ for all other $g\in G$.  

One has
$$
2u_i(G) = \xi^{i-1}(\De(2,\varphi_1)) - \xi^i(\De(2,\varphi_1)).
$$
\end{lemma}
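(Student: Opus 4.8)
The plan is to imitate the proof of Lemma~\ref{lemma5.B}, the only genuinely new feature being the bookkeeping of the group elements of $G$ and the involution $g\mapsto g^{-1}$.

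First I would unwind the definition of $\De_n(2,\varphi_1)$. Since $\varphi_1(g_2g_1)$ equals $1$ precisely when $g_2=g_1^{-1}$ and vanishes otherwise, formula \eqref{eq10.A} for $k=2$ collapses to
\[
\De_n(2,\varphi_1)=\sum_{i_1\ne i_2}\sum_{g\in G} g^{(i_1)}(g^{-1})^{(i_2)}(i_1,i_2)\,\beps_{i_1}\beps_{i_2},
\]
where $(i_1,i_2)$ runs over ordered pairs of distinct indices in $\N_n$. The key observation is that $(i_1,i_2,g)\mapsto(i_2,i_1,g^{-1})$ is a fixed-point-free involution of the index set leaving every summand fixed: indeed $g^{(i_1)}$ and $(g^{-1})^{(i_2)}$ act in disjoint coordinates and hence commute, $(i_2,i_1)=(i_1,i_2)$ in $S(n)$, and $\beps_{i_1}$ commutes with $\beps_{i_2}$, so after reordering the commuting factors the $(i_2,i_1,g^{-1})$-summand coincides with the $(i_1,i_2,g)$-summand. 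Therefore $\De_n(2,\varphi_1)$ equals twice the sum over pairs with $i_1<i_2$; by the definition of the elements $u_{i\mid n}(G)$ the latter sum is $\sum_{i=1}^{n-1}u_{i\mid n}(G)$, so $\De_n(2,\varphi_1)=2\sum_{i=1}^{n-1}u_{i\mid n}(G)$.

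Next I would establish, at each finite level, the telescoping identity
\[
\De_{n+1}(2,\varphi_1)-\xi_{n,n+1}\bigl(\De_n(2,\varphi_1)\bigr)=2\,u_{1\mid n+1}(G),
\]
the exact counterpart of \eqref{eq5.C}. Applying the shift $\xi_{n,n+1}$ to the displayed expression for $\De_n(2,\varphi_1)$ amounts to the reindexing $i_a\mapsto i_a+1$, so $\xi_{n,n+1}(\De_n(2,\varphi_1))$ is the part of $\De_{n+1}(2,\varphi_1)$ carried by pairs with both indices $\ge2$; the difference is therefore the part carried by pairs in which at least one index equals $1$, and by the same involution this equals $2\sum_{j\ge2}\sum_{g}g^{(1)}(g^{-1})^{(j)}(1,j)\beps_1\beps_j=2u_{1\mid n+1}(G)$. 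Passing to the projective limit in $\A(G)$ gives the case $i=1$ of the lemma, $2u_1(G)=\De(2,\varphi_1)-\xi(\De(2,\varphi_1))$.

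Finally, since $\xi_{n,n+1}(u_{i\mid n}(G))=u_{i+1\mid n+1}(G)$ --- the analog of the relation for the Jucys--Murphy-type elements used in the proof of Lemma~\ref{lemma5.B} --- one has $\xi(u_i(G))=u_{i+1}(G)$ in $\A(G)$; applying $\xi^{i-1}$ to both sides of the $i=1$ identity yields the general formula. I expect the main (though still elementary) point to be the verification that the involution $(i_1,i_2,g)\mapsto(i_2,i_1,g^{-1})$ fixes each summand, i.e. the commutation relations among the monomial and idempotent factors of $\C[\Ga(n,G)]$; this is where the group $G$ makes the argument slightly less transparent than in the $S(\infty)$ case, but it follows from the basic relations in $\Ga(n,G)$ (and is essentially the observation already recorded in the proof of Lemma~\ref{lemma10.B}).
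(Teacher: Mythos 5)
Your proof is correct and follows essentially the same route as the paper: the paper's proof consists precisely of writing out $\De_n(2,\varphi_1)=\sum_{i\ne j}\sum_{g}g^{(i)}(g^{-1})^{(j)}(i,j)\beps_i\beps_j$ and then repeating the telescoping-plus-shift argument of Lemma \ref{lemma5.B}. You have simply filled in the details (the involution fixing each summand and the reindexing under $\xi_{n,n+1}$) that the paper leaves implicit.
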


\begin{proof}
Observe that, by \eqref{eq10.A},
$$
\De_n(2,\varphi_1)=\sum_{1\le i\ne j\le n} \sum_{g\in G} g^{(i)} (g^{-1})^{(j)}(i,j)\beps_i\beps_j.
$$
Then one can apply the same argument as in Lemma \ref{lemma5.B}.
\end{proof}

Combining this with Proposition \ref{prop10.B} we obtain

\begin{corollary}\label{cor10.A}
For any $ m\in\Z_{\ge1}$ the algebra $\A_m(G)$ is generated by its subalgebras
$$
\A_0(G), \; \xi(\A_0(G)), \; \ldots,\; \xi^m(\A_0(G)),
$$
together with $\Ga(m,G)$. 
\end{corollary}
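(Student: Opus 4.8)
The plan is to mimic, step for step, the proof of Corollary \ref{cor5.A} (the case $G=\{1\}$), with all ingredients replaced by their wreath-product counterparts established above (essentially Wan's results). First I would record the trivial inclusion. Write $\mathcal K\subseteq\A_m(G)$ for the subalgebra generated by the subalgebras $\A_0(G),\xi(\A_0(G)),\dots,\xi^m(\A_0(G))$ together with $\Ga(m,G)$; this makes sense because $\Ga(m,G)\subseteq\A_m(G)$ by definition, and because $\xi$ carries $\A_0(G)$ into $\A_1(G)$, so that $\xi^k(\A_0(G))\subseteq\A_k(G)\subseteq\A_m(G)$ for every $0\le k\le m$.

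For the nontrivial inclusion $\A_m(G)\subseteq\mathcal K$, I would invoke Proposition \ref{prop10.B}(i): the multiplication map exhibits $\A_m(G)$ as generated by its subalgebras $\A_0(G)$ and $\wt\HH_m(G)$, and by Definition \ref{def10.D} the algebra $\wt\HH_m(G)$ is in turn generated by $\Ga(m,G)$ together with $u_1(G),\dots,u_m(G)$. Since $\A_0(G)\subseteq\mathcal K$ and $\Ga(m,G)\subseteq\mathcal K$, everything comes down to checking that $u_i(G)\in\mathcal K$ for $1\le i\le m$. Here Lemma \ref{lemma10.E} does the work: it gives $2u_i(G)=\xi^{i-1}(\De(2,\varphi_1))-\xi^i(\De(2,\varphi_1))$, and since $\De(2,\varphi_1)\in\A_0(G)$ by Lemma \ref{lemma10.C} (it is, up to scaling, one of Wan's generators of $\A_0(G)$ in Proposition \ref{prop10.A}), the two terms on the right lie in $\xi^{i-1}(\A_0(G))$ and $\xi^i(\A_0(G))$ respectively, hence in $\mathcal K$ because $i\le m$. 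Thus $\A_m(G)\subseteq\mathcal K$, and combined with the previous paragraph this proves $\mathcal K=\A_m(G)$.

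There is no genuine obstacle here; the argument is essentially a bookkeeping exercise. The one point worth stating explicitly is that it is the element $u_m(G)$ that forces the list of generating subalgebras to run up to $\xi^m(\A_0(G))$ rather than stopping at $\xi^{m-1}(\A_0(G))$, exactly as in the $G=\{1\}$ case. The only substantive input — which we are entitled to use — is that Proposition \ref{prop10.B} and Lemma \ref{lemma10.E}, taken from \cite{Wan10}, provide faithful analogs of Proposition \ref{prop5.B} and Lemma \ref{lemma5.B}.
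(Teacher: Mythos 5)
Your proof is correct and follows exactly the paper's own (very terse) argument: the paper merely states that Corollary \ref{cor10.A} follows by combining Lemma \ref{lemma10.E} with Proposition \ref{prop10.B}, and your write-up is precisely that reasoning spelled out with the intermediate bookkeeping made explicit.
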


\subsection{Extension of tame representations to the algebra $ \A(G) $}

Any tame representation $T$ of $ G(\infty) $ gives rise to a representation $\wt \T$ of the algebra $\A(G)$ acting on the dense subspace $H_\infty(T)$. If $T=T^\bla$ for some $\bla\in\Y(\Psi)$, then the corresponding representation $\wt \T^\bla$ is algebraically irreducible, which implies that $\A_0(G)$ acts on $H_\infty(T^\bla)$ by scalar operators:
\begin{equation}\label{eq10.E}
\wt \T^\bla(a)=\wh a(\bla)\cdot 1, \quad a\in\A_0(G),
\end{equation}
cf. \eqref{eq5.E}. 
All this is proved exactly as in the case of $S(\infty)$, see \cite[Propositions 3.10 and 3.11]{MO01}. 

In particular, the last equality holds for the central elements  $\De(k,\varphi)\in\A_0(G)$, where $\varphi$ is a class function on $G$ (see \eqref{eq10.A}):
\begin{equation*}
\wt \T^\bla(\De(k,\varphi))=\wh\De(k,\varphi)(\bla)\cdot 1. 
\end{equation*}

The next result is a generalization of Proposition \ref{prop5.C}. 

\begin{proposition}\label{prop10.C}
Let $k=1,2,\dots$,  $\psi\in\Psi$, and set
\begin{equation}\label{eq10.C}
\varphi:=\left(\frac{\dim\psi}{|G|}\right)^k \bar\psi.
\end{equation}
For any $\bla\in\Y(\Psi)$,
\begin{equation}\label{eq10.B}
\wh\De(k,\varphi)(\bla)=p^\#_{k,\psi}(\bla),
\end{equation}
where the elements $p^\#_{k,\psi}\in\Sym^*(\Psi)$ are defined in  \eqref{eq8.B}.
\end{proposition}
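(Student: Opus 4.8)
The plan is to compute the scalar $\wh\De(k,\varphi)(\bla)$ at the smallest relevant level $n=\|\bla\|$, where the idempotent factors $\beps_{i_1}\cdots\beps_{i_k}$ in \eqref{eq10.A} become the identity, and then invoke Proposition \ref{prop8.A}. Concretely, I would first recall from the extension procedure of section \ref{sect10} (the exact analogue of \eqref{eq5.D} together with \eqref{eq10.E}) that, since $\De(k,\varphi)\in\A_0(G)$, the operator $\wt\T^\bla(\De(k,\varphi))$ is the scalar $\wh\De(k,\varphi)(\bla)\cdot 1$, and that this scalar is already visible at the level $n=\ell:=\|\bla\|=\cond(T^\bla)$:
\begin{equation*}
\wh\De(k,\varphi)(\bla)\cdot 1=\T^\bla_\ell\bigl(\De_\ell(k,\varphi)\bigr)\quad\text{on }H_\ell(T^\bla),
\end{equation*}
where $\T^\bla_\ell$ is the representation of $\Ga(\ell,G)$ on $H_\ell(T^\bla)$ associated with $T^\bla$. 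If $\ell<k$ this finishes the proof at once: the right-hand side vanishes by Definition \ref{def10.C}(ii), while $p^\#_{k,\psi}(\bla)=p^\#_k(\bla(\psi))=0$ since $|\bla(\psi)|\le\ell<k$ and $p^\#_k$ has the extra vanishing property. So I may assume $\ell\ge k$.

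The key point I would establish is that, for $n=\ell$, the matrix set $\Om_G(\ell,\ell)$ of section \ref{sect7.3} reduces to the group $G(\ell)$ itself: an $\ell\times\ell$ matrix with exactly one nonzero entry per row and at most one per column is automatically strictly monomial. Hence every $\om\in\Om_G(\ell,\ell)$ has all its columns nonzero, so, by the defining formula \eqref{eq4.B}, the representation $\T^\bla_\ell$ kills every non-invertible element of $\Ga(\ell,G)$; in particular $\T^\bla_\ell(\eps_i)=0$, and therefore $\T^\bla_\ell(\beps_i)=1$, for all $i\in\N_\ell$. Moreover, as in the proof of Proposition \ref{prop7.A}, $H_\ell(T^\bla)$ carries the $G(\ell)$-representation $\Ind^{G(\ell)}_{G(\ell)G_\ell(\ell)}(\pi^\bla\otimes 1)$, which equals $\pi^\bla$ because $G_\ell(\ell)$ is trivial; thus the restriction of $\T^\bla_\ell$ to the group $G(\ell)$ of invertible elements of $\Ga(\ell,G)$ is exactly $\pi^\bla$.

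Next I would compare formula \eqref{eq10.A} at $n=\ell$ with formula \eqref{eq8.A}. With $\varphi$ chosen as in \eqref{eq10.C}, the element $\De_\ell(k,\varphi)$ is obtained from the central element $\zz^{(k,\psi)}_\ell\in\ZZ\C[G(\ell)]$ simply by multiplying each summand on the right by $\beps_{i_1}\cdots\beps_{i_k}$. Since $\T^\bla_\ell$ sends all these idempotents to the identity operator,
\begin{equation*}
\T^\bla_\ell\bigl(\De_\ell(k,\varphi)\bigr)=\T^\bla_\ell\bigl(\zz^{(k,\psi)}_\ell\bigr)=\pi^\bla\bigl(\zz^{(k,\psi)}_\ell\bigr)=\wh\zz^{\,(k,\psi)}_\ell(\bla)\cdot 1,
\end{equation*}
and Proposition \ref{prop8.A}, applied with $n=\ell$ and $\bla\in\Y_\ell(\Psi)$, identifies $\wh\zz^{\,(k,\psi)}_\ell(\bla)$ with $p^\#_{k,\psi}(\bla)=p^\#_k(\bla(\psi))$. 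Combining the two displays gives \eqref{eq10.B}.

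I do not expect a genuine obstacle: the argument is short once one passes to the minimal level $n=\cond(T^\bla)$. The two points that require care are that the extension machinery really does let one read $\wh\De(k,\varphi)(\bla)$ off this single level, and that at this level the factors $\beps_{i_1}\cdots\beps_{i_k}$ act as the identity while $H_\ell(T^\bla)$ is precisely $\pi^\bla$. It is worth emphasising that, in contrast with the center computation behind Theorem \ref{thm3.A}, no limiting procedure of the type in Lemma \ref{lemma3.A} is needed here: the idempotent factors in $\De_n(k,\varphi)$ are exactly what removes the contribution of the ``large first row'' that would otherwise enter $\zz^{(k,\psi)}_n$ for $n\gg\|\bla\|$.
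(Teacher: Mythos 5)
Your proof is correct and follows essentially the same route as the paper's: evaluate at the conductor level $n=\ell=\|\bla\|$, where the idempotents $\beps_i$ act as the identity and $H_\ell(T^\bla)$ carries $\pi^\bla$, identify $\De_\ell(k,\varphi)$ with $\zz^{(k,\psi)}_\ell$, and invoke Proposition \ref{prop8.A}, treating $k>\ell$ separately as the case where both sides vanish. The only difference is that you spell out, via the realization on $\Om_G(\ell,\ell)$, why $\T^\bla_\ell(\eps_i)=0$, which the paper simply cites as a property of the conductor.
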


\begin{proof}
We adapt the argument from \cite[Proposition 4.20]{MO01}. 
Denote $\Vert\bla\Vert=\ell$. This number is the conductor of $T^\bla$, which implies the following consequences:

\begin{itemize}

\item[(1)] the subspace $H_\ell(T^\bla)$ is nonzero;
\item[(2)] all elements $\eps_i$ with $1\le i\le\ell$ act on it by $0$'s;
\item[(3)] the representation of $G(\ell)\subset G(\infty)$ on $H_\ell(T^\bla)$ is irreducible and equivalent to $\pi^\bla$.

\end{itemize}
Next, due to (1) and because the action of $\wt\T^\bla(\De(k,\varphi))$ on $H_\ell(T^\bla)$ coincides with that of $\T^\bla_\ell(\De_\ell(k,\varphi))$, the scalar $\wh\De(k,\psi)(\bla)$ is the (only) eigenvalue of the scalar operator $\T^\bla_\ell(\De_\ell(k,\varphi))$. 

Rewrite the definition \eqref{eq10.A} for the function $\varphi$ from \eqref{eq10.C}:
\begin{equation}\label{eq10.D}
\De_\ell(k,\varphi):=\left(\frac{\dim\psi}{|G|}^k\right) \sum \bar\psi(g_k\dots g_1)g_1^{(i_1)}\dots g_k^{(i_k)}(i_1,\dots,i_k)\beps_{i_1}\dots\beps_{i_k},
\end{equation}
where the sum is taken over all ordered $k$-tuples of pairwise distinct indices $i_1,\dots,i_k$ in $\N_\ell$ and aribitrary ordered $k$-tuples of elements $g_1,\dots,g_k$ in $G$. 

Next, we examine separately two possible cases, $k\le \ell$ and $k>\ell$. 

Assume $k\le\ell$. Then, by virtue of (2), the elements $\beps_i=1-\eps_i$ with indices $1\le i\le\ell$ act on the subspace  $H_\ell(T^\bla)$ as the identity operator. Comparing \eqref{eq10.D} with the definition \eqref{eq8.A} of the central elements $c^{(k,\psi)}_n\in\ZZ\C[G(\ell)]$ we see that on the subspace $H_\ell(T^\bla)$ we have
$$
\T^\bla_\ell(\De_\ell(k,\varphi))=T^\bla(c^{(k,\psi)}_\ell).
$$
By virtue of (3), the element $c^{(k,\psi)}_\ell$ acts on this subspace as a scalar operator. Now we apply Proposition \ref{prop8.A} which tells us that the corresponding scalar, $\wh c^{(k,\psi)}_\ell(\la)$, equals $p^\#_{k,\psi}(\bla)$. This proves \eqref{eq10.B} in the case $k\le\ell$. 

Assume now $k>\ell$. By Definition \ref{def10.C} (ii), we have $\De_\ell(k,\varphi)=0$ and hence $\wh\De(k,\varphi)(\bla)=0$. On the other hand, observe that $k>\ell=\Vert\bla\Vert$ implies $p^\#_{k,\psi}(\bla)=0$. Indeed, to see this, we use the definition of $p^\#_{k,\psi}(\bla)$ (see \eqref{eq8.B}) and the fact that $p^\#_\rho(\nu)=0$ for $|\rho|>|\nu|$ (see \eqref{eq3.K}). Thus, in the case $k>\ell$ the equality \eqref{eq10.B} holds true because both its sides equal $0$. 

This concludes the proof. 
\end{proof}

\begin{corollary}[cf. Corollary \ref{cor5.B}]
The correspondence $a\mapsto\widehat a(\cdot)$ determined by \eqref{eq10.E} establishes an isomorphism $\A_0(G)\to \SSym(G)$ of filtered algebras.
\end{corollary}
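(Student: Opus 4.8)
The plan is to obtain this corollary as a formal consequence of the structural description of $\A_0(G)$ in Proposition \ref{prop10.A} together with the eigenvalue computation of Proposition \ref{prop10.C}, exactly paralleling the derivation of Corollary \ref{cor5.B} from Proposition \ref{prop5.C} in the case $G=\{1\}$. Write $\kappa\colon\A_0(G)\to\C^{\Y(\Psi)}$ for the map $a\mapsto\wh a(\,\cdot\,)$ defined by \eqref{eq10.E}. Since $\A_0(G)$ acts on each $H_\infty(T^\bla)$ by scalars and every $\wt\T^\bla$ is an algebra homomorphism, $\kappa$ is a homomorphism of algebras; regarding $\SSym(\Psi)$ as a subalgebra of $\C^{\Y(\Psi)}$ via the specialization described in section \ref{sect8}, the goal is to show that $\kappa$ is injective, has image exactly $\SSym(\Psi)$, and is an isomorphism of filtered algebras onto it.

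First I would verify injectivity. Suppose $a=(a_n)\in\A_0(G)$ with $\wh a(\bla)=0$ for all $\bla\in\Y(\Psi)$. For each $n$, the central element $a_n\in\A_0(n,G)$ acts on $H_n(T^\bla)=H(\T^\bla_n)$ as multiplication by $\wh a(\bla)=0$, for every $\bla$ with $\Vert\bla\Vert\le n$ (the analogue of the remark at the end of section \ref{sect5.5}). Since the $\T^\bla_n$ exhaust the irreducible representations of $\Ga(n,G)$ and $\C[\Ga(n,G)]$ is semisimple (section \ref{sect9.2}, items 1--2, i.e.\ the analogue of Proposition \ref{prop4.A}), the image of $a_n$ in $\bigoplus_\bla\End(H(\T^\bla_n))$ is $0$, hence $a_n=0$; so $a=0$.

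Next comes the identification of the image. Fix a basis $\Phi$ of the space of class functions on $G$; by Proposition \ref{prop10.A} the elements $\De(k,\varphi)$, $k\ge1$, $\varphi\in\Phi$, are algebraically independent generators of the filtered algebra $\A_0(G)$, with $\deg\De(k,\varphi)=k$. Because $\De(k,\cdot)$ is $\C$-linear in its second argument and $\{\bar\psi:\psi\in\Psi\}$ is again a basis of the space of class functions (complex conjugation permutes $\Psi$), the rescaled elements $\De\bigl(k,(\dim\psi/|G|)^k\,\bar\psi\bigr)$, indexed by $(k,\psi)\in\Z_{\ge1}\times\Psi$, form another system of algebraically independent filtered generators of $\A_0(G)$, still with $\deg=k$; and by Proposition \ref{prop10.C} their $\kappa$-images are precisely the elements $p^\#_{k,\psi}\in\SSym(\Psi)$ of \eqref{eq8.B}. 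On the other side, $\SSym$ is freely generated as a filtered algebra by $\{p^\#_k\}$ (since $[p^\#_k]=p_k$ and the $p_k$ freely generate $\Sym$), so $\SSym(\Psi)=\bigotimes_\psi\SSym^{(\psi)}$ is freely generated, consistently with the filtration, by the elements $p^\#_{k,\psi}=I^*_\psi(p^\#_k)$, each of degree $k$.

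Finally I would conclude: $\kappa$ carries one system of algebraically independent filtered generators of $\A_0(G)$ bijectively onto one of $\SSym(\Psi)$, matching the degrees, hence extends the obvious polynomial isomorphism and admits the obvious two-sided inverse (sending $p^\#_{k,\psi}$ back to the rescaled $\De$). Both $\kappa$ and $\kappa^{-1}$ respect the filtrations, because in either algebra the degree-$\le d$ component is spanned by the monomials of total degree $\le d$ in these generators. Thus $\kappa$ is an isomorphism of filtered algebras. The one point I expect to need the most care --- though it does not require new work --- is precisely this matching of the two filtrations, namely that the filtration inherited on $\A_0(G)$ from $\deg$ on $\C[\Ga(n,G)]$ coincides with the polynomial-degree filtration in the generators $\De(k,\varphi)$; this is exactly the refined content of Proposition \ref{prop10.A}.
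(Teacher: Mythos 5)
Your proposal is correct and follows essentially the same route as the paper: injectivity from the faithfulness of the collection of representations $\T^\bla_n$, identification of the image via the generators $\De(k,\varphi)$ of Proposition \ref{prop10.A} and their eigenvalues $p^\#_{k,\psi}$ from Proposition \ref{prop10.C}, and degree-matching for the filtrations. You merely spell out two points the paper leaves implicit (the semisimplicity argument for injectivity and the change of basis from $\Phi$ to the rescaled conjugate characters), which is harmless.
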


\begin{proof}
By the very definition of the correspondence in question, it is an injective algebra morphism $\A_0(G)\to \SSym(G)$. By Proposition \ref{prop10.A}, the elements  $\De_\ell(k,\varphi)$ from Proposition \ref{prop10.C} are algebraically independent generators of $\A_0(G)$. Next, Proposition \ref{prop10.C} says that their images are the functions of the form $p^\#_{k,\psi}(\bla)$, where $k=1,2,\dots$ and $\psi\in\Psi(G)$. Finally, these functions generate the algebra isomorphic to $\SSym(G)$. Thus, we obtain an algebra isomorphism. The fact that it preserves the filtration is obvious, because 
$$
\deg \De_\ell(k,\varphi)=\deg p^\#_{k,\psi}=k.
$$
This concludes the proof. 
\end{proof}

\begin{proposition}[cf. Proposition \ref{prop5.A}]\label{prop10.D}
Let $\bla\in \Y(\Psi)$. For any $a\in \A(G)$, the corresponding operator $\wt\T^\bla(a)$, which is initially defined on the dense subspace $H_\infty(T^\bla)$, is bounded and hence can be extended to the whole Hilbert space $H(T^\bla)$.
\end{proposition}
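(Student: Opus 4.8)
The plan is to reduce the boundedness claim to a generating set. By Definition \ref{def10.B}, $\A(G)=\bigcup_{m}\A_m(G)$, so it suffices to show that $\wt\T^\bla(a)$ is bounded for $a$ ranging over a generating set of $\A_m(G)$, for each $m\ge1$. By Corollary \ref{cor10.A}, such a set is furnished by $\Ga(m,G)$ together with the subalgebras $\A_0(G),\xi(\A_0(G)),\dots,\xi^m(\A_0(G))$. Thus three types of generators must be handled.

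First, for $\ga\in\Ga(m,G)$, viewed as a stable element of $\A_m(G)$, the operator $\wt\T^\bla(\ga)$ coincides with the value at $\ga$ of the canonical extension $\T^\bla$ of $T^\bla$ to the semigroup $\Ga(\N,G)$ (see section \ref{sect9.2}, item 4, and Proposition \ref{prop4.C}). Since $\T^\bla$ is, by Definition \ref{def4.A}, a homomorphism into the semigroup of contractions, $\|\wt\T^\bla(\ga)\|\le 1$. Second, for $a\in\A_0(G)$ the operator $\wt\T^\bla(a)$ equals the scalar $\wh a(\bla)\cdot 1$ by \eqref{eq10.E}, hence is bounded.

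The remaining generators, $a\in\xi^k(\A_0(G))$ with $1\le k\le m$, require the argument used in the proof of Proposition \ref{prop5.A}. Iterating the branching rule of Corollary \ref{cor7.A} $k$ times produces, in the exact manner of Corollary \ref{cor2.A}, a finite orthogonal decomposition
\[
H(T^\bla)=\bigoplus_{\bmu:\,\bmu\subseteq\bla} V^\bmu\otimes H(T^\bmu_k),\qquad H_\infty(T^\bla)=\bigoplus_{\bmu:\,\bmu\subseteq\bla} V^\bmu\otimes H_\infty(T^\bmu_k),
\]
where $T^\bmu_k:=T^\bmu\circ\xi^{-k}$ is the corresponding representation of $G_k(\infty)$, the spaces $V^\bmu$ are finite-dimensional, and the index set (which also includes $\bmu=\bla$) is finite. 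Because the shift endomorphism $\xi$ of $\A(G)$ is intertwined, under the representations, with passage from $T^\bla$ to its $G_k(\infty)$-restriction, the operator $\wt\T^\bla(\xi^k(a))$ respects this decomposition and acts on the block indexed by $\bmu$ as the scalar $\wh a(\bmu)$. Hence $\wt\T^\bla(\xi^k(a))$ is diagonalizable with the finitely many eigenvalues $\{\wh a(\bmu):\bmu\subseteq\bla\}$, in particular bounded; it therefore extends to all of $H(T^\bla)$.

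The main obstacle lies entirely in this last case: one must carry out the iteration of Corollary \ref{cor7.A} (keeping track of the finite multiplicities, which are products of dimensions $\dim\psi$) and verify the compatibility of $\xi^k$ with the resulting branching decomposition of $H_\infty(T^\bla)$. Both points are routine but require some care; they reproduce verbatim the reasoning behind Corollary \ref{cor2.A} and Proposition \ref{prop5.A} in the non-wreath setting, and I would either spell out the iteration explicitly or simply record that the argument is unchanged.
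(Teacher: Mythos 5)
Your proposal is correct and follows essentially the same route as the paper: reduce to the generators supplied by Corollary \ref{cor10.A}, observe that $\Ga(m,G)$ acts by contractions and $\A_0(G)$ by scalars, and handle $\xi^k(\A_0(G))$ via the iterated branching decomposition (the wreath analogue of Corollary \ref{cor2.A} obtained from Corollary \ref{cor7.A}), which diagonalizes the operator with finitely many eigenvalues. The paper's proof is just a one-line reference to the argument of Proposition \ref{prop5.A} with Corollary \ref{cor10.A} in place of Corollary \ref{cor5.A}, so your fuller write-up, including the explicit acknowledgment that the iteration of Corollary \ref{cor7.A} must be carried out, is if anything more complete.
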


\begin{proof}
The argument is the same as in Proposition \ref{prop5.A}, with the reference to Corollary \ref{cor10.A} instead of Corollary \ref{cor5.A}. 
\end{proof}

\section{Main theorem, case of wreath products $G\wr S(\infty)$}\label{sect11}

\subsection{The embedding $\A(G)\to \prod_{\bla\in\Y(\Psi)}\End(H(T^\bla))$}

By analogy with section \ref{sect6.1} we set 
$$
\mathcal E(G):=\prod_{\bla\in\Y(\Psi)}\End(H(T^\bla));
$$
it is an algebra with respect to component-wise operations. By virtue of Proposition \ref{prop10.D},  the correspondence $a\mapsto \wt\T^\bla(a)$ is an algebra morphism $\A(G)\to\End(T^\bla)$ for each $\bla\in\Y(\Psi)$. The total collection of these morphisms gives rise to an algebra morphism 
\begin{equation}\label{eq11.A}
\iota_{\A(G)}: \A(G)\to  \mathcal E(G), \quad a\mapsto \prod_{\bla\in\Y(\Psi)}\wt\T^\bla(a).
\end{equation}
It is injective: the proof is the same as in Lemma \ref{lemma6.A}.

\subsection{The algebra $\B(G)$ --- the virtual group algebra of $G(\infty)$}

Here we have to reproduce, with suitable minor changes, the material of section \ref{sect6.2}. 
 
For $0\le m\le n$ we denote by  $\B_m(n,G)$ the centralizer of $G_m(n)$ in $\C[G(n)]$. In particular, $\B_0(n,G)$ coincides with $\ZZ\C[G(n)]$, the center of the group algebra $\C[G(n)]$.

\begin{definition}[cf. Definition \ref{def6.A}]

(i) Let  $\al=(\al_1,\al_2,\dots)$ be an infinite sequence of elements of $\C[G(\infty)]$ such that $\al_n\in\C[G(n)]$ for all $n$ large enough. We say that $\al$ is \emph{convergent} if the following three conditions are satisfied:

\begin{itemize}
\item for any $\bla\in\Y(\Psi)$, the operators $T^\bla(\al_n)$  on the space $H(T^\bla)$ have a limit,  $\Tb^\la(\al)$, in the strong operator topology; 
\item there exists $m$ such that $\al_n\in\B_m(n,G)$ for all $n$ large enough;
\item $\sup \deg \al_n<\infty$. 
\end{itemize}

(ii) Next, we say that two convergent sequences, $\al$ and $\be$, are \emph{equivalent} if $\Tb^\bla(\al)=\Tb^\bla(\be)$ for all $\bla\in\Y(\Psi)$. 
\end{definition}

\begin{definition}[cf. Definition \ref{def6.B}]
Let $\B(G)$ be the set of equivalence classes of convergent sequences. We endow it with a natural structure of associative algebra, induced by component-wise operations on convergent sequences. 
\end{definition}

Again, the correctness of the definition is verified using the argument from Lemma \ref{lemma3.B}. 

For each $\bla\in\Y(\Psi)$, the correspondence $\al\mapsto \Tb^\bla(\al)$ determines a representation of algebra $\B(G)$ by bounded operators on the space $H(T^\bla)$. 

There is a natural embedding $\C[G(\infty)]\to\B(G)$ whose image is formed by the sequences $\al=(\al_n)$; here we argue as in Lemma \ref{lemma6.B}. 

We call $\B(G)$ the \emph{virtual group algebra} of $G(\infty)$.

\subsection{The isomorphism $\B(G)\to\A(G)$}

For each $m=0,1,2,\dots$, we denote by $\B_m(G)$ the subalgebra of $\B(G)$ formed by the convergent sequences $(\al_n)$ such that $\al_n\in\B_m(n,G)$ for all $n$ large enough. By the very definition of the algebra $\B(G)$, we have an algebra embedding (cf. \eqref{eq6.A})
\begin{equation}\label{eq11.B}
\iota_{\B(G)}: \B(G)\to  \mathcal E(G), \quad \al\mapsto \prod_{\bla\in\Y(\Psi)}\Tb^\bla(\al).
\end{equation}

\begin{theorem}[cf. Theorem \ref{thm6.A}]\label{thm11.A}
There exists an algebra isomorphism $\F:\B(G)\to\A(G)$, uniquely determined by the property that $\iota_{\A(G)}\circ\F=\iota_{\B(G)}$, meaning that  the diagram 
$$
  \xymatrix{ \B(G) \ar[r]^\F \ar[dr]_{\iota_{\B(G)}}& \A(G) \ar[d]^{\iota_{\A(G)}} \\
  & \mathcal E(G)}
$$
is commutative.

Furthermore, $\F(\B_m(G))=\A_m(G)$ for each $m=0,1,2,\dots$\,. 
\end{theorem}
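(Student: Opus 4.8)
The plan is to transcribe the proof of Theorem \ref{thm6.A} almost verbatim, replacing each object attached to $S(\infty)$ by its $G(\infty)$-counterpart constructed in sections \ref{sect7}--\ref{sect10}. Under this dictionary $S(n),S(\infty),\Ga(n),\Ga(\N)$ become $G(n),G(\infty),\Ga(n,G),\Ga(\N,G)$; the irreducible tame representations $T^\la$ become $T^\bla$, $\bla\in\Y(\Psi)$; the algebras $\A_m,\A,\B_m,\B$ become $\A_m(G),\A(G),\B_m(G),\B(G)$; Proposition \ref{prop4.A} is replaced by the structure of $\C[\Ga(n,G)]$ recorded in items 1--2 of section \ref{sect9.2} (a finite-dimensional $C^*$-algebra isomorphic to $\bigoplus_{\bla}\End(H(\T^\bla_n))$); Proposition \ref{prop4.C} by item 4 of section \ref{sect9.2}, in particular the identity $P_r=\T^\bla(\eps_{\N\setminus\N_r})$; Corollary \ref{cor5.A} by Corollary \ref{cor10.A}; Theorem \ref{thm3.A} by Theorem \ref{thm8.A}; and the boundedness of the operators $\wt\T^\bla(a)$ underlying $\iota_{\A(G)}$ by Proposition \ref{prop10.D} (with $\iota_{\A(G)}$ injective, as noted after \eqref{eq11.A}).

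First I would build, for each $m\ge0$, a map $\F_m\colon\B_m(G)\to\A_m(G)$, copying Steps 1--6 of the proof of Theorem \ref{thm6.A}. Given $\al=(\al_n)\in\B_m(G)$, set $b_r:=\lim_{n\to\infty}\th_r(\al_n)$ in the finite-dimensional algebra $\C[\Ga(r,G)]$; the limit exists and is independent of the representing sequence because, by the $C^*$-structure of $\C[\Ga(r,G)]$, it suffices to see that $\T^\bla_r(\th_r(\al_n))=P_r\,T^\bla(\al_n)\,P_r$ converges strongly for every $\bla$ with $\Vert\bla\Vert\le r$, which is immediate from $T^\bla(\al_n)\to\Tb^\bla(\al)$. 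The resulting $b_r$ are characterized by $\T^\bla_r(b_r)=P_r\Tb^\bla(\al)P_r$, have uniformly bounded degrees (since $\deg\th_r(\al_n)\le\deg\al_n$), satisfy $\th_{r+1,r}(b_{r+1})=b_r$, and lie in the centralizer $\A_m(r,G)$ of $\Ga_m(r,G)$ --- the last point by the argument of Steps 4--5: $\Tb^\bla(\al)$ commutes with $\T^\bla(\Ga_m(\N,G))$ because $\al_n$ centralizes $G_m(n)$ and $G_m(\infty)$ is dense in $\Ga_m(\N,G)$, while $P_r=\T^\bla(\eps_{\N\setminus\N_r})$ with $\eps_{\N\setminus\N_r}\in\Ga_r(\N,G)\subseteq\Ga_m(\N,G)$. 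Hence $b:=(b_r)\in\A_m(G)$, $\F_m(\al):=b$ satisfies $\iota_{\A(G)}\circ\F_m=\iota_{\B(G)}$, and $\F_m$ is injective since $\iota_{\A(G)}$ is. Taking the union over $m$ (note $\B(G)=\bigcup_m\B_m(G)$ by the definition of a convergent sequence) produces $\F$; uniqueness of $\F$ is forced by injectivity of $\iota_{\A(G)}$, and the assertion $\F(\B_m(G))=\A_m(G)$ will be exactly the surjectivity of $\F_m$.

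The substantive part is the surjectivity of $\F_m$: by Corollary \ref{cor10.A} it is enough to approximate a generating set of $\A_m(G)$, namely the group $G(m)$, the idempotents $\eps_1,\dots,\eps_m$, and the subalgebras $\A_0(G),\xi(\A_0(G)),\dots,\xi^m(\A_0(G))$. Elements $g\in G(m)$ are approximated by the stable sequences $\al_n\equiv g$. For $\A_0(G)$ the needed convergent sequences are produced by Theorem \ref{thm8.A} (this simultaneously upgrades that theorem to an isomorphism $\B_0(G)=\ZZ(G)\xrightarrow{\ \sim\ }\SSym(\Psi)$), and for $b'=\xi^i(b)$ with $b\in\A_0(G)$ and $1\le i\le m$ one applies $\xi^i$ to the sequence chosen for $b$, as in Step 8. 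The one genuinely new computation --- and the step I expect to need the most care --- is the approximation of $\eps_i$, $1\le i\le m$: I would take, for $n>m$,
\[
\al_n:=\frac{1}{|G|\,(n-m)}\sum_{j=m+1}^{n}\ \sum_{g\in G} g^{(i)}(g^{-1})^{(j)}(i,j),
\]
a degree-$2$ element of $\C[G(n)]$, and check two things. First, $\al_n\in\B_m(n,G)$: for $h\in G$ and $k\in\{m+1,\dots,n\}$, conjugation by $h^{(k)}$ sends the $j=k$ summand $g^{(i)}(g^{-1})^{(k)}(i,k)$ to $(gh^{-1})^{(i)}\bigl((gh^{-1})^{-1}\bigr)^{(k)}(i,k)$ and fixes the others, so the inner sum over $g$ --- hence $\al_n$ --- is unchanged, while conjugation by the Coxeter generators of $G_m(n)$ merely permutes the index $j$. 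Second, $T^\bla(\al_n)\to\T^\bla(\eps_i)$ strongly: in the realization of section \ref{sect7.3}, writing $\ell:=\Vert\bla\Vert$ one has $H(T^\bla)=\bigoplus_{\om\in\Om_G(\ell,\infty)}H(\om)$ and $\T^\bla(\eps_i)$ is the orthogonal projection onto the sum of those $H(\om)$ whose $i$th column is null; if that column is null, then each $g^{(i)}(g^{-1})^{(j)}(i,j)$ fixes $H(\om)$ pointwise for all but finitely many $j$, so $T^\bla(\al_n)f\to f$ for $f\in H(\om)$, whereas if it is non-null, the subspaces $T^\bla\bigl(g^{(i)}(g^{-1})^{(j)}(i,j)\bigr)H(\om)$ are mutually orthogonal for $m<j\le n$ and $g\in G$, so $\Vert T^\bla(\al_n)f\Vert^2=\Vert f\Vert^2/\bigl(|G|(n-m)\bigr)\to0$. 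Apart from this $\eps_i$-computation, in which the twists $g^{(i)}(g^{-1})^{(j)}$ play their only role, every step is a mechanical translation of the $S(\infty)$ argument, the underlying geometric mechanism --- the permutation action of transposition-type elements on the components $H(\om)$ --- being unchanged; so the real obstacle is merely the bookkeeping of checking that each ingredient invoked in the proof of Theorem \ref{thm6.A} has its established $G(\infty)$-analogue in sections \ref{sect7}--\ref{sect10}, which it does.
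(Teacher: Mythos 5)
Your proposal is correct and follows exactly the route the paper takes: the published proof of Theorem \ref{thm11.A} simply asserts that all steps of the proof of Theorem \ref{thm6.A} extend smoothly, with Theorem \ref{thm8.A} replacing Theorem \ref{thm3.A} and Corollary \ref{cor10.A} replacing Corollary \ref{cor5.A}. Your explicit $G$-averaged approximant $\al_n=\frac{1}{|G|(n-m)}\sum_{j,g}g^{(i)}(g^{-1})^{(j)}(i,j)$ for $\eps_i$, together with the centrality and orthogonality checks, correctly supplies the one genuinely new computation that the paper leaves implicit.
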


\begin{proof}
All steps of the proof of Theorem \ref{thm6.A} extend smoothly. The auxiliary results used in Theorem \ref{thm6.A} have exact counterparts, described in sections \ref{sect6} -- \ref{sect10}.  In particular, Theorem \ref{thm8.A} serves as a substitute of Theorem \ref{thm3.A}, and Corollary \ref{cor10.A} is an extension of Corollary \ref{cor5.A}.\end{proof}

\end{document}